\numberwithin{equation}{section}
\numberwithin{figure}{section}
\newtheorem{theorem}[equation]{Theorem}
\newtheorem{theoremintro}{Theorem}
\newtheorem{proposition}[equation]{Proposition}
\newtheorem{corollary}[equation]{Corollary}
\newtheorem{corollaryintro}[theoremintro]{Corollary}
\newtheorem{lemma}[equation]{Lemma}
\theoremstyle{definition}
\newtheorem{definition}[equation]{Definition}
\theoremstyle{remark}
\newtheorem{remark}[equation]{Remark}
\newtheorem{example}[equation]{Example}
\newcommand\partsn[1][n]{\mathcal{P}_{#1}}
\newcommand\Y{\mathcal{Y}}
\newcommand\nodes{\mathbb{Z}_{\geq 1}^2}
\newcommand\ladnode[2][e]{\mathrm{lad}_{#1}(#2)}
\newcommand\reg[1][e]{\mathrm{reg}_{#1}}
\newcommand\tomega{\widetilde{\omega}}
\newcommand\Pl[1][n]{\mathrm{Pl}_{#1}}
\newcommand\R{\mathbb{R}}
\newcommand\lambdareg{\mu}
\newcommand\myepsilon\eta
\newcommand\hp[1][k]{h_{#1}^+}
\newcommand\hm[1][k]{h_{#1}^-}
\newcommand\myLk[2]{L_e^{#1,(#2)}}
\newcommand\myL[1]{\myLk{#1}{k}}
\newcommand\LM[2][\alpha]{L_{#1}^{#2}}
\newcommand\Lx[2][\alpha]{L_{#1}^{#2}}
\DeclareMathOperator\Shop{Sh}
\DeclareMathOperator\shop{sh}
\newcommand\Sh[1][\alpha]{\Shop_{#1}}
\newcommand\Shp[1][\alpha]{\Shop'_{#1}}
\newcommand\sh[1][e]{\shop_{#1}}
\newcommand\dire[1][e]{\alpha_e}
\newcommand\gr{\mathbb{Y}}
\newcommand\epi{\mathrm{epi}}
\newcommand\suppfalpha[1][\alpha]{s_{#1}}
\newcommand\Omegae[1][e]{\Omega_{#1}} 
\newcommand\Omegaeps[2][\myepsilon]{\Omega^{#2 #1}}
\newcommand\Omegaepse[2][\myepsilon]{\Omega^{#2 #1}_{e}}
\newcommand\Omegaplot{2/pi*(\x*rad(asin(\x/2))+sqrt(4-\x*\x))}
\newcommand\Sigmaplot{(\x*\x+1)/2}
\newcommand\functalpha[2]{\Sh[#1](#2)}
\newcommand\falpha[1][\alpha]
\newcommand\falphapsprim[3][\myepsilon]{#2^{#3 #1}}
\newcommand\falphaps[2][\myepsilon]{f^{#2 #1}}
\newcommand\applatitk[3][+]{\rho^{#1,(#2)}_{#3}}
\newcommand\applatit[2][+]{\rho^{#1}_{#2}}
\newcommand\tapplatit[2][+]{\widetilde{\rho^{#1}_{#2}}}
\newcommand\tapplat[1]{\widetilde{\rho^{#1}_\mu}}
\newcommand\sizecorner{.15}
\newcommand\myclass[1][2]{\mathfrak{C}_{#1}}
\newcommand\partclass{\mathfrak{Y}}
\newcommand\mydeltaprim{\delta}
\newcommand\mydelta[1][x]{\mydeltaprim_{#1}}
\newcommand\mydeltafull[2]{\mydeltaprim_{#1}^{#2}}
\newcommand\Falpha[2][f]{#1_{[#2]}}
\newcommand\xalphap[1][\alpha]{x_{#1}^+}
\newcommand\xalphapf[2][f]{\xalphap[\alpha](#1)}
\newcommand\xalpham[1][\alpha]{x_{#1}^-}
\newcommand\xalphamf[2][f]{\xalpham[\alpha](#1)}
\newcommand\xalphapp{x_\alpha'^+}
\newcommand\xalphamp{x_\alpha'^-}
\newcommand\coinc{u_\alpha}
\newcommand\compfull[2]{\phi_{#1}^{#2}}
\newcommand\comp[1][\alpha]{\phi_{#1}}
\newcommand\invdelta\tau
\newcommand\su[1][]{a_{#1}} 
\newcommand\sd[1][]{b_{#1}}
\newcommand\biss{\mathcal{L}}
\newcommand\bissp{\biss'}
\newcommand\mywidth{.8\textwidth}
\newcommand\colshake{orange!70}
\definecolor{greensage}{rgb}{0, 0.5019607843137255, 0}
\author{Salim \textsc{Rostam}\thanks{Institut Denis Poisson, CNRS UMR 7013, Université de Tours, 37200 Tours, France}}
\title{Limit shape for regularisation of large partitions under the Plancherel measure}
\date{}
\begin{document}

\maketitle

\abstract{A celebrated result of Kerov--Vershik and Logan--Shepp gives an asymptotic shape for large partitions under the Plancherel measure. We prove that when we consider $e$-regularisations of such partitions we still have a limit shape, which is given by a shaking of the Kerov-Vershik-Logan-Shepp curve. We deduce an explicit form for the first asymptotics of the length of the first row and the first column for the $e$-regularisation.}

\section{Introduction}

Partitions of a given integer $n$ are the different ways to decompose $n$ as an unordered sum of positive integers. In other words, a \textbf{partition} of $n \in \mathbb{Z}_{\geq 0}$ is a non-increasing sequence $\lambda = (\lambda_1 \geq \dots )$ of non-negative integers with sum $n$. This mathematical object appears for instance in the study of the symmetric group $\mathfrak{S}_n$ of permutations of $\{1,\dots,n\}$, since partitions of $n$ index the conjugacy classes (via the cycle decomposition).

A \emph{representation}  of $\mathfrak{S}_n$ of dimension $N$ over a field $k$ is a group homomorphism $\rho : \mathfrak{S}_n \to \mathrm{GL}_N(k)$. We focus for the moment at the case where $k$ is  the field $\mathbb{C}$ of complex numbers, in which case we say that we have a \emph{complex representation}. As any integer decomposes into a product of primes, any complex representation decomposes into a sum of \emph{irreducible} complex representations. It turns out that the fact that the set $\partsn$ of partitions of $n$ index the conjugacy classes implies that $\partsn$ also index the set of irreducible complex representations. If we denote by $\rho_\lambda$ the irreducible complex representation associated with $\lambda \in \partsn$, a standard result of complex representation theory shows that:
\begin{equation}
\label{equation:n!=sumdim}
\#\mathfrak{S}_n = n! = \sum_{\lambda \in \partsn} (\dim \rho_\lambda)^2.
\end{equation}
A remarkable fact is that we are able to explicitly compute the numbers $\dim \rho_\lambda$ (namely with the famous \emph{hook length formula}). We refer for instance to Sagan~\cite{sagan} or James--Kerber~\cite{james-kerber} for more details on the complex representations of $\mathfrak{S}_n$.

\medskip
Now if we want to study the representations of $\mathfrak{S}_n$ over the field $k = \mathbb{F}_p$ with $p$ elements (with $p$ a prime number; we will say that we have a \emph{$p$-representation}), almost everything that is known for complex representations falls apart. A fundamental difference is that some representations may \emph{not} decompose into a sum of irreducible ones. Nevertheless, the study of irreducible representations is still interesting since any representation can always be decomposed into irreducible constituents via a \emph{composition} (or \emph{Jordan--Hölder}) \emph{series}.

A general theorem of Brauer says that the irreducible $p$-representations are in bijection with the $p$-regular conjugation classes, that is, with conjugation classes formed by elements whose order does not divide $p$. For the symmetric group, one can see that the set of $p$-regular conjugation classes is in one-to-one correspondence with the set of \textbf{$\boldsymbol{p}$-regular} partitions, that is, partitions with no $p$ (or more) consecutive equal parts. Note that the dual notion of \emph{$p$-restricted} partitions is also present in the literature, where the difference between two consecutive parts of a $p$-restricted partition it at most $p-1$.  If $\lambda$ is a $p$-regular partition, we will denote by $\rho^p_\lambda$ the associated irreducible $p$-representation. Contrary to the complex case, there is no formula expressing the dimension of $\rho^p_\lambda$ (yet). We refer for instance to James--Kerber~\cite{james-kerber} for more details on the $p$-representations of $\mathfrak{S}_n$.

A way to understand $p$-irreducible representations is to study the $p$-irreducible representations that appear in the decomposition series of an irreducible complex representation reduced modulo $p$. More explicitly, it can be shown that any irreducible complex representation can be realised over $\mathbb{Z}$, that is, we can assume that $\rho_\lambda : \mathfrak{S}_n \to \mathrm{GL}_N(\mathbb{Z})$. The reduction modulo $p$, that we denote by $\overline{\rho}_\lambda : \mathfrak{S}_n \to \mathrm{GL}_N(\mathbb{F}_p)$, is then simply the reduction modulo $p$ of the matrix entries. Now if we return to the problem of determining which irreducible $p$-representations appear in a decomposition series of $\overline{\rho}_\lambda$,  James~\cite{james} gave an explicit combinatorial construction of a $p$-regular partition $\reg(\lambda)$ such that $\rho^p_\lambda$ appears in a decomposition series of $\overline{\rho}_\lambda$. This partition $\reg(\lambda)$ is the \textbf{$\boldsymbol{p}$-regularisation} of $\lambda$. The $p$-regularisation operation has in fact a meaning also when $p$ is not prime, in the context of the \emph{Iwahori--Hecke algebra} of $\mathfrak{S}_n$ (see, for instance, Mathas~\cite{mathas}). In particular, for an integer $e \geq 2$ we will also use the terms $e$-regular and $e$-regularisation. Note that the $e$-regularisation map was recently generalised by Millan Berdasco~\cite{millan} to an $(e,i)$-regularisation map on partitions: it would be interesting to see whether the results we present in this paper generalise to this  setting.

\bigskip
We now go back to~\eqref{equation:n!=sumdim}. This equation shows that $\Pl(\lambda) \coloneqq \frac{(\dim \rho_\lambda)^2}{n!}$ is a probability measure on the set $\partsn$ of partitions of $n$, called the \emph{Plancherel measure}. Via some calculations involving the \emph{hook integral} (defined in the spirit of the hook length formula), Kerov--Vershik~\cite{kerov-vershik:asymptotics} and Logan--Shepp~\cite{logan-shepp:variational}  independently proved that there is a curve~$\Omega$ so that the upper rim $\tomega_\lambda$ of the Young diagram (in the Russian convention) of a large partition $\lambda$ converges uniformly in probability to $\Omega$. An illustration of this convergence is given in Figure~\ref{figure:lgn} (note\footnote{All the computations were made using \textsc{SageMath}~\cite{sage}.}). This limit shape theorem allowed to determine the first asymptotics of the length of the first row (and first column) of a Young diagram, taken under the Plancherel measure. Note that, via the Robinson--Schensted correspondence (which provides a bijective proof of~\eqref{equation:n!=sumdim}), this provides a solution to the \emph{Ulam problem} on the length of a longest increasing subsequence of a word in $\mathfrak{S}_n$ chosen uniformly. We refer for instance to Romik~\cite{romik} for more details on the Plancherel measure and related asymptotics.

In the previous context of regularisation of partitions, the following question is thus natural: for an integer $e \geq 2$, what can be said about the partition $\reg(\lambda)$ when $\lambda$ is a large partition taken under the Plancherel measure? The aim of this paper is to give a first answer to this question.

\begin{figure}[ht]
\begin{center}
\includegraphics[width=\mywidth]{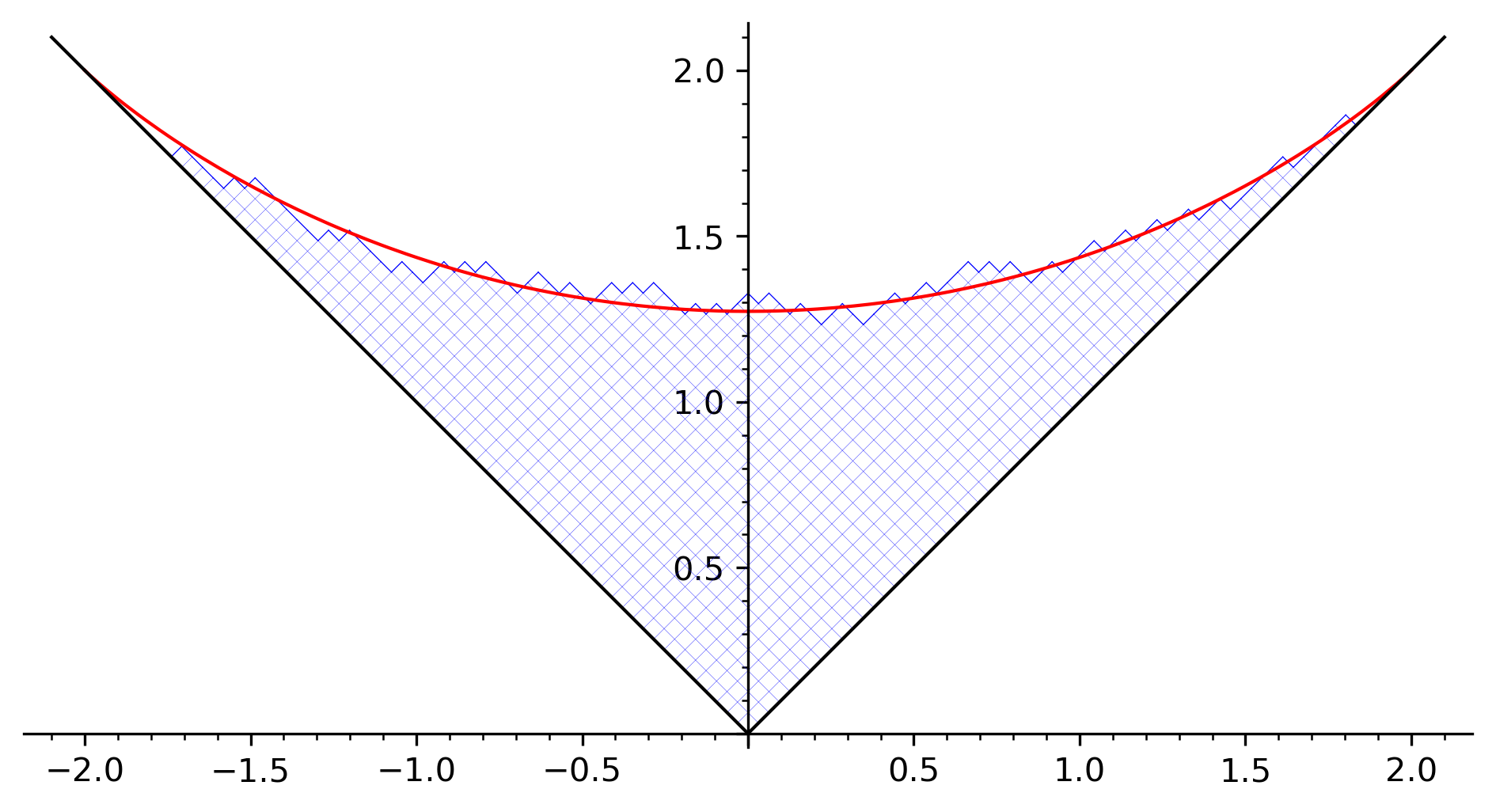}
\end{center}
\caption{The limit shape theorem for large partitions under the Plancherel measure (in red the shape $\Omega$)}
\label{figure:lgn}
\end{figure}

\bigskip
Steiner symmetrisation is a useful tool to study (namely) the isoperimetric problem. More precisely, if $K$ is a compact convex subset of $\R^2$ then its \emph{symmetrisation} with respect to the line $L$ is the set $S_L(K)$ that we obtain after sliding the different slices of $K$ (with respect to the orthogonal direction of $L$) until the midpoint of the slice is on $L$. An example of symmetrisation with respect to a vertical line in given in Figure~\ref{figure:symm_sh}. The link with the isoperimetric problem is that $S_L(K)$ has the same area as $K$ but a smaller perimeter. Note the following result, known as the \emph{sphericity theorem} of Gross: from any compact convex subset of $\R^2$ one can obtain the unit disk after a  succession of (possibly infinite number of) symmetrisations. Moreover, this concept of symmetrisation can also be extended to maps, with for example Schwarz \emph{symmetric rearrangement}, which is a powerful tool to study functional inequalities. We refer for instance to Gruber~\cite{gruber} or Krantz--Parks~\cite{krantz} for more details on the Steiner symmetrisation.

 A variation of Steiner symmetrisation is the notion of \textbf{shaking}. This notion was first introduced by Blaschke~\cite{blaschke} to solve Sylvester's ``four points problem''. The difference with Steiner symmetrisation is that we slide the slices until we \emph{meet} $L$. An example is given in Figure~\ref{figure:symm_sh}. Shaking and Steiner symmetrisation share many properties, for instance, Gross theorem holds for the shaking operation with ``unit disk'' replaced by ``simplex''. We refer to~\cite{shaking_compact} for more details on the shaking operation. We will in fact use the shaking operation in a context where the slices are not orthogonal to the line $L$; such an operation is for instance used in~\cite{freyer}.
 
\begin{figure}
\begin{center}
\begin{tikzpicture}[decoration={markings,mark=between positions .1 and .9 step .2  with {\arrow{>}}}]
\draw (4,3) --  (4,-2);
\draw (4,-2) -- (4,-4) node[below]{$L$};

\fill[blue] (0,2) -- (1,1) -- (0,0) -- cycle;
\draw[fill, blue!30] (4,2) -- (4.5,1) -- (4,0) -- (3.5,1) -- cycle;

\fill[olive] (0,-1) -- (1,-2) -- (0,-3) -- cycle;
\draw[fill, olive!30] (4,-1) -- (4,-3) -- (3,-2) -- cycle;

\draw[red, very thick] (0,1.5) -- (.5,1.5);
\draw[red,dashed,postaction={decorate}] (.5,1.5) -- (3.75,1.5) node[midway,above]{symmetrisation};
\draw[red,very thick] (3.75,1.5) -- (4.24,1.5);

\draw[red,very thick] (0,1) -- (1,1);
\draw[red, dashed,,postaction={decorate}] (1,1) -- (3.5,1);
\draw[red,very thick] (3.5,1) -- (4.5,1);

\draw[orange, very thick] (0,-2) -- (1,-2);
\draw[orange,dashed,,postaction={decorate}] (1,-2) -- (3,-2);
\draw[orange, very thick] (3,-2) -- (4,-2);

\draw[orange,very thick] (0,-2.5) -- (.5,-2.5);
\draw[orange,dashed,postaction={decorate}] (.5,-2.5) -- (3.5,-2.5) node[midway,below]{shaking};
\draw[orange,very thick] (3.5,-2.5) -- (4,-2.5);
\end{tikzpicture}
\end{center}
\caption{Examples of symmetrisation and shaking with respect to a line $L$}
\label{figure:symm_sh}
\end{figure}
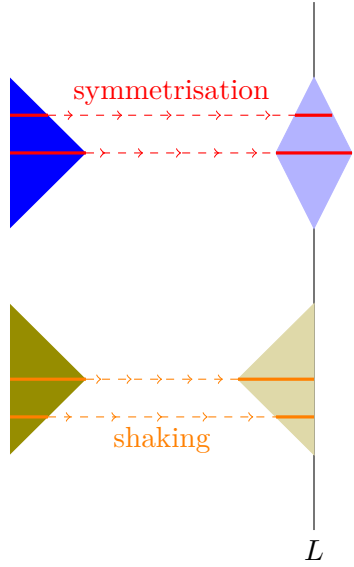

\bigskip

We can now state our main result (Theorems~\ref{theorem:sh_gr_f} and~\ref{theorem:limit_shape_regularisation}, together with~\eqref{equation:suppfalpha_Omega} and Corollary~\ref{corollary:Omegae_convex}).

\begin{theoremintro}
\label{theoremintro:main}
Let $e \geq 2$. Under the Plancherel measure $\Pl$, the upper rim $\tomega_{\reg(\lambda)}$ of the Young diagram (in the Russian convention) of $\reg(\lambda)$ converges uniformly in probability to the convex shape $\Omegae$ as $n \to +\infty$, in other words, for any $\myepsilon > 0$ we have
\[
\Pl\left(\sup_{\R} \,\bigl\lvert\tomega_{\reg(\lambda)}-\Omegae\bigr\rvert > \myepsilon\right) \xrightarrow{n\to+\infty} 0.
\]
The shape $\Omegae$ is obtained by shaking the part $\gr(\Omega)$ of the graph  of $\Omega$ that is above the graph of the absolute value, with respect to the line of equation $y = -x$ and angle $\alpha \coloneqq 1-2e^{-1}$, and is given by:
\begin{align*}
\Omegae(x) &= \Omega(x), &&\text{for all } x \leq x_\alpha^-,
\\
\Omegae(x+\delta_x) &= \Omega(x) + \alpha \delta_x, &&\text{for all } x \in (x_\alpha^-,x_\alpha^+),
\\
\Omegae(x) &= x, &&\text{for all } x \geq \frac{2e}{\pi}\sin\frac{\pi}{e},
\end{align*}
where:
\begin{itemize}
\item $x_\alpha^- \in [-a,x_\alpha^+)$ (is the unique point that) satisfies $\Falpha[\Omega]{\alpha}(x_\alpha^-) = (1-\alpha)a$,
\item $x_\alpha^+ = \Omega'^{-1}(\alpha) \in [0,a)$,
\item $\delta_x = (1-\alpha)^{-1} \Falpha[\Omega]{\alpha}(x) - \Falpha[\Omega]{\alpha}^{-1}\bigl(\Falpha[\Omega]{\alpha}(x)\bigr)$, where $\Falpha[\Omega]{\alpha} : x \mapsto \Omega(x) - \alpha x$ and $\Falpha[\Omega]{\alpha}^{-1}$ denotes its inverse on $[x_\alpha^+,+\infty)$.
\end{itemize}
\end{theoremintro}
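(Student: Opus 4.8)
The plan is to establish the limit shape for $\tomega_{\reg(\lambda)}$ in two logically separate stages, exactly as the theorem's parenthetical reference to "Theorems~\ref{theorem:sh_gr_f} and~\ref{theorem:limit_shape_regularisation}" suggests. First I would prove a \emph{deterministic} statement: the map $\lambda \mapsto \reg(\lambda)$, read at the level of upper rims in the Russian convention, is (asymptotically, after the $\sqrt{n}$ rescaling) nothing other than the shaking operation $\Sh[\alpha]$ along the line $y=-x$ with slope parameter $\alpha = 1-2e^{-1}$. Concretely, $e$-regularisation slides beads/boxes along the ladders (the $e$-regularisation ladders have a fixed rational direction), and one checks that a ladder of the abacus, after rescaling, becomes a line segment of the fixed Euclidean direction encoded by $\alpha$; James's combinatorial description of $\reg$ then says that the boxes on each such line are pushed as far "up-left" as possible, which is precisely the set-theoretic definition of shaking a region by sliding its slices until they meet $L$. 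The cleanest way to make this rigorous is to prove that the shaking operation is continuous (say, Lipschitz) for the uniform norm on the relevant class of rim functions — this continuity statement is presumably \ref{theorem:sh_gr_f} — and to prove that $\tomega_{\reg(\lambda)}$ and $\Sh[\alpha](\tomega_\lambda)$ differ by $O(1/\sqrt n)$ uniformly, which is a purely combinatorial estimate bounding how far the discretised ladder-sliding can deviate from the continuous picture.

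Second, I would invoke the Kerov--Vershik--Logan--Shepp theorem in its stated form: under $\Pl$, $\sup_\R|\tomega_\lambda - \Omega| \to 0$ in probability. Combining this with the uniform continuity of $\Sh[\alpha]$ from the first stage gives immediately that $\Sh[\alpha](\tomega_\lambda) \to \Sh[\alpha](\Omega)$ in probability in the uniform norm, and then the $O(1/\sqrt n)$ comparison absorbs into the same $\myepsilon$-ball, yielding $\sup_\R|\tomega_{\reg(\lambda)} - \Sh[\alpha](\Omega)| \to 0$ in probability. It then only remains to identify $\Omegae := \Sh[\alpha](\Omega)$ with the piecewise formula in the statement. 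This identification is an explicit computation with the KVLS curve: on the region where $\Omega$ already lies below the line $y=-\alpha x$ extended... more precisely where the slope of $\Omega$ has not yet reached $\alpha$, i.e. for $x \le x_\alpha^-$, the slices along the shaking direction do not reach $L$ and nothing moves, so $\Omegae = \Omega$; for $x \ge \frac{2e}{\pi}\sin\frac\pi e$ the diagram has been fully compressed against $L$ and the rim coincides with the ray $y=x$ (this endpoint is exactly $\Omega$ evaluated where its slope is $1$, rescaled); and in the transition window $(x_\alpha^-, x_\alpha^+)$ one solves, for each slice parametrised by the "height above the line" $\Falpha[\Omega]{\alpha}(x) = \Omega(x)-\alpha x$, the equation locating where that slice lands after being slid — giving the shift $\delta_x$ and the relation $\Omegae(x+\delta_x) = \Omega(x) + \alpha\delta_x$. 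The formulas for $x_\alpha^-$ (the slice that just touches $L$, characterised by $\Falpha[\Omega]{\alpha}(x_\alpha^-) = (1-\alpha)a$ where $a$ is the $x$-intercept of $\Omega$) and $x_\alpha^+ = \Omega'^{-1}(\alpha)$ (the point where the slope first equals the shaking slope, so that to its right the region is convex in the shaking direction and a single point per slice) fall out of this analysis; the convexity of $\Omegae$ asserted in the corollary should follow because shaking along a line preserves convexity of the epigraph, or can be checked directly from the explicit formula.

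The main obstacle, I expect, is the first stage: proving that discrete $e$-regularisation really is a $O(1/\sqrt n)$-perturbation of continuous shaking \emph{uniformly over all partitions of $n$}, not just with high probability. The subtlety is that $\reg$ is defined box-by-box via ladders, and translating "push each box to the lowest free position on its ladder" into "push each slice of the region until it meets $L$" requires controlling the accumulated error over possibly $\Theta(\sqrt n)$ boxes on a single ladder, as well as the boundary effects where ladders enter and leave the diagram. One has to set up the right coordinate system (the oblique coordinates adapted to the line $y=-x$ and the ladder direction $\alpha$) so that a ladder becomes a horizontal slice, show that the number of boxes of $\lambda$ on each ladder, divided by $\sqrt n$, converges uniformly to the length of the corresponding slice of the region under $\tomega_\lambda$, and then observe that "the lowest free positions" reassemble into a contiguous slice against $L$ up to an $O(1)$ discretisation error in each coordinate — hence $O(1/\sqrt n)$ after rescaling. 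Managing these estimates carefully, and in particular handling the two "corner" regions (near the first row/first column of $\reg(\lambda)$, which is where the first-column and first-row asymptotics in the abstract ultimately come from), is where the real work lies; the probabilistic part is then essentially a soft corollary of KVLS plus continuity.
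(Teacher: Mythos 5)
Your overall architecture --- (i) identify $e$-regularisation, at the level of rescaled rims, with the shaking $\Sh$ for $\alpha = 1-2e^{-1}$ up to an $O(1/\sqrt n)$ error, and (ii) push the Kerov--Vershik--Logan--Shepp convergence through the shaking operation --- is the same as the paper's, and your explicit identification of the shaken curve with the piecewise formula (via the chords of slope $\alpha$, the quantity $\Falpha[\Omega]{\alpha}$ and the shift $\delta_x$) matches Section~\ref{section:shaking_limit_shape}. The combinatorial half of stage (i) is also essentially the paper's: it proves the exact identity $\sh\bigl(\gr(\omega_\lambda)\bigr)=\sh\bigl(\gr(\omega_{\reg(\lambda)})\bigr)$ (Lemma~\ref{lemma:sh_gr_reg}) and then controls the distance between $\gr(\omega_{\reg(\lambda)})$ and its own shaking by sandwiching it between the outer and inner flattenings $\applatit[\pm]{\reg(\lambda)}$, two piecewise-linear curves within $e$ of $\omega_{\reg(\lambda)}$ in sup norm whose graphs are \emph{exactly} stable under $\sh$; this is where $e$-regularity is actually used (Lemma~\ref{lemma:corners_intertwins}). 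Your ``$O(1)$ discretisation error per ladder'' is the right intuition for that step, though you do not supply the construction that realises it.

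The genuine gap is in stage (ii). You propose to conclude by proving that $\Sh$ is ``continuous (say, Lipschitz) for the uniform norm'' and composing with KVLS. Shaking is \emph{not} Lipschitz --- indeed not even uniformly continuous --- for the sup norm on this class of curves: if $g$ contains a stretch of length $M$ on which its slope equals the shaking direction $\alpha$, then the slices of direction $\alpha$ lying within $\myepsilon$ above that stretch gain length about $M$ when $g$ is raised by $\myepsilon$, so the shaken curve moves by an amount that does not tend to $0$ with $\myepsilon$. This is not a vacuous worry here, because $\Omega$ is tangent to the direction $\alpha$ at $x_\alpha^+$; continuity of $\Sh$ \emph{at} $\Omega$ does hold, thanks to strict convexity, but that is precisely the analytic content of the theorem and it is the step your proposal leaves unproved. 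The paper never invokes metric continuity of $\Sh$: it uses only its monotonicity under inclusion (Proposition~\ref{proposition:shaking_subset}), applied to the sandwich $\Omegaeps{-}\leq\tomega_\lambda\leq\Omegaeps{+}$ furnished with high probability by Theorem~\ref{theorem:LLN_partitions}, where $\Omegaeps{\pm}(s)=\bigl(1\pm\tfrac{\myepsilon}{2}\bigr)\Omega\bigl(s/(1\pm\tfrac{\myepsilon}{2})\bigr)$. The point of taking dilations rather than the sup-norm ball $\Omega\pm\myepsilon$ is that their shakings are again dilations of $\Omegae$ (Proposition~\ref{proposition:falphaeps}) and converge uniformly to $\Omegae$ as $\myepsilon\to0$ (Proposition~\ref{proposition:diff_fepse_fe}). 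To repair your argument you should replace the appeal to Lipschitz continuity by this inclusion-monotonicity squeeze, which in effect is what any correct proof of ``continuity at $\Omega$'' would amount to.
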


 We illustrate the convergence for $e = 2$ (respectively, $e = 3$) in Figure~\ref{figure:limit_shape_2_diagram} (resp. Figure~\ref{figure:limit_shape_3_diagram}) and compare the limit shapes $\Omegae[2]$ and $\Omega$ (resp. $\Omegae[3]$).  In the particular case $e = 2$, the statement of Theorem~\ref{theoremintro:main} becomes more explicit (Corollary~\ref{corollary:limit_shape_alpha0}). In particular, the shape $\Omegae[2]$ is obtained via the horizontal shaking of $\gr(\Omega)$ (with respect to the line of equation $y = -x$), fact that we highlight in Figure~\ref{figure:Omegae_alpha0}.

\begin{corollaryintro}
\label{corollaryintro}
 Under the Plancherel measure $\Pl$, the upper rim $\tomega_{\reg[2](\lambda)}$ of the Young diagram (in the Russian convention) of $\reg[2](\lambda)$ converges uniformly in probability to the convex shape $\Omegae[2]$ given by:
\begin{align*}
\Omegae[2](x) &= \Omega(x),  &&\text{for all } x \leq -2,
\\
\Omegae[2]\bigl(2x+ \Omega(x)\bigr) &= \Omega(x), &&\text{for all }  x \in (-2, 0),
\\
\Omegae[2](x) &= x, &&\text{for all } x \geq \frac{4}{\pi}.
\end{align*}
\end{corollaryintro}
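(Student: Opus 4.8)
The plan is to derive this corollary directly from Theorem~\ref{theoremintro:main}: it is not an independent statement but the specialisation $e=2$, and the only work is to check that the somewhat implicit description of $\Omega_e$ in Theorem~\ref{theoremintro:main} collapses to the three displayed formulas when $e=2$. So I would set $e=2$ throughout, read off $\alpha \coloneqq 1-2\cdot 2^{-1} = 0$, and simplify each of the three cases in turn.

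With $\alpha = 0$ the auxiliary function $\Omega_{[\alpha]}\colon x\mapsto \Omega(x)-\alpha x$ of Theorem~\ref{theoremintro:main} is just $\Omega$ itself, and the right-hand side $\Omega(x)+\alpha\delta_x$ of the middle case reduces to $\Omega(x)$; it remains to pin down the two breakpoints and the shift $\delta_x$. Here I would invoke the standard explicit description of the Kerov--Vershik--Logan--Shepp curve: $\Omega$ is even, it agrees with $|\cdot|$ outside $(-a,a)$ with $a=2$, it is strictly decreasing on $[-2,0]$ with $\Omega(-2)=2$ and $\Omega(0)=\tfrac4\pi$, and $\Omega'(0)=0$. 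From this: $x_\alpha^- = -2$, since it is the unique point of $[-2,0)$ with $\Omega(x_\alpha^-) = (1-\alpha)a = 2$; and $x_\alpha^+ = \Omega'^{-1}(0) = 0$. For $x\in(-2,0)$ one has $\Omega(x)\in(\tfrac4\pi,2)$, and because $\Omega$ is even its inverse $\Omega^{-1}$ on $[0,+\infty)$ satisfies $\Omega^{-1}(\Omega(x)) = -x$; substituting into the definition of $\delta_x$ gives $\delta_x = \Omega(x)-(-x) = \Omega(x)+x$, whence $x+\delta_x = 2x+\Omega(x)$ and the middle case becomes $\Omega_2\bigl(2x+\Omega(x)\bigr)=\Omega(x)$. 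Finally $\tfrac{2e}{\pi}\sin\tfrac{\pi}{e} = \tfrac{4}{\pi}\sin\tfrac{\pi}{2} = \tfrac4\pi$, which yields the third line, and the first line ($\Omega_2=\Omega$ on $(-\infty,-2]$) is immediate; convexity of $\Omega_2$ is already part of Theorem~\ref{theoremintro:main}.

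I do not expect a genuine obstacle here — the argument is bookkeeping — but there is one point that deserves a line of care, namely well-definedness of the middle formula. I would check that $x\mapsto 2x+\Omega(x)$ is a strictly increasing bijection from $(-2,0)$ onto $(-2,\tfrac4\pi)$, which holds since its derivative $2+\Omega'(x)$ lies in $(1,2)$ on that interval, so that the relation $\Omega_2(2x+\Omega(x))=\Omega(x)$ genuinely determines $\Omega_2$ on $(-2,\tfrac4\pi)$; and that the three pieces match at the junction points $x=-2$ and $x=\tfrac4\pi$ (both junctions reduce to the evaluation $\Omega_2(-2)=\Omega(-2)=2$, respectively $\Omega_2(\tfrac4\pi)=\Omega(0)=\tfrac4\pi$). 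It is also worth observing, in line with the discussion preceding the statement, that the degeneration $\alpha=0$ turns the oblique shaking of Theorem~\ref{theoremintro:main} into an ordinary horizontal shaking of $\gr(\Omega)$ towards the line $y=-x$, which is exactly what the displayed formula encodes.
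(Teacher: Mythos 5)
Your proposal is correct and follows essentially the same route as the paper: the paper's Corollary~\ref{corollary:limit_shape_alpha0} performs exactly this $\alpha=0$ specialisation (using evenness of $f$ to get $\comp[0](x)=-x$, hence $\mydelta=f(x)+x$ and $x+\mydelta=2x+f(x)$, with right endpoint $\suppfalpha[0]=f(0)=\tfrac{4}{\pi}$ for $f=\Omega$), and the convergence and convexity are imported from the main theorem just as you do. Your added check that $x\mapsto 2x+\Omega(x)$ is an increasing bijection of $(-2,0)$ onto $(-2,\tfrac{4}{\pi})$ is the specialisation of the paper's Lemma~\ref{lemma:delta_increasing} and is a reasonable point to make explicit.
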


\begin{figure}[ht]
\begin{center}
\includegraphics[width=\mywidth]{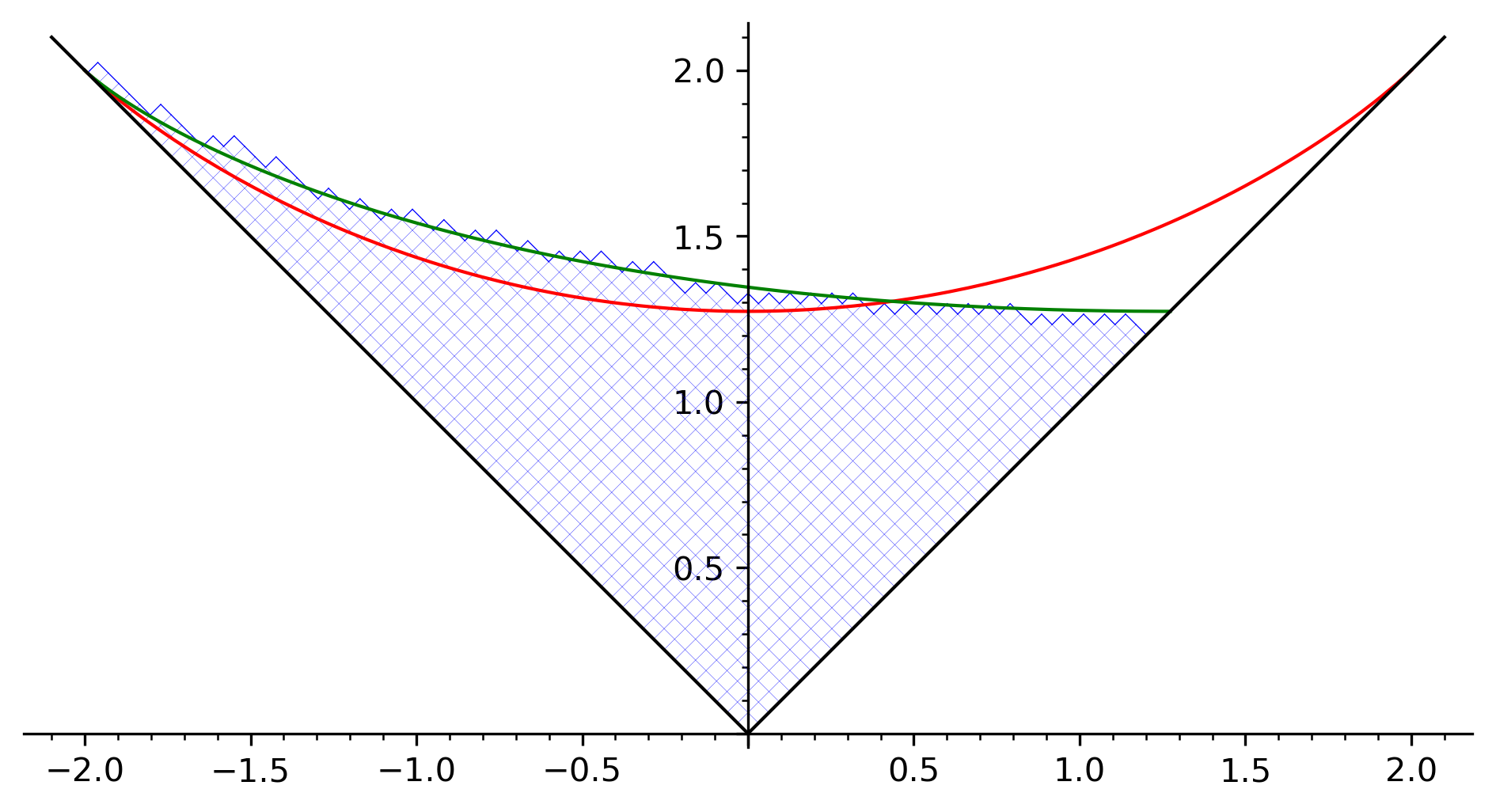}
\end{center}
\caption{Example of $2$-regularisation of a large partition taken under $\Pl[1000]$, with in green the limit shape $\Omegae[2]$ of Corollary~\ref{corollaryintro} and in red the limit shape $\Omega$ of Kerov-Vershik-Logan-Shepp}
\label{figure:limit_shape_2_diagram}
\end{figure}

\begin{figure}[ht]
\begin{center}
\includegraphics[width=\mywidth]{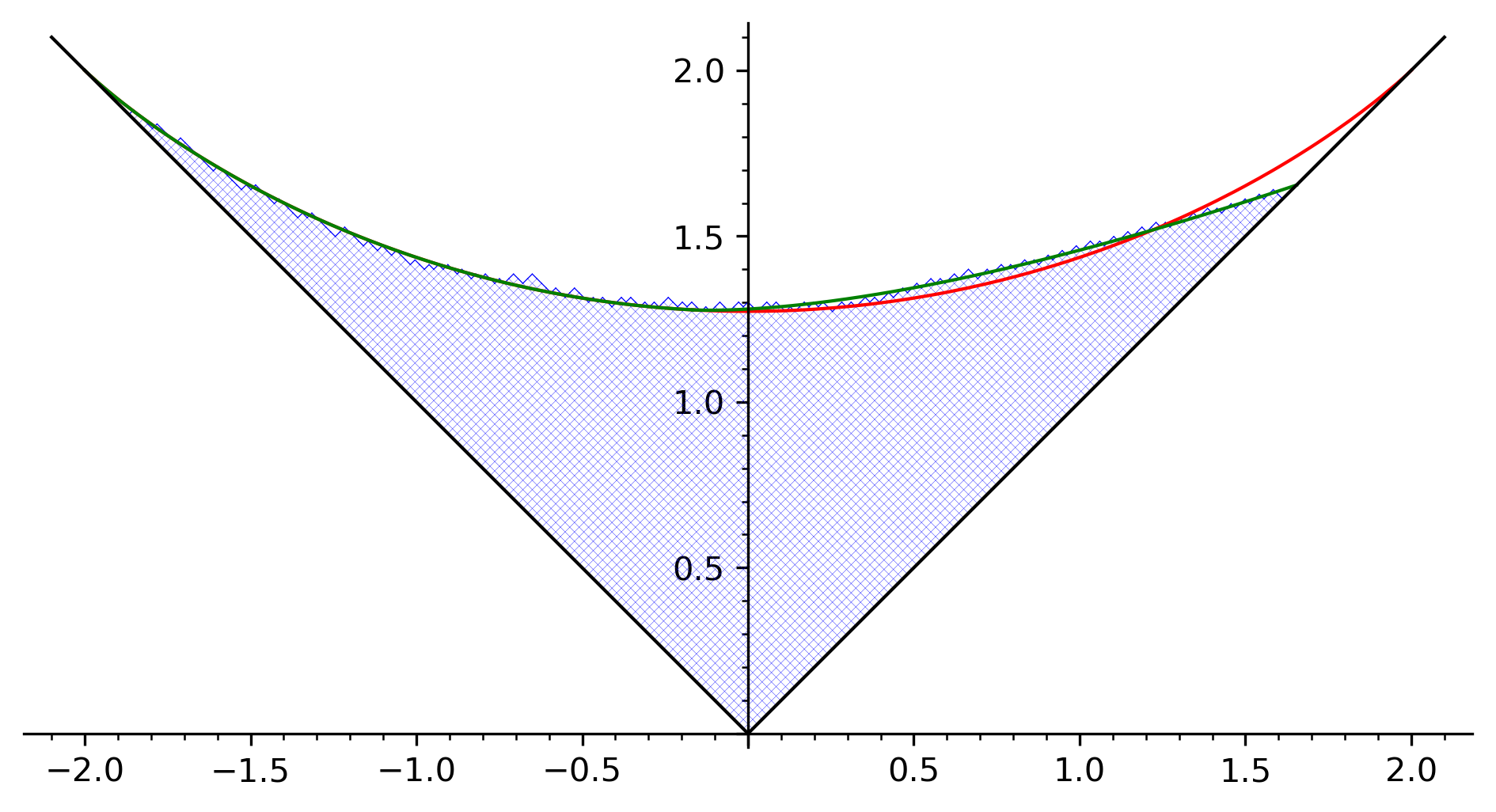}
\end{center}
\caption{Example of $3$-regularisation of a large partition taken under $\Pl[5000]$, with in green the limit shape $\Omegae[3]$ of Theorem~\ref{theoremintro:main} and in red the limit shape $\Omega$ of Kerov-Vershik-Logan-Shepp}
\label{figure:limit_shape_3_diagram}
\end{figure}

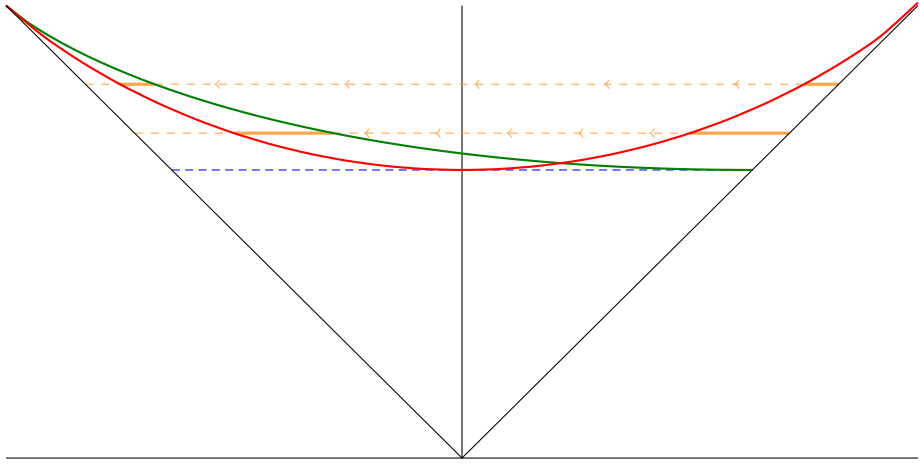
\begin{figure}
\begin{center}
\begin{tikzpicture}[scale=3,decoration={markings,mark=between positions .1 and .9 step .2  with {\arrow{<}}}]

\draw[blue,densely dashed] (-1.2732,1.2732) -- (1.2732,1.2732);

\draw (0,2) -- (0,0);
\draw (-2,0) -- (2,0);

\draw[\colshake,dashed,postaction={decorate}]  (-.564,1.43599) -- (1,1.43599);
\draw[\colshake,dashed] (-1.43599,1.43599) -- (-1,1.43599);
\draw[very thick,\colshake] (-1,1.43599) -- (-.564,1.43599);
\draw[very thick,\colshake] (1,1.43599) -- (1.43599,1.43599);

\draw[dashed,\colshake,postaction={decorate}]  (-1.34799,1.652) -- (1.5,1.652);
\draw[dashed,\colshake] (-1.652,1.652) -- (-1.5,1.652);
\draw[very thick,\colshake] (-1.5,1.652) -- (-1.34799,1.652);
\draw[very thick,\colshake] (1.5,1.652) -- (1.652,1.652);

\draw[domain=-2:0, smooth,greensage,thick] plot ({2*\x+\Omegaplot},{\Omegaplot});
\draw[domain=-2:2,smooth,red,thick] plot (\x,{\Omegaplot});

\draw (-2,2) -- (0,0) -- (2,2);
\end{tikzpicture}
\end{center}
\caption{The maps $\Omegae[2]$ (in green) and $\Omega$ (in red). In blue the tangent for $\Omega$ at $0$. The left orange segments are what is ``added'' to the red curve, and comes from the right orange segments by shaking $\gr(\Omega)$.}
\label{figure:Omegae_alpha0}
\end{figure}

We are also able to prove that the convergence of Theorem~\ref{theoremintro:main} holds for the supports (Theorem~\ref{theorem:cv_supports}). Despite the fact that the limit shape is not fully explicit, we are able to deduce the asymptotic behaviour of the length of the first line and of the first column of $\reg(\lambda)$ (Corollary~\ref{corollary:length_first_column}).

\begin{theoremintro}
Let $e \geq 2$. Under the Plancherel measure $\Pl$:
\begin{enumerate}
\item  the rescaled size $\frac{1}{\sqrt n}\reg(\lambda)_1$ of the first row of $\reg(\lambda)$ converges as $n \to +\infty$ in probability to $2$;
\item the rescaled size $\frac{1}{\sqrt n}\reg(\lambda)'_1$ of the first column of $\reg(\lambda)$ converges as $n \to +\infty$ in probability to $\frac{2e}{\pi}\sin\frac{\pi}{e}$.
\end{enumerate}
\end{theoremintro}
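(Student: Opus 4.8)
The plan is to read the two lengths $\reg(\lambda)_1$ and $\reg(\lambda)'_1$ off the support of the rescaled Russian profile $\tomega_{\reg(\lambda)}$, and then to invoke the convergence of supports (Theorem~\ref{theorem:cv_supports}); the point is that the support of $\Omegae$ is explicit even though $\Omegae$ itself is not. For a non-empty partition $\mu$ of $m$ the extreme cells of $\mu$ are $(1,\mu_1)$ and $(\mu_1',1)$, so the support of the $\sqrt m$-rescaled profile $\tomega_\mu$ has its two endpoints at $\pm\mu_1/\sqrt m$ and $\pm\mu_1'/\sqrt m$ (the actual signs being fixed by the orientation chosen for the Russian convention), up to an error $O(1/\sqrt m)$ coming from the half-cell shifts. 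Since regularisation preserves the size, applying this to $\mu=\reg(\lambda)$ and $m=n$ reduces the statement to the convergence in probability of the two endpoints of $\mathrm{supp}(\tomega_{\reg(\lambda)})$ to the two endpoints of $\mathrm{supp}(\Omegae)$ --- which is Theorem~\ref{theorem:cv_supports} --- together with the computation of those endpoints.

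So the remaining task is to determine $\mathrm{supp}(\Omegae)$ from Theorem~\ref{theoremintro:main}. On one side $\Omegae$ coincides with the Kerov--Vershik--Logan--Shepp curve $\Omega$, whose support is $[-2,2]$ (indeed $\Omega(\pm 2)=2$), so the corresponding endpoint of $\mathrm{supp}(\Omegae)$ has modulus $2$. On the other side $\Omegae(x)=x$ for $x\ge\frac{2e}{\pi}\sin\frac{\pi}{e}$, and one must check that the flat piece begins \emph{exactly} there, i.e. that $\Omegae>\lvert\cdot\rvert$ just before it. This follows from the middle formula $\Omegae(x+\delta_x)=\Omega(x)+\alpha\delta_x$ together with a short computation at the gluing point $x_\alpha^+$: since $\Omega'(x)=\frac{2}{\pi}\arcsin\frac{x}{2}$, the relation $\Omega'(x_\alpha^+)=\alpha=1-2e^{-1}$ gives $x_\alpha^+=2\cos\frac{\pi}{e}$, whence $\Falpha[\Omega]{\alpha}(x_\alpha^+)=\Omega(x_\alpha^+)-\alpha x_\alpha^+=\frac{4}{\pi}\sin\frac{\pi}{e}$; as $(1-\alpha)^{-1}=\frac{e}{2}$ and $\Falpha[\Omega]{\alpha}^{-1}\bigl(\Falpha[\Omega]{\alpha}(x_\alpha^+)\bigr)=x_\alpha^+$ (the inverse taken on $[x_\alpha^+,+\infty)$), letting $x\to x_\alpha^+$ in the formula for $\delta_x$ yields $x+\delta_x\to\frac{e}{2}\cdot\frac{4}{\pi}\sin\frac{\pi}{e}=\frac{2e}{\pi}\sin\frac{\pi}{e}$ and $\Omega(x)+\alpha\delta_x\to\frac{2e}{\pi}\sin\frac{\pi}{e}$. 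So $\Omegae$ meets the line $y=x$ precisely at $\frac{2e}{\pi}\sin\frac{\pi}{e}$ and stays strictly above $\lvert\cdot\rvert$ on $(x_\alpha^-,\frac{2e}{\pi}\sin\frac{\pi}{e})$; hence the two endpoints of $\mathrm{supp}(\Omegae)$ have moduli $2$ and $\frac{2e}{\pi}\sin\frac{\pi}{e}$.

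It then remains to attach each modulus to the correct length. By the two previous steps, $\frac{1}{\sqrt n}\reg(\lambda)_1$ and $\frac{1}{\sqrt n}\reg(\lambda)_1'$ converge in probability, in some order, to $2$ and to $\frac{2e}{\pi}\sin\frac{\pi}{e}$. Now regularisation is non-decreasing for the dominance order (see \cite{james}), so $\reg(\lambda)_1\ge\lambda_1$, and the classical theorem of Logan--Shepp and Kerov--Vershik \cite{logan-shepp:variational,kerov-vershik:asymptotics} gives $\frac{1}{\sqrt n}\lambda_1\to 2$ in probability; since $\frac{2e}{\pi}\sin\frac{\pi}{e}<2$, the limit of $\frac{1}{\sqrt n}\reg(\lambda)_1$ must be $2$, and therefore $\frac{1}{\sqrt n}\reg(\lambda)_1'\to\frac{2e}{\pi}\sin\frac{\pi}{e}$. (Equivalently, the matching of endpoints with row and column is already built into the construction of $\Omegae$ by shaking, the shaken side being the one recording the first column.) The only genuinely delicate input is Theorem~\ref{theorem:cv_supports}: the uniform convergence $\sup_\R\lvert\tomega_{\reg(\lambda)}-\Omegae\rvert\to 0$ of Theorem~\ref{theoremintro:main} does \emph{not} by itself control the endpoints of the support, since a $C^0$-small perturbation can carry an endpoint far out through a long thin sliver of cells; ruling this out is where the work lies, and everything above is then a short translation and computation.
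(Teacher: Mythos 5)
Your proposal is correct and follows essentially the same route as the paper: both reduce the statement to the convergence of supports (Theorem~\ref{theorem:cv_supports}) and then identify the right endpoint of the limit support as $\frac{2e}{\pi}\sin\frac{\pi}{e}$ via the computation $\Omega'^{-1}(\alpha)=2\cos\frac{\pi}{e}$ and $(1-\alpha)^{-1}\bigl(\Omega(x_\alpha^+)-\alpha x_\alpha^+\bigr)=\frac{2e}{\pi}\sin\frac{\pi}{e}$, which is exactly the paper's use of Corollary~\ref{corollary:support_fe} in the proof of Corollary~\ref{corollary:length_first_column}. The only differences are cosmetic: you spell out the (exact, in fact) translation between the support endpoints of $\tomega_{\reg(\lambda)}$ and the first row and column lengths, and you add a dominance-order tiebreak that is unnecessary since the paper's Russian convention already attaches the left endpoint to the first row and the right endpoint to the first column.
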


We now give the outline of the paper. 
In Section~\ref{section:background} we recall the definition of the $e$-regularisation map on the set of partitions of $n$ and we state the limit shape result of Kerov--Vershik and Logan--Shepp for large partitions taken under the Plancherel measure. We also recall the (non-orthogonal) shaking operation on compact subsets of $\R^2$. Namely, we explain in Proposition~\ref{proposition:reg_direction} why the direction $\alpha = 1-2e^{-1}$ will be important and we recall in Proposition~\ref{proposition:shaking_subset} that the shaking operation preserves the inclusions.

In Section~\ref{section:shaking_limit_shape} we introduce a shaking operations $f \mapsto \falpha$ on a certain class of functions. In~\textsection\ref{subsection:about_chords} we establish some preliminary results to be able to give Definition~\ref{definition:falpha}, which define this shaking operation $\Sh$ on functions. Then in~\textsection\ref{subsection:compatibility_shifts} and~\textsection\ref{subsection:relative_position} we study some properties of the shaked functions $\falpha$.

In Section~\ref{section:shaking_graphs}  we give our first main result, Theorem~\ref{theorem:sh_gr_f}, proving that the previous two shaking operations are indeed related: the graph of the shaked function $\falpha$ is obtained by shaking the graph of the original function $f$. Still using results from shaking theory, we prove in~\textsection\ref{subsection:convexity} that the function $\falpha$ is convex.

In Section~\ref{section:shaking_partitions} we introduce two approximations $\applatit[\pm]{\lambda}$ of the upper rim $\omega_\lambda$ of the Young diagram (in the Russian convention) of a partition $\lambda$, in order to study the shaking of~$\omega_\lambda$. More precisely, in~\textsection\ref{subsection:outer_regularisation} we define the \emph{outer flattening} $\applatit{\lambda}$ of an $e$-regular partition $\lambda$ (Definition~\ref{definition:applatit+}) and in Proposition~\ref{proposition:applatit+} we prove that $\applatit{\lambda}$ is  close to $\omega_\lambda$ and  that its graph is stable under the shaking operation. A similar construction is made in~\textsection\ref{subsection:inner_regularisation} for the \emph{inner flattening} $\applatit[-]{\lambda}$.

Finally, Section~\ref{section:limit_shape_regularisation} is devoted to the proof of the second main result, Theorem~\ref{theorem:limit_shape_regularisation}, which states that the (rescaled) upper rim $\tomega_{\reg(\lambda)}$ of the Young diagram (in the Russian convention) of the $e$-regularisation of a partition $\lambda$ taken under the Plancherel measure converges uniformly in probability to the shape $\Omegae \coloneqq \functalpha{\alpha}{\Omega}$, where $\alpha = 1-2e^{-1}$. The idea of the proof is to sandwich $\tomega_{\reg(\lambda)}$ between a flattening $\applatit[\pm]{\lambda}$ and a rescaling of the shape $\Omegae$.
We also prove the convergence of the support of $\tomega_\lambda - |\cdot|$ (Theorem~\ref{theorem:cv_supports}), and we deduce the asymptotic length of the first row and the first column of $\reg(\lambda)$ (Corollary~\ref{corollary:length_first_column}).

\paragraph{Acknowledgements} The author would like to thank Vincent Beck, François Bolley and Andrew Elvey Price for useful discussions. The author also thanks the Centre Henri Lebesgue ANR-11-LABX-0020-0. This research was funded, in whole or in part, by the Agence Nationale de la Recherche funding ANR CORTIPOM 21-CE40-001. 
A CC-BY public copyright license has been applied by the
author to the present document and will be applied to all subsequent
versions up to the Author Accepted Manuscript arising from this
submission, in accordance with the grant's open access conditions.

\section{Background}
\label{section:background}

We recall in~\textsection\ref{subsection:regularisation} the combinatorial notion of $e$-regularisation of a partition. In~\textsection\ref{subsection:limit_shape} we recall the limit shape result of Kerov--Vershik and Logan--Shepp for large partitions under the Plancherel measure. Finally, in~\textsection\ref{subsection:shakings}  we present the notion of shaking for compact sets in $\R^2$.

\subsection{Regularisation}
\label{subsection:regularisation}

A \emph{partition} is a finite non-increasing sequence $\lambda = (\lambda_1 \geq \dots \geq \lambda_h > 0)$ of positive integers. If $|\lambda| \coloneqq \sum_{i = 1}^h \lambda_i = n$ then we say that $\lambda$ is a partition of $n$. We denote by $\partsn$ the set of partitions of $n$.
The \emph{Young diagram} of a partition $\lambda = (\lambda_1 \geq \dots \geq \lambda_h > 0)$ is the subset of $\mathbb{Z}_{\geq 1}^2$ given by:
\[
\Y(\lambda) = \bigl\{(a,b) \in \nodes : 1 \leq a \leq h \text{ and } 1 \leq b \leq \lambda_a\bigr\}.
\]
Note that we will consider that the $a$-coordinate in the Young diagrams goes downwards. A \emph{node} is an element of $\nodes$.

\begin{example}
The Young diagram of the partition $\lambda = (4,4,1)$ is \ydiagram{4,4,1}*[*(cyan)]{1+1}\,. The node in blue has coordinates $(1,2)$.
\end{example}

\begin{definition}
\label{definition:regular_partitions}
Let $e \geq 2$. A partition $\lambda = (\lambda_1 \geq \dots\geq\lambda_h>0)$ is \emph{$e$-regular} if no parts repeat $e$ times or more, that is,  if $\lambda_i > \lambda_{i+e-1}$ for any $i \in \{1,\dots, h-e+1\}$.
\end{definition}

As we mentioned in the introduction, the notion of $e$-regular partitions appears for instance in the context of the representation theory of the \emph{Iwahori--Hecke algebra} $\mathcal{H}_q(\mathfrak{S}_n)$ of $\mathfrak{S}_n$ (a certain deformation of $\mathbb{C}\mathfrak{S}_n$), with $q \in \mathbb{C}^\times$ having order $e$, see, for instance,~\cite{mathas} (note that in~\cite{mathas} the dual notion of \emph{$e$-restricted} partition  is used). The next two definitions are due to James~\cite{james}.

\begin{definition}
Let $e \geq 2$.
\begin{itemize}
\item  The \emph{$e$-ladder number} (or simply \emph{ladder number}) of a node $\gamma = (a,b)\in \nodes$ is:
\[
\ladnode{\gamma} \coloneqq a + (e-1)(b-1) \in \mathbb{Z}_{\geq 1}.\]

\item Let $\ell \geq 1$. The \emph{$(e,\ell)$-th ladder} (or simply \emph{$\ell$-th ladder}) is the (finite) set of all nodes of $\nodes$ with $e$-ladder $\ell$. The \emph{$e$-ladder} of a node $\gamma$ is the $(e,\ladnode{\gamma})$-th ladder.
\end{itemize}
\end{definition}

\begin{example}
\label{example:ladder_numbers}
In the Young diagram of $\lambda = (4,4,3,3,3,3,3,1)$, in each node we write the corresponding $4$-ladder numbers:
\[
\ytableausetup{nosmalltableaux}
\ytableaushort{147{10},258{11},369,47{10},58{11},69{12},7{10}{13},8}\,.
\]
\end{example}

\begin{definition}
Let $e \geq 2$ and let $\lambda$ be a partition. The \emph{$e$-regularisation} of $\lambda$ is the partition $\reg(\lambda)$ that we obtain after moving each node of $\Y(\lambda)$ as high as possible in its $e$-ladder.
\end{definition}

Note that $\reg(\lambda)$ is an $e$-regular partition, and if $\lambda$ is $e$-regular then $\reg(\lambda) = \lambda$. As we have mentioned in the introduction, the $e$-regularisation map has a significance in terms of modular representation theory of the symmetric group (or its associated Iwahori--Hecke algebra).

\begin{example}
The $4$-regularisation of the partition of Example~\ref{example:ladder_numbers} is $(5,4,4,3,3,2,2,1)$. The $4$-ladders of the added (respectively, deleted) nodes are in green (resp. red).
\[
\ytableaushort{147{10}{\color{green}13},258{11},369{\color{green}12},47{10},58{11},69{\color{red}12},7{10}{\color{red}13},8}\,
\]
\end{example}

\subsection{Limit shape}
\label{subsection:limit_shape}

\paragraph{Russian convention}
Rotating the Young diagram of $\lambda = (\lambda_1\geq \dots \geq \lambda_h>0) \in \partsn$ by an angle of $\frac{3\pi}{4}$ and embedding it inside $\mathbb{R}\times\mathbb{R}_{\geq 0}$ so that the box $(1,1)$ has bottom vertex at $(0,0)$ and each box has area $2$ (\textit{i.e.} semi-diagonal length $1$) gives the \emph{Russian convention} for the Young diagram of $\lambda$. Note that the node $(a,b) \in \mathcal{Y}(\lambda)$ corresponds to the (square) box with top vertex $(a-b,a+b)$ in $\mathbb{R}\times \mathbb{R}_{\geq 0}$. 
 We denote by $\omega_\lambda : \mathbb{R} \to \mathbb{R}$ the upper rim of the resulting diagram, extending $\omega_\lambda$ by $\omega_\lambda(x) \coloneqq |x|$ outside the diagram.
Then $\omega_\lambda$ is a  continuous piecewise linear function such that:
\begin{itemize}
\item for each $k \in \mathbb Z$ we have
\begin{equation}
\label{equation:omega'}
\omega'_\lambda|_{(k,k+1)} = \pm 1,
\end{equation}
\item we have $\omega_\lambda(x) = |x|$ for $|x| \gg 0$ (more precisely, for $x \leq -\lambda_1$ or $x \geq h$),
\item we have $\omega_\lambda(x) \geq |x|$ for all $x \in \R$ and $\int_{\mathbb{R}} \bigl[\omega_\lambda(x) - |x|\bigr] dx = 2n$.
\end{itemize}
An illustration of the construction of $\omega_\lambda$ is given in Figure~\ref{figure:omegalambda}. (We warn the reader that in the literature the convention is sometimes reversed, that is, our $\omega_\lambda$ is sometimes reflected with respect to the axis $\{0\}\times\R$.)  We will use a particular rescaling $\tomega_\lambda : \mathbb R \to \mathbb R$ of $\omega_\lambda$, given by:
\begin{equation}
\label{equation:rescaling}
\tomega_\lambda(s) \coloneqq \frac{1}{\sqrt n} \omega_\lambda\bigl(s\sqrt n\bigr),
\end{equation}
for all $s \in \R$. Note that the area between the curves of $\tomega_\lambda$ and $|\cdot|$ is $2$.

\begin{figure}
\begin{center}
\begin{tikzpicture}[scale=.6]
\draw[-stealth] (-8,0) -- (8,0) node[below]{$x$};
\draw[-stealth] (0,-1) -- (0,9) node[right]{$y$};
\foreach \i in {-7,-6,-5,-4,-3,-2,-1,1,2,3,4,5,6,7}
	{\draw (\i,-.1) --++ (0,.2);}
\foreach \j in {1,2,...,8}
	\draw (-.1,\j) --++ (.2,0);

\draw[very thick] (-7,7) -- (-4,4) -- (-2,6) -- (0,4) -- (1,5) -- (2,4) -- (3,5) -- (4,4) -- (7,7);
\draw[dotted, very thick] (-7.7,7.7) -- (-7,7);
\draw[dotted, very thick] (7,7) -- (7.7,7.7) node[below right]{$y=\omega_\lambda(x)$};

\draw[dashed] (-4,4) -- (0,0) -- (4,4);
\draw[dashed] (-3,5) -- (0,2) -- (2,4) --++ (1,-1);
\draw[dashed] (-3,3) --++ (2,2);
\draw[dashed] (-2,2) --++ (2,2) --++ (2,-2);
\draw[dashed] (-1,1) --++ (1,1) --++ (1,-1);
\end{tikzpicture}
\end{center}
\caption{Russian convention for the Young diagram of $\lambda = (4,4,2,1)$}
\label{figure:omegalambda}
\end{figure}
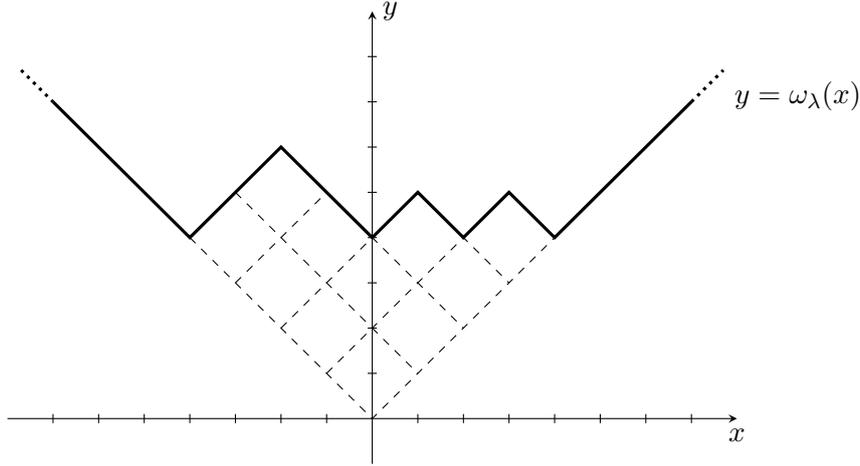

\paragraph{Plancherel measure} Let $\lambda \in \partsn$.
 A \emph{standard tableau of shape $\lambda$} is a bijection $\mathfrak{t} : \mathcal{Y}(\lambda) \to \{1,\dots,n\}$ such that $\mathfrak{t}$ increases along the rows and down the columns, in other words for $(a,b) \in \mathcal{Y}(\lambda)$ we have $\mathfrak{t}(a,b) < \mathfrak{t}(a+1,b)$ if $(a+1,b) \in \mathcal{Y} (\lambda)$ and $\mathfrak{t}(a,b) < \mathfrak{t}(a,b+1)$ if $(a,b+1) \in \mathcal{Y}(\lambda)$. We denote by $\mathrm{Std}(\lambda)$ the set of standard Young tableaux of shape $\lambda$. We have the following standard identity:
\[
n! = \sum_{\lambda \in \partsn} \#\mathrm{Std}(\lambda)^2.
\]

\begin{definition}
The \emph{Plancherel measure} on the set $\partsn$ of the partitions of $n$ is given by
\[
\Pl(\lambda) \coloneqq \frac{\#\mathrm{Std}(\lambda)^2}{n!},
\]
for all $\lambda \in \partsn$.
\end{definition}

The next result describes the Plancherel measure $\Pl$ for large $n$.

\begin{theorem}[\protect{\cite{logan-shepp:variational}, \cite{kerov-vershik:asymptotics}, \cite[Theorem 1.26]{romik}}]
\label{theorem:LLN_partitions}
Let $\Omega : \mathbb R \to \mathbb R$ be defined by
\[
\Omega(s) \coloneqq \begin{dcases}
\frac{2}{\pi}\left(s\arcsin\frac{s}{2}+\sqrt{4-s^2}\right),&\text{if } \lvert s \rvert \leq 2,
\\
\lvert s \rvert, &\text{otherwise}.
\end{dcases}
\]
Then, under the Plancherel measure $\Pl$, the function $\tomega_\lambda$ converges uniformly in probability to $\Omega$ as $n \to +\infty$. In other words, for any $\myepsilon > 0$ we have
\[
\Pl\left(\sup_{\mathbb R}\, \bigl\lvert\tomega_\lambda-\Omega\bigr\rvert > \myepsilon\right) \xrightarrow{n\to+\infty} 0.
\]
Moreover, we also have convergence of the supports, that is:
\begin{align*}
\inf\bigl\{s \in \mathbb R : \tomega_\lambda(s) \neq |s|\bigr\} &\longrightarrow -2,
\\
\intertext{and:}
\sup\bigl\{s \in \mathbb R : \tomega_\lambda(s) \neq |s|\bigr\} &\longrightarrow 2,
\end{align*}
in probability under $\Pl$ as $n \to +\infty$.
\end{theorem}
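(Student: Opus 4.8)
This is the classical Kerov--Vershik / Logan--Shepp limit shape theorem, and the plan is to run the \emph{variational} argument. The starting point is the hook length formula $\#\mathrm{Std}(\lambda) = n!\big/\prod_{c\in\Y(\lambda)} h_\lambda(c)$, with $h_\lambda(c)$ the hook length of the cell $c$, so that $\Pl(\lambda) = n!\big/\prod_{c} h_\lambda(c)^2$. Encode a partition by its rescaled Russian profile $f = \tomega_\lambda$, an element of the set $\mathcal F$ of $1$-Lipschitz functions $f \geq |\cdot|$ that equal $|\cdot|$ outside a compact set and satisfy $\int_\R\bigl(f(x) - |x|\bigr)\,dx = 2$. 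Rewriting $\sum_c \log h_\lambda(c)$ as a Riemann sum over the cells of $\Y(\lambda)$ and applying Stirling, the first task is to establish, \emph{uniformly} over $\lambda\in\partsn$, an expansion
\[
\tfrac1n\log\Pl(\lambda) = -2\,\mathcal Q(\tomega_\lambda) + o(1),
\]
where $\mathcal Q$ is the suitably normalised \emph{hook functional} on $\mathcal F$ --- a logarithmic double integral of $f''$, in the spirit of the hook integral of the introduction --- with $\mathcal Q \geq 0$ (this is forced, once the expansion and the continuity of $\mathcal Q$ are in hand, by $\Pl(\lambda)\leq 1$). The delicate point is the uniformity, and within it the contribution of the cells of hook length $o(\sqrt n)$ near the rim of $\Y(\lambda)$.

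Granting this, the concentration is soft. One checks that $\mathcal Q$ is lower semicontinuous and \emph{coercive} (its sublevel sets are compact for the uniform norm: a spread-out shape such as a single row is heavily penalised, $\mathcal Q(\tomega_{(n)})\to+\infty$), and that, viewed as a logarithmic-energy functional of $f''$, it is strictly convex on the convex set $\mathcal F$ by positive-definiteness of the logarithmic kernel; hence $\mathcal Q$ has a unique minimiser, say $\Omega^\star$, with $\mathcal Q(\Omega^\star)=0$. For each $\myepsilon>0$, coercivity and lower semicontinuity give $\delta := \inf\{\mathcal Q(f) : f\in\mathcal F,\ \|f-\Omega^\star\|_\infty\geq\myepsilon\} > 0$, whence
\[
\Pl\bigl(\|\tomega_\lambda - \Omega^\star\|_\infty\geq\myepsilon\bigr) \leq \#\partsn\cdot e^{-2\delta n + o(n)} = e^{O(\sqrt n)-2\delta n + o(n)} \xrightarrow{n\to+\infty} 0,
\]
using $\#\partsn = e^{O(\sqrt n)}$ together with the uniformity of the error.

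The remaining and genuinely hard step is to \emph{identify} $\Omega^\star$ with the explicit curve $\Omega$. On the open set where $|f'|<1$, the Euler--Lagrange equation for $\mathcal Q$ is a singular integral equation for $(\Omega^\star)'$ of finite-Hilbert-transform (Carleman / airfoil) type; solving it --- by diagonalising the finite Hilbert transform, or by its explicit inversion formula --- gives $(\Omega^\star)'(x) = \tfrac2\pi\arcsin(x/2)$ on $[-2,2]$, and integrating subject to $\int(\Omega^\star - |\cdot|) = 2$ yields exactly $\Omega$. Alternatively one may guess $\Omega$ and verify optimality directly from the convexity above. Either way this is the analytic heart of the proof.

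Finally, the convergence of the supports follows deterministically from the uniform convergence just obtained, using that $\tomega_\lambda$ has slopes $\pm1$ everywhere and that $\Omega-|\cdot|$ is positive exactly on $(-2,2)$. Given $\myepsilon>0$, put $s^\star = 2-\myepsilon/2$ and $c := \Omega(s^\star)-|s^\star|>0$: then $\|\tomega_\lambda-\Omega\|_\infty < c/2$ forces $\tomega_\lambda(s^\star) > |s^\star|$, hence $\sup\{s:\tomega_\lambda(s)\neq|s|\}\geq 2-\myepsilon$; conversely, once $\tomega_\lambda$ is within a small fixed constant of $|\cdot|$ near $x=2+\myepsilon$, the slope constraint returns it to $|\cdot|$ after an extra $O(\myepsilon)$, giving $\sup\{\dots\}\leq 2+\myepsilon$, and the left endpoint follows by the conjugation symmetry $\Pl(\lambda')=\Pl(\lambda)$. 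The main obstacle throughout is the uniform hook-sum estimate together with the resolution of the variational problem; an alternative to the whole scheme is the moment method via Biane's transition measure, identifying the limiting transition measure of $\tomega_\lambda$ as the semicircle of radius $2$ (equivalently, $\Omega$) from the asymptotics of the normalised Plancherel characters.
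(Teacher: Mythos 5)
This theorem is not proved in the paper: it is the classical Kerov--Vershik/Logan--Shepp result, quoted with references to \cite{logan-shepp:variational}, \cite{kerov-vershik:asymptotics} and \cite[Theorem 1.26]{romik}, so there is no internal proof to compare against. Your outline of the variational route (hook length formula, uniform expansion of $\tfrac1n\log\Pl(\lambda)$ via the hook functional, concentration by comparing $e^{-2\delta n+o(n)}$ against $\#\partsn=e^{O(\sqrt n)}$, identification of the minimiser through the finite Hilbert transform) is a faithful sketch of the standard proof of the uniform-convergence part, with the genuinely hard steps correctly flagged rather than carried out.

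There is, however, a real error in your treatment of the supports. You claim the convergence of $\inf\{s:\tomega_\lambda(s)\neq|s|\}$ and $\sup\{s:\tomega_\lambda(s)\neq|s|\}$ ``follows deterministically from the uniform convergence just obtained.'' Only the inner bounds do. For the outer bounds the implication fails: a partition whose first row has length $10\sqrt n$ but whose remaining rows are short has $\omega_\lambda(x)-|x|\leq 2$ on the whole stretch covered only by that row, hence $\tomega_\lambda$ deviates from $|\cdot|$ by at most $2/\sqrt n$ there while the left support endpoint sits near $-10$; such a $\lambda$ can be uniformly as close to $\Omega$ as you like. (More generally, $f(x)-|x|$ is non-increasing in $|x|$ and integrates to $2$, so it is bounded by $2/|x|$ uniformly over the class $\mathcal F$ --- thin protrusions are invisible to the sup norm. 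For the same reason your coercivity claim, that sublevel sets of the hook functional are compact for the uniform norm on all of $\R$, is not quite right: mass can escape through such protrusions, which is why Romik first proves a separate bound on $\lambda_1$ and $\lambda_1'$ before restricting to a compact window.) The support convergence therefore needs its own probabilistic input --- e.g.\ the classical counting/RSK bound showing $\Pl(\lambda_1\geq(2+\myepsilon)\sqrt n)\to 0$, together with conjugation symmetry for the column. This is not a cosmetic point for the present paper: the proofs of Theorems~\ref{theorem:limit_shape_regularisation} and~\ref{theorem:cv_supports} use precisely the support conditions \eqref{equation:inf_tomega}--\eqref{equation:sup_tomega} in the definition of the high-probability set $M_n$, so the ``moreover'' clause is load-bearing and cannot be absorbed into the sup-norm statement.
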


An illustration of Theorem~\ref{theorem:LLN_partitions} for $n = 1000$ is given in Figure~\ref{figure:lgn}.
Note that the limit shape $\Omega$ has a much simpler form after derivation.

\begin{lemma}
\label{lemma:derivative}
The map $\Omega$ is an antiderivative on $\R$ of:
\[
s \mapsto \begin{dcases}
\frac{2}{\pi}\arcsin\frac{s}{2}, &\text{if } |s| \leq 2,
\\
\mathrm{sgn}(s), &\text{if } |s| > 2.
\end{dcases}
\]
\end{lemma}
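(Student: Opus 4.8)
The plan is a direct computation. I would verify that $\Omega$ is continuous on $\R$ and that on each of the three open intervals $(-\infty,-2)$, $(-2,2)$ and $(2,+\infty)$ its derivative equals the displayed function; since that function is continuous, checking that the one-sided derivatives match at the two junction points $s=\pm 2$ will show that $\Omega$ is of class $C^1$ on all of $\R$, hence an antiderivative of it.

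First I would treat the range $|s|>2$: there $\Omega(s)=|s|$, so $\Omega'(s)=\mathrm{sgn}(s)$, as claimed. On $(-2,2)$, differentiating $\tfrac{2}{\pi}\bigl(s\arcsin\tfrac{s}{2}+\sqrt{4-s^2}\bigr)$ with the product and chain rules gives $\tfrac{d}{ds}\bigl(s\arcsin\tfrac{s}{2}\bigr)=\arcsin\tfrac{s}{2}+\tfrac{s}{\sqrt{4-s^2}}$ and $\tfrac{d}{ds}\sqrt{4-s^2}=-\tfrac{s}{\sqrt{4-s^2}}$; the two rational terms cancel, leaving $\Omega'(s)=\tfrac{2}{\pi}\arcsin\tfrac{s}{2}$ on $(-2,2)$.

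It then remains to glue at $s=\pm 2$, and by evenness of $\Omega$ and oddness of the candidate derivative it suffices to treat $s=2$. The left branch at $s=2$ takes the value $\tfrac{2}{\pi}\bigl(2\cdot\tfrac{\pi}{2}+0\bigr)=2=|2|$, so $\Omega$ is continuous there, and the left-hand derivative $\tfrac{2}{\pi}\arcsin 1=1$ agrees with the right-hand derivative $\mathrm{sgn}(2)=1$. Hence $\Omega$ is differentiable on all of $\R$ with derivative equal to the stated continuous function, and the fundamental theorem of calculus concludes.

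I expect no real obstacle here: the only steps requiring a line of care are the cancellation of the $\pm\, s/\sqrt{4-s^2}$ terms on $(-2,2)$ and the routine continuity/derivative check at $s=\pm 2$. Alternatively one could simply cite the classical description of the Logan--Shepp/Kerov--Vershik curve, but the computation above is self-contained and shorter.
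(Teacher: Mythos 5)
Your computation is correct and is exactly the routine verification the paper has in mind: the lemma is stated without proof there, being regarded as a direct differentiation of the defining formula for $\Omega$ together with the matching of the one-sided derivatives at $s=\pm 2$. Nothing to add.
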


%
%

\subsection{Shakings}
\label{subsection:shakings}

The aim of this paper is to put together the notions of~\textsection\ref{subsection:regularisation} and~\textsection\ref{subsection:limit_shape}. As a first step, we show in Figure~\ref{figure:ladders_russian} what do the $4$-ladders of Example~\ref{example:ladder_numbers} look like in the Russian convention. 

\begin{proposition}
\label{proposition:reg_direction}
In the Russian convention, the $e$-regularisation makes the nodes going as left as possible in the direction of $y = \bigl(1-2e^{-1}\bigr)x$.
\end{proposition}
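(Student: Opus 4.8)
The plan is a direct coordinate translation of the notion of $e$-ladder into the Russian convention. First I would record the elementary reparametrisation of a ladder: two nodes $(a,b)$ and $(a',b')$ of $\nodes$ lie in the same $e$-ladder precisely when $a+(e-1)(b-1)=a'+(e-1)(b'-1)$, which rearranges to $a-a'=(e-1)(b'-b)$. Consequently, fixing any node $(a,b)$ of a given ladder, the nodes of that ladder are exactly the points $\bigl(a-(e-1)t,\ b+t\bigr)$ as $t$ ranges over the integers for which both coordinates stay $\geq 1$. In particular, moving a node \emph{as high as possible} in its ladder — i.e.\ making its first coordinate as small as possible, since in the Young diagram the $a$-axis points downwards — amounts to taking $t$ as large as possible.

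Next I would apply the dictionary from~\textsection\ref{subsection:limit_shape}, by which the node $(a,b)$ is drawn in the Russian convention as the unit box with top vertex $(a-b,\,a+b)$. For the node $\bigl(a-(e-1)t,\ b+t\bigr)$ this top vertex is
\[
\bigl((a-(e-1)t)-(b+t),\ (a-(e-1)t)+(b+t)\bigr)=\bigl(a-b-et,\ a+b-(e-2)t\bigr),
\]
so as $t$ varies the box is translated by $t\cdot(-e,\,-(e-2))$; since the box shape is the same for all nodes, the whole box moves by this vector. Hence, in the Russian convention, each $e$-ladder lies on a single affine line with direction vector $(-e,\,-(e-2))$, i.e.\ of slope $\tfrac{e-2}{e}=1-2e^{-1}$, parallel to the line $y=(1-2e^{-1})x$; and increasing $t$ (equivalently, raising the node in its ladder) strictly decreases the $x$-coordinate of the box, i.e.\ pushes it to the left.

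Combining the two observations gives the statement: the $e$-regularisation, which by definition sends each node as high as possible inside its $e$-ladder, moves each box as far as possible in the direction of decreasing $x$ along the line through that box parallel to $y=(1-2e^{-1})x$ — that is, as left as possible in the direction of $y=(1-2e^{-1})x$. (An illustration for $e=4$ is Figure~\ref{figure:ladders_russian}, where each $4$-ladder of Example~\ref{example:ladder_numbers} becomes a chain of boxes along a line of slope $\tfrac12$; for $e=2$ the slope is $0$, i.e.\ the ladders become horizontal.) There is essentially no obstacle; the only point requiring care is the orientation convention — that ``higher in the ladder'' means \emph{smaller} first coordinate $a$ — which is precisely what makes the induced motion leftward rather than rightward.
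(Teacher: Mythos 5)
Your proof is correct and follows essentially the same approach as the paper: a direct coordinate computation translating the $e$-ladder condition into the Russian convention to read off the slope $\tfrac{e-2}{e}=1-2e^{-1}$. You parametrise the whole ladder and verify the leftward orientation explicitly, whereas the paper computes the slope from the single representative pair $(e,1)\mapsto(1,2)$ and asserts that all boxes follow the same direction, so your version is if anything slightly more complete.
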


\begin{proof}
In the Young diagram we know that the node $(e,1) \in \nodes$ goes to the node $(1,2)$ during the $e$-regularisation. In the Russian convention it means that the box with top vertex $(e-1,e+1)$ goes to the box with top vertex $(-1,3)$. The corresponding slope is thus $\frac{e+1-3}{e-1-(-1)} = \frac{e-2}{e} = 1-2e^{-1}$ as announced. Note that the slope do not change when both axes are rescaled by a same constant and that all boxes follow the same direction.
\end{proof}

\begin{figure}[ht]
\begin{center}
\begin{tikzpicture}[scale=.6]
\draw[-stealth] (-5,0) -- (9,0) node[below]{$x$};
\draw[-stealth] (0,-1) -- (0,11) node[right]{$y$};
\foreach \i in {-4,-3,...,-1,1,2,...,8}
	{\draw (\i,-.1) --++ (0,.2);}
\foreach \j in {1,2,...,10}
	\draw (-.1,\j) --++ (.2,0);
	
\draw[dashed] (-4,4) --++ (1,1) --++ (4,-4) --++ (-1,-1) -- cycle;
\draw[dashed] (-3,5) --++ (1,1) --++ (4,-4) --++ (-1,-1);
\foreach \i in {-1,0,...,3}
	\draw[dashed] (\i,\i+6) --++ (1,1) --++ (3,-3) --++ (-1,-1);
\draw[dashed] (6,8) --++ (1,1) --++ (1,-1) --++ (-1,-1);

\draw[dashed] (-3,3) --++ (2,2);
\foreach \i in {-2,-1}
	\draw[dashed] (\i,-\i) --++ (7,7);
	
\foreach \i in {1,2,...,8}
	\draw (\i-1,\i) node[fill=white]{$\i$};

\foreach \i in {4,5,...,10}
	\draw (\i-5,\i-2) node[fill=white]{$\i$};
	
\foreach \i in {7,8,...,13}
	\draw (\i-9,\i-4) node[fill=white]{$\i$};
	
\foreach \i in {10,11}
	\draw (\i-13,\i-6) node[fill=white]{$\i$};
\end{tikzpicture}
\caption{The $4$-ladders in the Russian convention for $\lambda = (4,4,3,3,3,3,3,1)$}
\label{figure:ladders_russian}
\end{center}
\end{figure}
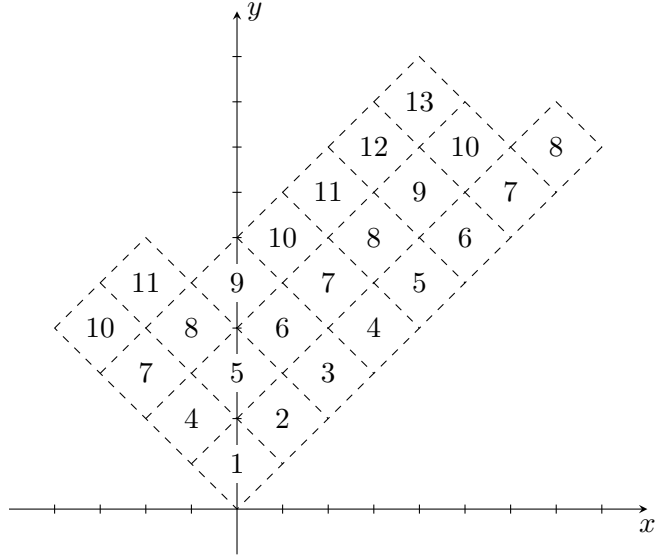

We now define the shaking operation that will be of interest for us. As we mentioned in the introduction, this is a variation of Steiner symmetrisation first introduced by Blaschke~\cite{blaschke}. Let $|\cdot|$ be the one-dimensional Lebesgue measure in $\R$ and let $\biss$ be the line of equation $y = -x$.

\begin{definition}
\label{definition:shaking}
Let $\alpha \in \mathbb{R}_{\geq 0}$ and let $v_\alpha$ be the unit vector positively collinear to $(1,\alpha)^\top$. If $K$ is a compact subset of $\mathbb{R}^2$, its \emph{shaking} with direction $\alpha$ (against $y = -x$) is the compact subset $\Sh(K)$ of $\mathbb{R}^2$ that we obtain by sliding the set $K$ along the direction of the line $(L_\alpha) : y = \alpha x$ until we meet the line $\biss$, that is:
\[
\Sh(K) \coloneqq \bigsqcup_{x \in L} K_\alpha^x,
\]
where $K_\alpha^x$ is:
\begin{itemize}
\item empty if $K\cap(x + L_\alpha) = \emptyset$,
\item the segment with extreme points $x$ and $x+\bigl| K \cap (x+L_\alpha)\bigr|v_\alpha$ otherwise.
\end{itemize}
\end{definition}

In the usual definition of shaking the direction $L_\alpha$ is orthogonal to the  line $\biss$ against which we shake. This non-orthogonal variation is for instance considered in~\cite[\textsection 5]{freyer}.

\begin{example}
Take $\alpha = 0$ and let $K$ be the unit square with bottom left corner at $(1,1)$. Then, as shown on Figure~\ref{figure:Sh_example}, the set $\Sh(K)$ is the parallelogram with vertices $(-2,2),(-1,2),(0,1)$ and $(-1,1)$.
\end{example}

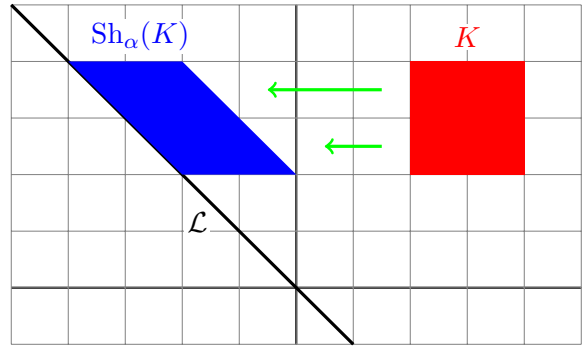
\begin{figure}
\begin{center}
\begin{tikzpicture}[scale=1.5]
\draw[thick] (0,-.5) -- (0,2.5);
\draw[thick] (-2.5,0) -- (2.5,0);
\draw[gray, very thin] (-2.5,-.5) grid[step=.5] (2.5,2.5);

\draw[very thick] (-2.5,2.5) -- (.5,-.5);

\fill[red] (1,1) rectangle (2,2);
\draw[red] (1.5,2.1) node[above,inner sep=1pt,fill=white]{$K$};

\fill[blue] (-2,2) -- node[above, midway,fill=white,inner sep=1pt,shift={(.2,.1)}] {$\Sh(K)$} (-1,2) -- (0,1) -- (-1,1) -- cycle;

\draw[very thick,green,->] (.75,1.75) -- (-.25,1.75);
\draw[very thick,green,->] (.75,1.25) -- (.25,1.25);

\draw (-.75,.7) node[below left,fill=white,inner sep = 1pt]{$\biss$};
\end{tikzpicture}
\end{center}
\caption{Example of the map $\Sh$ with $\alpha = 0$}
\label{figure:Sh_example}
\end{figure}

The following property is a mere reformulation of the definition. If $\alpha \in \R_{\geq 0}$ and $M \in \R^2$, we denote by:
\begin{itemize}
\item $\LM{M}$ the line with slope $\alpha$ containing $M$,
\item $d_\alpha(M, \biss)$ the  distance between $M$ and $\biss$ along the direction $\alpha$, that is, the distance between $M$ and its image under the  projection on $\biss$ along $\LM{M}$.
\end{itemize}

\begin{lemma}
\label{lemma:shaking_distance}
Let $\alpha \in \R_{\geq 0}$ and let $K \subseteq \R^2$ be compact. Let $M \in \R^2$ be at the right of the line $\biss$. Then:
\[
M \in \Sh(K) \iff d_\alpha(M,\biss) \leq \bigl|\LM{M} \cap K\bigr| \text{ and } \LM{M} \cap K \neq \emptyset.
\]
\end{lemma}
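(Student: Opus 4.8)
The plan is to reduce the statement to membership in a single slice of the decomposition defining $\Sh(K)$, after which everything becomes a one-dimensional computation along $\LM{M}$.

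First I would observe that the lines $x+L_\alpha$, for $x$ ranging over $\biss$, foliate $\R^2$: since $\alpha\geq 0$, no line of slope $\alpha$ is parallel to $\biss$ (which has slope $-1$), so each such line meets $\biss$ in exactly one point, and two lines $x+L_\alpha$ and $x'+L_\alpha$ with $x\neq x'$ in $\biss$ are disjoint. In particular, $\LM{M}$ meets $\biss$ in a unique point $x_0$, and $\LM{M}=x_0+L_\alpha$. Because the union defining $\Sh(K)$ is disjoint and $K_\alpha^x\subseteq x+L_\alpha$ for each $x$, this gives $M\in\Sh(K)\iff M\in K_\alpha^{x_0}$.

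Then I would split according to the two cases of the definition of $K_\alpha^{x_0}$. If $\LM{M}\cap K=\emptyset$ (equivalently $K\cap(x_0+L_\alpha)=\emptyset$), then $K_\alpha^{x_0}=\emptyset$, so $M\notin\Sh(K)$; and the right-hand side of the claimed equivalence also fails, since one of its conjuncts is $\LM{M}\cap K\neq\emptyset$. Otherwise $K_\alpha^{x_0}$ is the segment from $x_0$ to $x_0+\ell\,v_\alpha$, where $\ell\coloneqq|\LM{M}\cap K|\geq 0$. Parametrising $\LM{M}$ by $t\mapsto x_0+t v_\alpha$ and writing $x_0=(c,-c)$, the sum of the coordinates of $x_0+t v_\alpha$ equals $t(1+\alpha)/\sqrt{1+\alpha^2}$, which (using $\alpha\geq 0$) is $>0$ exactly when $t>0$; hence the points of $\LM{M}$ strictly to the right of $\biss$ are precisely those with parameter $t>0$. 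Since $M$ is assumed strictly to the right of $\biss$, we have $M=x_0+t_M v_\alpha$ with $t_M>0$, and $t_M=d_\alpha(M,\biss)$ because $v_\alpha$ is a unit vector and $x_0$ is the projection of $M$ onto $\biss$ along $\LM{M}$. Finally $M\in K_\alpha^{x_0}$ iff $0\leq t_M\leq\ell$; the lower inequality is automatic from $t_M>0$, leaving $d_\alpha(M,\biss)=t_M\leq|\LM{M}\cap K|$, which together with $\LM{M}\cap K\neq\emptyset$ is exactly the right-hand side.

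I do not expect a genuine obstacle: the argument is entirely bookkeeping. The only point requiring a little care is the orientation check — that "to the right of $\biss$" corresponds to the positive direction of $v_\alpha$ along $\LM{M}$ — since this is what makes the lower bound $0\leq t_M$ vacuous and thereby recovers precisely the inequality stated in the lemma.
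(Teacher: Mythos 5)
Your proof is correct: the paper gives no argument for this lemma (it is stated as ``a mere reformulation of the definition''), and your write-up is exactly the careful unpacking of that definition — reduction to the single slice $K_\alpha^{x_0}$ via the foliation by lines of slope $\alpha$, then the parametrisation by $v_\alpha$ identifying $d_\alpha(M,\biss)$ with the parameter along the segment. The orientation check you flag (that ``right of $\biss$'' corresponds to $t>0$) is indeed the only point needing care, and you handle it correctly.
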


The next property is a standard property of Steiner symmetrisations and shakings. The proof is immediate from the definition.

\begin{proposition}
\label{proposition:shaking_subset}
Let $\alpha \in \R_{\geq 0}$. If $K \subseteq K'$ are compact subsets of $\R^2$ then $\Sh(K) \subseteq \Sh(K')$.
\end{proposition}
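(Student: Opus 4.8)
The plan is to reduce everything to the characterisation of $\Sh(K)$ provided by Lemma~\ref{lemma:shaking_distance}, together with the elementary observation that the one-dimensional Lebesgue measure is monotone with respect to inclusion. Fix $M \in \Sh(K)$; the goal is to show $M \in \Sh(K')$. From Definition~\ref{definition:shaking}, every point of $\Sh(K)$ is of the form $x + t v_\alpha$ with $x \in \biss$ and $t \geq 0$, and since $v_\alpha$ is positively collinear to $(1,\alpha)^\top$ with $\alpha \geq 0$, such a point lies on $\biss$ (when $t = 0$) or strictly to the right of $\biss$ (when $t > 0$). So there are two cases.

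If $M \in \biss$, then $M$ is the base point of a nonempty slice $K_\alpha^M$, which by definition means $K \cap (M + L_\alpha) \neq \emptyset$; since $K \subseteq K'$ this gives $K' \cap (M + L_\alpha) \neq \emptyset$, so the slice of $K'$ based at $M$ is a nonempty (possibly degenerate) segment having $M$ as one of its endpoints, and hence $M \in \Sh(K')$. If instead $M$ lies strictly to the right of $\biss$, I would apply Lemma~\ref{lemma:shaking_distance} to $K$ to get $\LM{M} \cap K \neq \emptyset$ and $d_\alpha(M,\biss) \leq \bigl|\LM{M} \cap K\bigr|$. From $K \subseteq K'$ we obtain the inclusion $\LM{M} \cap K \subseteq \LM{M} \cap K'$ of compact subsets of the line $\LM{M}$, whence $\LM{M} \cap K' \neq \emptyset$ and, by monotonicity of the Lebesgue measure along $\LM{M}$, $\bigl|\LM{M} \cap K\bigr| \leq \bigl|\LM{M} \cap K'\bigr|$. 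Chaining these inequalities yields $d_\alpha(M,\biss) \leq \bigl|\LM{M} \cap K'\bigr|$ with $\LM{M} \cap K' \neq \emptyset$, so Lemma~\ref{lemma:shaking_distance} applied to $K'$ gives $M \in \Sh(K')$. In either case $M \in \Sh(K')$, which proves $\Sh(K) \subseteq \Sh(K')$.

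As the paper notes, there is essentially no obstacle: the only mild point requiring care is that Lemma~\ref{lemma:shaking_distance} is phrased for points to the right of $\biss$, so the boundary points lying on $\biss$ have to be handled separately and directly from Definition~\ref{definition:shaking}, as above. Measurability is not an issue, since $\LM{M} \cap K$ and $\LM{M} \cap K'$ are compact, and the whole argument is purely set-theoretic plus monotonicity of $|\cdot|$.
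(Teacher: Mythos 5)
Your proof is correct and amounts to exactly the slice-by-slice monotonicity argument that the paper declares immediate from the definition and otherwise delegates to the cited references ({\cite[Lemma 1.1]{shaking_compact}}, {\cite[Lemma 5.3]{freyer}}): for each line of slope $\alpha$ the slice of $K$ is contained in the slice of $K'$, so its Lebesgue measure, and hence the shaken segment, can only grow. Your separate treatment of points lying on $\biss$ is a sensible precaution, since Lemma~\ref{lemma:shaking_distance} is stated only for points to the right of $\biss$, and it is handled correctly via Definition~\ref{definition:shaking}.
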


\begin{proof}
See~\cite[Lemma 1.1]{shaking_compact} and \cite[Lemma 5.3]{freyer}. Note that the proof is in fact the same as for the usual (orthogonal) shakings.
\end{proof}


Recalling Proposition~\ref{proposition:reg_direction}, we are interested in special cases of shakings.

\begin{definition}
\label{definition:sh}
For any $e \geq 2$, we denote by $\sh$ the shaking $\Sh$ with direction $\dire \coloneqq 1-2e^{-1}$.
\end{definition}

\section{Shaking functions}
\label{section:shaking_limit_shape}

Our aim is here to introduce the shaking  for functions of a certain kind. The connection with the shaking operation of~\textsection\ref{subsection:shakings} will be made in Section~\ref{section:shaking_graphs}.


\subsection{A class of functions}

Let $a \in \R_{> 0}$. We define $\myclass[a]$ to be the set of all (convex) functions $f : \R \to \R$ of class $\mathcal{C}^1$ such that:
\begin{subequations}
\label{subequations:assumptions_f}
\begin{gather}
\label{equation:f_even}
f \text{ is even,}
\\
\label{equation:f(s)=|s|}
\text{for any }|s| \geq a \text{ we have }f(s) = |s|,
\\
f \text{ is strictly convex on }[-a,a].
\end{gather}
\end{subequations}

\begin{example}
\label{example:Omega_convex}
By Lemma~\ref{lemma:derivative} we have $\Omega \in \myclass[2]$.
\end{example}

\begin{example}
\label{example:carre}
Let $\Sigma : \R \to \R$ be given by $\Sigma(x) = |x|$ if $|x| \geq 1$ and $\Sigma(x) = \frac{1}{2}(x^2+1)$ otherwise. Then $\Sigma \in \myclass[1]$.
\end{example}

The next map, defined between some sets $\myclass[a]$ via double scaling, will be useful when using the convergence of Theorem~\ref{theorem:LLN_partitions}.

\begin{definition}
\label{definition:feps}
Let $f : \R \to \R$ be any map and let $\myepsilon \in (0,1)$. We define the two maps $\falphaps{\pm} : \R \to \R$ by, for any $s \in \R$,
\[
\falphaps{\pm}(s) \coloneqq (1\pm \tfrac{\myepsilon}{2})\,f\left(\frac{s}{1\pm \tfrac{\myepsilon}{2}}\right).
\]
\end{definition}

For instance, the maps $\Omegaeps[1]{\pm}$ are depicted in Figure~\ref{figure:Omegaeps}. We now gather some informations about the map $\falphaps{\pm}$ for $f \in \myclass[a]$.

\begin{figure}[ht]
\begin{center}
\begin{tikzpicture}[scale=1.5]
\draw (-3,0) -- (3,0);
\draw (0,-.5) -- (0,3);

\draw[thick] (-3,3) -- (0,0) -- (3,3);

\draw[red,domain=-2:2] plot (\x,{\Omegaplot});
\draw[red] (2,2) node[below right]{$\Omega$};

\draw[blue, domain=-2:2] plot({1.5*\x},{1.5*\Omegaplot});
\draw[blue] (3,3) node[below right]{$\Omegaeps[1]{+}$};

\draw[green, domain=-2:2] plot({.5*\x},{.5*\Omegaplot});
\draw[green] (1,1)  node[below right]{$\Omegaeps[1]{-}$};
\end{tikzpicture}
\end{center}
\caption{The maps $\Omega$ and $\Omegaeps[1]{\pm}$}
\label{figure:Omegaeps}
\end{figure}
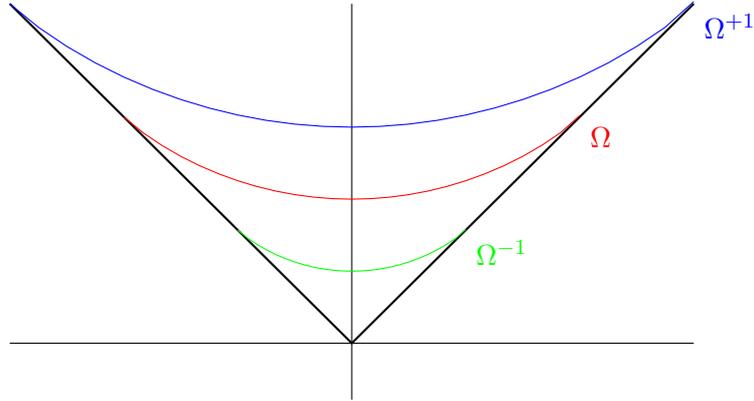

\begin{lemma}
\label{lemma:feps}
Let $a \in \R_{> 0}$ and $\myepsilon \in (0,1)$.  Let $f \in \myclass[a]$.
\begin{enumerate}
\item We have $\falphaps{\pm} \in \myclass[(2\pm\myepsilon)\frac{a}{2}]$, in particular for $a = 2$ we have $\falphaps{\pm} \in \myclass[2\pm \myepsilon]$.
\item
\label{item:falpha'}
For any $s \in \R$ we have $(\falphaps{\pm})'(s) = f'\left(\frac{s}{1\pm\tfrac{\myepsilon}{2}}\right)$.
\item The maps $\falphaps{+} - f$ and $f - \falphaps{-}$ are decreasing on $\R_{\geq 0}$.
\label{item:f+_-_f}
\item For any $s \in \R$ we have $\falphaps{-}(s) \leq f(s) \leq \falphaps{+}(s)$, and:
\begin{enumerate}[label=$(\alph*)$]
\item we have $\falphaps{-}(s) = f(s)$ if and only if $|s| \geq a$,
\item we have $\falphaps{+}(s) = f(s)$ if and only if $|s| \geq (2+\myepsilon)\tfrac{a}{2}$,
\end{enumerate}
\item We have $\|f - \falphaps{\pm}\|_\infty = \tfrac{\myepsilon}{2} f(0)$.
\end{enumerate}
\end{lemma}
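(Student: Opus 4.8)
The plan is to verify each of the five assertions of Lemma~\ref{lemma:feps} in turn, all of them being direct consequences of the substitution $s \mapsto s/(1\pm\frac{\myepsilon}{2})$ and the defining properties~\eqref{subequations:assumptions_f} of the class $\myclass[a]$. Throughout, write $\mu_\pm \coloneqq 1\pm\frac{\myepsilon}{2}$, so $\mu_\pm \in (\frac12,\frac32)$ and $\falphaps{\pm}(s) = \mu_\pm f(s/\mu_\pm)$, and note the chain rule identity $(\falphaps{\pm})'(s) = f'(s/\mu_\pm)$ already settles item~\eqref{item:falpha'} and will be used repeatedly.

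For item~$(i)$: $\falphaps{\pm}$ is $\mathcal{C}^1$ as the composite of $\mathcal{C}^1$ maps; it is even because $f$ is even; for $|s| \geq \mu_\pm a$ we have $|s/\mu_\pm| \geq a$, hence $\falphaps{\pm}(s) = \mu_\pm \cdot |s/\mu_\pm| = |s|$ by~\eqref{equation:f(s)=|s|}; and convexity (resp. strict convexity on $[-\mu_\pm a, \mu_\pm a]$) is preserved since precomposition by the increasing affine map $s\mapsto s/\mu_\pm$ and postcomposition by multiplication by $\mu_\pm > 0$ both preserve (strict) convexity. Thus $\falphaps{\pm} \in \myclass[\mu_\pm a]$ and the case $a=2$ is the specialisation $\mu_\pm a = 2\pm\myepsilon$. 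For item~$(iii)$: on $\R_{\geq 0}$ the derivative of $\falphaps{+} - f$ is $s\mapsto f'(s/\mu_+) - f'(s)$; since $\mu_+ > 1$ we have $s/\mu_+ \leq s$, and $f'$ is non-decreasing (convexity), so this difference is $\leq 0$; similarly $f - \falphaps{-}$ has derivative $f'(s) - f'(s/\mu_-) \leq 0$ since $\mu_- < 1$ forces $s \leq s/\mu_-$. Hence both maps are decreasing on $\R_{\geq 0}$ (non-increasing, which is what is needed; one may note they are in fact strictly decreasing on the relevant interval using strict convexity on $[-a,a]$, but this is not required).

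For item~$(iv)$: by item~$(iii)$ the even map $g_+ \coloneqq \falphaps{+} - f$ attains its minimum on $\R_{\geq 0}$ at $+\infty$, where it vanishes by~\eqref{equation:f(s)=|s|} (and item~$(i)$), so $g_+ \geq 0$ everywhere; likewise $g_- \coloneqq f - \falphaps{-} \geq 0$. This gives $\falphaps{-} \leq f \leq \falphaps{+}$. For the equality cases: $\falphaps{-}(s) = f(s)$ means $\mu_- f(s/\mu_-) = f(s)$; if $|s| \geq a$ then both sides equal $|s|$, while if $|s| < a$ one uses that $g_-$ is strictly decreasing on $[0,a]$ (strict convexity of $f$ on $[-a,a]$ makes the derivative inequality $f'(s) < f'(s/\mu_-)$ strict there) together with $g_-$ being constant equal to $0$ only from $a$ onwards, so $g_-(s) > g_-(a) = 0$ for $|s| < a$. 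The statement for $\falphaps{+}$ is identical, except that $\falphaps{+} \in \myclass[\mu_+ a]$, so the flat region of $g_+$ begins at $\mu_+ a = (2+\myepsilon)\frac{a}{2}$, and $g_+$ is strictly decreasing on $[0,\mu_+ a]$. Finally, item~$(v)$: since $f - \falphaps{\pm}$ is even and, by items~$(iii)$ and~$(iv)$, has constant sign and is monotone in $|s|$, its sup-norm is attained at $s = 0$, where $|f(0) - \falphaps{\pm}(0)| = |f(0) - \mu_\pm f(0)| = \frac{\myepsilon}{2} f(0)$ (using $f(0) \geq 0$, which holds as $f(0) = \Omega$-type value $\geq |0| = 0$ by convexity and $f(\pm a) = a > 0$). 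I do not anticipate a genuine obstacle here — every step is a one-line monotonicity or substitution argument; the only point requiring a little care is bookkeeping the strict versus non-strict monotonicity in the equality cases of item~$(iv)$, where strict convexity of $f$ on $[-a,a]$ must be invoked to rule out equality for $|s| < a$.
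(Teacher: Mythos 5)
Your proof is correct and follows essentially the same route as the paper's: items (i)--(ii) by direct substitution and the chain rule, item (iii) from the derivative formula and monotonicity of $f'$, item (iv) by combining the monotone decrease with the vanishing of $\falphaps{\pm}-f$ for large $|s|$ and invoking strict convexity (with the same case split around $a$, resp.\ $(2+\myepsilon)\tfrac{a}{2}$) for the equality cases, and item (v) by locating the maximum of the even, non-negative, non-increasing difference at $s=0$. No gaps; your extra bookkeeping of the strict-versus-non-strict monotonicity is exactly the point the paper's terse proof also relies on.
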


\begin{proof}
\begin{enumerate}
\item Clear.
\item Clear.
\item Follows from the previous point and from the fact that $f'$ is increasing.
\item Since the functions are even, it suffices to prove the inequality on $\R_{\geq  0}$. We prove only the relations for $\falphaps{+}$, the ones for $\falphaps{-}$ being similar. By the previous point, we know that $g \coloneqq \falphaps{+} - f$  is decreasing on $\R_{\geq 0}$, thus we deduce that $g \geq 0$ since $g(s) = |s| - |s| = 0$ for $s \gg 0$. For the equality case, first note that the necessary condition holds. For the sufficient condition, since $f$ is strictly convex on $[-a,a]$ we have in fact $g'(s) < 0$ for all $s \in (0,a]$ thus by what precedes we have $g(s) > 0$ for all $s \in [0,a]$. Now for $s \in \bigl[a,(2+\myepsilon)\tfrac{a}{2}\bigr)$ we have $g'(s) = f'\bigl(\frac{s}{1+\tfrac{\myepsilon}{2}}\bigr) - 1   < 0$ since $1 = f'(a)$ and $f'$ is strictly increasing on $[0,a]$ and we conclude  the proof as before.
\item Again we only prove the result for $\falphaps{+}$. With the previous notation, we know that $g = \falphaps{+} - f$ is non-negative and decreases on $\R_{\geq 0}$ thus its maximum is reached at $s = 0$.
\end{enumerate}
\end{proof}

\subsection{About chords}
\label{subsection:about_chords}

In this whole part we fix $a \in \R_{> 0}$ and $\alpha \in [0,1)$, together with $f \in \myclass[a]$.
We will here study the chords of the curve of $f$ that have slope $\alpha$.
The last two equations in~\eqref{subequations:assumptions_f} imply that for all $|s| < a$ we have:
\begin{subequations}
\begin{equation}
\label{equation:f(s)>|s|}
f(s) > |s|,
\end{equation}
since $f$ is above its tangent at $s = a$, and:
\begin{equation}
\label{equation:f'<1}
|f'(s)| < 1,
\end{equation}
\end{subequations}
for all $s \in (-a,a)$, since $f'$ is strictly increasing on $[-a,a]$. In particular $f'$ induces a bijection $[-a,a] \to [-1,1]$, allowing us to make the following definition.

\begin{definition}
\label{definition:xalpha+}
We define the real number
\[\xalphapf{\alpha} \coloneqq f'^{-1}(\alpha),
\]
where $f'^{-1} : [-1,1] \to [-a,a]$ denotes the inverse function of $f'$, and we define the map $\Falpha{\alpha} : s \mapsto f(s) - \alpha s$.
\end{definition}

In the sequel we will write $\xalphap$ instead of $\xalphapf{\alpha}$ when $f$ is clear from the context. We give an example of $\xalphap$ in Figure~\ref{figure:xalphaprime}, with the map $\Sigma$ of Example~\ref{example:carre}.

\begin{proposition}
\label{proposition:xalpha+}
We have $\xalphap \in [0,a)$. Moreover, the map $\Falpha{\alpha}$ is convex, decreasing on $\bigl(-\infty,\xalphap\bigr)$ and increasing on $\bigl(\xalphap,+\infty\bigr)$, with limit $\pm\infty$ at $\pm \infty$.
\end{proposition}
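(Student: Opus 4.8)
The plan is to unpack the definition $\Falpha{\alpha}(s) = f(s) - \alpha s$ and use the hypotheses on $f$ from~\eqref{subequations:assumptions_f}. First I would settle the location of $\xalphap = f'^{-1}(\alpha)$: since $\alpha \in [0,1)$ and $f'$ is a strictly increasing bijection $[-a,a] \to [-1,1]$ with $f'(0) = 0$ (because $f$ is even and $\mathcal{C}^1$, so $f'$ is odd) and $f'(a) = 1$, the value $\alpha \in [0,1)$ is hit at a point in $[0,a)$; this gives $\xalphap \in [0,a)$.

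Next I would analyse $\Falpha{\alpha}$. Its derivative is $\Falpha{\alpha}'(s) = f'(s) - \alpha$. On $[-a,a]$, the map $f'$ is strictly increasing, hence $\Falpha{\alpha}'$ is strictly increasing there, $\Falpha{\alpha}'(s) < 0$ for $s < \xalphap$ and $> 0$ for $s > \xalphap$ within $[-a,a]$; since $f$ is convex on all of $\R$, $\Falpha{\alpha}'$ is nondecreasing everywhere, so this sign pattern persists: $\Falpha{\alpha}' < 0$ on $(-\infty, \xalphap)$ and $\Falpha{\alpha}' > 0$ on $(\xalphap, +\infty)$. For $|s| \geq a$ we have $f(s) = |s|$, so $\Falpha{\alpha}'(s) = \operatorname{sgn}(s) - \alpha$, which is $1 - \alpha > 0$ for $s > a$ and $-1-\alpha < 0$ for $s < -a$; this both re-confirms the monotonicity and pins down convexity of $\Falpha{\alpha}$ (it is the sum of the convex $f$ and the linear $-\alpha s$). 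Thus $\Falpha{\alpha}$ is convex, strictly decreasing on $(-\infty,\xalphap)$ and strictly increasing on $(\xalphap,+\infty)$ — or at least decreasing/increasing in the weak sense, which is what the statement claims.

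For the limits at $\pm\infty$: for $s \geq a$, $\Falpha{\alpha}(s) = s - \alpha s = (1-\alpha)s \to +\infty$ since $1 - \alpha > 0$; for $s \leq -a$, $\Falpha{\alpha}(s) = -s - \alpha s = -(1+\alpha)s \to +\infty$ since $1+\alpha > 0$. So $\Falpha{\alpha} \to +\infty$ at both $\pm\infty$, as asserted.

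I do not expect any serious obstacle here: the statement is essentially a direct consequence of $f$ being even, convex, $\mathcal{C}^1$, equal to $|\cdot|$ outside $[-a,a]$, and strictly convex inside. The only mild subtlety is bookkeeping the transition at $s = \pm a$ — checking that the one-sided derivatives match up (they do, since $f$ is $\mathcal{C}^1$ and $f'(\pm a) = \pm 1$) so that the global monotonicity statements hold across the gluing points; the hypothesis $\alpha < 1$ is exactly what makes the exterior slopes $\pm 1 - \alpha$ have the correct (nonzero, appropriately signed) values. I would present this as a short argument: identify $\xalphap$, differentiate $\Falpha{\alpha}$, read off the sign of $f' - \alpha$ piecewise, and compute the two linear tails for the limits.
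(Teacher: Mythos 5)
Your proof is correct and follows essentially the same route as the paper: locate $\xalphap$ using $f'(0)=0$, $f'(a)=1$ and the strict monotonicity of $f'$ on $[-a,a]$, then differentiate $\Falpha{\alpha}$ and read off the sign of $f'-\alpha$. You in fact spell out the limits at $\pm\infty$ via the linear tails $(1-\alpha)s$ and $-(1+\alpha)s$, a detail the paper leaves implicit, but this is only a minor elaboration of the same argument.
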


\begin{proof}
By the above discussion, we know that $f'$ is increasing on $[-a,a]$. Hence, since $f'(0) = 0$ (since $f$ is even by~\eqref{equation:f_even}) and $f'(a) = 1$ (by~\eqref{equation:f(s)=|s|}) we have $\xalphap \in [0,a)$. We have $(\Falpha{\alpha})'(s) = f'(s) - \alpha$, thus recalling that $f$ is strictly convex on $[-a,a]$ we have that $f'$ is increasing thus we obtain both the convexity and the variations of $\Falpha{\alpha}$. 
\end{proof}

Thanks to Proposition~\ref{proposition:xalpha+}, we can make the following definition.

\begin{definition}
\label{definition:phi}
Let $\Falpha{\alpha}^{-1}$ be the inverse function of $\Falpha{\alpha}$ on $\bigl[\xalphap,+\infty\bigr)$. We define the map $\compfull{\alpha}{f}: \R \to \R$ by:
\[
\compfull{\alpha}{f} \coloneqq \Falpha{\alpha}^{-1} \circ \Falpha{\alpha}.
\]
\end{definition}

As before, we will write $\comp$ instead of $\compfull{\alpha}{f}$ when $f$ is clear from the context. Note that $\Falpha{\alpha}^{-1}$ is defined (and is a bijection) from $\Bigl[\Falpha{\alpha}\bigl(\xalphap\bigr),+\infty\Bigr)$ to $\bigl[\xalphap,+\infty\bigr)$.

\begin{lemma}
\label{lemma:phi}
The following properties are satisfied.
\begin{enumerate}
\item
\label{item:falpha_circ_phi}
We have $\Falpha{\alpha} \circ \Falpha{\alpha}^{-1} = \mathrm{id}$, in particular $\Falpha{\alpha} \circ \comp = \Falpha{\alpha}$.
\item
\label{item:phi_s_xalphap}
For any $s \geq \xalphap$ we have $\comp(s)= s$.
\item
\label{item:phi_bij}
 The map $\comp$ is decreasing bijection  $\bigl(-\infty,\xalphap\bigr] \to \bigl[\xalphap,+\infty\bigr)$. In particular, for any $s < \xalphap$ we have $\comp(s) > s$.
\item
\label{item:geometric_interpretation_phi}
For any $s < \xalphap$, the abscissa $\comp(s)$ is the unique element in $\bigl(\xalphap,+\infty\bigr)$ such that the chord in the curve of $f$ between the points of abscissae $s$ and $\comp(s)$  has slope $\alpha$.
\end{enumerate}
\end{lemma}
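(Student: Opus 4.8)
The plan is to prove the four items of Lemma~\ref{lemma:phi} in order, each following quickly from Definition~\ref{definition:phi} and the structural information on $\Falpha{\alpha}$ recorded in Proposition~\ref{proposition:xalpha+}.

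For item~\ref{item:falpha_circ_phi}: by Definition~\ref{definition:phi} the map $\Falpha{\alpha}^{-1}$ is, by construction, the inverse of the restriction of $\Falpha{\alpha}$ to $\bigl[\xalphap,+\infty\bigr)$; since this restriction is a continuous strictly increasing bijection onto $\bigl[\Falpha{\alpha}(\xalphap),+\infty\bigr)$ (by Proposition~\ref{proposition:xalpha+}), we have $\Falpha{\alpha}\circ\Falpha{\alpha}^{-1} = \mathrm{id}$ on that image set. Composing on the right with $\Falpha{\alpha}$ gives $\Falpha{\alpha}\circ\comp = \Falpha{\alpha}\circ\Falpha{\alpha}^{-1}\circ\Falpha{\alpha} = \Falpha{\alpha}$, valid everywhere since the range of $\Falpha{\alpha}$ lies inside the domain of $\Falpha{\alpha}^{-1}$ (again by the shape of $\Falpha{\alpha}$ from Proposition~\ref{proposition:xalpha+}: its minimum is at $\xalphap$). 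For item~\ref{item:phi_s_xalphap}: if $s \geq \xalphap$ then $\Falpha{\alpha}(s)$ lies in $\bigl[\Falpha{\alpha}(\xalphap),+\infty\bigr)$ and $s$ is the unique preimage of $\Falpha{\alpha}(s)$ under the increasing bijection $\Falpha{\alpha}|_{[\xalphap,+\infty)}$, so $\comp(s) = \Falpha{\alpha}^{-1}(\Falpha{\alpha}(s)) = s$.

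For item~\ref{item:phi_bij}: restrict $\Falpha{\alpha}$ to $\bigl(-\infty,\xalphap\bigr]$, where by Proposition~\ref{proposition:xalpha+} it is a strictly decreasing continuous bijection onto $\bigl[\Falpha{\alpha}(\xalphap),+\infty\bigr)$ (using $\Falpha{\alpha}\to+\infty$ at $-\infty$), and then $\Falpha{\alpha}^{-1}$ maps this set back to $\bigl[\xalphap,+\infty\bigr)$ as an increasing bijection; composing, $\comp$ restricted to $\bigl(-\infty,\xalphap\bigr]$ is a decreasing bijection onto $\bigl[\xalphap,+\infty\bigr)$. In particular $\comp(\xalphap) = \xalphap$ and, for $s < \xalphap$, $\comp(s) > \comp(\xalphap) = \xalphap > s$. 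Finally, item~\ref{item:geometric_interpretation_phi} is the geometric reading of item~\ref{item:falpha_circ_phi}: for $s < \xalphap$, writing $t = \comp(s) \in \bigl(\xalphap,+\infty\bigr)$, the equality $\Falpha{\alpha}(t) = \Falpha{\alpha}(s)$ means $f(t) - \alpha t = f(s) - \alpha s$, i.e. $\frac{f(t)-f(s)}{t-s} = \alpha$ (note $t \neq s$ by item~\ref{item:phi_bij}), which says exactly that the chord of the curve of $f$ between abscissae $s$ and $t$ has slope $\alpha$; uniqueness of such a $t$ in $\bigl(\xalphap,+\infty\bigr)$ follows from strict monotonicity of $\Falpha{\alpha}$ there.

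I do not expect any genuine obstacle here: the lemma is essentially unwinding the definitions together with the monotonicity/convexity picture established in Proposition~\ref{proposition:xalpha+}. The only point requiring a little care is being explicit about domains and codomains — in particular checking that $\Falpha{\alpha}$ maps all of $\R$ into the domain $\bigl[\Falpha{\alpha}(\xalphap),+\infty\bigr)$ of $\Falpha{\alpha}^{-1}$, which is immediate since $\xalphap$ is the global minimiser of $\Falpha{\alpha}$ — so that the composition defining $\comp$ makes sense on all of $\R$ and the identities above hold without restriction.
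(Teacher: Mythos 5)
Your proposal is correct and follows essentially the same route as the paper: items (i)--(iii) are unwound from the definition of $\Falpha{\alpha}^{-1}$ together with the monotonicity of $\Falpha{\alpha}$ from Proposition~\ref{proposition:xalpha+}, and item (iv) is the same chord computation $\Falpha{\alpha}\bigl(\comp(s)\bigr)=\Falpha{\alpha}(s)\Rightarrow \alpha=\frac{f(\comp(s))-f(s)}{\comp(s)-s}$. The only cosmetic difference is that you get uniqueness in (iv) from injectivity of $\Falpha{\alpha}$ on $\bigl(\xalphap,+\infty\bigr)$ rather than citing strict convexity of $f$ directly, which is an equivalent use of the same ingredients.
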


\begin{proof}
The first two points simply follow from the fact that $\Falpha{\alpha}^{-1}$ is the inverse of $\Falpha{\alpha}$ on $\bigl[\xalphap,+\infty\bigr)$. The third point follows from Proposition~\ref{proposition:xalpha+}, noting that $\Falpha{\alpha}^{-1}$ is increasing as the inverse of an increasing function. For the last point, for any $s < \xalphap$ we have, by the first point, $\Falpha{\alpha}\bigl(\comp(s)\bigr) = \Falpha{\alpha}(s)$, thus:
\[
f\bigl(\comp(s)\bigr)-\alpha\comp(s) = f(s) - \alpha s.
\]
We thus obtain, noting that $\comp(s) \neq s$ by the preceding point:
\[
\alpha = \frac{f\bigl(\comp(s)\bigr) - f(s)}{\comp(s)-s},
\]
whence the assertion on the slope. The uniqueness follows from the strict convexity of $f$.
\end{proof}

An illustration of Lemma~\ref{lemma:phi}\ref{item:geometric_interpretation_phi} is given at Figure~\ref{figure:delta_salpha}.
By Lemma~\ref{lemma:phi}\ref{item:phi_bij}, since by Proposition~\ref{proposition:xalpha+} we have $a > \xalphap$ we know that there exists a unique $\xalphamf{\alpha} \in \bigl(-\infty,\xalphap\bigr)$ such that:
\begin{equation}
\label{equation:def_xalpham}
\comp\bigl(\xalphamf{\alpha}\bigr) = a.
\end{equation}
Again, we will write $\xalpham$ instead of $\xalphamf{\alpha}$ when $f$ is clear from the context.  Note that, by convexity of $f$ and by and Lemma~\ref{lemma:phi}\ref{item:geometric_interpretation_phi}, since $\alpha \geq 0$ we have $a \leq \comp(- a)$ thus:
\begin{equation}
\label{equation:xalpham_-a}
\xalpham \geq -a.
\end{equation}

\begin{definition}
\label{definition:xprime}
For $x \in \R$, we denote by $\Lx{x}$ the line with slope $\alpha$ containing $(x,-x)$. Moreover, we denote by $\xalphamp = \xalphamp(f)$ (resp. $\xalphapp = \xalphapp(f)$) the unique $x \in \R$ such that $\Lx{x}$ contains the point of the curve of $f$ with abscissa $\xalpham$ (resp. $\xalphap$).
\end{definition}

We give an example of $\xalpham, \xalphamp,\xalphapp$ in Figure~\ref{figure:xalphaprime}. The next result follows from~\eqref{subequations:assumptions_f} and from Lemma~\ref{lemma:phi}. Note that the term ``below'' (resp. ``above'') means ``strictly below (resp. above) except maybe at the extremities''.

\begin{figure}
\begin{center}
\begin{tikzpicture}[scale=5]
\draw (-1,0) -- (1.1,0);
\draw (0,-.2) -- (0,1.1);

\draw (-.8,.8) -- (0,0) -- (.8,.8);

{
\color{green}
\draw[dashed] (.25,.53125) -- (.25,0) node[below]{$\xalphap$};
\draw[domain=-1:1.1] plot (\x,{(\x -.25)/4+.53125});
\draw[dashed] (-.375,.375) -- (-.375,0) node[below]{$\xalphapp$};
}
\draw[domain=-1:1, smooth,red] plot (\x,{\Sigmaplot});
\draw[red] (-.85,1) node[right]{$y=f(x)$};

\draw (-1,1) -- (-.8,.8);
\draw (.8,.8) -- (1.1,1.1);

\color{orange}
\draw[domain=-1:1.1] plot (\x,{(\x -1)/4+1});
\draw[dashed] (-.5,.625) -- (-.5,0) node[below]{$\xalpham$};
\draw[dashed](-.6,.6) -- (-.6,0) node[below left]{$\xalphamp$};


\end{tikzpicture}
\end{center}
\caption{Example of $\xalphap,\xalpham,\xalphapp,\xalphamp$. The green and orange lines have slope $\alpha$.}
\label{figure:xalphaprime}
\end{figure}
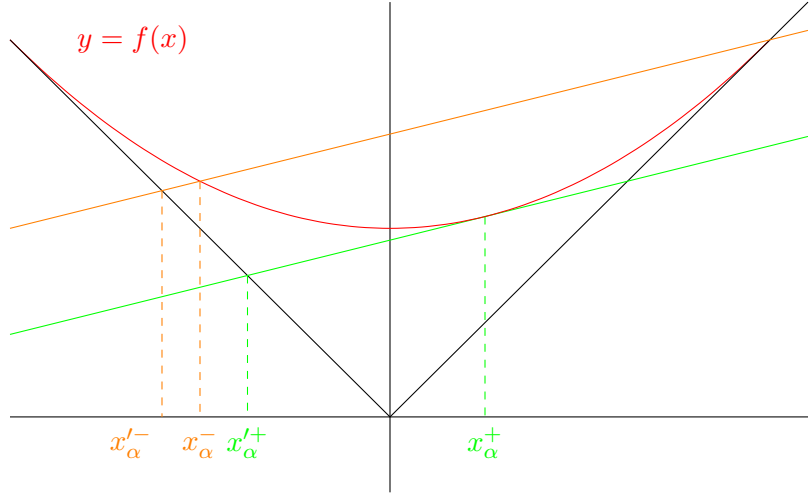

\begin{lemma}
\label{lemma:intersection_f_L}
We have $-a \leq \xalphamp < \xalphapp < 0$. Moreover, if $x \in \R$ then on $[-a,a]$ we have:
\begin{itemize}
\item if $x \leq -a$ then $\Lx{x}$ is above the curve of $f$,
\item if $-a < x \leq \xalphamp$ then $\Lx{x}$ is below and then above the curve of $f$,
\item if $\xalphamp < x < \xalphapp$ then $\Lx{x}$ is below, then above and then below the curve of $f$,
\item if $x \geq \xalphapp$ then $\Lx{x}$ is below the curve of $f$.
\end{itemize}
\end{lemma}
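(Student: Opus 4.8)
The plan is to reduce every assertion to the sign, on the interval $[-a,a]$, of a single strictly convex auxiliary function. First I would write the line $\Lx{x}$ as $\{(s,\alpha s-(1+\alpha)x):s\in\R\}$ (it has slope $\alpha$ and passes through $(x,-x)$ at $s=x$), so that for $s\in[-a,a]$ the line $\Lx{x}$ lies strictly below, on, or strictly above the curve of $f$ at abscissa $s$ according as
\[
g_x(s) \coloneqq f(s)-\alpha s+(1+\alpha)x = \Falpha{\alpha}(s)+(1+\alpha)x
\]
is positive, zero, or negative. As $g_x$ differs from $\Falpha{\alpha}$ by the additive constant $(1+\alpha)x$, Proposition~\ref{proposition:xalpha+} gives that $g_x$ is strictly convex on $[-a,a]$, strictly decreasing on $[-a,\xalphap]$, strictly increasing on $[\xalphap,a]$, with minimum attained only at $\xalphap$. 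I would also record $g_x(-a)=(1+\alpha)(a+x)$ and $g_x(a)=(1-\alpha)a+(1+\alpha)x$ (from $f(\pm a)=a$), and note that $\Lx{x}$ meets the curve of $f$ at abscissa $s_0$ precisely when $g_x(s_0)=0$, that is, when $x=-\Falpha{\alpha}(s_0)/(1+\alpha)$. Taking $s_0=\xalphap$ and $s_0=\xalpham$, and using $\Falpha{\alpha}(\xalpham)=\Falpha{\alpha}(\comp(\xalpham))=\Falpha{\alpha}(a)=(1-\alpha)a$ (which follows from~\eqref{equation:def_xalpham} and Lemma~\ref{lemma:phi}\ref{item:falpha_circ_phi}), this yields
\[
\xalphapp=-\frac{\Falpha{\alpha}(\xalphap)}{1+\alpha},\qquad \xalphamp=-\frac{(1-\alpha)a}{1+\alpha}.
\]

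Next I would establish $-a\le\xalphamp<\xalphapp<0$. Since $\xalphap\in[0,a)$ by Proposition~\ref{proposition:xalpha+}, \eqref{equation:f(s)>|s|} gives $f(\xalphap)>|\xalphap|=\xalphap\ge\alpha\xalphap$ (because $0\le\alpha<1$ and $\xalphap\ge0$), hence $\Falpha{\alpha}(\xalphap)>0$ and $\xalphapp<0$. Because $\xalphap$ is the unique minimiser of $\Falpha{\alpha}$ on $[-a,a]$ and $\xalphap<a$, we get $\Falpha{\alpha}(\xalphap)<\Falpha{\alpha}(a)=(1-\alpha)a$, which rearranges to $\xalphamp<\xalphapp$. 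Finally $\xalphamp=-(1-\alpha)a/(1+\alpha)\ge-a$ since $1-\alpha\le1+\alpha$.

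For the four relative-position statements I would compare the level $-(1+\alpha)x$ with the three relevant values of $\Falpha{\alpha}$ on $[-a,a]$: its maximum $(1+\alpha)a$, attained at $s=-a$; the value $(1-\alpha)a=\Falpha{\alpha}(\xalpham)=\Falpha{\alpha}(a)$; and its minimum $m\coloneqq\Falpha{\alpha}(\xalphap)$, which by the previous paragraph satisfy $0<m<(1-\alpha)a\le(1+\alpha)a$. If $x\le-a$ then $-(1+\alpha)x\ge(1+\alpha)a$, so $g_x\le0$ on $[-a,a]$ with equality only at $s=-a$ (and only for $x=-a$): the line is above the curve. If $-a<x\le\xalphamp$ then $(1-\alpha)a\le-(1+\alpha)x<(1+\alpha)a$, so the strictly decreasing $\Falpha{\alpha}$ crosses this level exactly once on $[-a,\xalphap]$, at some $s_1\in(-a,\xalphap)$, while on $[\xalphap,a]$ one has $\Falpha{\alpha}\le(1-\alpha)a\le-(1+\alpha)x$; hence $g_x>0$ on $[-a,s_1)$ and $g_x\le0$ on $(s_1,a]$, so the line is below then above. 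If $\xalphamp<x<\xalphapp$ then $m<-(1+\alpha)x<(1-\alpha)a$, a level crossed once on $[-a,\xalphap]$ and once on $[\xalphap,a]$, so $g_x$ is positive, then negative, then positive, and the line is below, then above, then below. If $x\ge\xalphapp$ then $-(1+\alpha)x\le m$, so $g_x\ge0$ on $[-a,a]$ with equality only at $s=\xalphap$ (and only for $x=\xalphapp$): the line is below the curve. The argument is a routine case analysis; the only delicate point is bookkeeping the strict versus non-strict inequalities at the three threshold parameters $x\in\{-a,\xalphamp,\xalphapp\}$, for which $\Lx{x}$ is tangent to or just touches the curve (at $s=-a$, at $s=a$, and at $s=\xalphap$ respectively), so as to match the ``strictly below/above except maybe at the extremities'' convention.
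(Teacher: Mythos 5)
Your proof is correct, and it fills in the details of an argument the paper leaves unproved (the paper only remarks that the lemma ``follows from~\eqref{subequations:assumptions_f} and from Lemma~\ref{lemma:phi}''): reducing everything to the sign of $\Falpha{\alpha}(s)+(1+\alpha)x$ and invoking the strict unimodality of $\Falpha{\alpha}$ on $[-a,a]$ from Proposition~\ref{proposition:xalpha+}, together with $\Falpha{\alpha}(\xalpham)=\Falpha{\alpha}(a)=(1-\alpha)a$ from Lemma~\ref{lemma:phi}, is exactly the route the paper indicates. The explicit formulas $\xalphapp=-\Falpha{\alpha}(\xalphap)/(1+\alpha)$ and $\xalphamp=-(1-\alpha)a/(1+\alpha)$ and the careful handling of the tangency/touching cases at $x\in\{-a,\xalphamp,\xalphapp\}$ are a welcome bonus.
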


We now study the map $\comp$ in more details.

\begin{lemma}
\label{lemma:phi_precis}
The following properties are satisfied.
\begin{enumerate}
\item
\label{item:phi_decreasing_bij_x-_x+}
The map $\comp$ induces a decreasing bijection from $\bigl[\xalpham,\xalphap\bigr]$ onto $\bigl[\xalphap,a\bigr]$.
\item
\label{item:phi_x_leq_x-}
For any $s \leq \xalpham$ we have $\comp(s) = (1-\alpha)^{-1}\Falpha{\alpha}(s)$.
\end{enumerate}
\end{lemma}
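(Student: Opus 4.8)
The plan is to deduce both assertions directly from the properties of $\comp$ already established in Lemma~\ref{lemma:phi}, together with the explicit form of $f$ outside $[-a,a]$; no new analytic input is needed, so each point is essentially a one-line argument.

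For the first point, recall from Lemma~\ref{lemma:phi}\ref{item:phi_bij} that $\comp$ is a continuous strictly decreasing bijection $(-\infty,\xalphap] \to [\xalphap,+\infty)$. Restricting such a map to the closed subinterval $[\xalpham,\xalphap]$ therefore yields a decreasing bijection onto $[\comp(\xalphap),\comp(\xalpham)]$, and it only remains to identify the two endpoints: $\comp(\xalphap) = \xalphap$ by Lemma~\ref{lemma:phi}\ref{item:phi_s_xalphap}, and $\comp(\xalpham) = a$ by the very definition~\eqref{equation:def_xalpham} of $\xalpham$. This gives the image $[\xalphap,a]$, as claimed.

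For the second point, fix $s \leq \xalpham$. Since $\comp$ is decreasing on $(-\infty,\xalphap]$ and $\comp(\xalpham) = a$, we get $\comp(s) \geq a$, hence $\comp(s) > 0$ and so $f\bigl(\comp(s)\bigr) = \comp(s)$ by~\eqref{equation:f(s)=|s|}. On the other hand, Lemma~\ref{lemma:phi}\ref{item:falpha_circ_phi} gives $\Falpha{\alpha}\bigl(\comp(s)\bigr) = \Falpha{\alpha}(s)$, that is, $f\bigl(\comp(s)\bigr) - \alpha\comp(s) = \Falpha{\alpha}(s)$. Substituting the previous identity yields $(1-\alpha)\comp(s) = \Falpha{\alpha}(s)$, and since $\alpha \in [0,1)$ we may divide by $1-\alpha > 0$ to obtain $\comp(s) = (1-\alpha)^{-1}\Falpha{\alpha}(s)$.

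The only mild point requiring care — and the step I would state most explicitly — is the inequality $\comp(s) \geq a$ for $s \leq \xalpham$: it uses that $\comp$ is monotone on all of $(-\infty,\xalphap]$, not merely on $[\xalpham,\xalphap]$, which is precisely the content of Lemma~\ref{lemma:phi}\ref{item:phi_bij}. Everything else is a direct substitution, so I do not anticipate any genuine obstacle.
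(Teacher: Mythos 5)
Your proof is correct. Point $(i)$ is handled exactly as in the paper (restrict the decreasing bijection of Lemma~\ref{lemma:phi}\ref{item:phi_bij} and identify the endpoints via Lemma~\ref{lemma:phi}\ref{item:phi_s_xalphap} and~\eqref{equation:def_xalpham}). For point $(ii)$ your route is a genuine, slightly more direct variant: you first get $\comp(s) \geq a$ from the monotonicity of $\comp$ together with $\comp(\xalpham) = a$, so that $f\bigl(\comp(s)\bigr) = \comp(s)$ by~\eqref{equation:f(s)=|s|}, and then the identity $\Falpha{\alpha}\circ\comp = \Falpha{\alpha}$ immediately yields $(1-\alpha)\comp(s) = \Falpha{\alpha}(s)$. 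The paper instead argues on the domain side: it uses the monotonicity of $\Falpha{\alpha}$ (Proposition~\ref{proposition:xalpha+}) to write $\Falpha{\alpha}(s) = (1-\alpha)b$ for some auxiliary $b \geq a$, checks $\Falpha{\alpha}(b) = (1-\alpha)b$, and concludes $\comp(s) = b$ from the injectivity built into Lemma~\ref{lemma:phi}. Both arguments rest on the same two facts ($\Falpha{\alpha}\circ\comp = \Falpha{\alpha}$ and $f = \mathrm{id}$ on $[a,+\infty)$); yours trades Proposition~\ref{proposition:xalpha+} and the auxiliary point $b$ for the monotonicity of $\comp$, which you correctly flag as the one step needing explicit justification, and which Lemma~\ref{lemma:phi}\ref{item:phi_bij} indeed provides.
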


\begin{proof}
The first point follows from Lemma~\ref{lemma:phi} and~\eqref{equation:def_xalpham}. For the second one, by~\eqref{equation:def_xalpham} and Lemma~\ref{lemma:phi}\ref{item:falpha_circ_phi} we have $\Falpha{\alpha}\bigl(\xalpham\bigr) = \Falpha{\alpha}(a)$ thus $\Falpha{\alpha}\bigl(\xalpham\bigr) = (1-\alpha)a$ by~\eqref{equation:f(s)=|s|}. Since $\xalpham < \xalphap$, by Proposition~\ref{proposition:xalpha+} we know that if $s \leq \xalpham$ then $\Falpha{\alpha}(s) \geq \Falpha{\alpha}\bigl(\xalpham\bigr)$ thus, since $\alpha < 1$, there exists $b \geq a$ such that:
\begin{equation}
\label{equation:falpha_b}
\Falpha{\alpha}(s) = (1-\alpha)b.
\end{equation}
Since $\xalphap < a \leq b$, by~\eqref{equation:f(s)=|s|} we have $ (1-\alpha)b = \Falpha{\alpha}(b)$ thus we deduce that:
\[
\Falpha{\alpha}(s) = \Falpha{\alpha}(b),
\]
thus $\comp(s) = b$ by Lemma~\ref{lemma:phi}.
 This concludes the proof since $b= (1-\alpha)^{-1}\Falpha{\alpha}(s)$ by~\eqref{equation:falpha_b}.
\end{proof}

 The next definition will play a crucial role in the statement of our first main result (in Section~\ref{section:shaking_graphs}).

\begin{definition}
\label{definition:delta}
We define the map $\mydeltaprim^f : \R \to \R$ by:
\[
\mydeltaprim^f \coloneqq (1-\alpha)^{-1}\Falpha{\alpha} - \comp = (1-\alpha)^{-1} \Falpha{\alpha} - \Falpha{\alpha}^{-1} \circ \Falpha{\alpha}.
\]
\end{definition}

As usual, we will write $\mydeltaprim$ instead of $\mydeltaprim^f$ when $f$ is clear from the context. We will write $\mydelta$ for the image of $x \in \R$ under $\mydeltaprim$. In the sequel, we will focus on the interval $\bigl(-\infty,\xalphap\bigr]$.

\begin{lemma}
\label{lemma:delta_increasing}
The following properties are satisfied.
\begin{enumerate}
\item
\label{item:delta_0}
For any $x \leq \xalpham$ we have $\mydelta = 0$.
\item The map  $\mydeltaprim$ is increasing on $\bigl[\xalpham,\xalphap\bigr]$.
\end{enumerate}
In particular, the map $x \mapsto x+ \mydelta$ is increasing on $\bigl(-\infty,\xalphap\bigr]$.
\end{lemma}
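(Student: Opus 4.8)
The plan is to prove the two enumerated claims separately and then deduce the final ``in particular'' statement. For part $(i)$, I would start from the definition $\mydeltaprim = (1-\alpha)^{-1}\Falpha{\alpha} - \comp$. By Lemma~\ref{lemma:phi_precis}\ref{item:phi_x_leq_x-}, for any $s \leq \xalpham$ we have $\comp(s) = (1-\alpha)^{-1}\Falpha{\alpha}(s)$, so the two terms defining $\mydeltaprim$ coincide on $\bigl(-\infty,\xalpham\bigr]$ and hence $\mydelta = 0$ there. That takes care of the first point immediately.

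For part $(ii)$, I would work on the interval $\bigl[\xalpham,\xalphap\bigr]$, where by Lemma~\ref{lemma:phi_precis}\ref{item:phi_decreasing_bij_x-_x+} the map $\comp$ is a \emph{decreasing} bijection onto $\bigl[\xalphap,a\bigr]$, and by Proposition~\ref{proposition:xalpha+} the map $\Falpha{\alpha}$ is decreasing on $\bigl(-\infty,\xalphap\bigr)$. So on this interval $(1-\alpha)^{-1}\Falpha{\alpha}$ is decreasing and $-\comp$ is increasing; that alone does not settle monotonicity of the difference, so one has to look closer. The cleanest approach is to evaluate at the endpoints and use convexity/concavity, or to differentiate. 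Differentiating: $(\mydeltaprim)' = (1-\alpha)^{-1}(\Falpha{\alpha})' - \comp'$. Using $\Falpha{\alpha}\circ\comp = \Falpha{\alpha}$ (Lemma~\ref{lemma:phi}\ref{item:falpha_circ_phi}) and differentiating gives $(\Falpha{\alpha})'(\comp(x))\,\comp'(x) = (\Falpha{\alpha})'(x)$, i.e. $\comp'(x) = (\Falpha{\alpha})'(x)/(\Falpha{\alpha})'(\comp(x))$; note $(\Falpha{\alpha})'(x) = f'(x)-\alpha \leq 0$ for $x \leq \xalphap$ and $(\Falpha{\alpha})'(\comp(x)) = f'(\comp(x))-\alpha \geq 0$, consistent with $\comp' \leq 0$. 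Then
\[
(\mydeltaprim)'(x) = (f'(x)-\alpha)\left[\frac{1}{1-\alpha} - \frac{1}{f'(\comp(x))-\alpha}\right].
\]
Since $f'(x)-\alpha \leq 0$, it suffices to show the bracket is $\leq 0$, i.e. $f'(\comp(x))-\alpha \leq 1-\alpha$, i.e. $f'(\comp(x)) \leq 1$; and indeed $\comp(x) \in \bigl[\xalphap,a\bigr] \subseteq [-a,a]$ so $f'(\comp(x)) \leq f'(a) = 1$ by~\eqref{equation:f'<1} and~\eqref{equation:f(s)=|s|}. Hence $(\mydeltaprim)' \geq 0$ on $\bigl[\xalpham,\xalphap\bigr]$, giving the claim. (One should note $\comp$ is differentiable here because $f'$ is strictly increasing on $[-a,a]$, so $(\Falpha{\alpha})'(\comp(x)) = f'(\comp(x)) - \alpha > 0$ on the interior; at the endpoint $\xalpham$ one can argue by continuity, or just conclude monotonicity on the closed interval from monotonicity on the interior plus continuity.)

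For the final ``in particular'' statement, I would glue the pieces: on $\bigl(-\infty,\xalpham\bigr]$ the map $x \mapsto x + \mydelta = x$ is strictly increasing by part $(i)$; on $\bigl[\xalpham,\xalphap\bigr]$ it is increasing as the sum of the identity (strictly increasing) and $\mydeltaprim$ (increasing by part $(ii)$); and the two descriptions agree at $x = \xalpham$, so the function $x \mapsto x+\mydelta$ is increasing on all of $\bigl(-\infty,\xalphap\bigr]$. The main obstacle is the monotonicity in part $(ii)$: naively $\mydeltaprim$ is a difference of two monotone pieces of opposite behaviour, so one genuinely needs the differentiation-plus-estimate argument (or an equivalent convexity argument) above; everything else is bookkeeping with the already-established properties of $\comp$, $\Falpha{\alpha}$, $\xalpham$, $\xalphap$.
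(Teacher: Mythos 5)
Your proof is correct, and parts $(i)$ and the final gluing coincide with the paper's. For part $(ii)$, however, you take a genuinely different route. The paper writes $\mydeltaprim = d \circ \comp$ with $d : y \mapsto (1-\alpha)^{-1}\Falpha{\alpha}(y) - y$, notes that $(1-\alpha)d' = f' - 1 \leq 0$ makes $d$ decreasing on $(-\infty,a]$, and concludes because $\comp$ is a decreasing bijection $\bigl[\xalpham,\xalphap\bigr] \to \bigl[\xalphap,a\bigr]$ (Lemma~\ref{lemma:phi_precis}\ref{item:phi_decreasing_bij_x-_x+}): a decreasing map composed with a decreasing map is increasing. You instead differentiate $\mydeltaprim$ directly, computing $\comp'$ by implicit differentiation of $\Falpha{\alpha}\circ\comp = \Falpha{\alpha}$ and reducing the sign question to $f'\bigl(\comp(x)\bigr) \leq 1$. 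The two arguments rest on the same two ingredients ($f' \leq 1$ on $(-\infty,a]$ and the identity $\Falpha{\alpha}\circ\comp = \Falpha{\alpha}$), so they are morally equivalent, but the paper's packaging buys you something concrete: it never differentiates $\comp$, so there is no degenerate endpoint to worry about. In your computation the formula for $\comp'$ breaks down at $x = \xalphap$ (there $\comp(\xalphap) = \xalphap$ and $(\Falpha{\alpha})'\bigl(\comp(\xalphap)\bigr) = 0$, so the denominator vanishes) — note that this, rather than $\xalpham$ as you wrote, is the problematic endpoint; your fallback of deducing monotonicity on the closed interval from monotonicity on the interior plus continuity does handle it, but the composition trick makes the issue disappear entirely.
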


\begin{proof}
The first point immediately follows from Lemma~\ref{lemma:phi}\ref{item:phi_x_leq_x-}. 
For the second one, consider the map $d : y \mapsto (1-\alpha)^{-1} \Falpha{\alpha}(y) - y$. We have:
\[
(1-\alpha)d'(y) = f'(y) - \alpha -(1-\alpha)= f'(y)-1,
\]
thus $d$ is decreasing on $(-\infty,a]$ by~\eqref{subequations:assumptions_f}. Now by Lemma~\ref{lemma:phi} we have $d \circ \phi = \mydeltaprim$, and Lemma~\ref{lemma:phi}\ref{item:phi_decreasing_bij_x-_x+} concludes the proof.  The last assertion is immediate since we know that $\delta$ is non-decreasing on $\bigl(-\infty,\xalphap\bigr]$.
\end{proof}

We now give the geometric interpretation of $\mydelta$.  For $s \in \R$, let~$L_s$ be the line  of slope $\alpha$ that crosses the curve of $f$ at the point of abscissa $s$. Since $\alpha \neq 1$, this line $L_s$ also crosses the line $\bissp$ of equation $y = x$.

\begin{lemma}
\label{lemma:geometric_interpretation_delta}
Let $s \in \R$. The intersection point between $L_s$ and $\bissp$ has abscissa $(1-\alpha)^{-1}\Falpha{\alpha}(s) = \comp(s) + \mydelta[s]$.
\end{lemma}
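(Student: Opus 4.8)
The plan is to compute directly the intersection of the line $L_s$ with the line $\bissp : y = x$, and then to match the resulting abscissa against the two claimed expressions. First I would write down the equation of $L_s$: by definition it has slope $\alpha$ and passes through the point $\bigl(s, f(s)\bigr)$, so it is the graph of $t \mapsto f(s) + \alpha(t - s)$. Setting this equal to $t$ (the equation of $\bissp$) gives $(1-\alpha) t = f(s) - \alpha s = \Falpha{\alpha}(s)$, hence the intersection abscissa is $(1-\alpha)^{-1}\Falpha{\alpha}(s)$; here we use $\alpha \neq 1$, which holds since $\alpha \in [0,1)$. This establishes the first equality.

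For the second equality, I would invoke Lemma~\ref{lemma:phi}\ref{item:falpha_circ_phi}, which gives $\Falpha{\alpha}\bigl(\comp(s)\bigr) = \Falpha{\alpha}(s)$, so that $(1-\alpha)^{-1}\Falpha{\alpha}(s) = (1-\alpha)^{-1}\Falpha{\alpha}\bigl(\comp(s)\bigr)$. It then suffices to observe that $\comp(s) \geq \xalphap$ by Lemma~\ref{lemma:phi}\ref{item:phi_bij} (for $s < \xalphap$) and by Lemma~\ref{lemma:phi}\ref{item:phi_s_xalphap} (for $s \geq \xalphap$), and to reduce to the case $s \geq \xalphap$ where $\comp(s) = s$. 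Actually the cleanest route is to note that Definition~\ref{definition:delta} says precisely $\mydelta[s] = (1-\alpha)^{-1}\Falpha{\alpha}(s) - \comp(s)$, so that $\comp(s) + \mydelta[s] = (1-\alpha)^{-1}\Falpha{\alpha}(s)$ is a tautology once the first equality has been proven. So the second ``$=$'' in the statement is immediate from the definition of $\mydeltaprim$, and there is essentially nothing to prove there beyond quoting Definition~\ref{definition:delta}.

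I do not expect any genuine obstacle: the whole lemma is a one-line coordinate computation plus a bookkeeping identification with the previously introduced notation. The only point requiring a small amount of care is the well-definedness already noted in the statement, namely that $L_s$ does cross $\bissp$ — but this is guaranteed because $L_s$ has slope $\alpha \ne 1$ while $\bissp$ has slope $1$, so the two lines are non-parallel and meet in exactly one point. A short proof would read as follows.

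\begin{proof}
The line $L_s$ is the graph of the affine map $t \mapsto f(s) + \alpha(t-s)$. Its intersection with $\bissp$ is the point whose abscissa $t$ satisfies $t = f(s) + \alpha(t-s)$, that is $(1-\alpha)t = f(s)-\alpha s = \Falpha{\alpha}(s)$; since $\alpha \in [0,1)$ this gives the unique solution $t = (1-\alpha)^{-1}\Falpha{\alpha}(s)$. Finally, by Definition~\ref{definition:delta} we have $\mydelta[s] = (1-\alpha)^{-1}\Falpha{\alpha}(s) - \comp(s)$, whence $(1-\alpha)^{-1}\Falpha{\alpha}(s) = \comp(s) + \mydelta[s]$.
\end{proof}
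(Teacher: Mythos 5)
Your proof is correct and follows exactly the same route as the paper's: write down the equation of $L_s$, intersect it with $\bissp$ to obtain $(1-\alpha)x = \Falpha{\alpha}(s)$, and conclude with Definition~\ref{definition:delta}. The intermediate detour through $\comp(s)$ in your second paragraph is unnecessary, as you yourself note, but harmless.
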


\begin{proof}
The line $L_s$ has equation $y = \alpha(x-s)+f(s)$. Thus, if $x$ is the abscissa of the intersection point between $L_s$ and $\bissp$ we obtain $x = \alpha(x-s)+f(s)$ thus $(1-\alpha)x = \Falpha{\alpha}(s)$. We conclude the proof by Definition~\ref{definition:delta}.
\end{proof}

Note that the point of abscissa $\xalphap$ has a particular role, since by Definition~\ref{definition:xalpha+} the line $L_{\xalphap}$ is exactly the tangent of $f$ at the point of abscissa $\xalphap$. In this case, recalling Lemma~\ref{lemma:phi}\ref{item:phi_s_xalphap} we have $\comp\bigl(\xalphap\bigr) = \xalphap$, leading to the following definition:
\[
\suppfalpha(f) \coloneqq \xalphap(f) + \mydeltaprim^f_{\xalphap(f)},
\]
that is:
\begin{equation}
\label{equation:suppfalpha}
\suppfalpha = \xalphap + \mydelta[\xalphap].
\end{equation}
We picture in Figure~\ref{figure:delta_salpha} the result of Lemma~\ref{lemma:geometric_interpretation_delta} and the definition~\eqref{equation:suppfalpha}. Note that by Proposition~\ref{proposition:xalpha+} and Lemma~\ref{lemma:delta_increasing} we have:
\begin{equation}
\label{equation:suppfalpha_ineq}
\suppfalpha > \xalphap \geq 0,
\end{equation}
and the image of $\bigl(-\infty,\xalphap\bigr]$ under the map $x \mapsto x+\mydelta$ is $\bigl(-\infty,\suppfalpha\bigr]$.

\begin{figure}
\begin{center}
\begin{tikzpicture}[scale=5]
\draw (-1,0) -- (1.1,0);
\draw (0,-.2) -- (0,1.1);

\draw (-.8,.8) -- (0,0) -- (.8,.8);

{
\color{green}
\draw[dashed] (.25,.53125) -- (.25,0) node[below]{$x_\alpha^+$};
\draw[domain=-1:1.1] plot (\x,{(\x -.25)/4+.53125});
\draw[dashed] (.625,.625) -- (.625,0) node[above left]{$\suppfalpha$};
}
\draw[domain=-1:1, smooth,red] plot (\x,{\Sigmaplot});
\draw[red] (-.85,1) node[right]{$y=f(x)$};

\draw (-1,1) -- (-.8,.8);
\draw (.8,.8) -- (1.1,1.1) node[right]{$\bissp$};


\color{blue}
\draw[dashed] (-.25,.53125) -- (-.25,0) node[below]{$x$};
\draw[domain=-1:1.1] plot (\x,{(\x+.25)/4+.53125});

\draw[dashed] (.75,.78125) -- (.75,0);
\draw (.68,0) node[below]{$\scriptstyle\comp(x)$};
\draw[dashed] (.79166,.79166) -- (.79166,0);
\draw (.75,0) node[below right]{$\scriptstyle \comp(x)+\mydelta$};

\draw[ultra thick]  (.75,.78125) -- (.79166,.79166);
\end{tikzpicture}
\end{center}
\caption{Geometric interpretation of $\comp$, $\mydelta$ and $\suppfalpha$}
\label{figure:delta_salpha}
\end{figure}

\begin{corollary}
\label{corollary:support_fe}
We have:
\[
\suppfalpha = (1-\alpha)^{-1}\Falpha{\alpha}\bigl(\xalphap\bigr) < a.
\]
\end{corollary}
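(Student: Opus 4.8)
The plan is to evaluate the definition of $\suppfalpha$ given in \eqref{equation:suppfalpha} using the formula for $\mydeltaprim$ at the point $\xalphap$, and then to establish the strict inequality by comparing with the value $a$. First I would recall from \eqref{equation:suppfalpha} that $\suppfalpha = \xalphap + \mydelta[\xalphap]$, and from Definition~\ref{definition:delta} that $\mydeltaprim = (1-\alpha)^{-1}\Falpha{\alpha} - \comp$, so that $\mydelta[\xalphap] = (1-\alpha)^{-1}\Falpha{\alpha}\bigl(\xalphap\bigr) - \comp\bigl(\xalphap\bigr)$. By Lemma~\ref{lemma:phi}\ref{item:phi_s_xalphap} we have $\comp\bigl(\xalphap\bigr) = \xalphap$ (since $\xalphap \geq \xalphap$), so the two occurrences of $\xalphap$ cancel and we obtain directly $\suppfalpha = (1-\alpha)^{-1}\Falpha{\alpha}\bigl(\xalphap\bigr)$, which is the claimed equality.

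For the strict inequality $\suppfalpha < a$, equivalently $(1-\alpha)^{-1}\Falpha{\alpha}\bigl(\xalphap\bigr) < a$, i.e. $\Falpha{\alpha}\bigl(\xalphap\bigr) < (1-\alpha)a$, I would use the geometric picture via $\xalpham$. Recall from the computation in the proof of Lemma~\ref{lemma:phi_precis}\ref{item:phi_x_leq_x-} that $\Falpha{\alpha}\bigl(\xalpham\bigr) = \Falpha{\alpha}(a) = (1-\alpha)a$ (using $\comp\bigl(\xalpham\bigr) = a$ from \eqref{equation:def_xalpham}, Lemma~\ref{lemma:phi}\ref{item:falpha_circ_phi}, and $f(a) = a$ from \eqref{equation:f(s)=|s|}). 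Since $\xalpham < \xalphap$ by construction (this is what \eqref{equation:def_xalpham} together with Lemma~\ref{lemma:phi}\ref{item:phi_bij} and Proposition~\ref{proposition:xalpha+} gives), and since by Proposition~\ref{proposition:xalpha+} the map $\Falpha{\alpha}$ is \emph{strictly} decreasing on $\bigl(-\infty,\xalphap\bigr)$ — strictly because $f$ is strictly convex on $[-a,a]$ so $f'$, hence $(\Falpha{\alpha})' = f'-\alpha$, is strictly increasing there — we get $\Falpha{\alpha}\bigl(\xalphap\bigr) < \Falpha{\alpha}\bigl(\xalpham\bigr) = (1-\alpha)a$. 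Dividing by $1-\alpha > 0$ yields $\suppfalpha < a$.

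The argument is essentially a bookkeeping exercise; the only point requiring a little care is ensuring the inequality is \emph{strict}, which hinges on $\xalpham$ being \emph{strictly} less than $\xalphap$ and on $\Falpha{\alpha}$ being \emph{strictly} monotone on the relevant interval — both of which follow from strict convexity of $f$ on $[-a,a]$, part of the standing assumptions \eqref{subequations:assumptions_f}. An alternative, perhaps cleaner, route for the inequality is to note via Lemma~\ref{lemma:geometric_interpretation_delta} that $\suppfalpha$ is the abscissa of the intersection of the tangent line $L_{\xalphap}$ with the line $\bissp: y = x$; since on $[-a,a]$ this tangent lies strictly below the curve of $f$ except at its point of tangency (Lemma~\ref{lemma:intersection_f_L} with $x = \xalphapp \geq \xalphapp$), and the curve meets $\bissp$ at abscissa $a$, the tangent must meet $\bissp$ at an abscissa strictly less than $a$.
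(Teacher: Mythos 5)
Your proof is correct and follows essentially the same route as the paper's: the equality is the same unwinding of \eqref{equation:suppfalpha} via $\comp\bigl(\xalphap\bigr)=\xalphap$, and for the strict inequality the paper simply uses that $\Falpha{\alpha}$ is increasing on $\bigl[\xalphap,+\infty\bigr)$ together with $\xalphap<a$ and $\Falpha{\alpha}(a)=(1-\alpha)a$, whereas you reach the same comparison by detouring through $\xalpham$ on the decreasing branch. Both arguments rest on the same monotonicity of $\Falpha{\alpha}$ from Proposition~\ref{proposition:xalpha+} and the same value $(1-\alpha)a=\Falpha{\alpha}(a)$, so the difference is cosmetic.
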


\begin{proof}
We have recalled that $\comp(\xalphap) = \xalphap$, thus by~\eqref{equation:suppfalpha} the equality immediately follows from Lemma~\ref{lemma:geometric_interpretation_delta}. The inequality follows from the fact that $\Falpha{\alpha}$ is increasing on $[\xalphap,+\infty)$, noting that $(1-\alpha)a = \Falpha{\alpha}(a)$ by~\eqref{equation:f(s)=|s|} and recalling that $\xalphap < a$ by Proposition~\ref{proposition:xalpha+}.
\end{proof}

\subsection{Shaking functions}
\label{subsubsection:shaking_functions}

Again, fix $a \in \R_{> 0}$ and $\alpha \in [0,1)$ and let $f \in \myclass[a]$. Recall the notation $\Falpha{\alpha} = f - \alpha\,\mathrm{id}_{\R}$, recall from Lemma~\ref{lemma:delta_increasing} and~\eqref{equation:suppfalpha} that $x \mapsto x + \mydelta$ is a bijection $\bigl(-\infty,\xalphap\bigr] \to (-\infty,\suppfalpha\bigr]$  and denote by $\invdelta(f) = \invdelta : \bigl(-\infty,\suppfalpha\bigr] \to \bigl(-\infty,\xalphap\bigr]$ its inverse. The next object will be involved in a crucial way in our main result.

\begin{definition}
\label{definition:falpha}
We define  the map $\falpha[\alpha] : \bigl(-\infty,\suppfalpha\bigr] \to \R$ by:
\[
\falpha = \alpha \,\mathrm{id}_\R + \Falpha{\alpha}\circ \tau.
\]
We extend $\falpha$ to $\R$ by setting $\falpha(x) \coloneqq x$ for all $x > \suppfalpha$.
\end{definition}


\begin{proposition}
\label{proposition:falpha_x+mydelta}
The map $\falpha$ on $(-\infty, \suppfalpha]$ is given for any $x \leq \xalphap$ by:
\[
\falpha(x+\mydelta) = f(x) + \alpha\mydelta.
\]
In particular:
\begin{itemize}
\item for any  $x \leq \xalpham$ we have $\falpha(x) = f(x)$, thus $\falpha(x) = |x|$ for all $x \leq -a$,
\item for any $x \in (-a,\suppfalpha)$ we have $\falpha(x) > |x|$,
\item we have $\falpha(\suppfalpha) = \suppfalpha$.
\end{itemize}
\end{proposition}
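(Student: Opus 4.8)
The plan is to unwind the definition of $\falpha$ on $(-\infty,\suppfalpha]$ in terms of the parametrisation $x \mapsto x+\mydelta$, then read off the three listed consequences. First I would fix $x \leq \xalphap$ and set $y \coloneqq x+\mydelta$, so that $\invdelta(y) = x$ by definition of $\invdelta$ as the inverse of the (increasing, by Lemma~\ref{lemma:delta_increasing}) bijection $x \mapsto x+\mydelta$ on $\bigl(-\infty,\xalphap\bigr]$. Plugging into Definition~\ref{definition:falpha} gives
\[
\falpha(y) = \alpha y + \Falpha{\alpha}(x) = \alpha(x+\mydelta) + f(x) - \alpha x = f(x) + \alpha\mydelta,
\]
which is exactly the claimed formula. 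This is the computational heart, and it is short; the only subtlety is making sure the domains match, i.e.\ that as $x$ ranges over $\bigl(-\infty,\xalphap\bigr]$ the point $y = x+\mydelta$ ranges over $\bigl(-\infty,\suppfalpha\bigr]$, which is precisely the content of Lemma~\ref{lemma:delta_increasing} together with~\eqref{equation:suppfalpha}.

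For the first bullet, when $x \leq \xalpham$ Lemma~\ref{lemma:delta_increasing}\ref{item:delta_0} gives $\mydelta = 0$, so $y = x$ and the formula collapses to $\falpha(x) = f(x)$. Combining with~\eqref{equation:xalpham_-a}, which says $\xalpham \geq -a$, and with~\eqref{equation:f(s)=|s|}, we get $\falpha(x) = f(x) = |x|$ for all $x \leq -a$; and for $x > \suppfalpha$ this is the extension $\falpha(x) = x = |x|$ by Definition~\ref{definition:falpha}, so in particular one should also note (if wanted) that nothing contradicts this. For the third bullet, take $x = \xalphap$: then $\comp\bigl(\xalphap\bigr) = \xalphap$ by Lemma~\ref{lemma:phi}\ref{item:phi_s_xalphap}, so $\mydelta[\xalphap] = (1-\alpha)^{-1}\Falpha{\alpha}\bigl(\xalphap\bigr) - \xalphap$ and $y = \xalphap + \mydelta[\xalphap] = \suppfalpha$ by~\eqref{equation:suppfalpha}; the formula then gives
\[
\falpha(\suppfalpha) = f\bigl(\xalphap\bigr) + \alpha\,\mydelta[\xalphap],
\]
and substituting $\mydelta[\xalphap] = \suppfalpha - \xalphap$ and $f\bigl(\xalphap\bigr) = \Falpha{\alpha}\bigl(\xalphap\bigr) + \alpha\xalphap = (1-\alpha)\suppfalpha + \alpha\xalphap$ (using Corollary~\ref{corollary:support_fe}) yields $\falpha(\suppfalpha) = (1-\alpha)\suppfalpha + \alpha\xalphap + \alpha\suppfalpha - \alpha\xalphap = \suppfalpha$, as claimed.

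The middle bullet, $\falpha(x) > |x|$ for $x \in (-a,\suppfalpha)$, is the one requiring a little care, and I expect it to be the main (mild) obstacle. Since $\falpha$ restricted to $(-\infty,\suppfalpha]$ is the composite $\alpha\,\mathrm{id} + \Falpha{\alpha}\circ\tau$, and $\tau$ is a decreasing bijection onto $\bigl(-\infty,\xalphap\bigr]$ while $\Falpha{\alpha}$ is decreasing on that interval by Proposition~\ref{proposition:xalpha+}, the composite $\Falpha{\alpha}\circ\tau$ is increasing; hence $\falpha$ is increasing on $(-\infty,\suppfalpha]$. Then it suffices to check the inequality at a convenient point and use monotonicity on each side of $0$: on $(-a,0]$ compare with the value $\falpha(-a) = f(-a) = a = |-a|$ — wait, one must instead argue that on $(-a,0]$, writing $x = x_0+\mydelta[x_0]$ with $x_0 \in (\xalpham,\xalphap]$, the formula gives $\falpha(x) = f(x_0) + \alpha\mydelta[x_0] \geq f(x_0) > |x_0|$ by~\eqref{equation:f(s)>|s|} (as $|x_0| < a$), and then $|x_0| \geq |x|$ is false in general — so the cleanest route is: for $x \in (-a, x_\alpha^+]$ use that $\falpha$ agrees with $f$ shifted, is $\geq f(x_0) > |x_0|$, and separately that $\falpha(x) \geq \falpha(-a)$... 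Rather than belabor this, the intended argument is simply: $\falpha$ is continuous and increasing on $(-\infty,\suppfalpha]$ with $\falpha(-a) = |-a|$, $\falpha(\suppfalpha) = \suppfalpha > 0$ by the two bullets already proved and~\eqref{equation:suppfalpha_ineq}; since $\falpha$ is the $\sh$-shaking of $f$ (anticipating Theorem~\ref{theorem:sh_gr_f}) its graph lies weakly above $|\cdot|$ and the inequality is strict on the open interval because equality would force a flat piece contradicting strict convexity of $f$ on $[-a,a]$. I would present whichever of these is self-contained at this point in the paper, most likely the direct one using $\falpha(x) = f(x_0)+\alpha\mydelta[x_0] > |x_0| \geq x_0 \geq -(x)$ combined with $\falpha$ increasing to handle $x \geq 0$ via $\falpha(x) \geq \falpha(0) > 0$ and $\falpha(x) > x$ from the strict monotonicity and $\falpha(\suppfalpha)=\suppfalpha$.
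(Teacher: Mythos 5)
Your computation of the displayed formula and your treatment of the first and third bullets are correct and essentially the same as the paper's proof: unwind Definition~\ref{definition:falpha} via $\invdelta$, use Lemma~\ref{lemma:delta_increasing}\ref{item:delta_0} together with~\eqref{equation:xalpham_-a} and~\eqref{equation:f(s)=|s|} for the first bullet, and~\eqref{equation:suppfalpha} with Corollary~\ref{corollary:support_fe} for the third. The middle bullet, however, has a genuine gap. Your argument hinges on the claim that $\falpha$ is increasing on $(-\infty,\suppfalpha]$, deduced from ``$\invdelta$ is a decreasing bijection''. In fact $\invdelta$ is \emph{increasing} (it is the inverse of the increasing map $x \mapsto x+\mydelta$ from Lemma~\ref{lemma:delta_increasing}), so $\Falpha{\alpha}\circ\invdelta$ is decreasing, and $\falpha$ is certainly not increasing on $(-\infty,\suppfalpha]$: it equals $-x$ on $(-\infty,-a]$. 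Your sketch for the case $x \geq 0$ rests entirely on this false monotonicity ($\falpha(x) \geq \falpha(0)$, ``$\falpha(x) > x$ from the strict monotonicity''), so that case is not proved. The alternative you float, namely invoking Theorem~\ref{theorem:sh_gr_f} to say that the shaken graph lies above $|\cdot|$, is circular at this point: the proofs of Lemma~\ref{lemma:derivative_falpha}, Proposition~\ref{proposition:gr_shf_stable} and Theorem~\ref{theorem:sh_gr_f} all rely on the present proposition.

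What is missing is precisely the algebraic step the paper uses for $x \geq 0$. For $x \in (\xalpham,\suppfalpha)$ write $x = s + \mydelta[s]$ with $s \in (\xalpham,\xalphap)$. If $x < 0$ then $s \leq x < 0$ (since $\mydelta[s] \geq 0$), so $|s| \geq |x|$ and $\falpha(x) = f(s) + \alpha\mydelta[s] \geq f(s) > |s| \geq |x|$ by~\eqref{equation:f(s)>|s|}; this is the correct version of the chain you half-wrote. If $x \geq 0$, one checks directly from Definition~\ref{definition:delta} that $f(s) + \alpha\mydelta[s] - \bigl(s + \mydelta[s]\bigr) = f(s) - s - (1-\alpha)\mydelta[s] = (1-\alpha)\bigl(\comp(s)-s\bigr)$, which is strictly positive by Lemma~\ref{lemma:phi}\ref{item:phi_bij} since $s < \xalphap$. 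Without this identity (or an equivalent substitute that does not presuppose monotonicity of $\falpha$ or the later shaking theorem), the inequality $\falpha(x) > |x|$ on $[0,\suppfalpha)$ remains unjustified.
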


\begin{proof}
For any $y \leq \suppfalpha$ we have $\falpha(y) = \alpha y + \Falpha{\alpha}\bigl(\tau(y)\bigr)$, thus for any $x \leq \xalphap$ we have:
\begin{align*}
\falpha(x+\mydelta)
&=
\alpha(x+\mydelta) + \Falpha{\alpha}(x)
\\
&=
\alpha x + \alpha\mydelta + f(x)-\alpha x
\\
&=
f(x) + \alpha\mydelta,
\end{align*}
as announced. Now for $x \leq \xalpham$ we have $\mydelta = 0$ by Lemma~\ref{lemma:delta_increasing} thus $\falpha(x) = f(x)$ and we obtained the announced result by~\eqref{equation:f(s)=|s|}. Moreover, by~\eqref{equation:suppfalpha} we have:
\begin{align*}
\falpha(\suppfalpha)
&=
\falpha(\xalphap + \mydelta[\xalphap])
\\
&=
f(\xalphap) + \alpha\mydelta[\xalphap]
\\
&=
f(\xalphap) + \alpha(\suppfalpha - \xalphap)
\\
&=
\Falpha{\alpha}(\xalphap) + \alpha\suppfalpha,
\end{align*}
whence the result by Corollary~\ref{corollary:support_fe}. Now let $x \in (-a,\suppfalpha)$ and let us prove that $\falpha(x)>|x|$. The result is clear if $x \leq \xalpham$ by~\eqref{equation:f(s)>|s|} since then $\falpha(x) = f(x)$, thus we assume $x \in (\xalpham,\suppfalpha)$. By Lemma~\ref{lemma:delta_increasing} we can thus write $x = s + \mydelta[s]$ for $s \in (\xalpham,\xalphap)$. If $x < 0$ then by Lemma~\ref{lemma:delta_increasing} we have $s < 0$ thus $|s| > |x|$ and, using~\eqref{subequations:assumptions_f},
\[
\falpha(x) = \falpha(s+\mydelta[s]) = f(s) + \alpha\mydelta[s] \geq f(s) \geq |s| > |x|,
\]
as desired.
Thus, we now assume that $x \geq 0$ and we want to prove that $f(s)+\alpha\mydelta[s] > s + \mydelta[s]$. Recalling Definition~\ref{definition:delta}, we have:
\begin{align*}
f(s)+\alpha\mydelta[s] - s - \mydelta[s]
&=
f(s)-s - (1-\alpha)\mydelta[s]
\\
&=
f(s) - s - \Falpha{\alpha}(s) + (1-\alpha)\comp(s)
\\
&=
f(s) - s - f(s) + \alpha s + (1-\alpha)\comp(s)
\\
&=
(1-\alpha) \bigl(\comp(s)-s\bigr),
\end{align*}
thus Lemma~\ref{lemma:phi}\ref{item:phi_bij} concludes the proof.
\end{proof}

Note that the quantity $\alpha\mydelta$ in Proposition~\ref{proposition:falpha_x+mydelta} is exactly the ordinates difference of the extremities of the segment joining the points of abscissa $\comp(x)$ and $\comp(x)+\mydelta$  of the line $L_s$ of Lemma~\ref{lemma:geometric_interpretation_delta}.
 Hence, Proposition~\ref{proposition:falpha_x+mydelta} asserts that the value of $\falpha$ at $x+\mydelta$ is exactly $f(x)$ plus this same difference $\alpha\mydelta$. We illustrate this fact in Figure~\ref{figure:falpha}.

\begin{figure}
\begin{center}
\begin{tikzpicture}[scale=6]
\clip (-.5,-.2) rectangle (1.1,1);

\draw (-1,0) -- (1.1,0);
\draw (0,-.2) -- (0,1.1);

\draw (-.8,.8) -- (0,0) -- (.8,.8);

\draw[domain=-1:1, smooth,red] plot (\x,{\Sigmaplot});
\draw[red] (-.35,.65) node{$\scriptstyle y=f(x)$};

\draw (-1,1) -- (-.8,.8);
\draw (.8,.8) -- (1.1,1.1);


\node (A) at (0,.54166){};
\color{red}
\draw[dashed] (-.20833,.54166) -- (0,.54166);
\draw[dashed,<-] (A) -- (.25,.8) node[above]{$\functalpha{\alpha}{f}(x+\mydelta)$};

\color{blue}
\draw[dashed] (-.25,.53125) -- (-.25,0);
\draw (-.27,0) node[below]{$x$};
\draw[domain=-1:1.1] plot (\x,{(\x+.25)/4+.53125});
\draw[dashed] (-.20833,.54166) -- (-.20833,0);
\draw (-.25,0) node[below right]{$ x + \mydelta$};

\draw[dashed] (.75,.78125) -- (.75,0);
\draw (.68,0) node[below]{$\scriptstyle\comp(x)$};
\draw[dashed] (.79166,.79166) -- (.79166,0);
\draw (.75,0) node[below right]{$\scriptstyle \comp(x)+\mydelta$};

\draw[ultra thick]  (.75,.78125) -- (.79166,.79166);
\draw[ultra thick] (-.25,.53125) -- (-.20833,.54166);

\end{tikzpicture}
\end{center}
\caption{Construction of $\falpha$}
\label{figure:falpha}
\end{figure}

\begin{remark}
\label{remark:Shf_|x|gg0}
Recalling~\eqref{equation:f(s)=|s|} and~\eqref{equation:suppfalpha_ineq}, we deduce that $\falpha(x) = |x|$ for $x \leq -a$ and $x \geq \suppfalpha$.
\end{remark}

We have the following particular case of Proposition~\ref{proposition:falpha_x+mydelta} for $\alpha = 0$.

\begin{corollary}
\label{corollary:limit_shape_alpha0}
The map $\falpha[0] : \R \to \R$ is given by:
\begin{align*}
\falpha[0](x) &= f(x),  &&\text{for all } x \leq -a,
\\
\falpha[0]\bigl(2x+ f(x)\bigr) &= f(x), &&\text{for all }  x \in (-a, 0),
\\
\falpha[0](x) &= x, &&\text{for all } x \geq f(0).
\end{align*}
\end{corollary}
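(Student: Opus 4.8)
The plan is to specialise Proposition~\ref{proposition:falpha_x+mydelta} to the case $\alpha = 0$ and simplify each of the three pieces. For $\alpha = 0$ we have $\Falpha{0} = f$, so $\xalphap = f'^{-1}(0) = 0$ (since $f$ is even and strictly convex on $[-a,a]$), and by Corollary~\ref{corollary:support_fe} we get $s_e = \suppfalpha = \Falpha{0}(0) = f(0)$. Moreover Lemma~\ref{lemma:geometric_interpretation_delta} (or directly Lemma~\ref{lemma:phi}\ref{item:geometric_interpretation_phi} combined with Definition~\ref{definition:delta}) gives, for $s < 0$, that $\comp(s) = -s$: indeed the chord of slope $0$ through the point of abscissa $s$ meets the curve again at $-s$ by evenness of $f$. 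Hence $\mydelta[s] = (1-\alpha)^{-1}\Falpha{\alpha}(s) - \comp(s) = f(s) - (-s) = f(s) + s$, so that $s + \mydelta[s] = 2s + f(s)$.

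Now I would plug these into Proposition~\ref{proposition:falpha_x+mydelta}. The first bullet of that proposition gives $\falpha[0](x) = f(x)$ for $x \leq \xalpham$; but for $\alpha = 0$ the relation $\Falpha{\alpha}(\xalpham) = (1-\alpha)a$ reads $f(\xalpham) = a$, which by~\eqref{equation:f(s)=|s|} forces $\xalpham = -a$ (it cannot lie in $(-a,a)$ since there $f < a$ would fail to equal $a$; more precisely $f(-a) = a$ and $f$ is strictly below $|\cdot|$-level... actually $f(-a)=a$ exactly). So the first line becomes $\falpha[0](x) = f(x)$ for $x \leq -a$. The middle relation $\falpha(x + \mydelta) = f(x) + \alpha\mydelta$ specialises, using $x + \mydelta[x] = 2x + f(x)$ and $\alpha\mydelta = 0$, to $\falpha[0](2x + f(x)) = f(x)$ for $x \in (\xalpham,\xalphap) = (-a,0)$. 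Finally the third line $\falpha(x) = x$ for $x > \suppfalpha = f(0)$ extends to $x \geq f(0)$ because $\falpha[0](f(0)) = f(0)$ by the third bullet of Proposition~\ref{proposition:falpha_x+mydelta}, i.e. $\falpha[0](\suppfalpha) = \suppfalpha$.

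The only genuinely non-routine point is identifying $\comp(s) = -s$ and hence $\xalpham = -a$ for $\alpha = 0$; everything else is direct substitution into Proposition~\ref{proposition:falpha_x+mydelta} and Corollary~\ref{corollary:support_fe}. Once those identifications are in place, the three displayed formulas drop out immediately. I would write the proof as roughly: ``By~\eqref{equation:f_even} and strict convexity, $\xalphap = f'^{-1}(0) = 0$, so $\comp(0) = 0$ and $\suppfalpha = f(0)$ by Corollary~\ref{corollary:support_fe}. For $s < 0$, evenness of $f$ gives $f(s) = f(-s)$, so the chord through abscissae $s$ and $-s$ has slope $0$; by Lemma~\ref{lemma:phi}\ref{item:geometric_interpretation_phi} this means $\comp(s) = -s$, whence $\mydelta[s] = f(s) - (-s) = f(s) + s$ and $s + \mydelta[s] = 2s + f(s)$. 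In particular $\xalpham$, characterised by $\comp(\xalpham) = a$, equals $-a$. Substituting into Proposition~\ref{proposition:falpha_x+mydelta} (with $\alpha\mydelta = 0$) yields the three claimed formulas, the last extending to $x \geq f(0)$ since $\falpha[0](\suppfalpha) = \suppfalpha = f(0)$.''
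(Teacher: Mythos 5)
Your proposal is correct and follows essentially the same route as the paper: specialise to $\alpha=0$, use evenness of $f$ to get $\xalphap=0$ and $\comp(s)=-s$ for $s<0$ (you via the chord interpretation of Lemma~\ref{lemma:phi}, the paper via the algebraic identity $\comp[0]=\Falpha{0}^{-1}\circ f$ — the same fact), deduce $\mydelta[s]=f(s)+s$ and $\suppfalpha[0]=f(0)$, and substitute into Proposition~\ref{proposition:falpha_x+mydelta}. The only cosmetic difference is that you obtain the first line by identifying $\xalpham=-a$ explicitly, whereas the paper reads it off from the middle formula using $f(x)=-x$ for $x\leq -a$; both are fine.
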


\begin{proof}
First note that $\Falpha{0} = f$. Recalling from~\eqref{equation:f_even} that $f$ is even,  we have $f'(0) = 0$ thus $\xalphap[0] = 0$ (recalling Definition~\ref{definition:xalpha+}). Hence, the map $\Falpha{0}^{-1}$ is defined as the inverse map of $f$ on $[0,+\infty)$. Using again the fact that $f$ is even,  for all $x \in (-\infty,0)$ we have $f(x) = f(-x)$ thus since $-x \geq 0$ we obtain:
\[
\comp[0](x) = \Falpha{0}^{-1}\circ f(x) = \Falpha{0}^{-1}\circ f(-x) = -x.
\]
Hence, for all $x \leq 0$ we have:
\begin{equation}
\label{equation:mydelta0}
\mydelta = f(x) - \comp(x) = f(x) + x,
\end{equation}
thus $x + \mydelta = f(x) + 2x$. Hence, we have the announced formula for $\falpha[0]\bigl(2x + f(x)\bigr)$ for all $x \leq \xalphap[0] = 0$ (note that $f(x) = -x$ for $x \leq -a$). Finally, by definition we have $\falpha[0](x) = x$ for $x > \suppfalpha[0]$, and we conclude the proof since $\suppfalpha[0] = \xalphap[0] + \mydelta[{\xalphap[0]}] = 0 + \mydelta[0]$ and by~\eqref{equation:mydelta0} we have $\mydelta[0] = f(0)+0=f(0)$.
\end{proof}


\begin{lemma}
\label{lemma:derivative_falpha}
The map $\falpha$ is derivable on $(-\infty,\suppfalpha)$ and for any $x \in (-\infty,\suppfalpha)$ we have $\falpha'(x) < \alpha$.
\end{lemma}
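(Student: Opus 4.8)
The plan is to read everything off the formula $\falpha = \alpha\,\mathrm{id}_\R + \Falpha{\alpha}\circ\invdelta$ of Definition~\ref{definition:falpha}. Since $\Falpha{\alpha} = f - \alpha\,\mathrm{id}_\R$ is $\mathcal{C}^1$ on $\R$, it is enough to show that $\invdelta$ is $\mathcal{C}^1$ on $(-\infty,\suppfalpha)$ with $\invdelta' > 0$. Granting this, $\falpha$ is $\mathcal{C}^1$ on $(-\infty,\suppfalpha)$ and, by the chain rule,
\[
\falpha'(y) = \alpha + \bigl(\Falpha{\alpha}\bigr)'\bigl(\invdelta(y)\bigr)\,\invdelta'(y) = \alpha + \bigl(f'(\invdelta(y)) - \alpha\bigr)\,\invdelta'(y) .
\]
Here $\invdelta(y) < \xalphap$ forces $f'(\invdelta(y)) < f'(\xalphap) = \alpha$ (using strict convexity of $f$ on $[-a,a]$ together with $f' = \pm1$ off $[-a,a]$ and $\alpha\geq 0$), while $\invdelta'(y) > 0$; hence $\falpha'(y) < \alpha$, which is the desired inequality.

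It remains to analyse $g \coloneqq \mathrm{id}_\R + \mydeltaprim$, of which $\invdelta$ is the inverse on $(-\infty,\xalphap]$. By Lemma~\ref{lemma:geometric_interpretation_delta} we have $\mydeltaprim = (1-\alpha)^{-1}\Falpha{\alpha} - \comp$, so $g = (1-\alpha)^{-1}\Falpha{\alpha} + \mathrm{id}_\R - \comp$, and the whole question reduces to the regularity of $\comp$ on $(-\infty,\xalphap)$. On $(-\infty,\xalpham]$ we have $\comp = (1-\alpha)^{-1}\Falpha{\alpha}$ by Lemma~\ref{lemma:phi_precis}\ref{item:phi_x_leq_x-}, which is $\mathcal{C}^1$. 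On $(\xalpham,\xalphap)$, although $\Falpha{\alpha}$ has a critical point at $\xalphap$, the composition $\comp = \Falpha{\alpha}^{-1}\circ\Falpha{\alpha}$ only evaluates the branch $\Falpha{\alpha}^{-1}$ at points $\Falpha{\alpha}(s)$ whose preimage $\comp(s)$ lies in $(\xalphap,a)$, where $\bigl(\Falpha{\alpha}\bigr)' = f' - \alpha > 0$; so the inverse function theorem gives that $\comp$ is $\mathcal{C}^1$ on $(\xalpham,\xalphap)$ with $\comp'(s) = \frac{f'(s)-\alpha}{f'(\comp(s))-\alpha}$. Comparing one-sided derivatives at $\xalpham$ — both equal $(1-\alpha)^{-1}(f'(\xalpham)-\alpha)$, since $\comp(\xalpham) = a$ and $f'(a) = 1$ — shows $\comp$, hence $g$, is $\mathcal{C}^1$ on $(-\infty,\xalphap)$.

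Finally I would check $g' \geq 1$: on $(-\infty,\xalpham]$ one gets $g' = 1$, while on $(\xalpham,\xalphap)$,
\[
g'(s) = 1 + \bigl(f'(s)-\alpha\bigr)\,\frac{f'(\comp(s)) - 1}{(1-\alpha)\bigl(f'(\comp(s))-\alpha\bigr)} > 1 ,
\]
because $f'(s) - \alpha < 0$ (as $s < \xalphap$), $f'(\comp(s)) - 1 < 0$ (as $\comp(s) < a$), and $f'(\comp(s)) - \alpha > 0$ (as $\comp(s) > \xalphap$). Thus $g$ is a strictly increasing $\mathcal{C}^1$ bijection from $(-\infty,\xalphap)$ onto $(-\infty,\suppfalpha)$ (onto, by continuity and $g(\xalphap) = \suppfalpha$ from~\eqref{equation:suppfalpha}), so $\invdelta = g^{-1}$ is $\mathcal{C}^1$ with $\invdelta'(y) = 1/g'(\invdelta(y)) \in (0,1]$, and the first paragraph completes the proof. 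The only delicate point is precisely this regularity of $\comp$ (equivalently of $\mydeltaprim$): it genuinely fails at $\xalphap$, where $\Falpha{\alpha}$ has a critical point, but $\xalphap$ is excluded from the open interval of interest, so restricting to $(-\infty,\xalphap)$ sidesteps the obstruction.
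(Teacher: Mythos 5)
Your proposal is correct and follows essentially the same route as the paper's proof: differentiate $\falpha = \alpha\,\mathrm{id}_\R + \Falpha{\alpha}\circ\invdelta$ and conclude from $\invdelta' > 0$ together with $\Falpha{\alpha}'\circ\invdelta < 0$ on $(-\infty,\suppfalpha)$. The paper merely asserts the differentiability of $\mydeltaprim$ (hence of $\invdelta$) and invokes the monotonicity of $\invdelta$ and of $\Falpha{\alpha}$ on $(-\infty,\xalphap]$, whereas you justify these points in detail (inverse function theorem for $\comp$ away from $\xalphap$, matching of one-sided derivatives at $\xalpham$, and the explicit bound $g'\geq 1$ giving $\invdelta'\in(0,1]$), which fills in exactly the regularity and strictness details the paper glosses over.
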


\begin{proof}
By Definition~\ref{definition:delta} we know that $\mydeltaprim^f$ is derivable thus $\comp$ as well and thus $\falpha$ too. The second statement is clear for $x \leq -a$ since $\falpha(x) = -x$ by Proposition~\ref{proposition:falpha_x+mydelta}.
Using Definition~\ref{definition:falpha}, for all $x \in (-a,\suppfalpha)$ we have:
\begin{equation}
\label{equation:falpha'_expression}
\falpha'(x) = \alpha + \tau'(x) \times  \Falpha{\alpha}'\circ \tau(x),
\end{equation}
 thus it suffices to prove that:
\begin{equation}
\label{equation:falpha'}
\tau' \times \Falpha{\alpha}'\circ \tau < 0, \quad \text{on } (-a,\suppfalpha).
\end{equation}
By definition, the map $\tau : (-\infty,\suppfalpha] \to \bigl(-\infty,\xalphap\bigr]$ is the inverse map of the increasing bijection $x\mapsto x+\mydelta$  thus $\tau$ is increasing as well.  We deduce that~\eqref{equation:falpha'} is satisfied since by Proposition~\ref{proposition:xalpha+} we know that $\Falpha{\alpha}$ is decreasing on $\bigl(-\infty,\xalphap\bigr]$.
\end{proof}

\begin{remark}
By Proposition~\ref{proposition:xalpha+} we have $\Falpha{\alpha}'(\xalphap) = 0$ thus since $\tau(\suppfalpha) = \xalphap$ we obtain from~\eqref{equation:falpha'_expression} that the left derivative of $\falpha$ at $\suppfalpha$ is $\alpha$. Since $\alpha  \neq 1$, we deduce that $\falpha$ is not derivable at $\suppfalpha$ since $\falpha(x) = x$ for $x > \suppfalpha$ (by Proposition~\ref{proposition:falpha_x+mydelta}).
\end{remark}

\begin{remark}
\label{remark:falpha_CVU}
By Lemma~\ref{lemma:phi}\ref{item:geometric_interpretation_phi} and the chordal slope lemma, we have $\xalpham = \comp^{-1}(a) \to a$ as $\alpha \to 1$. Hence, we deduce from Proposition~\ref{proposition:falpha_x+mydelta} that $\bigl(\falpha\bigr)_{\alpha \in (0,1)}$ converges pointwise to $f$ on $[-a,a]$ as $\alpha \to 1$. By Lemma~\ref{lemma:derivative_falpha} and Remark~\ref{remark:Shf_|x|gg0}, we know that each $\falpha$ is $1$-Lipschitz and thus a standard result tells that the preceding convergence is uniform.
\end{remark}

We can now define the shapes that will be of interest for our problem with the Plancherel measure. Recall from Definition~\ref{definition:sh} that we will be interested in the shakings $\sh = \Sh[1-2e^{-1}]$.

\begin{definition}
\label{definition:Omegae}
For any $e \geq 2$, we define $\Omegae \coloneqq  \functalpha{1-2e^{-1}}{\Omega}$.
\end{definition}

 In Figure~\ref{figure:reg_limit_shape}  we represent the curves $\Omegae$ for $e \in \{2,3,4\}$, together with the curve $\Omega$.


\begin{figure}
\begin{center}
\includegraphics[width=\mywidth]{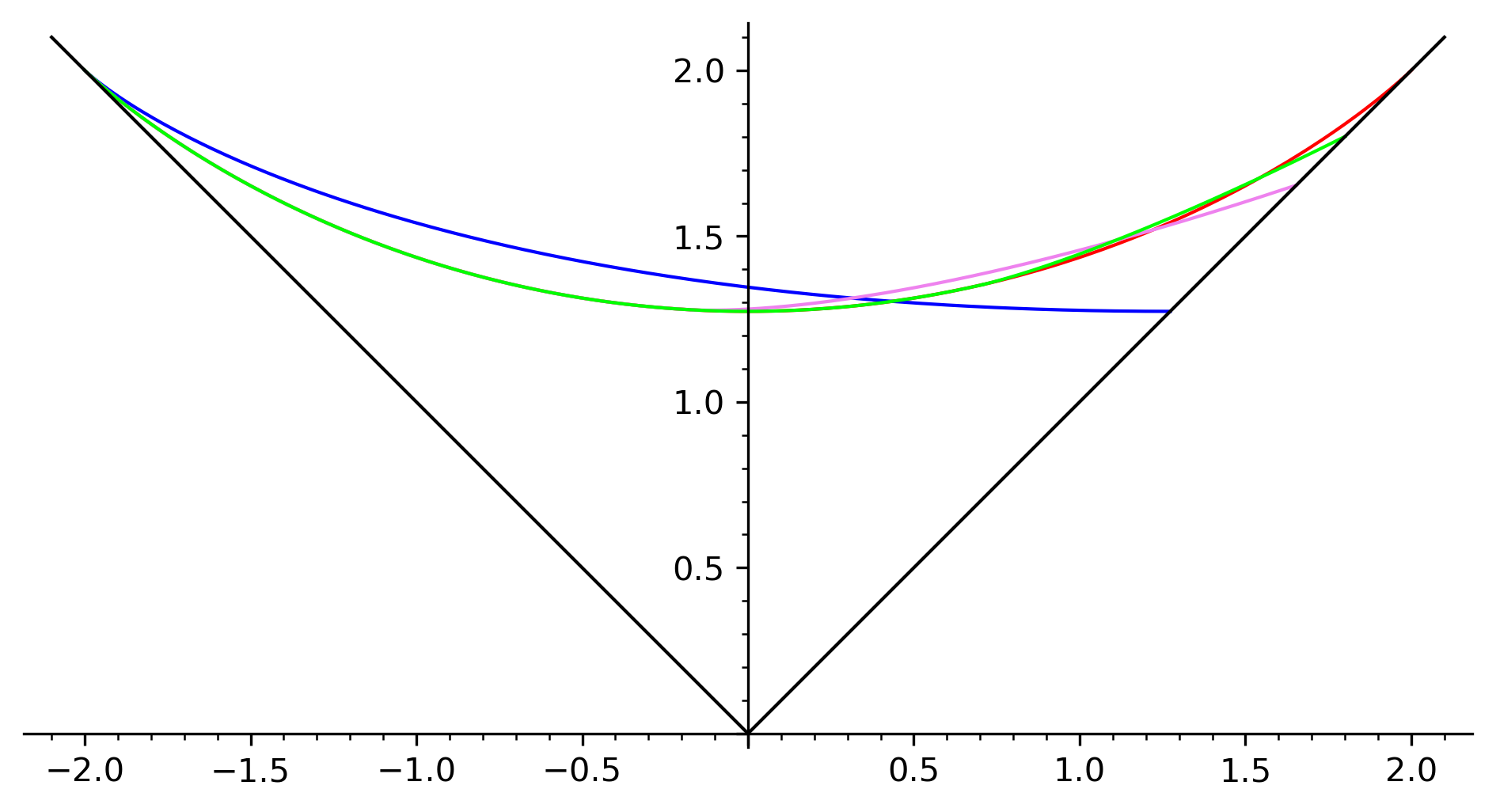}
\end{center}
\caption{Shapes $\Omega$ (in red), $\Omegae[2]$ (in blue), $\Omegae[3]$ (in violet) and $\Omegae[4]$ (in green).}
\label{figure:reg_limit_shape}
\end{figure}

\subsection{Compatibility with the shifts}
\label{subsection:compatibility_shifts}

Let $a \in \R_{> 0}$ and $\alpha \in [0,1)$, together with $f \in \myclass[a]$.
In this part, which involves many but simple calculations, we study the behaviour of $\functalpha{\alpha}{\falphaps{\pm}}$ with respect to $\falpha$. These results will be used in Section~\ref{section:limit_shape_regularisation}.

\begin{lemma}
\label{lemma:salphaeta}
Let $\myepsilon \in (0,1)$. Then:
\begin{itemize}
\item $\suppfalpha\bigl(\falphaps{\pm}\bigr) = \bigl(1\pm \frac{\eta}{2}\bigr) \suppfalpha(f)$,
 \item for all $x \in \R$ we have $\mydeltaprim^{\falphaps{\pm}}_x = \bigl(1\pm \frac{\eta}{2}\bigr) \mydeltaprim^f_{x/(1\pm \frac{\eta}{2})}$, in other words $\mydeltaprim^{(\falphaps{\pm})} = (\mydeltaprim^f)^{\pm \eta}$.
\end{itemize}
\end{lemma}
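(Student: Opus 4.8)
The plan is to compute everything explicitly from the definitions, using the scaling relation $\falphaps{\pm}(s) = (1\pm\tfrac\eta2)f\bigl(s/(1\pm\tfrac\eta2)\bigr)$. Write $c \coloneqq 1\pm\tfrac\eta2$ for brevity. First I would record how the auxiliary objects attached to $\falphaps{\pm}$ relate to those of $f$. From Lemma~\ref{lemma:feps}\ref{item:falpha'} we have $(\falphaps{\pm})'(s) = f'(s/c)$, so $\xalphap(\falphaps{\pm}) = c\,\xalphap(f)$ by Definition~\ref{definition:xalpha+}. Next, $\Falpha[\falphaps{\pm}]{\alpha}(s) = \falphaps{\pm}(s) - \alpha s = c f(s/c) - \alpha s = c\bigl(f(s/c) - \alpha(s/c)\bigr) = c\,\Falpha{\alpha}(s/c)$; that is, $\Falpha[\falphaps{\pm}]{\alpha} = c\cdot(\Falpha{\alpha}\circ (\tfrac1c\,\mathrm{id}))$. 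Inverting on the appropriate half-lines, this gives $\bigl(\Falpha[\falphaps{\pm}]{\alpha}\bigr)^{-1}(t) = c\,\Falpha{\alpha}^{-1}(t/c)$, hence $\comp[\alpha]^{\falphaps{\pm}} = \bigl(\Falpha[\falphaps{\pm}]{\alpha}\bigr)^{-1}\circ \Falpha[\falphaps{\pm}]{\alpha}$ sends $s$ to $c\,\Falpha{\alpha}^{-1}\bigl(\Falpha{\alpha}(s/c)\bigr) = c\,\comp(s/c)$.

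With these two facts the second bullet is immediate: by Definition~\ref{definition:delta},
\[
\mydeltaprim^{\falphaps{\pm}}_x = (1-\alpha)^{-1}\Falpha[\falphaps{\pm}]{\alpha}(x) - \comp[\alpha]^{\falphaps{\pm}}(x) = c\Bigl[(1-\alpha)^{-1}\Falpha{\alpha}(x/c) - \comp(x/c)\Bigr] = c\,\mydeltaprim^f_{x/c},
\]
which is exactly $\mydeltaprim^{\falphaps{\pm}}_x = \bigl(1\pm\tfrac\eta2\bigr)\mydeltaprim^f_{x/(1\pm\eta/2)}$; and recalling Definition~\ref{definition:feps}, the map $x\mapsto c\,\mydeltaprim^f_{x/c}$ is by definition $(\mydeltaprim^f)^{\pm\eta}$, giving the stated identity $\mydeltaprim^{(\falphaps{\pm})} = (\mydeltaprim^f)^{\pm\eta}$.

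For the first bullet, recall from~\eqref{equation:suppfalpha} that $\suppfalpha(g) = \xalphap(g) + \mydeltaprim^g_{\xalphap(g)}$ for any admissible $g$. Applying this to $g = \falphaps{\pm}$ and using $\xalphap(\falphaps{\pm}) = c\,\xalphap(f)$ together with the identity just proved (evaluated at $x = c\,\xalphap(f)$, so that $x/c = \xalphap(f)$):
\[
\suppfalpha\bigl(\falphaps{\pm}\bigr) = c\,\xalphap(f) + \mydeltaprim^{\falphaps{\pm}}_{c\,\xalphap(f)} = c\,\xalphap(f) + c\,\mydeltaprim^f_{\xalphap(f)} = c\bigl(\xalphap(f) + \mydeltaprim^f_{\xalphap(f)}\bigr) = c\,\suppfalpha(f),
\]
which is $\bigl(1\pm\tfrac\eta2\bigr)\suppfalpha(f)$ as claimed. (Alternatively one may invoke Corollary~\ref{corollary:support_fe}: $\suppfalpha(\falphaps{\pm}) = (1-\alpha)^{-1}\Falpha[\falphaps{\pm}]{\alpha}(\xalphap(\falphaps{\pm})) = (1-\alpha)^{-1}c\,\Falpha{\alpha}(\xalphap(f)) = c\,\suppfalpha(f)$.)

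There is no real obstacle here; the only points requiring a little care are checking that $\falphaps{\pm}\in\myclass[c a]$ so that all the constructions of~\textsection\ref{subsection:about_chords} apply (this is Lemma~\ref{lemma:feps}), and checking that the inversion of $\Falpha[\falphaps{\pm}]{\alpha}$ is performed on the correct half-line $[\xalphap(\falphaps{\pm}),+\infty) = [c\,\xalphap(f),+\infty)$, which is consistent with the scaling $s\mapsto s/c$ mapping it onto $[\xalphap(f),+\infty)$. Everything else is the bookkeeping of the dilation by $c$.
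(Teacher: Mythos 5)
Your proposal is correct and follows essentially the same route as the paper: both proofs reduce everything to the scaling identities $\Falpha[\falphaps{\pm}]{\alpha}(s) = (1\pm\tfrac{\myepsilon}{2})\Falpha{\alpha}\bigl(s/(1\pm\tfrac{\myepsilon}{2})\bigr)$, $\xalphap(\falphaps{\pm}) = (1\pm\tfrac{\myepsilon}{2})\xalphap(f)$ and the corresponding formula for $\bigl(\Falpha[\falphaps{\pm}]{\alpha}\bigr)^{-1}$, and then read off both bullets; your alternative derivation of the first bullet via Corollary~\ref{corollary:support_fe} is in fact exactly the paper's argument, while your main derivation via~\eqref{equation:suppfalpha} is an equivalent minor variant.
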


\begin{proof}
Let $\theta \coloneqq 1\pm\tfrac{\myepsilon}{2}$.
Note that for any $x \in \R$ we have (note that $\Falpha[(\falphaps{\pm})]{\alpha}$ is defined by Definition~\ref{definition:xalpha+}):
\begin{align}
\Falpha[(\falphaps{\pm})]{\alpha}(x)
&=
\falphaps{\pm}(x) - \alpha x
\notag
\\
&=
\theta f\left(\tfrac{x}{\theta}\right)-\theta\alpha\tfrac{x}{\theta}
\notag
\\
&=
\theta\Falpha{\alpha}\left(\tfrac{x}{\theta}\right)
\notag
\\
&=
\falphapsprim{(\Falpha{\alpha})}{\pm}(x),
\notag
\end{align}
thus $\Falpha[(\falphaps{\pm})]{\alpha} = \falphapsprim{(\Falpha{\alpha})}{\pm}$, and without ambiguity we can write this function $\Falpha[\falphaps{\pm}]{\alpha}$. By Lemma~\ref{lemma:feps}, for all $x \in \R$ we have:
\[
\bigl(\Falpha[\falphaps{\pm}]{\alpha}\bigr)'(x) = \Falpha{\alpha}'\left(\tfrac{x}{\theta}\right).
\]
 Recalling from Proposition~\ref{proposition:xalpha+} that $\xalphap(f)$ (resp. $\xalphap\bigl(\falphaps{\pm}\bigr)$) is the unique $x \in \R$ such that $\Falpha{\alpha}'(x) = 0$ (resp. ${(\Falpha[\falphaps{\pm}]{\alpha})}'(x) = 0$), we deduce that:
\[
\xalphap(\falphaps{\pm}) = \theta\xalphap(f).
\]
Now recall from Corollary~\ref{corollary:support_fe} that $(1-\alpha)\suppfalpha(f) = \Falpha{\alpha}\bigl(\xalphap(f)\bigr)$. We thus have:
\begin{align*}
(1-\alpha)\suppfalpha\bigl(\falphaps{\pm}\bigr)
&=
\Falpha[\falphaps{\pm}]{\alpha}\bigl(\xalphap(\falphaps{\pm})\bigr)
\\
&=
\Falpha[\falphaps{\pm}]{\alpha}\left(\theta\xalphap(f)\right)
\\
&=
\theta \Falpha{\alpha}\bigl(\xalphap(f)\bigr)
\\
&=
\theta (1-\alpha)\suppfalpha(f),
\end{align*}
which proves the first item of the Lemma.

Now recalling Definition~\ref{definition:phi}, for any $x,y \in \R$ we have $x = \Falpha{\alpha}^{-1}(y) \iff x \geq \xalphap(f)$ and $y = \Falpha{\alpha}(x)$. We thus have:
\begin{align*}
x = \bigl(\Falpha[\falphaps{\pm}]{\alpha}\bigr)^{-1}(y)
&\iff
x \geq \xalphap\bigl(\falphaps{\pm}\bigr) \text{ and } y = \Falpha[\falphaps{\pm}]{\alpha}(x)
\\
&\iff
x \geq \theta\xalphap(f) \text{ and } y = \theta\Falpha{\alpha}\left(\tfrac{x}{\theta}\right)
\\
&\iff
\tfrac{x}{\theta} = \Falpha{\alpha}^{-1}\left(\tfrac{y}{\theta}\right),
\end{align*}
thus for all $y \geq \xalphap\bigl(\falphaps{\pm}\bigr) = \theta\xalphap(f)$ we have:
\[
\bigl(\Falpha[\falphaps{\pm}]{\alpha}\bigr)^{-1}(y)
=
\theta\Falpha{\alpha}^{-1}\left(\tfrac{y}{\theta}\right).
\]
We deduce that for all $x \in \R$ we have:
\begin{align*}
\bigl(\Falpha[\falphaps{\pm}]{\alpha}\bigr)^{-1} \circ \Falpha[\falphaps{\pm}]{\alpha}(x)
&=
\theta \Falpha{\alpha}^{-1}\left(\frac{\Falpha[\falphaps{\pm}]{\alpha}(x)}{\theta}\right)
\\
&=
\theta \Falpha{\alpha}^{-1} \circ \Falpha{\alpha}\left(\tfrac{x}{\theta}\right),
\end{align*}
and thus:
\begin{align*}
\mydeltaprim^{\falphaps{\pm}}_x
&=
(1-\alpha)^{-1} \Falpha[\falphaps{\pm}]{\alpha}(x) - \bigl(\Falpha[\falphaps{\pm}]{\alpha}\bigr)^{-1} \circ \Falpha[\falphaps{\pm}]{\alpha}(x)
\\
&=
(1-\alpha)^{-1} \theta \Falpha{\alpha}\bigl(\tfrac{x}{\theta}\bigr) - \theta \Falpha{\alpha}^{-1} \circ \Falpha{\alpha}\bigl(\tfrac{x}{\theta}\bigr)
\\
&=
\theta\left[(1-\alpha)^{-1} \Falpha{\alpha}\bigl(\tfrac{x}{\theta}\bigr) -  \Falpha{\alpha}^{-1} \circ \Falpha{\alpha}\bigl(\tfrac{x}{\theta}\bigr)\right]
\\
&=
\theta \mydeltaprim^f_{\frac{x}{\theta}},
\end{align*}
which concludes the proof.
\end{proof}

\begin{proposition}
\label{proposition:falphaeps}
Let  $\myepsilon \in (0,1)$. We have:
\[
\functalpha{\alpha}{\falphaps{\pm}} = \falpha^{\pm \myepsilon}.
\]
\end{proposition}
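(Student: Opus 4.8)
The goal is to show $\functalpha{\alpha}{\falphaps{\pm}} = \falpha^{\pm\myepsilon}$, i.e. that applying the function-shaking operation $\Sh[\alpha]$ to the double-scaled function $\falphaps{\pm}$ produces the same thing as first shaking $f$ and then double-scaling the result. Both sides are functions on $\R$ (extended by the identity past their support), so it suffices to compare them on $\bigl(-\infty, \suppfalpha(\falphaps{\pm})\bigr] = \bigl(-\infty, (1\pm\tfrac\eta2)\suppfalpha(f)\bigr]$, using the first item of Lemma~\ref{lemma:salphaeta}, and then note both sides agree with $x \mapsto x$ beyond that point.

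The plan is to use the parametrisation from Proposition~\ref{proposition:falpha_x+mydelta}: for $g \in \myclass[b]$ the shaked function is characterised by $\functalpha{\alpha}{g}(x + \mydeltaprim^g_x) = g(x) + \alpha\mydeltaprim^g_x$ for $x \le \xalphap(g)$. I would set $\theta \coloneqq 1\pm\tfrac\myepsilon2$ and apply this with $g = \falphaps{\pm} \in \myclass[\theta a]$ (by Lemma~\ref{lemma:feps}). Let $y \le \theta\,\xalphap(f) = \xalphap(\falphaps{\pm})$ be arbitrary and write $y = \theta x$ with $x \le \xalphap(f)$. By the second item of Lemma~\ref{lemma:salphaeta} we have $\mydeltaprim^{\falphaps{\pm}}_y = \theta\,\mydeltaprim^f_{y/\theta} = \theta\,\mydelta$. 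Hence
\[
\functalpha{\alpha}{\falphaps{\pm}}\bigl(\theta x + \theta\mydelta\bigr)
= \falphaps{\pm}(\theta x) + \alpha\theta\mydelta
= \theta f(x) + \alpha\theta\mydelta
= \theta\bigl(f(x) + \alpha\mydelta\bigr)
= \theta\,\falpha(x + \mydelta),
\]
where the second equality is the definition of $\falphaps{\pm}$ and the last is Proposition~\ref{proposition:falpha_x+mydelta}. On the other hand, by Definition~\ref{definition:feps} applied to the function $\falpha$, the value of $\falpha^{\pm\myepsilon}$ at $\theta(x+\mydelta)$ is exactly $\theta\,\falpha(x+\mydelta)$. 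Since, as $x$ ranges over $\bigl(-\infty,\xalphap(f)\bigr]$, the quantity $\theta(x+\mydelta)$ ranges over $\bigl(-\infty, \theta\suppfalpha(f)\bigr] = \bigl(-\infty,\suppfalpha(\falphaps{\pm})\bigr]$ (using Lemma~\ref{lemma:delta_increasing}, \eqref{equation:suppfalpha_ineq} and Lemma~\ref{lemma:salphaeta}), this shows $\functalpha{\alpha}{\falphaps{\pm}}$ and $\falpha^{\pm\myepsilon}$ coincide on that half-line.

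It remains to handle the region to the right of $\suppfalpha(\falphaps{\pm}) = \theta\suppfalpha(f)$. There $\functalpha{\alpha}{\falphaps{\pm}}(x) = x$ by the extension clause in Definition~\ref{definition:falpha}; and $\falpha^{\pm\myepsilon}(x) = \theta\,\falpha(x/\theta) = \theta\cdot(x/\theta) = x$ for $x/\theta > \suppfalpha(f)$, again by Definition~\ref{definition:falpha} applied to $\falpha$. So the two functions agree everywhere. The only genuinely nontrivial ingredient is Lemma~\ref{lemma:salphaeta} (the scaling behaviour of $\suppfalpha$ and $\mydeltaprim$), which has already been established; given that, this proposition is a bookkeeping check that the double-scaling operation $g \mapsto g^{\pm\myepsilon}$ commutes with everything in sight. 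I expect the mild obstacle to be purely notational: keeping straight the distinction between $\falphaps{\pm}$ (double-scaling of $f$), $\falpha$ (shaking of $f$), $\falpha^{\pm\myepsilon}$ (double-scaling of the shaking), and $\functalpha{\alpha}{\falphaps{\pm}}$ (shaking of the double-scaling), and making sure the $\xalphap$, $\mydeltaprim$ and $\suppfalpha$ of each are invoked with the correct argument.
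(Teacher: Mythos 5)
Your proposal is correct and follows essentially the same route as the paper: both sides are compared on $\bigl(-\infty,\suppfalpha(\falphaps{\pm})\bigr]$ via the parametrisation of Proposition~\ref{proposition:falpha_x+mydelta} applied to $f$ and to $\falphaps{\pm}$, with Lemma~\ref{lemma:salphaeta} supplying the scaling of $\xalphap$, $\mydeltaprim$ and $\suppfalpha$, and the region beyond $\suppfalpha(\falphaps{\pm})$ handled by noting both functions reduce to the identity there. The only difference is cosmetic (you treat the half-line first and the identity region second, the paper does the reverse), so nothing further is needed.
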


\begin{proof}
Set $\theta \coloneqq 1 \pm \tfrac{\myepsilon}{2}$.
We first prove the equality on $\bigl(\suppfalpha(\falphaps{\pm}),+\infty)$. On this interval we have $\functalpha{\alpha}{\falphaps{\pm}}(x) = x$, moreover:
\[
\falphapsprim{\falpha}{\pm}(x)
=
\theta \falpha\bigl(\tfrac{x}{\theta}\bigr)
=x,
\]
since:
\[\frac{x}{\theta} > \frac{\suppfalpha(\falphaps{\pm})}{\theta} = \suppfalpha,
\]
where the last equality follows from Lemma~\ref{lemma:salphaeta}.

We now take $x \leq \xalphap(\falphaps{\pm})$, so that $x + \mydelta^{\falphaps{\pm}} \leq \suppfalpha(\falphaps{\pm}) = \tfrac{\suppfalpha}{\theta}$. We deduce from Proposition~\ref{proposition:falpha_x+mydelta} and Lemma~\ref{lemma:salphaeta} that:
\begin{align*}
\falphapsprim{\falpha}{\pm}\bigl(x + \mydeltafull{x}{\falphaps{\pm}}\bigr)
&=
\falphaps{\pm}(x) + \alpha \mydeltafull{x}{\falphaps{\pm}}
\\
&=
\theta f\bigl(\tfrac{x}{\theta}\bigr) + \theta \alpha \mydeltafull{x/\theta}{f}
\\
&=
\theta \falpha \left(\frac{x}{\theta} + \mydeltafull{x/\theta}{f}\right)
\\
&=
\theta \falpha \left(\frac{x}{\theta} + \frac{\mydeltafull{x}{\falphaps{\pm}}}{\theta}\right)
\\
&=
\functalpha{\alpha}{\falphaps{\pm}}\bigl(x  + \mydeltafull{x}{\falphaps{\pm}}\bigr),
\end{align*}
which concludes the proof.
\end{proof}

\begin{proposition}
\label{proposition:diff_fepse_fe}
We have $\mathopen{\|}\falpha[\alpha]-\falpha[\alpha]^{\pm \myepsilon}\|_{\infty,\R} \to 0$ as $\myepsilon \to 0$.
\end{proposition}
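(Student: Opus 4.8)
The plan is to combine Proposition~\ref{proposition:falphaeps} with the explicit form of the double scaling. By Proposition~\ref{proposition:falphaeps} we have $\falpha[\alpha]^{\pm\myepsilon} = \functalpha{\alpha}{\falphaps{\pm}}$, and, setting $\theta \coloneqq 1 \pm \tfrac{\myepsilon}{2}$, the proof of that proposition shows that $\functalpha{\alpha}{\falphaps{\pm}}(x) = \theta\,\falpha(x/\theta)$ for all $x \in \R$. Hence it is enough to prove that
\[
\sup_{x \in \R}\bigl\lvert\falpha(x) - \theta\,\falpha(x/\theta)\bigr\rvert \longrightarrow 0 \qquad\text{as } \theta \to 1 .
\]

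First I would reduce this supremum to a fixed compact interval. By Remark~\ref{remark:Shf_|x|gg0} we have $\falpha(x) = \lvert x\rvert$ for $x \leq -a$ and for $x \geq \suppfalpha$, hence $\theta\,\falpha(x/\theta) = \theta\,\lvert x/\theta\rvert = \lvert x\rvert$ as soon as $x \leq -a\theta$ or $x \geq \theta\suppfalpha$. Restricting to $\myepsilon \in (0,1)$, so that $\theta \in \bigl(\tfrac12,\tfrac32\bigr)$, both $\falpha$ and $x \mapsto \theta\,\falpha(x/\theta)$ coincide with $\lvert\cdot\rvert$ outside the fixed compact interval $I \coloneqq \bigl[-2a,\,2\suppfalpha\bigr]$ (recall $\suppfalpha > 0$ by~\eqref{equation:suppfalpha_ineq}), so that $\bigl\|\falpha[\alpha]-\falpha[\alpha]^{\pm\myepsilon}\bigr\|_{\infty,\R} = \sup_{x \in I}\bigl\lvert\falpha(x) - \theta\,\falpha(x/\theta)\bigr\rvert$. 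On $I$ I would split
\[
\bigl\lvert\falpha(x) - \theta\,\falpha(x/\theta)\bigr\rvert \;\leq\; \bigl\lvert\falpha(x) - \falpha(x/\theta)\bigr\rvert + \lvert 1-\theta\rvert\,\bigl\lvert\falpha(x/\theta)\bigr\rvert .
\]
Since $\falpha$ is $1$-Lipschitz (Remark~\ref{remark:falpha_CVU}, itself deduced from Lemma~\ref{lemma:derivative_falpha} and Remark~\ref{remark:Shf_|x|gg0}), the first term is at most $\lvert x - x/\theta\rvert = \lvert x\rvert\,\bigl\lvert 1-\tfrac1\theta\bigr\rvert \leq C_f\,\myepsilon$ on $I$ (with $C_f$ depending only on $f$, using $\lvert x\rvert \leq 2\suppfalpha$ and $\theta$ close to $1$); and since $x/\theta$ stays in a fixed compact interval as $x$ ranges over $I$ and $\theta$ over $\bigl(\tfrac12,\tfrac32\bigr)$, the continuous function $\falpha$ is bounded there by some $M_f$, so the second term is at most $\tfrac{\myepsilon}{2}M_f$. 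Altogether $\bigl\|\falpha[\alpha]-\falpha[\alpha]^{\pm\myepsilon}\bigr\|_{\infty,\R} \leq \bigl(C_f + \tfrac{M_f}{2}\bigr)\myepsilon \to 0$ as $\myepsilon \to 0$, as wanted.

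The argument is elementary and I do not expect a genuine obstacle; the only point to handle carefully — and the real content — is the preliminary reduction to a fixed compact interval, since the crude Lipschitz bound $\lvert x\rvert\,\bigl\lvert 1-\tfrac1\theta\bigr\rvert$ is unbounded on $\R$ and becomes harmless only because $\falpha$ and $\falpha[\alpha]^{\pm\myepsilon}$ are both eventually equal to $\lvert\cdot\rvert$, uniformly for $\myepsilon$ small. Alternatively, one may conclude by the same standard fact already invoked in Remark~\ref{remark:falpha_CVU} — that a uniformly $1$-Lipschitz family converging pointwise on a compact set converges uniformly on it — applied to $\bigl(\theta\,\falpha(\cdot/\theta)\bigr)_\theta$ on $I$, together with the observation that all its members agree with $\lvert\cdot\rvert$ off $I$.
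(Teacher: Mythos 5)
Your proof is correct and follows essentially the same route as the paper's: both reduce to a compact set outside of which $\falpha$ and $\falpha^{\pm\myepsilon}$ coincide with $\lvert\cdot\rvert$, then split the difference into a term controlled by the $1$-Lipschitz property of $\falpha$ and a term controlled by $\tfrac{\myepsilon}{2}\,\|\falpha\|_{\infty}$ on a fixed compact. The only (minor, and welcome) difference is that you make explicit that the compact interval can be chosen uniformly in $\myepsilon \in (0,1)$, a point the paper's proof leaves implicit; the detour through Proposition~\ref{proposition:falphaeps} is unnecessary since $\falpha^{\pm\myepsilon}(x) = \bigl(1\pm\tfrac{\myepsilon}{2}\bigr)\falpha\bigl(x/(1\pm\tfrac{\myepsilon}{2})\bigr)$ is just Definition~\ref{definition:feps}, but it is harmless.
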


\begin{proof}
First, note that $\falpha - \falphapsprim{\falpha}{\pm}$ has compact support $K$ by Remark~\ref{remark:Shf_|x|gg0}. Moreover, by Remark~\ref{remark:falpha_CVU} we know that $\falpha$ is $1$-Lipschitz. Hence, for all $x \in K$ we have:
\begin{align*}
\falphapsprim{\falpha}{\pm}(x) - \falpha(x)
&=
\bigl(1\pm\tfrac{\myepsilon}{2}\bigr)\falpha\left(\frac{x}{1\pm\tfrac{\myepsilon}{2}}\right) - \falpha(x)
\\
&=
\falpha\left(\frac{x}{1\pm\tfrac{\myepsilon}{2}}\right) - \falpha(x) \pm \tfrac{\myepsilon}{2}\falpha\left(\frac{x}{1\pm\tfrac{\myepsilon}{2}}\right).
\end{align*}
We can assume $\myepsilon \in (0,1)$, so that $\frac{x}{1\pm\tfrac{\myepsilon}{2}}$ lies in the compact $K' \coloneqq \{2y : y \in K\}$ whenever $x \in K$. With $\|K\|_{\infty} \coloneq \max_{x \in K} |x|$ we deduce that for any $x \in K$ we have:
\begin{align*}
\bigl|\falphapsprim{\falpha}{\pm}(x) - \falpha(x)\bigr|
&\leq
|x| \left|\frac{1}{1\pm\tfrac{\myepsilon}{2}}-1\right| + \tfrac{\myepsilon}{2}\mathopen{\|}\falpha\|_{\infty,K'}
\\
&\leq
\|K\|_{\infty} \frac{\eta}{2\pm \myepsilon} + \tfrac{\myepsilon}{2}\mathopen{\|}\falpha\|_{\infty,K'},
\end{align*}
whence the result.
\end{proof}

\subsection{Relative position}
\label{subsection:relative_position}

Let $a > 0$ and $\alpha \in [0,1)$, together with $f \in \myclass[a]$.
In this short part, we study the relative positions of the curves of $f$ and $\falpha$. Note that this property will not be used to prove the other results of the paper.

 Recall from Proposition~\ref{proposition:falpha_x+mydelta} that for any $x \leq \suppfalpha$ we have:
\[
\falpha(x) = \Falpha{\alpha}\bigl(\tau(x)\bigr) + \alpha x,
\]
where $\tau : (-\infty,\suppfalpha] \to (-\infty,\xalphap]$ is the inverse function of $x \mapsto x + \mydelta$.

\begin{lemma}
\label{lemma:phi_circ_tau}
There exists a unique $\coinc \in \bigl(\xalpham,\suppfalpha\bigr)$ such that $\comp\circ \tau(\coinc) = \coinc$. Moreover, for $u \in \bigl(\xalpham,\suppfalpha\bigr)$ we have $\comp\circ\tau(u) < u \iff u > \coinc$.
\end{lemma}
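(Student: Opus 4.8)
The plan is to analyse the auxiliary function $g : u \mapsto \comp\circ\tau(u) - u$ on the interval $\bigl(\xalpham,\suppfalpha\bigr)$ and show that it is continuous, strictly decreasing, positive near $\xalpham$ and negative near $\suppfalpha$; then existence and uniqueness of $\coinc$ follow from the intermediate value theorem and strict monotonicity, and the sign statement $g(u) < 0 \iff u > \coinc$ is immediate.

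First I would record the behaviour at the two endpoints. As $u \to \xalpham^+$, we have $\tau(u) \to \tau(\xalpham) = \xalpham$ by Lemma~\ref{lemma:delta_increasing}\ref{item:delta_0} (since $\mydelta[\xalpham] = 0$, so $\xalpham$ is a fixed point of $x \mapsto x+\mydelta$), and then $\comp(\xalpham) = a > \xalpham$ by~\eqref{equation:def_xalpham} and~\eqref{equation:xalpham_-a} together with $\xalpham < \xalphap < a$; hence $g(\xalpham) = a - \xalpham > 0$. As $u \to \suppfalpha^-$, we have $\tau(\suppfalpha) = \xalphap$ by the paragraph preceding~\eqref{equation:suppfalpha}, and $\comp(\xalphap) = \xalphap$ by Lemma~\ref{lemma:phi}\ref{item:phi_s_xalphap}, so $g(\suppfalpha) = \xalphap - \suppfalpha < 0$ by~\eqref{equation:suppfalpha_ineq}. (One can either extend $g$ continuously to the closed interval and apply the IVT there, or just note that $g$ takes both signs on the open interval.)

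Next I would establish strict monotonicity of $g$ on $\bigl(\xalpham,\suppfalpha\bigr)$. The map $\tau$ is an increasing bijection $\bigl(-\infty,\suppfalpha\bigr] \to \bigl(-\infty,\xalphap\bigr]$, and it restricts to an increasing bijection $\bigl[\xalpham,\suppfalpha\bigr] \to \bigl[\xalpham,\xalphap\bigr]$ (since $\xalpham$ is a fixed point of its inverse). By Lemma~\ref{lemma:phi_precis}\ref{item:phi_decreasing_bij_x-_x+}, $\comp$ is strictly decreasing on $\bigl[\xalpham,\xalphap\bigr]$, so $\comp\circ\tau$ is strictly decreasing on $\bigl[\xalpham,\suppfalpha\bigr]$; subtracting the strictly increasing $\mathrm{id}$ keeps $g$ strictly decreasing. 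A continuous strictly decreasing function with $g(\xalpham) > 0 > g(\suppfalpha)$ has a unique zero $\coinc \in \bigl(\xalpham,\suppfalpha\bigr)$, and $g(u) < 0$ exactly when $u > \coinc$, which is the claim after rewriting $g(u) < 0$ as $\comp\circ\tau(u) < u$.

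The only mild subtlety — and the step I would be most careful about — is the boundary bookkeeping: making sure that $\tau$ genuinely maps $\bigl[\xalpham,\suppfalpha\bigr]$ onto $\bigl[\xalpham,\xalphap\bigr]$ (so that Lemma~\ref{lemma:phi_precis}\ref{item:phi_decreasing_bij_x-_x+} applies to $\comp$ on the right domain), and checking that $g$ is defined and continuous up to the endpoints so the IVT can be invoked cleanly; both reduce to the already-established facts that $x \mapsto x+\mydelta$ is an increasing bijection $\bigl(-\infty,\xalphap\bigr] \to \bigl(-\infty,\suppfalpha\bigr]$ fixing $\xalpham$, plus Lemma~\ref{lemma:delta_increasing}. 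Everything else is routine composition of monotone maps.
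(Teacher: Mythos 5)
Your proposal is correct and follows essentially the same route as the paper: both consider $u \mapsto \comp\circ\tau(u) - u$, show it is decreasing (as the composition of the decreasing $\comp$ with the increasing $\tau$, minus the identity), evaluate it at the endpoints using $\tau(\xalpham)=\xalpham$, $\tau(\suppfalpha)=\xalphap$ and $\comp(\xalphap)=\xalphap$, and conclude by the intermediate value theorem and monotonicity. The only cosmetic difference is that you compute $\comp(\xalpham)=a$ directly from~\eqref{equation:def_xalpham}, whereas the paper only records the weaker bound $\comp(\xalpham) > \xalphap > \xalpham$; both suffice.
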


\begin{proof}
Recall from Lemma~\ref{lemma:phi_precis} that the continuous function $\comp = \Falpha{\alpha}^{-1}\circ\Falpha{\alpha}$ is decreasing on $\bigl(\xalpham,\xalphap\bigr)$ and from Lemma~\ref{lemma:delta_increasing} that $\tau$ is increasing on $(-\infty,\suppfalpha]$ with $\tau(\suppfalpha) = \xalphap$ and $\tau(\xalpham) = \xalpham$. We deduce that $\comp \circ \tau$ is decreasing on $\bigl(\xalpham,\suppfalpha\bigr)$.  Moreover,  we have:
\[
\comp\circ\tau\bigl(\xalpham\bigr) = \comp\bigl(\xalpham\bigr) > \xalphap > \xalpham,
\]
since $\comp$ is decreasing, and, recalling~\eqref{equation:suppfalpha_ineq},
 \[
 \comp\circ\tau\bigl(\suppfalpha\bigr) = \comp\bigl(\xalphap\bigr) = \xalphap < \suppfalpha.
 \]
By the intermediate value theorem, we thus obtain the existence of the point $\coinc$ of the statement. Its uniqueness and the remaining part of the statement follow from the above fact that $u \mapsto \comp\circ \tau(u) - u$ is decreasing.
\end{proof}

\begin{proposition}
For any $x \in \bigl(\xalpham,a\bigr)$, we have $\falpha(x) < f(x)$ if and only if $x \in \bigl(\coinc,a\bigr)$. 
Moreover,  the point of abscissa $\coinc$ is the unique intersection point between the curves of $\falpha$ and $f$ in $\bigl(\xalpham,a\bigr)$.
\end{proposition}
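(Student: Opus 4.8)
The plan is to reduce the comparison to a statement about a single secant line of the curve of $f$. Fix $x \in (\xalpham, \suppfalpha]$ and put $s \coloneqq \tau(x) \in (\xalpham, \xalphap]$, so that $x = s + \mydelta[s]$ and $\falpha(x) = f(s) + \alpha\mydelta[s]$ by Proposition~\ref{proposition:falpha_x+mydelta}; since $x - s = \mydelta[s]$, the right-hand side is precisely the value at abscissa $x$ of the line $L_s$ of slope $\alpha$ through the point $(s, f(s))$ of the curve of $f$. Moreover $x > s$: indeed $x > \xalpham$ forces $s > \xalpham$, and $\mydeltaprim$ is strictly increasing on $[\xalpham,\xalphap]$ (clear from the proof of Lemma~\ref{lemma:delta_increasing}) with $\mydelta[\xalpham] = 0$. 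Thus $\falpha(x) - f(x) = L_s(x) - f(x)$, and it remains to locate $x$ relative to the two points where $L_s$ meets the curve of $f$.

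First I would record the position of the curve of $f$ with respect to $L_s$. Since $f(t) - L_s(t) = \Falpha{\alpha}(t) - \Falpha{\alpha}(s)$, Proposition~\ref{proposition:xalpha+} (strict convexity of $\Falpha{\alpha}$, decreasing on $(-\infty,\xalphap)$ and increasing on $(\xalphap,+\infty)$, with limits $+\infty$) together with the fact that $\comp(s)$ is the unique point of $[\xalphap,+\infty)$ with $\Falpha{\alpha}(\comp(s)) = \Falpha{\alpha}(s)$ shows that $f(t) > L_s(t)$ outside $[s,\comp(s)]$, $f(t) < L_s(t)$ inside $(s,\comp(s))$, and $f(t) = L_s(t)$ only at $s$ and $\comp(s)$ (with $s = \comp(s) = \xalphap$ when $s = \xalphap$, $L_s$ being then the tangent at $\xalphap$, which lies strictly below the curve elsewhere).

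Applying this at $t = x > s$, the sign of $\falpha(x) - f(x)$ is that of $\comp(s) - x = \comp\circ\tau(x) - x$: it is negative, zero, or positive according as $x > \comp\circ\tau(x)$, $x = \comp\circ\tau(x)$, or $x < \comp\circ\tau(x)$. By Lemma~\ref{lemma:phi_circ_tau} this dichotomy is exactly $x \gtrless \coinc$ for $x \in (\xalpham,\suppfalpha)$, while at $x = \suppfalpha$ one has $\comp\circ\tau(\suppfalpha) = \comp(\xalphap) = \xalphap < \suppfalpha$ by~\eqref{equation:suppfalpha} (and $\comp(\xalphap)=\xalphap$) and~\eqref{equation:suppfalpha_ineq}, so $\falpha(\suppfalpha) < f(\suppfalpha)$. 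Hence on $(\xalpham, \suppfalpha]$ we have $\falpha > f$ on $(\xalpham,\coinc)$, $\falpha = f$ at $\coinc$, and $\falpha < f$ on $(\coinc,\suppfalpha]$. Finally, for $x \in (\suppfalpha, a)$ we have $\falpha(x) = x$ by Definition~\ref{definition:falpha} whereas $f(x) > |x| = x$ by~\eqref{equation:f(s)>|s|} (here $0 < \suppfalpha < x < a$ by~\eqref{equation:suppfalpha_ineq} and Corollary~\ref{corollary:support_fe}), so $\falpha < f$ there too. Combining the two ranges and recalling $\coinc \in (\xalpham,\suppfalpha)$ yields $\falpha(x) < f(x) \iff x \in (\coinc, a)$ for $x \in (\xalpham,a)$, with equality exactly at $\coinc$; in particular $\coinc$ is the abscissa of the unique common point of the two curves over $(\xalpham,a)$.

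I do not expect a real obstacle: the argument is essentially a careful reading of a picture. The only delicate points are the clean identification of $\falpha(x)$ as the value $L_s(x)$ of a secant line (i.e. using $x - s = \mydelta[s]$ correctly), and the separate handling of the two boundary situations $s = \tau(x) = \xalphap$ (secant degenerating to tangent) and $x \in [\suppfalpha, a)$ (where $\falpha$ is given by its affine extension rather than by the shaking formula).
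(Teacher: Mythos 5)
Your proposal is correct and follows essentially the same route as the paper: both reduce the sign of $\falpha(x)-f(x)$ to comparing $\Falpha{\alpha}\bigl(\tau(x)\bigr)$ with $\Falpha{\alpha}(x)$ (your secant-line $L_{\tau(x)}$ picture is just the geometric form of this identity), then translate it into $x$ versus $\comp\circ\tau(x)$ and conclude with Lemma~\ref{lemma:phi_circ_tau}, treating $[\suppfalpha,a)$ separately via $\falpha(x)=x<f(x)$. The only cosmetic difference is that you carry the full trichotomy (making the uniqueness of the intersection point explicit), whereas the paper splits $(\xalpham,\suppfalpha)$ at $\xalphap$ and argues via the monotonicity of $\Falpha{\alpha}$ and $\Falpha{\alpha}^{-1}$.
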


\begin{proof}
First, note that by Proposition~\ref{proposition:falpha_x+mydelta}, we know that $\falpha$ and $f$ coincide (at least) on $\bigl(-\infty,\xalpham\bigr]\cup\bigl[a,+\infty\bigr)$, moreover $\falpha(x) < f(x)$ for all $x \in \bigl[\suppfalpha,a\bigr)$ by~\eqref{subequations:assumptions_f}. Now for any $x \in \bigl(\xalpham,\suppfalpha\bigr)$ we have:
\begin{align*}
\falpha(x) < f(x)
&\iff
\Falpha{\alpha}\bigl(\tau(x)\bigr) + \alpha x < f(x)
\\
&\iff \Falpha{\alpha}\bigl(\tau(x)\bigr) < f(x) - \alpha x
\\
&\iff
\Falpha{\alpha}\bigl(\tau(x)\bigr) < \Falpha{\alpha}(x).
\end{align*}
For all $y \in (\xalpham,\xalphap]$ we have $\mydelta[y] > 0$  by Lemma~\ref{lemma:delta_increasing} thus $y + \mydelta[y] > y$. Since $\tau$ is increasing we deduce that $x > \tau(x)$ for all  $x \in (\xalpham,\suppfalpha]$.
 Recalling~\eqref{equation:suppfalpha_ineq}, we deduce that for all $x \in (\xalpham,\xalphap]$ we have $\tau(x) < x \leq \xalphap$ thus the above inequality is not satisfied since $\Falpha{\alpha}$ is decreasing on $(-\infty,\xalphap]$ by Proposition~\ref{proposition:xalpha+}. Now for $x \in \bigl(\xalphap,\suppfalpha\bigr)$, by Definition~\ref{definition:phi} we have $\Falpha{\alpha}^{-1}\bigl(\Falpha{\alpha}(x)\bigr) = x$. Since $\Falpha{\alpha}^{-1}$ is increasing on $[\xalphap,+\infty)$ (by Proposition~\ref{proposition:xalpha+}) we deduce that for $x \in (\xalphap,\suppfalpha)$ we have:
\[
\Falpha{\alpha}\bigl(\tau(x)\bigr) < \Falpha{\alpha}(x)
\iff
\comp\circ\tau(x) < x.
\]
Lemma~\ref{lemma:phi_circ_tau} concludes the proof.
\end{proof}

\section{Shaking graphs}
\label{section:shaking_graphs}

The aim of this part is to determine how the quantity $\falpha$ is related with the shaking of $\gr(f)$. We first introduce the objects on which we will use the shaking operation introduced in~\textsection\ref{subsection:shakings}. Let $\partclass$ be the set of all continuous maps $g : \R \to \R$ such that:
\begin{subequations}
\begin{gather}
g(x) \geq |x|, \quad\text{for all }x \in \R,
\label{equation:g(x)>=|x|}
\\
g(x) = |x|, \quad\text{for all } |x| \gg 0,
\label{equation:g(x)=|x|}
\\
g(x_0) \neq |x_0|, \quad \text{for some } x_0 \in \R.
\end{gather}
\end{subequations}
Note that $\myclass[a] \subseteq \partclass$ for $a > 0$.

\begin{definition}
Let $g \in \partclass$ and let $\su[g], \sd[g] \in \R$ be defined by:
\begin{align*}
\su[g] &\coloneqq \inf\{ x \in \R : g(x) \neq |x|\} \in \R,
\\
\sd[g] &\coloneqq \sup\{x \in \R : g(x) \neq |x|\} \in \R.
\end{align*}
 We denote by $\gr(g)$ the part of the graph of $g$ on $[\su[g],\sd[g]]$ that is above the graph of the absolute value, that is:
\[
\gr(g) \coloneqq \{(x,y) \in [\su[g],\sd[g]] \times \R_{\geq 0} : |x| \leq y \leq g(x)\}.
\]
\end{definition}

For instance, we have pictured in Figure~\ref{figure:Y(Omega)} the set $\gr(\Omega)$.

\begin{figure}
\begin{center}
\begin{tikzpicture}[scale=2]
\draw[gray] (-2.1,0) -- (2.1,0);
\draw[gray] (0,-.5) -- (0,2.1);

\draw (-2.1,2.1) -- (0,0) -- (2.1,2.1);

\draw[domain=-2:2,color=red,smooth,thick] plot (\x,{2/pi*(\x*rad(asin(\x/2))+sqrt(4-\x*\x))}) node[left]{$\scriptstyle y = \Omega(x)$};

\fill[blue!20] (0,0) -- (-2,2) -- plot[domain=-2:2,smooth] (\x,{2/pi*(\x*rad(asin(\x/2))+sqrt(4-\x*\x))})  -- cycle;
\end{tikzpicture}
\end{center}
\caption{The set $\gr(\Omega)$ (in blue)}
\label{figure:Y(Omega)}
\end{figure}

\begin{lemma}
\label{lemma:inclusion_gr}
Let $g,h \in \partclass$. We have $g(x) \leq h(x)$ for all $x \in \R$ if and only if $\gr(g) \subseteq \gr(h)$.
\end{lemma}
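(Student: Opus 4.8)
The statement is an "if and only if", and the natural strategy is to prove each direction by unpacking the definition of $\gr(\cdot)$ together with the fact that both $\su[\cdot]$ and $\sd[\cdot]$ are themselves monotone (in the appropriate sense) in the function. First I would record the elementary observation that if $g(x) \leq h(x)$ for all $x \in \R$, then $\{x : g(x) \neq |x|\} \subseteq \{x : h(x) \neq |x|\}$ (using $g(x) \geq |x|$, so $g(x) \neq |x|$ forces $g(x) > |x|$, hence $h(x) > |x|$), and therefore $\su[h] \leq \su[g]$ and $\sd[g] \leq \sd[h]$, i.e. $[\su[g],\sd[g]] \subseteq [\su[h],\sd[h]]$. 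With this in hand, the forward direction is immediate: any $(x,y) \in \gr(g)$ satisfies $x \in [\su[g],\sd[g]] \subseteq [\su[h],\sd[h]]$ and $|x| \leq y \leq g(x) \leq h(x)$, so $(x,y) \in \gr(h)$.

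For the converse, assume $\gr(g) \subseteq \gr(h)$ and suppose for contradiction that $g(x_0) > h(x_0)$ for some $x_0$. Since $g(x_0) > h(x_0) \geq |x_0|$, we have $x_0 \in [\su[g],\sd[g]]$, so the point $(x_0, g(x_0)) \in \gr(g) \subseteq \gr(h)$; but membership in $\gr(h)$ would require $g(x_0) \leq h(x_0)$, a contradiction. Hence $g(x) \leq h(x)$ for all $x \in \R$.

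There is no real obstacle here; the only point that requires a moment of care is that $\gr(g)$ is defined on the closed interval $[\su[g],\sd[g]]$ rather than on all of $\R$, so one must check the interval inclusion $[\su[g],\sd[g]] \subseteq [\su[h],\sd[h]]$ separately — which is exactly what the preliminary observation above supplies. I would also note in passing that $\gr(g)$ does genuinely "see" the function $g$ everywhere it matters: outside $[\su[g],\sd[g]]$ one has $g = |\cdot|$ by definition of $\su[g],\sd[g]$, so the inequality $g \leq h$ there is either automatic or recovered from the interval inclusion, and this is why testing the top point $(x_0,g(x_0))$ suffices to force the pointwise inequality everywhere.

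Thus the whole argument is a short bookkeeping exercise in the two defining conditions of $\partclass$ (namely $g \geq |\cdot|$ and $g = |\cdot|$ off a bounded set) plus the definition of $\gr$, and I would present it in two short paragraphs corresponding to the two implications.
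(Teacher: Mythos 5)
Your proof is correct and follows essentially the same route as the paper: the forward direction via the inclusion of the sets where $g$, $h$ differ from $|\cdot|$ (hence $[\su[g],\sd[g]] \subseteq [\su[h],\sd[h]]$), and the converse by testing the point $\bigl(x,g(x)\bigr) \in \gr(g) \subseteq \gr(h)$ — the paper argues this directly rather than by contradiction, but the content is identical.
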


\begin{proof}
Assume that $g(x) \leq h(x)$ for all $x \in \R$.
By~\eqref{equation:g(x)>=|x|} we have $|x| \leq g(x) \leq h(x)$ for all $x \in \R$, thus if $g(x) \neq |x|$ then $h(x) \neq |x|$. This proves that $\su[g] \geq \su[h]$ and $\sd[g] \leq \sd[h]$. The result follows.

Conversely, assume that $\gr(g) \subseteq \gr(h)$.  Note that by~\eqref{equation:g(x)>=|x|}, it suffices to prove that $g(x) \leq h(x)$ for all $x \in [\su[g],\sd[g]]$. If $x \in [\su[g],\sd[g]]$ then $\bigl(x,g(x)\bigr) \in \gr(g)$ by~\eqref{equation:g(x)>=|x|} thus $\bigl(x,g(x)\bigr) \in \gr(h)$ thus $g(x) \leq h(x)$. This concludes the proof.
\end{proof}

\subsection{A stability criterion}

 Recall that $\biss$ denotes the line with equation $y = -x$.

\begin{lemma}
\label{lemma:derivative_bounded_alpha}
Let $L$ be a line of slope $\alpha \in \R$.
Let $g \in \partclass$ and write $a \coloneqq \su[g]$ and $b \coloneqq \sd[g]$. Assume that $g$ differentiable on $(\su,\sd)$ with  $g'(x) < \alpha$ for all $x \in (\su,\sd)$. Then one and only one of the followings holds.
\begin{enumerate}
\item
The line $L$ is strictly above the curve of $g$ on $[\su,\sd]$.
\item
The line $L$ is strictly below the curve of $g$ on $[\su,\sd]$.
\item
\label{item:line_crosses_two_points}
There is a unique $x \in [\su,\sd]$ such that $\bigl(x,g(x)\bigr) \in L$, moreover the line $L$ is strictly below (resp. above) the curve of $g$ on $[\su,x)$ (resp. $(x,\sd]$). Besides, the point $\bigl(x,g(x)\bigr)$ is the unique point $M \in L$ at the right of $\biss$ such that $d_\alpha(M,\biss) = \bigl|L \cap \gr(g)\bigr|$.
\end{enumerate}
In particular, the intersection $L \cap \gr(g)$ is a segment.
\end{lemma}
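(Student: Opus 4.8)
The plan is to reduce the statement to the monotonicity of a single function measuring the vertical gap between $g$ and $L$. Write the equation of $L$ as $y = \alpha u + c$ and set $h(u) \coloneqq g(u) - \alpha u - c$ for $u \in [a,b]$. Then $h$ is continuous on $[a,b]$ and differentiable on $(a,b)$ with $h'(u) = g'(u) - \alpha < 0$, so $h$ is strictly decreasing on $[a,b]$; in particular it has at most one zero there, and $h(a) > h(b)$. I would then split according to the signs of $h(a)$ and $h(b)$: if $h(b) > 0$ then $h > 0$ on $[a,b]$, i.e.\ $L$ is strictly below the curve of $g$; if $h(a) < 0$ then $h < 0$ on $[a,b]$, i.e.\ $L$ is strictly above it; otherwise $h(a) \geq 0 \geq h(b)$, and by the intermediate value theorem and strict monotonicity there is a unique $x \in [a,b]$ with $h(x) = 0$ (equivalently $(x,g(x)) \in L$), with $h > 0$ on $[a,x)$ and $h < 0$ on $(x,b]$, which is the third alternative. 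Since $h(a) > h(b)$ these three cases are pairwise incompatible, and they are clearly exhaustive.

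Next I would identify $L \cap \gr(g)$ uniformly in the three cases. Parametrising $L$ by its first coordinate, a point $(u,\alpha u + c)$ of $L$ lies in $\gr(g)$ precisely when $u \in [a,b]$, $\alpha u + c \leq g(u)$ and $|u| \leq \alpha u + c$. The set $\{u : \alpha u + c \leq g(u)\}$ is an interval by the monotonicity of $h$, and $\{u : |u| \leq \alpha u + c\}$ is an interval, being the superlevel set of the concave map $u \mapsto \alpha u + c - |u|$; hence $L \cap \gr(g)$ is the image of an interval of parameters under the affine injection $u \mapsto (u,\alpha u + c)$, i.e.\ a (possibly degenerate) segment, which proves the final assertion of the lemma. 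In the third case this segment contains $(x,g(x))$, since $g(x) = \alpha x + c \geq |x|$, and no point of $L \cap \gr(g)$ has first coordinate larger than $x$, because $\alpha u + c > g(u)$ for $u \in (x,b]$.

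It remains to establish the last claim of the third case. The key intermediate fact is that $a < 0$: indeed $g(a) = |a|$, and if $a \geq 0$ then, using the mean value theorem, for $t \in (a,b)$ one would get $t \leq g(t) < a + \alpha(t-a) \leq t$ (the last inequality using $\alpha \leq 1$), which is absurd since $(a,b) \neq \emptyset$ and $g \geq |\cdot|$; here one uses $\alpha < 1$, which holds in the situations where the lemma is applied. Granting $a < 0$, the point $(a,\alpha a + c)$ lies strictly at the left of $\biss$ (because $\alpha a + c < g(a) = -a$ in the third case), so on the left side the binding constraint in the previous paragraph is $|u| \leq \alpha u + c$, whose left endpoint $u_0$ satisfies $\alpha u_0 + c = -u_0$ and is therefore the first coordinate of the unique (as $\alpha \neq -1$) intersection point of $L$ with $\biss$. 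Thus $L \cap \gr(g)$ is the sub-segment of $L$ with parameters in $[u_0,x]$, of length $\sqrt{1+\alpha^2}\,(x-u_0)$.

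To conclude, parametrising $L$ by its first coordinate once more, $M \mapsto d_\alpha(M,\biss)$ equals $\sqrt{1+\alpha^2}$ times the distance of the parameter of $M$ from $u_0$, hence is strictly monotone on the side of $\biss$ on which $M$ lies to the right; so there is at most one such $M$ with $d_\alpha(M,\biss) = |L \cap \gr(g)|$, and by the length computation just made this $M$ is $(x,g(x))$. The step I expect to be the main obstacle is the bookkeeping in these last two paragraphs: ensuring that the lower end of the segment $L \cap \gr(g)$ lies exactly on $\biss$ and is not truncated earlier by the support endpoint $a$, which is precisely what the reduction to $a < 0$ (and, behind it, $\alpha < 1$) is for; everything else is routine.
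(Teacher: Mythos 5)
Your proof is correct and follows essentially the same route as the paper's (which is much terser): the trichotomy comes from the strict monotonicity of the vertical gap between $g$ and $L$, the segment claim from intersecting the intervals cut out by that monotone gap and by the concave map $u \mapsto \alpha u + c - |u|$, and the final claim of $(iii)$ from locating the left end of the segment on $\biss$. Your observation that this last step needs $\su < 0$, hence $\alpha \leq 1$ (a restriction the lemma does not state but which holds in every application), is exactly the mean-value-theorem argument the paper itself only supplies in the proof of the subsequent Lemma~\ref{lemma:graphe_stable_Sh}, so you have merely made explicit a hypothesis the paper leaves implicit here.
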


\begin{proof}
The cases are obtained via differentiation. Note that for $(iii)$, the point $M$ is clearly unique and by the first part of $(iii)$ we know that the point $\bigl(x,g(x)\bigr)$ satisfies the assumptions. To obtain the last assertion, with $K \coloneqq L \cap \gr(g)$ we have respectively:
\begin{enumerate}
\item $K$ is empty,
\item $K$ is the intersection between $L$ and $[\su,\sd] \times \bigl\{(u,v) \in [\su,\sd] \times \R_{\geq 0} : |u| \leq v\bigr\}$,
\item $K$ is the intersection between $L$ and $[\su,x] \times \bigl\{(u,v) \in [\su,\sd] \times \R_{\geq 0} : |u| \leq v\bigr\}$,
\end{enumerate}
thus $K$ is a segment indeed, by convexity of the absolute value.
\end{proof}

The next result gives a simple condition on $g \in \partclass$ for $\gr(g)$ to be stable under the shaking operation.

\begin{lemma}
\label{lemma:graphe_stable_Sh}
Let $g \in \partclass$  be continuously differentiable on $(\su[g],\sd[g])$. Let $\alpha \in [0,1)$ and assume that for all $x \in (\su[g],\sd[g])$ we have $g'(x) < \alpha$. Then $\gr(g)$ is stable under the shaking operation $\Sh$.
\end{lemma}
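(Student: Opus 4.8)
The plan is to show that every point of $\gr(g)$ stays in $\gr(g)$ after shaking, and conversely. Fix $\alpha \in [0,1)$ and write $a = \su[g]$, $b = \sd[g]$. The key input is Lemma~\ref{lemma:derivative_bounded_alpha}: since $g' < \alpha$ on $(\su[g],\sd[g])$, any line $L$ of slope $\alpha$ meets the curve of $g$ in at most one point, and the intersection $L \cap \gr(g)$ is a segment whose ``upper'' endpoint (the one at the right of $\biss$, in the sense of the $\alpha$-direction) lies exactly on the curve of $g$, in the case~\ref{item:line_crosses_two_points} where the intersection is non-empty and $L$ is not entirely below the curve.

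First I would use Lemma~\ref{lemma:shaking_distance} to rephrase stability: $\gr(g)$ is stable under $\Sh$ if and only if for every $M \in \gr(g)$ at the right of $\biss$ we have $d_\alpha(M,\biss) \leq |\LM{M} \cap \gr(g)|$, and conversely every such $M$ satisfying this inequality already lies in $\gr(g)$. The crucial geometric fact is that a point $M = (x,y) \in \gr(g)$ sits on the line $\LM{M}$ of slope $\alpha$ through $M$, and $\gr(g) \cap \LM{M}$ is the segment from the point where $\LM{M}$ crosses the boundary $y=|x|$ (the ``$\biss$ side'') up to the point where $\LM{M}$ meets the curve of $g$; here one must check $M$ lies on this segment, which holds because $|x| \leq y \leq g(x)$. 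Since the $\biss$-projection of the lower boundary of $\gr(g)$ is on $\biss$ itself (the absolute-value graph passes through the origin, and $\biss: y=-x$ is the left branch of it), the distance $d_\alpha(M,\biss)$ along $\LM{M}$ is precisely the length of the portion of $\LM{M}$ between $\biss$ and $M$, hence at most $|\LM{M} \cap \gr(g)|$. This gives the forward inclusion $\gr(g) \subseteq \Sh(\gr(g))$.

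For the reverse inclusion $\Sh(\gr(g)) \subseteq \gr(g)$, take $M$ at the right of $\biss$ with $d_\alpha(M,\biss) \leq |\LM{M} \cap \gr(g)|$ and $\LM{M} \cap \gr(g) \neq \emptyset$; I must show $M \in \gr(g)$. Again $\LM{M} \cap \gr(g)$ is a segment starting on $\biss$ (by the same observation about the absolute-value graph) and ending at the unique curve point of Lemma~\ref{lemma:derivative_bounded_alpha}\ref{item:line_crosses_two_points}, or the whole of $\LM{M}$'s part below the curve in the ``strictly below'' case. The inequality $d_\alpha(M,\biss) \leq |\LM{M} \cap \gr(g)|$ says $M$ lies on this segment between $\biss$ and its far endpoint, so $M \in \gr(g)$.

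The main obstacle I anticipate is being careful about the boundary behaviour: one must verify that the left endpoint of $\LM{M} \cap \gr(g)$ really is the intersection of $\LM{M}$ with the line $\biss$ (and not with the right branch $y = x$ of the absolute value), which uses that $M$ lies at the right of $\biss$ together with $0 \leq \alpha < 1$ so that $\LM{M}$, travelled leftward, exits through the $y=-x$ branch; and that the length measured along $\LM{M}$ from $\biss$ to $M$ equals $d_\alpha(M,\biss)$ by the very definition of $d_\alpha$. The differentiability hypothesis $g' < \alpha$ is used only through Lemma~\ref{lemma:derivative_bounded_alpha} to guarantee $\LM{M} \cap \gr(g)$ is a single segment with the upper endpoint on the curve; once that structural statement is in hand, the rest is a direct comparison of lengths along the line $\LM{M}$.
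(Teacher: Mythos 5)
Your proposal is correct and follows essentially the same route as the paper: both arguments rest on Lemma~\ref{lemma:derivative_bounded_alpha} to see that each slice $\LM{M}\cap\gr(g)$ is a segment, and both reduce the statement to checking that this segment, when non-empty, reaches down to the foot point of $\LM{M}$ on $\biss$. The verification you flag as the ``main obstacle'' is precisely where the paper's proof spends its effort --- a three-case contradiction ($x>0$, $x>\sd[g]$, $x<\su[g]$, the last using the mean value theorem to get $\su[g]<0$) --- and besides the right branch $y=x$ you should also rule out the slice stopping at the abscissa bound $\su[g]$ before reaching $\biss$, which holds because strictly between the foot point and a point of $\gr(g)$ the line lies strictly between $|\cdot|$ and $g$, forcing those abscissae into $(\su[g],\sd[g])$.
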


\begin{proof}
First, since the set $\gr(g)$ is compact we can take its image by $\Sh$ indeed. Write $\su\coloneqq \su[g]$ and $\sd \coloneqq \sd[g]$.

By Lemma~\ref{lemma:derivative_bounded_alpha}, it suffices to prove that if $L$ is a line of slope $\alpha < 1$ and if $x \in \R$ is the unique  real number such that $(x,-x) \in L$ then $L \cap \gr(g) \neq \emptyset$ if and only if $(x,-x) \in \gr(g)$. Obviously this condition is necessary, thus assume that $L \cap \gr(g) \neq \emptyset$ and let us prove that $(x,-x) \in \gr(g)$. In other words, we have to prove that $x \in [\su,\sd]$ and that $x \leq 0$ (\text{i.e.} $|x| \leq -x$). We proceed by contradiction.

 If $x > 0$ then since $\alpha < 1$ and by~\eqref{equation:g(x)=|x|} we have $L \cap \gr(g) = \emptyset$. Similarly, if $x > \sd$ then since $\alpha > -1$  we have $L \cap \gr(g) = \emptyset$ again.

Note that we  have $\su < 0$. Indeed, if $\su \geq 0$ then we have $0 \leq \su \leq \sd$, thus $g(\su) = \su$ and $g(\sd) = \sd$; by the mean value theorem, this implies that there exists $c \in (\su,\sd)$ with $g'(c) = 1$, but this is a contradiction since $g'(c) < \alpha < 1$ by assumption. Now if $x < \su$, then since $\alpha \geq 0$ the point $(\su,|\su|) = (\su,g(\su))$ is strictly below $L$. Thus, by Lemma~\ref{lemma:derivative_bounded_alpha} the whole curve of $g$ on $[\su,\sd]$ is below $L$. As a result, we have $L \cap \gr (g)= \emptyset$, which is a contradiction. Hence, we have proved both $x \leq 0$ and $a \leq x \leq b$, as announced.
\end{proof}

\subsection{Shaking graphs}

We prove here the first main result of the paper, that is, that the graph of the shaking is given by the shaking of the graph (Theorem~\ref{theorem:sh_gr_f}). Let $\alpha \in [0,1)$, $a \in \mathbb{R}_{> 0}$ and $f \in \myclass[a]$.

\begin{proposition}
\label{proposition:gr_shf_stable}
\begin{itemize}
\item  We can apply Lemma~\ref{lemma:derivative_bounded_alpha} for $\falpha$.
\item The set $\gr\bigl(\falpha\bigr)$ is stable under the shaking operation $\Sh$.
\end{itemize}
\end{proposition}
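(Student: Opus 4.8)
The plan is to derive both items from the two lemmas just established, namely Lemma~\ref{lemma:derivative_bounded_alpha} and Lemma~\ref{lemma:graphe_stable_Sh}, applied to the function $g := \falpha[\alpha]$. For this I first need to check that $\falpha$ genuinely belongs to the class $\partclass$ and is continuously differentiable on the interior of the relevant support interval, with derivative strictly less than $\alpha$ there. The membership in $\partclass$ is immediate from Proposition~\ref{proposition:falpha_x+mydelta}: that proposition gives $\falpha(x) = |x|$ for $x \leq -a$ and, together with the definition $\falpha(x) = x$ for $x > \suppfalpha$, also for $x \geq \suppfalpha$; it gives $\falpha(x) > |x|$ strictly on $(-a,\suppfalpha)$, which simultaneously verifies $\falpha \geq |\cdot|$ everywhere and the existence of a point where the inequality is strict. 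Moreover, by Remark~\ref{remark:Shf_|x|gg0} (and the computation of $\su,\sd$ it entails), we identify $\su[\falpha] = -a$ and $\sd[\falpha] = \suppfalpha$.

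Next I would invoke Lemma~\ref{lemma:derivative_falpha}, which states exactly that $\falpha$ is derivable on $(-\infty,\suppfalpha)$ with $\falpha'(x) < \alpha$ for all such $x$; in particular this holds on the interior $(-a,\suppfalpha) = (\su[\falpha],\sd[\falpha])$, and the proof of Lemma~\ref{lemma:derivative_falpha} via formula~\eqref{equation:falpha'_expression} (with $\tau$, $\comp$, $\mydeltaprim$ all $\mathcal{C}^1$) shows the derivative is in fact continuous there. Thus the hypotheses of Lemma~\ref{lemma:derivative_bounded_alpha} are met for $g = \falpha$ and the line $L = \biss$ (or any line of slope $\alpha$), which gives the first bullet. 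For the second bullet, the hypotheses of Lemma~\ref{lemma:graphe_stable_Sh} — namely $g \in \partclass$ continuously differentiable on $(\su[g],\sd[g])$ with $g' < \alpha$ there, and $\alpha \in [0,1)$ — are precisely what we have just verified, so that lemma directly yields that $\gr(\falpha)$ is stable under $\Sh[\alpha]$.

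So the proof is essentially a bookkeeping argument assembling previously proven facts, and I do not expect a genuine obstacle; the only thing requiring a small amount of care is making explicit that $\su[\falpha] = -a$ and $\sd[\falpha] = \suppfalpha$ so that "$(\su[\falpha],\sd[\falpha])$" is literally "$(-a,\suppfalpha)$", which follows from Proposition~\ref{proposition:falpha_x+mydelta} (the strict inequality $\falpha(x) > |x|$ on $(-a,\suppfalpha)$ pins down that the support is exactly this open interval) together with Remark~\ref{remark:Shf_|x|gg0}. I would write:

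\begin{proof}
By Proposition~\ref{proposition:falpha_x+mydelta} and Remark~\ref{remark:Shf_|x|gg0}, the map $\falpha$ is continuous, satisfies $\falpha(x) \geq |x|$ for all $x \in \R$ with equality exactly for $x \leq -a$ and $x \geq \suppfalpha$, and $\falpha(x) > |x|$ for $x \in (-a,\suppfalpha)$; in particular $\falpha \in \partclass$ with $\su[\falpha] = -a$ and $\sd[\falpha] = \suppfalpha$. By Lemma~\ref{lemma:derivative_falpha} (see~\eqref{equation:falpha'_expression}), the map $\falpha$ is continuously differentiable on $(-a,\suppfalpha) = (\su[\falpha],\sd[\falpha])$ with $\falpha'(x) < \alpha$ there. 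The first assertion is then an instance of Lemma~\ref{lemma:derivative_bounded_alpha}, and the second follows from Lemma~\ref{lemma:graphe_stable_Sh}, recalling $\alpha \in [0,1)$.
\end{proof}
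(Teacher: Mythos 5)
Your proposal is correct and follows essentially the same route as the paper's proof: identify $\su[\falpha]=-a$ and $\sd[\falpha]=\suppfalpha$ via Proposition~\ref{proposition:falpha_x+mydelta}, get continuous differentiability with $\falpha'<\alpha$ from Lemma~\ref{lemma:derivative_falpha}, and then invoke Lemmas~\ref{lemma:derivative_bounded_alpha} and~\ref{lemma:graphe_stable_Sh}. Your version is if anything slightly more careful in spelling out the membership of $\falpha$ in $\partclass$.
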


\begin{proof}
First, note that from~\eqref{equation:f(s)=|s|}, \eqref{equation:f(s)>|s|} and Proposition~\ref{proposition:falpha_x+mydelta}, we know that $\su[\falpha] = -a$ and $\sd[\falpha] = \suppfalpha$. Second, since $f$ is continuously differentiable, by Definition~\ref{definition:falpha} we know that $\falpha$ is continuously differentiable on $(-a,\suppfalpha)$. Hence, by Lemma~\ref{lemma:graphe_stable_Sh} we know that the second item follows from the first one. We conclude the proof since $\falpha$ satisfies the assumptions of Lemma~\ref{lemma:derivative_bounded_alpha}  by Lemma~\ref{lemma:derivative_falpha}.
\end{proof}

Recall from Definition~\ref{definition:xprime} the definition of $\Lx{x}$ and $\xalphapp$.
We now give the analogue of Lemma~\ref{lemma:intersection_f_L} for $\falpha$. 

\begin{lemma}
\label{lemma:intersection_shf_L}
Let  $x \in \R$. On $[-a,a]$ we have:
\begin{itemize}
\item if $x \leq -a$ then $\Lx{x}$ is above the curve of $\falpha$,
\item if $-a < x \leq \xalphapp$ then $\Lx{x}$ is below and then above the curve of $\falpha$,
\item if $x > \xalphapp$ then $\Lx{x}$ is below the curve of $\falpha$.
\end{itemize}
\end{lemma}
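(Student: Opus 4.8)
The strategy is to transfer the trichotomy of Lemma~\ref{lemma:intersection_f_L} from $f$ to $\falpha$ by using the relation $\falpha(x + \mydelta) = f(x) + \alpha\mydelta$ of Proposition~\ref{proposition:falpha_x+mydelta} together with the geometric meaning of $\mydelta$ recorded in Lemma~\ref{lemma:geometric_interpretation_delta}. The key observation is that the shaking operation, by construction, slides each chord of slope $\alpha$ of $\gr(f)$ along the line $\Lx{x}$ of that slope, so a line $\Lx{x}$ (with $x$ the abscissa where it meets $\biss$, per Definition~\ref{definition:xprime}) that met $\gr(f)$ in a chord now meets $\gr(\falpha)$ in the image of that chord; the points of abscissa $\xalphamp$ and $\xalphapp$ are the boundary cases where the lower endpoint of the chord is respectively $\xalpham$ and $\xalphap$. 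So the three regimes for $\falpha$ should be indexed by whether the slope-$\alpha$ line hits $\gr(f)$ at all, hits it in a nondegenerate chord, or is already tangent/below.

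Concretely, I would first dispose of the case $x \leq -a$: since $\falpha$ agrees with $f$ on $(-\infty, \xalpham]$ (Proposition~\ref{proposition:falpha_x+mydelta}) and, by~\eqref{equation:xalpham_-a}, $\xalpham \geq -a$, the restriction of $\falpha$ to $[-a,a]$ starts out equal to $|\cdot|$ on $[-a,\xalpham]$; then the same computation as in Lemma~\ref{lemma:intersection_f_L} (the line $\Lx{x}$ with $x \leq -a$ has slope $\alpha \in [0,1)$ and passes through $(x,-x)$ with $-x \geq a$, so it lies above the graph of $|\cdot|$, hence above $\falpha$ on all of $[-a,a]$ by convexity) gives the first bullet. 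For the general case I would write $\falpha$ in the parametrised form $\falpha(s + \mydelta[s]) = f(s) + \alpha\mydelta[s]$ with $s \in (-\infty, \xalphap]$, and use Lemma~\ref{lemma:geometric_interpretation_delta}: the point $(s+\mydelta[s],\, f(s)+\alpha\mydelta[s])$ on the curve of $\falpha$ is obtained from the point $(s, f(s))$ on the curve of $f$ by sliding along the line $L_s$ of slope $\alpha$ until the abscissa increases by $\mydelta[s]$, i.e.\ it lies on the same line $L_s$. Hence $\Lx{x}$ meets $\gr(\falpha)$ exactly in the image under this slide of $\Lx{x} \cap \gr(f)$ (when $x$ is such that $\Lx{x}$ carries a chord of $f$), and we can read off the below/above pattern from Lemma~\ref{lemma:intersection_f_L}, checking that the crossing pattern ``below then above'' is preserved because: (a) for $x$ with $\Lx{x}$ already below $\gr(f)$ on $[-a,a]$ — i.e.\ $x \geq \xalphapp$ — the line stays below $\gr(\falpha)$; (b) for $-a < x \leq \xalphapp$ the line meets $\gr(f)$ in a genuine chord whose lower end has abscissa $\leq \xalpham$ or in $(\xalpham,\xalphap)$, and after the slide the intersection with $\gr(\falpha)$ is again a single segment, with the line below before it and above after. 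The possible third regime of Lemma~\ref{lemma:intersection_f_L} (``below, above, below'', for $\xalphamp < x < \xalphapp$) collapses here because the shaking has removed the part of $\gr(f)$ to the right of $\xalphap$ that produced the second ``below'': by Proposition~\ref{proposition:falpha_x+mydelta} the curve of $\falpha$ has no portion lying below a slope-$\alpha$ line past the tangency point.

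The cleanest way to organise the last step is probably to invoke the shaking machinery directly rather than redo differentiation: by Theorem~\ref{theorem:sh_gr_f} (available to us) $\gr(\falpha) = \Sh(\gr(f))$, and by Lemma~\ref{lemma:derivative_bounded_alpha} applied to $\falpha$ (legitimate by Proposition~\ref{proposition:gr_shf_stable}) the intersection of any slope-$\alpha$ line with $\gr(\falpha)$ is a segment, and exactly one of the three alternatives of that lemma holds; it then remains only to match the boundary value $x = \xalphapp$ correctly, which follows since $\Lx{\xalphapp}$ is by Definition~\ref{definition:xprime} the line through the point of $\gr(f)$ of abscissa $\xalphap$, i.e.\ the tangent to $f$ at $\xalphap$, which is also tangent to $\falpha$ at its right endpoint $\suppfalpha$ (by the Remark following Lemma~\ref{lemma:derivative_falpha}), so for $x > \xalphapp$ the line drops strictly below and for $-a < x \leq \xalphapp$ it still meets the graph. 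The main obstacle I anticipate is purely bookkeeping: being careful that the three regimes for $\falpha$ are delimited by $\xalphapp$ (not $\xalphamp$, which no longer plays a role), and verifying that the ``below then above'' alternative genuinely occurs — i.e.\ that $\Lx{x}$ is not entirely below — for every $x \in (-a,\xalphapp]$, which amounts to checking that such a line still meets the (shortened) arc $\gr(\falpha)$; this is where the precise value $\su[\falpha] = -a$ from Proposition~\ref{proposition:gr_shf_stable} and the monotonicity of $x \mapsto x+\mydelta$ (Lemma~\ref{lemma:delta_increasing}) are used.
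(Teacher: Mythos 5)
Your final organisation is, at its core, the paper's proof: apply the trichotomy of Lemma~\ref{lemma:derivative_bounded_alpha} to $\falpha$ (legitimate by Proposition~\ref{proposition:gr_shf_stable}), use Proposition~\ref{proposition:falpha_x+mydelta} to place the point $(x,-x)$ relative to the curve, and identify the threshold $\xalphapp$ through the fact that $\Lx{\xalphapp}$ passes through the right endpoint $\bigl(\suppfalpha,\suppfalpha\bigr)$ of the curve of $\falpha$ (Lemma~\ref{lemma:geometric_interpretation_delta} together with~\eqref{equation:suppfalpha}, or equivalently the tangency you cite). However, your ``cleanest way'' rests on Theorem~\ref{theorem:sh_gr_f}, which you declare ``available to us''. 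It is not: in the paper that theorem is stated and proved \emph{after} the present lemma, and its proof opens by invoking Lemmas~\ref{lemma:intersection_f_L} and~\ref{lemma:intersection_shf_L} to reduce to $x \in (\xalphamp,\xalphapp)$. Citing it here is circular. The same goes for your second paragraph, which in effect re-derives $\Sh\bigl(\gr(f)\bigr)=\gr\bigl(\falpha\bigr)$ line by line: that is the content of Theorem~\ref{theorem:sh_gr_f}, for which the paper needs the present lemma first. Your argument can be saved only because the trichotomy you actually use comes from Lemma~\ref{lemma:derivative_bounded_alpha} applied directly to $\falpha$, not from the shaking identity; the appeal to Theorem~\ref{theorem:sh_gr_f} must be deleted.

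Second, your treatment of the case $x \leq -a$ is wrong as written. By Proposition~\ref{proposition:falpha_x+mydelta} the map $\falpha$ coincides with $f$ on $\bigl(-\infty,\xalpham\bigr]$, not with $|\cdot|$: by~\eqref{equation:f(s)>|s|} we have $f>|\cdot|$ on $(-a,a)$, so $\falpha$ does not ``start out equal to $|\cdot|$ on $[-a,\xalpham]$'' (unless $\xalpham=-a$). Consequently the inference ``the line lies above the graph of $|\cdot|$, hence above $\falpha$'' runs in the wrong direction: since $\falpha \geq |\cdot|$ everywhere, knowing that the line dominates the absolute value gives no control whatsoever on its position relative to $\falpha$. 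The correct (and still easy) argument compares the line with the curve at the single abscissa $-a$, where $\falpha(-a)=a$ while the line has ordinate $-x+\alpha(-a-x)\geq a$ because $-x\geq a$, $-a-x\geq 0$ and $\alpha\geq 0$; the alternative of Lemma~\ref{lemma:derivative_bounded_alpha} (the line's slope $\alpha$ exceeds $\falpha'$ on $(-a,\suppfalpha)$) then propagates ``above'' across the whole interval, with the stretch $[\suppfalpha,a]$, where $\falpha(t)=t$, checked directly against the line's crossing of $\bissp$ at abscissa $\frac{1+\alpha}{1-\alpha}|x|\geq a$.
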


\begin{proof}
The three points follow from Propositions~\ref{proposition:falpha_x+mydelta} and~\ref{proposition:gr_shf_stable}, noting that by Lemma~\ref{lemma:geometric_interpretation_delta} and~\eqref{equation:suppfalpha} the points of the curve of $\falpha$ with abscissa $\xalphap$ and $\suppfalpha$ are both on $\Lx{\xalphapp}$.
\end{proof}

We can now give  the main result of this section.  Note that Figure~\ref{figure:Omegae_alpha0} of the introduction illustrates Theorem~\ref{theorem:sh_gr_f} for $\Omegae[2] = \Sh[0](\Omega)$.

\begin{theorem}
\label{theorem:sh_gr_f}
We have:
\[
\Sh\bigl(\gr(f)\bigr) = \gr\bigl(\falpha[\alpha]\bigr).
\]
\end{theorem}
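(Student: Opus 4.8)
The plan is to prove the two inclusions $\Sh(\gr(f)) \subseteq \gr(\falpha)$ and $\gr(\falpha) \subseteq \Sh(\gr(f))$ separately, using the stability result already available. First I would record the setup: by the proof of Proposition~\ref{proposition:gr_shf_stable} we have $\su[\falpha] = -a$ and $\sd[\falpha] = \suppfalpha$, so $\gr(\falpha)$ is the compact region between the graph of $\falpha$ and the graph of $|\cdot|$ over $[-a,\suppfalpha]$; and by Proposition~\ref{proposition:gr_shf_stable} together with Lemma~\ref{lemma:derivative_bounded_alpha}, every line $L$ of slope $\alpha$ meets $\gr(\falpha)$ in a segment, and the ``upper'' endpoint of that segment lies on the curve of $\falpha$ while the ``lower'' endpoint lies on the graph of $|\cdot|$ or is the same point. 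The key geometric input is that for $x \le \suppfalpha$, the chord-line $\Lx{x}$ (slope $\alpha$ through $(x,-x)$) intersects $\gr(\falpha)$ precisely in the segment from $(x,-x)$ up to the point of the curve of $\falpha$, by Lemma~\ref{lemma:intersection_shf_L}.

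For the inclusion $\gr(\falpha) \subseteq \Sh(\gr(f))$, I would take a point $M$ in $\gr(\falpha)$, which lies to the right of (or on) $\biss$ since $\gr(\falpha)$ sits in $\{x \le y\}$ when $x \le 0$ and above the diagonal $y=x$ portion for $x\ge 0$ — more precisely $M$ lies on some line $\Lx{x_0}$ with $x_0 \le \suppfalpha$, and $d_\alpha(M,\biss) \le |\Lx{x_0} \cap \gr(\falpha)|$. Then I want $|\Lx{x_0}\cap\gr(\falpha)| = |\Lx{x_0}\cap\gr(f)|$, which is the heart of the matter: shaking preserves the length of each slice, so since $\gr(\falpha)$ was built to be the shaking of $\gr(f)$ slice-by-slice along lines of slope $\alpha$, these slice lengths agree. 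Concretely, for $x_0 \le \xalpham$ the line $\Lx{x_0}$ meets both $\gr(f)$ and $\gr(\falpha)$ in the same segment (since $f=\falpha$ there), and for $\xalpham < x_0 < \suppfalpha$ one uses Lemma~\ref{lemma:geometric_interpretation_delta} and Proposition~\ref{proposition:falpha_x+mydelta}: the value $\falpha(x+\mydelta) = f(x)+\alpha\mydelta$ was exactly designed so that the slice of $\gr(\falpha)$ along $\Lx{\cdot}$ (which passes through the curve point at abscissa $x+\mydelta$) equals in Euclidean length the slice of $\gr(f)$ along the parallel line through the curve point at abscissa $x$. Then Lemma~\ref{lemma:shaking_distance} gives $M \in \Sh(\gr(f))$.

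For the reverse inclusion $\Sh(\gr(f))\subseteq\gr(\falpha)$, I would take $M \in \Sh(\gr(f))$; by Lemma~\ref{lemma:shaking_distance}, $M$ lies on some line $\LM{M}$ of slope $\alpha$ with $d_\alpha(M,\biss) \le |\LM{M}\cap\gr(f)|$, and $M$ is to the right of $\biss$. Writing $\LM{M} = \Lx{x_0}$, one has $x_0 \le a_\alpha'^+$-type bound (using Lemma~\ref{lemma:intersection_f_L} to see which lines actually meet $\gr(f)$, i.e. $\xalphamp \le x_0$ is impossible to exceed on the relevant side). The slice-length identity $|\Lx{x_0}\cap\gr(f)| = |\Lx{x_0}\cap\gr(\falpha)|$ established above, combined with Lemma~\ref{lemma:shaking_distance} applied now to $\gr(\falpha)$ (legitimate because $\gr(\falpha)$ is stable under $\Sh$ by Proposition~\ref{proposition:gr_shf_stable}, so membership in $\gr(\falpha)$ is governed by the same distance inequality), yields $M \in \Sh(\gr(\falpha)) = \gr(\falpha)$.

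The main obstacle I anticipate is making the slice-length identity $|\Lx{x}\cap\gr(f)| = |\Lx{x}\cap\gr(\falpha)|$ fully rigorous across all the regimes for $x$, since the geometry of where $\Lx{x}$ enters and exits the region between a curve and $|\cdot|$ changes character (Lemma~\ref{lemma:intersection_f_L} lists four cases for $f$ and Lemma~\ref{lemma:intersection_shf_L} three for $\falpha$); one must check that the ``below, then above, then below'' slices for $f$ and the ``below, then above'' slices for $\falpha$ contribute equal total length, which is exactly the content of the definition of $\falpha$ via $\delta$ but needs to be unwound carefully using Lemma~\ref{lemma:phi}\ref{item:geometric_interpretation_phi} (the chord of slope $\alpha$ of the curve of $f$) and the fact that the portion of $\Lx{x}$ lying below $|\cdot|$ has the same length for $f$ and for $\falpha$ because both regions share the same diagonal boundary. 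An alternative, possibly cleaner route for one direction is to invoke Proposition~\ref{proposition:shaking_subset} (monotonicity of $\Sh$) together with Lemma~\ref{lemma:inclusion_gr}: approximate or sandwich $f$ by piecewise-linear functions whose shakings are directly computable, but the direct slice-by-slice argument above is likely the intended and most transparent one.
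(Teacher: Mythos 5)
Your proposal is correct and follows essentially the same route as the paper: both arguments fix a line of slope $\alpha$, use Lemmas~\ref{lemma:intersection_f_L} and~\ref{lemma:intersection_shf_L} to reduce to the nontrivial range of lines, and then match the slice of $\Sh\bigl(\gr(f)\bigr)$ with the slice of $\gr\bigl(\falpha\bigr)$ via the stability of $\gr\bigl(\falpha\bigr)$ (Proposition~\ref{proposition:gr_shf_stable}) together with the identity $\falpha(s+\mydelta[s]) = f(s)+\alpha\mydelta[s]$ and Lemma~\ref{lemma:geometric_interpretation_delta}. The only difference is cosmetic: the paper identifies the two segments by showing they share both endpoints, whereas you compare their lengths and invoke the distance criterion of Lemma~\ref{lemma:shaking_distance} twice; the ``slice-length identity'' you flag as the main obstacle is exactly the endpoint computation carried out in the paper's proof.
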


\begin{proof}
 Write $\Lx[]{x}$ instead of $\Lx{x}$ for simplicity. 
It suffices to prove that for any $x \in \R$ we have  $\Lx[]{x} \cap \Sh\bigl(\gr(f)\bigr) = \Lx[]{x} \cap \gr\bigl(\Sh(f)\bigr)$. By Lemmas~\ref{lemma:intersection_f_L} and~\ref{lemma:intersection_shf_L}, it suffices to consider the case $x \in (\xalphamp,\xalphapp)$.

By Lemmas~\ref{lemma:phi} and~\ref{lemma:intersection_f_L}, we know that there exists $s < \xalphap$ such that $\Lx[]{x}$ intersects the curve of $f$ at the points of abscissa $s$ and $\comp(s) > s$, with $\Lx[]{x}$ being above the curve of $f$ exactly between $s$ and $\comp(s)$.
Hence, by Lemma~\ref{lemma:phi}\ref{item:geometric_interpretation_phi} we thus know that the connected components of $\Lx[]{x} \cap \gr(f)$ are:
\begin{align*}
L^x_- &\coloneqq \Lx[]{x} \cap \gr(f) \cap \bigl([-a,s]\times \R\bigr),
\\
L^x_+ &\coloneqq \Lx[]{x} \cap \gr(f) \cap \bigl([\comp(s),a] \times \R),
\end{align*}
and:
\begin{itemize}
\item the segment $S^x \coloneqq \Sh\bigl(\Lx[]{x} \cap \gr(f)\bigr)$ is obtained by appending $L^x_+$ at the end of $L^x_-$ (note that $(x,-x) \in L^x_-$ by Lemma~\ref{lemma:intersection_f_L}),
\item the segment $L^x_-$ ends at the point of $\Lx[]{x}$ with abscissa $s$.
\end{itemize}
By Lemma~\ref{lemma:geometric_interpretation_delta}, we know that $L^x_+$ has abscissa length $\mydelta[s]$. We deduce that $S^x$ ends at the point $P$ of $\Lx[]{x}$ with abscissa $s+\mydelta[s]$. Recalling that $\Lx[]{x}$ has slope $\alpha$, we deduce that $P$ has ordinate $t$ satisfying $\alpha = \frac{t-f(s)}{s+\mydelta[s]-s}$ so that $t = f(s) + \alpha\mydelta[s]$.

Now by Proposition~\ref{proposition:gr_shf_stable}, we know that $\gr\bigl(\falpha\bigr)$ is stable under $\Sh$. In particular, the set $T^x \coloneqq \Lx[]{x} \cap \gr\bigl(\falpha\bigr)$ is a segment. By  Lemma~\ref{lemma:derivative_bounded_alpha} and Proposition~\ref{proposition:gr_shf_stable}, we know that there is a unique $\hat{s} \in [-a,\suppfalpha]$ such that $\bigl(\hat{s},\falpha(\hat{s})\bigr) \in \Lx[]{x}$, and this $\hat{s}$ is the right extremity of $T^x$. By Proposition~\ref{proposition:falpha_x+mydelta} we have $\falpha(s+\mydelta[s]) = f(s) + \alpha\mydelta[s] = t$ and we have seen that $P = (s+\mydelta[s],t) \in \Lx[]{x}$, thus  $\hat{s} = s+\mydelta[s]$ by unicity. Hence, the two segments $S^x$ and $T^x$ start and end at the same point thus are equal.
\end{proof}

\subsection{Convexity}
\label{subsection:convexity}

Let $a \in \R_{> 0}$ and  $\alpha \in [0,1)$, together with $f \in \myclass[a]$. We want to prove that the convexity of $f$ ensures that $\falpha$ is convex as well. Note that this result will not be used in the remaining part of the paper.  The key point is the following result.

\begin{lemma}
\label{lemma:shaking_convex}
Let $K \subseteq \R^2$ be compact. If $K$ is convex then $\Sh(K)$ is also convex.
\end{lemma}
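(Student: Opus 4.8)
The plan is to show that shaking preserves convexity by reducing to the one-dimensional statement that for a fixed line $L_\alpha$ of slope $\alpha$, the function recording the \emph{chord length} $x \mapsto \bigl|K \cap (x+L_\alpha)\bigr|$ is concave on the set of $x$ where it is positive. This is the key mechanism behind the Brunn--Minkowski-type behaviour of Steiner symmetrisation and shaking; I expect to invoke either the Brunn--Minkowski inequality in $\R^1$ applied slice-wise, or a direct elementary argument using convexity of $K$.

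First I would set up coordinates adapted to the shaking direction: pick a parametrisation of $\R^2$ by $(x,t)$ where $x$ runs along the line $\biss$ (equation $y=-x$) and $t$ runs along the direction $v_\alpha$, so that the affine lines $x + L_\alpha$ are exactly the sets $\{x\}\times\R$ in these coordinates. For a convex compact $K$, each slice $K_x := \{t : (x,t)\in K\}$ is a (possibly empty) closed interval $[m(x), M(x)]$, and its length is $\ell(x) := M(x)-m(x) = \bigl|K\cap(x+L_\alpha)\bigr|$ when nonempty. The set of $x$ with $\ell(x)>0$ (or $\geq 0$) is an interval $I$ by convexity of $K$, $m$ is convex on $I$, $M$ is concave on $I$, hence $\ell = M - m$ is concave on $I$. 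Then by Definition~\ref{definition:shaking}, in these coordinates $\Sh(K) = \{(x,t) : x\in \pi_1(K),\ 0 \le t \le \ell(x)\}$ (after translating the foot of each slice onto $\biss$; one must check the foot is indeed at $t=0$, i.e. that $\biss$ corresponds to $t = 0$ along each $x+L_\alpha$, which holds because $L_\alpha$ meets $\biss$ in exactly one point since $\alpha \neq -1$).

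The conclusion is then that $\Sh(K)$ is the region between the graph of the concave function $\ell \geq 0$ and the $x$-axis over an interval, i.e. the \emph{hypograph} (intersected with $\{t\ge0\}$) of a concave function on an interval, which is a convex set. Concretely, given two points $(x_0,t_0),(x_1,t_1)\in\Sh(K)$ with $0\le t_i\le \ell(x_i)$ and $\theta\in[0,1]$, the convex combination $(x_\theta, t_\theta)$ satisfies $0 \le t_\theta \le \theta t_0 + (1-\theta)t_1 \le \theta\ell(x_0) + (1-\theta)\ell(x_1) \le \ell(x_\theta)$ by concavity of $\ell$, so it lies in $\Sh(K)$.

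The main obstacle is establishing the concavity of the slice-length function $\ell$ cleanly. In the orthogonal case this is completely standard, but here the slices are taken along the non-orthogonal direction $L_\alpha$; the argument nonetheless goes through verbatim once one works in the $(x,t)$ coordinates adapted to that direction, since convexity of $K$ is a coordinate-free notion and is inherited by the linear change of variables. I would therefore spend the bulk of the proof carefully justifying that $m$ is convex and $M$ is concave on the projection interval (an immediate consequence of $K$ being convex: if $(x_i, m(x_i)) \in K$ then their convex combination $(x_\theta, \theta m(x_0)+(1-\theta)m(x_1))$ lies in $K$, so $m(x_\theta) \le \theta m(x_0) + (1-\theta)m(x_1)$, and dually for $M$), and then assembling the pieces as above. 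Alternatively, one can simply cite the relevant lemma from~\cite{shaking_compact}, since this convexity-preservation property of shaking is classical; but giving the short self-contained slice argument seems preferable here.
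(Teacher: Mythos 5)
Your proof is correct, but it is organised differently from the paper's. The paper disposes of this lemma by citation (\cite[Proposition 9.1]{gruber}, \cite[Proposition 7.1.7]{krantz}, \cite[Lemma 1.1]{shaking_compact}) together with a one-line sketch: given $M,N \in \Sh(K)$, the convex hull of the two chords $K \cap (M+L_\alpha)$ and $K \cap (N+L_\alpha)$ is a trapezoid whose shaking is again convex, and monotonicity of $\Sh$ (Proposition~\ref{proposition:shaking_subset}) then places the segment $[M,N]$ inside $\Sh(K)$. You instead pass to oblique coordinates $(x,t)$ adapted to $\biss$ and the unit direction $v_\alpha$, observe that for convex $K$ the slice endpoints $m$ and $M$ are respectively convex and concave on the projection interval, so the chord-length $\ell = M-m$ is concave, and identify $\Sh(K)$ with the region $\{0 \le t \le \ell(x)\}$, which is convex as the truncated hypograph of a concave function. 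The two arguments rest on the same geometric mechanism (the trapezoid reduction is exactly the two-point instance of concavity of $\ell$), but yours is fully self-contained, makes the treatment of the non-orthogonal direction explicit through the affine change of variables (under which convexity and the relevant lengths behave correctly since $v_\alpha$ is a unit vector and $\alpha \neq -1$), and avoids invoking Proposition~\ref{proposition:shaking_subset}; the paper's version buys brevity by leaning on the classical references. One small point worth stating explicitly in your write-up is that the relevant interval is the set of $x$ with nonempty slice (the projection of $K$ along $v_\alpha$ onto $\biss$), so that slices reduced to a single point are included; this costs nothing in your convex-combination computation.
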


\begin{proof}
See, for instance,~\cite[Proposition 9.1]{gruber} or~\cite[Proposition 7.1.7]{krantz} (see also \cite[Lemma 1.1.(ii)]{shaking_compact}). The idea is to notice that for $M,N \in \Sh(K)$, the convex hull of
\[
\bigl[K \cap (M+L_\alpha)\bigr] \cup \bigl[K \cap (N + L_\alpha)\bigr]
\]
(where $L_\alpha$ is the line with equation $y = \alpha x$) is a trapezoid, whose shaking remains convex, as illustrated in Figure~\ref{figure:Sh_example}.
\end{proof}

We cannot directly apply Lemma~\ref{lemma:shaking_convex} from Theorem~\ref{theorem:sh_gr_f} since $\gr(f)$ is not convex. Instead, for $g \in \partclass$  we consider the following restricted epigraph, recalling the notation $\Lx[]{x} = \Lx{x}$ from Definition~\ref{definition:xprime}:
\[
\epi(g) \coloneqq \bigl\{(x,y) \in [-a,\comp(-a)] \times \R_{\geq 0} : g(x) \leq y \text{ and } (x,y) \text{ is below } \Lx[]{-a} \bigr\}.
\]
 Note that by Lemma~\ref{lemma:phi_precis} we have $\comp(-a) = \frac{1+\alpha}{1-\alpha}a \geq a$ (which does not depend on $f$). Moreover, by Lemma~\ref{lemma:phi} we know that $\comp(-a)$ is exactly the intersection point between $\Lx[]{-a}$ and the line $\bissp$ of equation $y = x$. An example is given at Figure~\ref{figure:epi(Omega)}.

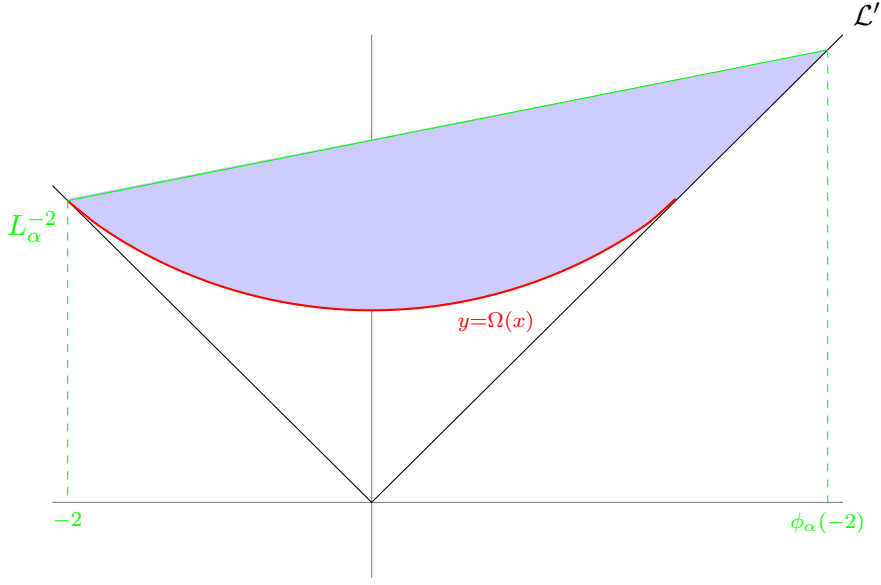
\begin{figure}
\begin{center}
\begin{tikzpicture}[scale=2]
\draw[gray] (-2.1,0) -- (3.1,0);
\draw[gray] (0,-.5) -- (0,3.1);

\fill[blue!20] (3,3) -- (2,2) -- plot[domain=2:-2,smooth] (\x,{2/pi*(\x*rad(asin(\x/2))+sqrt(4-\x*\x))}) -- cycle;

\draw (-2.1,2.1) -- (0,0) -- (3.1,3.1) node[above right]{$\bissp$};

\draw[domain=-2:2,color=red,smooth,thick] plot (\x,{2/pi*(\x*rad(asin(\x/2))+sqrt(4-\x*\x))});
\draw[red] (.5,1.2) node[right] {$\scriptstyle y = \Omega(x)$} ;

{
\color{green}
\draw (-2,2) node[below left]{$\Lx{-2}$} -- (3,3);

\draw[dashed] (-2,2) -- (-2,0)node[below]{$\scriptstyle -2$};
\draw[dashed] (3,3) -- (3,0) node[below]{$\scriptstyle \comp(-2)$};
}
\end{tikzpicture}
\end{center}
\caption{The set $\epi(\Omega)$ (in blue) with $\alpha = \frac{1}{5}$}
\label{figure:epi(Omega)}
\end{figure}

\begin{remark}
\label{remark:epi}
If $[\su[g],\sd[g]] \subseteq [-a,a]$ and if the curve of $g \in \partclass$ is below $\Lx[]{-a}$ on $[-a,a]$ then:
\begin{subequations}
\label{subequations:epi_gr}
\begin{itemize}
\item   we do not loose  information by considering $\epi(g)$ instead of its genuine epigraph, in particular if $\epi(g)$ is convex then $g$ is convex,
\item the intersection 
\begin{equation}
\label{equation:epi_cap_gr}
\epi(g) \cap \gr(g) = \bigl\{(x,g(x)) : x \in [\su[g],\sd[g]]\bigr\},
\end{equation}
is the curve of $g$ on $[\su[g],\sd[g]]$,
\item we have:
\begin{equation}
\label{equation:epi_cup_gr}
\epi(g) \cup \gr(g) = \{(x,y) \in [-a,\comp(-a)]\times\R_{\geq 0} : |x| \leq y \text{ and } (x,y) \text{ is below } \Lx[]{-a}\}.
\end{equation}
\end{itemize}
\end{subequations}
\end{remark}

The next result will be used without further notice.

\begin{lemma}
\label{lemma:f_falpha_satisfy}
Both $f$ and $\falpha$ satisfy the condition of Remark~\ref{remark:epi}.
\end{lemma}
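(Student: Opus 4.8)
The plan is to unwind the two hypotheses appearing in Remark~\ref{remark:epi}: for $g \in \partclass$ they require that $[\su[g],\sd[g]] \subseteq [-a,a]$ and that the curve of $g$ lie below the line $\Lx[]{-a}$ on the interval $[-a,a]$. Both $f$ and $\falpha$ belong to $\partclass$ (for $\falpha$ this is read off from Proposition~\ref{proposition:falpha_x+mydelta} and Remark~\ref{remark:Shf_|x|gg0}), so it suffices to check these two conditions in each of the two cases, and in each case the check reduces to quoting an earlier result.

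For $f$: by~\eqref{equation:f(s)=|s|} and~\eqref{equation:f(s)>|s|} the set $\{s \in \R : f(s) \neq |s|\}$ equals $(-a,a)$, so $\su[f] = -a$ and $\sd[f] = a$, and hence $[\su[f],\sd[f]] = [-a,a]$. For the second condition I would apply Lemma~\ref{lemma:intersection_f_L} with $x = -a$: since $-a \leq -a$, that lemma asserts that $\Lx[]{-a}$ lies above the curve of $f$ on $[-a,a]$, that is, the curve of $f$ lies below $\Lx[]{-a}$ there, as required.

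For $\falpha$: Proposition~\ref{proposition:falpha_x+mydelta} together with Remark~\ref{remark:Shf_|x|gg0} identify $\{s \in \R : \falpha(s) \neq |s|\}$ with $(-a,\suppfalpha)$, so $\su[\falpha] = -a$ and $\sd[\falpha] = \suppfalpha$ (this is also the fact used at the start of the proof of Proposition~\ref{proposition:gr_shf_stable}); by Corollary~\ref{corollary:support_fe} we have $\suppfalpha < a$, whence $[\su[\falpha],\sd[\falpha]] = [-a,\suppfalpha] \subseteq [-a,a]$. The second condition then follows from Lemma~\ref{lemma:intersection_shf_L} with $x = -a$, which states exactly that $\Lx[]{-a}$ lies above the curve of $\falpha$ on $[-a,a]$. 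There is no genuine difficulty here; the only points requiring a moment's care are that the convention ``above/below'' of Lemmas~\ref{lemma:intersection_f_L} and~\ref{lemma:intersection_shf_L} matches the one used in Remark~\ref{remark:epi} (strict except possibly at the endpoints), and that the endpoints of the supports of $f$ and of $\falpha$ are computed from the correct earlier statements.
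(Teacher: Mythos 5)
Your proposal is correct and follows essentially the same route as the paper: determine the supports ($\su[f]=-a$, $\sd[f]=a$ and $\su[\falpha]=-a$, $\sd[\falpha]=\suppfalpha<a$ via Proposition~\ref{proposition:falpha_x+mydelta} and Corollary~\ref{corollary:support_fe}) and then get the ``below $\Lx[]{-a}$'' condition from Lemmas~\ref{lemma:intersection_f_L} and~\ref{lemma:intersection_shf_L}. The only cosmetic difference is that the paper dismisses the case of $f$ as immediate from~\eqref{subequations:assumptions_f}, whereas you spell out the same check explicitly.
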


\begin{proof}
The statement is clear for $f$ by~\eqref{subequations:assumptions_f}. For $\falpha$, note that by Proposition~\ref{proposition:falpha_x+mydelta} we have $\su[\falpha] = -a$ and $\sd[\falpha] = \suppfalpha \in [0,a]$ (by Corollary~\ref{corollary:support_fe}). We conclude the proof using Lemma~\ref{lemma:intersection_shf_L}.
\end{proof}

We now consider the shaking $\Shp$ with direction $\alpha$ against the line $\bissp$ with equation $y = x$  but with respect to the unit vector $-v_\alpha$, which is \emph{negatively} collinear to $(1,\alpha)^\top$.  The analogue of Lemma~\ref{lemma:shaking_distance} is the following (with the notation of Lemma~\ref{lemma:shaking_distance}).

\begin{lemma}
\label{lemma:shakingp_distance}
Let $K \subseteq \R^2$ be compact. Let $M \in \R^2$ be on the \emph{left} of the line $\bissp$. Then:
\[
M \in \Shp(K) \iff d_\alpha(M,\bissp) \leq \bigl|\LM{M} \cap K\bigr| \text{ and } \LM{M} \cap K \neq \emptyset.
\]
\end{lemma}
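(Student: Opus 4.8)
The plan is to prove this exactly as Lemma~\ref{lemma:shaking_distance} was proved, i.e. as a direct unpacking of the relevant definition --- here, of $\Shp$. Recall that $\Shp$ is the shaking built as in Definition~\ref{definition:shaking}, but against the line $\bissp$ of equation $y=x$ and using the vector $-v_\alpha$ in place of $v_\alpha$; thus $\Shp(K)$ is the disjoint union, over $x \in \bissp$, of the slices $K_x$, where $K_x$ is empty when $K \cap (x+L_\alpha) = \emptyset$ and is otherwise the segment from $x$ to $x + \bigl|K \cap (x+L_\alpha)\bigr|\,(-v_\alpha)$.

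First I would fix a point $M$ on the left of $\bissp$ and let $x$ be the unique point in which the line $\LM{M}$ (the line of slope $\alpha$ through $M$) meets $\bissp$; uniqueness holds since $\alpha \neq 1$, and by construction $x$ is exactly the image of $M$ under the projection onto $\bissp$ along $\LM{M}$, so the distance between $M$ and $x$ equals $d_\alpha(M,\bissp)$. I would also note that $\LM{M} = x + L_\alpha$ as subsets of $\R^2$. Then comes the orientation check: by definition, every non-degenerate slice $K_x$ lies on the side of $\bissp$ reached from $\bissp$ by moving in the direction $-v_\alpha$ --- which is precisely the side the statement calls ``the left of $\bissp$'' --- so the hypothesis on $M$ forces the vector $M-x$ to be a non-negative multiple of $-v_\alpha$; in particular $M \neq x$.

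With this in hand the conclusion is immediate: since $M \neq x$, we have $M \in \Shp(K)$ if and only if $M \in K_x$, and because $M-x$ points in the $-v_\alpha$ direction this is equivalent to requiring both $K \cap \LM{M} = K \cap (x+L_\alpha) \neq \emptyset$ and that the distance between $M$ and $x$ be at most $\bigl|K \cap \LM{M}\bigr|$; replacing that distance by $d_\alpha(M,\bissp)$ gives the statement. I do not anticipate any genuine obstacle: the proof is purely a reformulation of the definition, and its only step deserving a line of justification is the orientation check of the second paragraph, which is the exact mirror of why Lemma~\ref{lemma:shaking_distance} places $M$ to the right of $\biss$ (matching $+v_\alpha$) while here $M$ is placed to the left of $\bissp$ (matching $-v_\alpha$).
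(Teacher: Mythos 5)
Your proof is correct and matches the paper's treatment: the paper states this lemma without proof, presenting it (like Lemma~\ref{lemma:shaking_distance}) as a mere reformulation of the definition of $\Shp$, and your careful unpacking — identifying the projection point $x$, checking that the slices point in the $-v_\alpha$ direction onto the left of $\bissp$, and translating membership in the slice into the distance inequality — is exactly the intended argument made explicit.
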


An example of $\Shp$ is given in Figure~\ref{figure:Shp}. We will need to rephrase the interpretation of $\Shp$ of Lemma~\ref{lemma:shakingp_distance} in terms of $\biss$. 

\begin{figure}
\begin{center}
\begin{tikzpicture}[scale=1.5]

\draw[thick] (0,-.5) -- (0,3.5);
\draw[thick] (-3.5,0) -- (3.5,0);
\draw[gray, very thin] (-3.5,-.5) grid[step=.5] (3.5,3.5);

\draw[very thick] (-3.5,3.5) -- (0,0) -- (3.5,3.5);

\fill[red] (-.5,2) rectangle (.5,3);
\draw[red] (0,3.1) node[above,inner sep=1pt,fill=white]{$K$};

\fill[blue] (-3,3) -- node[above, midway,fill=white,inner sep=1pt,shift={(.2,.1)}] {$\Sh(K)$} (-2,3) -- (-1,2) -- (-2,2) -- cycle;
\draw[very thick,green,->] (-.625,2.75) -- (-1.625,2.75);
\draw[very thick,green,->] (-.625,2.25) -- (-1.125,2.25);

\fill[violet] (3,3) -- node[above, pos=.6,fill=white,inner sep=1pt,shift={(.2,.1)}] {$\Shp(K)$} (2,3) -- (1,2) -- (2,2) -- cycle;
\draw[very thick,greensage,->] (.625,2.75) -- (1.625,2.75);
\draw[very thick,greensage,->] (.625,2.25) -- (1.125,2.25);

\draw (-1.25,1) node[below left,fill=white,inner sep = 1pt]{$\biss$};
\draw (1.25,1) node[below right, fill=white, inner sep = 1pt]{$\bissp$};

\end{tikzpicture}
\end{center}
\caption{Shakings $\Sh$ and $\Shp$ with $\alpha = 0$}
\label{figure:Shp}
\end{figure}

\begin{lemma}
\label{lemma:shakingp_distance_reverse}
Let $g \in \partclass$ be satisfying the assumption of Remark~\ref{remark:epi}. Let $M = (x,y) \in \R^2$ be below $\Lx[]{-a}$ such that $|x| \leq y$. Then:
\[
M \in \Shp\bigl(\epi(g)\bigr) \iff d_\alpha(M,\biss) \geq \bigl|\LM{M} \cap \gr(g)\bigr| \text{ and } \LM{M} \cap \epi(g) \neq \emptyset.
\]
\end{lemma}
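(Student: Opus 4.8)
The plan is to reduce the statement to Lemma~\ref{lemma:shakingp_distance}, which already characterises membership in $\Shp\bigl(\epi(g)\bigr)$ in terms of $\epi(g)$, and then to trade the condition involving $\epi(g)$ for the announced one involving $\gr(g)$ by exploiting that $\epi(g)$ and $\gr(g)$ together tile the triangular region appearing in~\eqref{equation:epi_cup_gr}. Since $|x|\le y$ forces $x\le y$, the point $M$ lies (weakly) to the left of $\bissp$, so Lemma~\ref{lemma:shakingp_distance} applied to the compact set $\epi(g)$ gives
\[
M\in\Shp\bigl(\epi(g)\bigr)\iff d_\alpha(M,\bissp)\le\bigl|\LM{M}\cap\epi(g)\bigr|\ \text{ and }\ \LM{M}\cap\epi(g)\ne\emptyset .
\]
As the condition $\LM{M}\cap\epi(g)\ne\emptyset$ appears verbatim in the target statement, it suffices to prove that $d_\alpha(M,\bissp)\le\bigl|\LM{M}\cap\epi(g)\bigr|$ is equivalent to $d_\alpha(M,\biss)\ge\bigl|\LM{M}\cap\gr(g)\bigr|$.

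First I would check that the hypotheses force $M=(x,y)$ to lie in $Q\coloneqq\epi(g)\cup\gr(g)$. If $x<-a$ then $y\ge -x>a$, whereas the line $\Lx[]{-a}$ has ordinate $a+\alpha(x+a)\le a$ at abscissa $x$ (recall $\alpha\ge 0$ and $x+a<0$), so $M$ would be strictly above $\Lx[]{-a}$, a contradiction; symmetrically, $x>\comp(-a)$ is impossible since there $\Lx[]{-a}$ has ordinate $\comp(-a)+\alpha\bigl(x-\comp(-a)\bigr)<x\le y$ because $\alpha<1$. Hence $x\in[-a,\comp(-a)]$, and $M\in Q$ by~\eqref{equation:epi_cup_gr}. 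The line $\LM{M}$ has slope $\alpha$, hence is parallel to the edge of $Q$ lying on $\Lx[]{-a}$ and (weakly) below it; consequently $\LM{M}\cap Q$ is the segment $[A,B]$, where $A\coloneqq\LM{M}\cap\biss$ and $B\coloneqq\LM{M}\cap\bissp$ are exactly the images of $M$ under the projections onto $\biss$ and $\bissp$ along $\LM{M}$, and $M$ lies on this segment. Therefore
\[
\bigl|\LM{M}\cap Q\bigr|=|AB|=|AM|+|MB|=d_\alpha(M,\biss)+d_\alpha(M,\bissp).
\]
On the other hand $Q=\epi(g)\cup\gr(g)$ while, by~\eqref{equation:epi_cap_gr}, $\epi(g)\cap\gr(g)$ is the curve of $g$, which the slope-$\alpha$ line $\LM{M}$ meets in a set of one-dimensional measure zero; hence $\bigl|\LM{M}\cap Q\bigr|=\bigl|\LM{M}\cap\epi(g)\bigr|+\bigl|\LM{M}\cap\gr(g)\bigr|$. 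Comparing the two expressions for $\bigl|\LM{M}\cap Q\bigr|$ gives $d_\alpha(M,\bissp)-\bigl|\LM{M}\cap\epi(g)\bigr|=\bigl|\LM{M}\cap\gr(g)\bigr|-d_\alpha(M,\biss)$, whence the desired equivalence of inequalities; feeding it into the equivalence displayed above completes the proof.

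The routine parts are the case analysis showing $M\in Q$ and the identification of $\LM{M}\cap Q$ with the segment $[A,B]$. The genuinely load-bearing point is the additivity $\bigl|\LM{M}\cap Q\bigr|=\bigl|\LM{M}\cap\epi(g)\bigr|+\bigl|\LM{M}\cap\gr(g)\bigr|$: it requires the overlap $\LM{M}\cap\epi(g)\cap\gr(g)$ to be negligible, i.e.\ that the curve of $g$ contains no segment of slope $\alpha$. This holds for the functions to which the lemma will be applied — $f$, which is strictly convex on $[-a,a]$, and $\falpha$, for which $\falpha'<\alpha$ by Lemma~\ref{lemma:derivative_falpha} — so I would invoke this regularity when justifying the length additivity.
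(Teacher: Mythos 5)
Your proposal is correct and follows essentially the same route as the paper: reduce to Lemma~\ref{lemma:shakingp_distance} (legitimate since $|x|\leq y$ puts $M$ weakly to the left of $\bissp$), then trade $\bigl|\LM{M}\cap\epi(g)\bigr|$ for $\bigl|\LM{M}\cap\gr(g)\bigr|$ via the segment $D=\LM{M}\cap\{(u,v):|u|\leq v\}$, for which $d_\alpha(M,\biss)+d_\alpha(M,\bissp)=|D|$ and $\bigl|\LM{M}\cap\epi(g)\bigr|=|D|-\bigl|\LM{M}\cap\gr(g)\bigr|$. The measure-zero-overlap issue you flag is precisely what the paper dismisses with ``the curve of $g$ has zero Lebesgue measure'', so your extra care (no slope-$\alpha$ segment in the curve, which holds for the functions the lemma is actually applied to) is a minor refinement rather than a different argument.
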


\begin{proof}
By assumption, the point $M$ is on the left of $\bissp$ thus:
\[
M \in \Shp\bigl(\epi(g)\bigr) \iff d_\alpha(M,\bissp) \leq \bigl|\LM{M} \cap \epi(g)\bigr| \text{ and } \LM{M} \cap \epi(g) \neq \emptyset.
\]
Now the subset $D \coloneqq \LM{M} \cap \{(u,v) \in \R^2 : |u| \leq v\}$ is a segment of $\R^2$ since $\alpha \neq \pm 1$, with left (resp. right) extremity in $\biss$ (resp. $\biss'$). We have $M \in D$ thus:
\[
d_\alpha(M,\bissp) = |D| - d_\alpha(M,\biss).
\]
Moreover, by~\eqref{subequations:epi_gr}, since $M$ is below $\Lx[]{-a}$ and since the curve of $g$ has zero Lebesgue measure, we have:
\[
\bigl|\LM{M} \cap \epi(g)\bigr| = |D| - \bigl|\LM{M} \cap \gr(g)\bigr|.
\]
The result follows.
\end{proof}

\begin{proposition}
\label{proposition:epi(sh)}
We have:
\[
\epi\bigl(\falpha[\alpha]\bigr) = \Shp\bigl(\epi(f)\bigr).
\]
\end{proposition}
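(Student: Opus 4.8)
The plan is to compare the two compact sets $\Shp\bigl(\epi(f)\bigr)$ and $\epi\bigl(\falpha\bigr)$ by cutting them with the lines of slope $\alpha$. The starting point is that both sets live inside one and the same region: by~\eqref{equation:epi_cup_gr} and Lemma~\ref{lemma:f_falpha_satisfy}, the set $E\coloneqq\epi(g)\cup\gr(g)$ does not depend on whether $g=f$ or $g=\falpha$, and clearly $\epi(f)\subseteq E$ and $\epi(\falpha)\subseteq E$. Geometrically $E$ is bounded below by the graph of $|\cdot|$ and above by the line $\Lx{-a}$, and these two curves cross exactly at $(-a,a)$ and at $(\comp(-a),\comp(-a))$. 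Since the lower boundary of $E$ is a piece of $\biss$ (where $|x|=-x$) followed by a piece of $\bissp$ (where $|x|=x$), and the upper boundary $\Lx{-a}$ is parallel to $L$, any line $L$ of slope $\alpha\in[0,1)$ that meets $E$ does so in a segment with lower extremity $L\cap\biss$ and upper extremity $L\cap\bissp$. Moreover $E\cap L=\bigl(\gr(f)\cap L\bigr)\cup\bigl(\epi(f)\cap L\bigr)$, with the overlap contained in the curve of $f$ by~\eqref{equation:epi_cap_gr} and hence of Lebesgue measure zero along $L$; the same holds for $\falpha$.

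First I would fix a line $L$ of slope $\alpha$ meeting $E$ and read off the two slices from Definition~\ref{definition:shaking}: $\Sh\bigl(\gr(f)\bigr)\cap L$ is the sub-segment of $E\cap L$ issued from $L\cap\biss$ of length $\bigl|\gr(f)\cap L\bigr|$, while $\Shp\bigl(\epi(f)\bigr)\cap L$ is the sub-segment issued from $L\cap\bissp$ of length $\bigl|\epi(f)\cap L\bigr|=|E\cap L|-\bigl|\gr(f)\cap L\bigr|$. Thus these two sub-segments are \emph{complementary} inside $E\cap L$: they cover it and meet only in the point $P$ at distance $\bigl|\gr(f)\cap L\bigr|$ from $L\cap\biss$. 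By Theorem~\ref{theorem:sh_gr_f}, $\Sh\bigl(\gr(f)\bigr)=\gr\bigl(\falpha\bigr)$, so $\gr(\falpha)\cap L$ is the segment from $L\cap\biss$ to $P$. Now $E=\gr(\falpha)\cup\epi(\falpha)$, the two meeting precisely along the curve of $\falpha$, and by Lemma~\ref{lemma:intersection_shf_L} the line $L$ meets that curve in at most one point, which when it exists is exactly the right extremity $P$ of $\gr(\falpha)\cap L$. Hence $\epi(\falpha)\cap L=\bigl(E\cap L\bigr)\setminus\bigl(\gr(\falpha)\cap L\bigr)$ completed by the curve point, i.e.\ the segment from $P$ to $L\cap\bissp$ — with the obvious reading in the degenerate cases where $\gr(\falpha)\cap L$ is empty or all of $E\cap L$ — which is $\Shp\bigl(\epi(f)\bigr)\cap L$. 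Since lines of slope $\alpha$ missing $E$ contribute nothing to either set and every point lies on a unique line of slope $\alpha$, this yields $\Shp\bigl(\epi(f)\bigr)=\epi\bigl(\falpha\bigr)$. The same argument can be run pointwise via the distance reformulations: Lemma~\ref{lemma:shakingp_distance_reverse} describes membership in $\Shp(\epi(f))$, Lemma~\ref{lemma:shaking_distance} together with Theorem~\ref{theorem:sh_gr_f} describes membership in $\gr(\falpha)$, and $\epi(\falpha)$ is the complement of $\gr(\falpha)$ in $E$ up to the curve of $\falpha$; reconciling the two is exactly the length identity above.

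The hard part will not be a new idea but the bookkeeping of boundary and degenerate configurations: lines of slope $\alpha$ that miss $E$, lines tangent to — or meeting only at an extremity $x=\pm a$ — the curve of $f$ or of $\falpha$, and points lying on $\biss$ or on $\bissp$ themselves. The one genuinely load-bearing check is that a line whose $\gr(f)$-slice exhausts $E\cap L$ is precisely a line whose $\epi(f)$-slice is empty (equivalently, via Theorem~\ref{theorem:sh_gr_f}, a line whose $\gr(\falpha)$-slice exhausts $E\cap L$ and whose $\epi(\falpha)$-slice is empty); this is where one uses that the overlaps are genuinely negligible and that the curves of $f$ and of $\falpha$ touch the graph of $|\cdot|$ only at their endpoints, which is guaranteed by~\eqref{subequations:assumptions_f}, Proposition~\ref{proposition:falpha_x+mydelta} and Lemma~\ref{lemma:intersection_shf_L}. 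Apart from that casework, the statement follows directly from Theorem~\ref{theorem:sh_gr_f}, Remark~\ref{remark:epi}, and the definitions of $\Sh$ and $\Shp$.
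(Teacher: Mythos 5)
Your argument is correct and is essentially the paper's own proof: both slice by lines of slope $\alpha$, feed in Theorem~\ref{theorem:sh_gr_f}, and exploit that $\gr$ and $\epi$ are complementary (up to the curve) inside the common region of Remark~\ref{remark:epi}, which is exactly what Lemma~\ref{lemma:shakingp_distance_reverse} encodes; the paper merely writes the slice comparison as a pointwise chain of equivalences via Lemmas~\ref{lemma:shaking_distance}, \ref{lemma:derivative_bounded_alpha} and~\ref{lemma:shakingp_distance_reverse}, and dispatches your ``degenerate'' lines by restricting to $x\in(\xalphamp,\xalphapp)$ using Lemmas~\ref{lemma:intersection_f_L} and~\ref{lemma:intersection_shf_L}, just as you indicate.
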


\begin{proof}
It suffices to prove that for all $x \in \R$ we have $\Lx[]{x} \cap \epi\bigl(\falpha\bigr) = \Lx[]{x} \cap \Shp\bigl(\epi(f)\bigr)$. By Lemmas~\ref{lemma:intersection_f_L} and~\ref{lemma:intersection_shf_L}, it suffices to consider $x \in (\xalphamp,\xalphapp)$. Let $M_0 = (u_0,v_0)$ be the intersection point between $\Lx[]{x}$ and the curve of $\falpha$ and let $M = (u,v) \in \Lx[]{x}$. In particular, note that $\Lx[]{x}$ is the line of slope $\alpha$ containing $M$. 
We have:
\begin{align*}
M \in \epi\bigl(\falpha[\alpha]\bigr)
&\iff
v \geq \falpha[\alpha](u)
\\
&\iff
v = \falpha[\alpha](u) \text{ or } v > \falpha[\alpha](u)
\\
&\iff
v = \falpha[\alpha](u) \text{ or } M \notin \gr\bigl(\falpha[\alpha]\bigr).
\end{align*}
Thus, by Theorem~\ref{theorem:sh_gr_f} and Lemma~\ref{lemma:shaking_distance}  we have:
\begin{align*}
M \in \epi\bigl(\falpha[\alpha]\bigr)
&\iff
v = \falpha[\alpha](u) \text{ or } M \notin \Sh\bigl(\gr(f)\bigr)
\\
&\iff
v = \falpha[\alpha](u) \text{ or } d_\alpha(M,\biss) > \bigl|\Lx[]{x} \cap \gr(f)\bigr| \text{ or } \Lx[]{x} \cap \gr(f) = \emptyset.
\end{align*}
By Lemma~\ref{lemma:intersection_f_L}, since $x \in (\xalphamp,\xalphapp)$ we have $\Lx[]{x} \cap \gr(f) \neq \emptyset$ thus we deduce that:
\[
M \in \epi\bigl(\falpha[\alpha]\bigr) \iff v = \falpha[\alpha](u) \text{ or } d_\alpha(M,\biss) > \bigl|\Lx[]{x} \cap \gr(f)\bigr|.
\]
Now by definition of $\Sh$ and by  Theorem~\ref{theorem:sh_gr_f} we have:
\[
\bigl|\Lx[]{x}\cap \gr(f)\bigr|
=
\bigl|\Lx[]{x} \cap \Sh\bigl(\gr(f)\bigr)\bigr|
=
\bigl| \Lx[]{x} \cap \gr\bigl(\falpha[\alpha]\bigr)\bigr|.
\]
Using this equality twice and using Lemma~\ref{lemma:derivative_bounded_alpha}\ref{item:line_crosses_two_points} (that we can use by the existence of $M_0$), by Proposition~\ref{proposition:gr_shf_stable} we obtain:
\begin{align*}
M \in \epi\bigl(\falpha[\alpha]\bigr)
&\iff 
v = \falpha[\alpha](u) \text{ or } d_\alpha(M,\biss) > \bigl|\Lx[]{x} \cap \gr\bigl(\falpha[\alpha]\bigr)\bigr|
\\
&\iff 
d_\alpha(M,\biss) \geq \bigl|\Lx[]{x} \cap \gr\bigl(\falpha[\alpha]\bigr)\bigr|
\\
&\iff 
d_\alpha(M,\biss) \geq \bigl|\Lx[]{x} \cap \gr(f)\bigr|.
\end{align*}
Again by Lemma~\ref{lemma:intersection_f_L}, since $x \in (\xalphamp,\xalphapp)$ we have $\Lx[]{x} \cap \epi(f) \neq \emptyset$ thus we obtain, by Lemma~\ref{lemma:shakingp_distance_reverse}:
\begin{align*}
M \in \epi\bigl(\falpha[\alpha]\bigr)
&\iff 
d_\alpha(M,\biss) \geq \bigl|\Lx[]{x} \cap \gr(f)\bigr| \text{ and } \Lx[]{x} \cap \epi(f) \neq \emptyset
\\
&\iff M \in \Shp\bigl(\epi(f)\bigr).
\end{align*}
This concludes the proof.
\end{proof}

\begin{corollary}
\label{corollary:falpha_convex}
The function $\falpha[\alpha]$ is convex.
\end{corollary}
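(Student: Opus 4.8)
The plan is to transfer the convexity of $f$ through the identity $\epi\bigl(\falpha[\alpha]\bigr) = \Shp\bigl(\epi(f)\bigr)$ of Proposition~\ref{proposition:epi(sh)}, using the fact that shaking preserves convexity, and then to descend from the convexity of $\epi\bigl(\falpha[\alpha]\bigr)$ to that of $\falpha[\alpha]$ itself via Remark~\ref{remark:epi}.

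First I would check that $\epi(f)$ is a compact convex subset of $\R^2$. Since $f \in \myclass[a]$ is convex by~\eqref{subequations:assumptions_f}, its (full) epigraph is convex, and $\epi(f)$ is the intersection of this epigraph with the vertical strip $[-a,\comp(-a)]\times\R$, the half-plane $\R\times\R_{\geq 0}$, and the closed half-plane of points lying below the line $\Lx[]{-a}$; being an intersection of convex sets, it is convex. It is closed, and it is bounded because $|x| \leq f(x) \leq y$ while $y$ is bounded above by the value of the (affine) function whose graph is $\Lx[]{-a}$ on the compact interval $[-a,\comp(-a)]$; hence $\epi(f)$ is compact, so that $\Shp\bigl(\epi(f)\bigr)$ is defined.

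Next, I would observe that $\Shp$ is the same type of (non-orthogonal) shaking operation as $\Sh$, differing only by a reflection across the first bisector together with the choice of anchoring line; in particular the trapezoid argument recalled in the proof of Lemma~\ref{lemma:shaking_convex} is insensitive to which side of the anchoring line the slices are pushed towards, so Lemma~\ref{lemma:shaking_convex} applies verbatim to $\Shp$ and $\Shp\bigl(\epi(f)\bigr)$ is convex. By Proposition~\ref{proposition:epi(sh)} this says precisely that $\epi\bigl(\falpha[\alpha]\bigr)$ is convex. Finally, by Lemma~\ref{lemma:f_falpha_satisfy} the function $\falpha[\alpha]$ satisfies the hypotheses of Remark~\ref{remark:epi}, whose first assertion is that convexity of $\epi(g)$ forces $g$ to be convex; applying this with $g = \falpha[\alpha]$ yields the claim. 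The only point requiring any care is the adaptation of Lemma~\ref{lemma:shaking_convex} from $\Sh$ to $\Shp$; the remaining steps — convexity and compactness of $\epi(f)$, and the elementary implication ``$\epi(g)$ convex $\Rightarrow$ $g$ convex'' — are routine.
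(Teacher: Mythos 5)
Your proposal is correct and follows essentially the same route as the paper: Proposition~\ref{proposition:epi(sh)} combined with Lemma~\ref{lemma:shaking_convex} (adapted to $\Shp$, as the paper also notes) gives convexity of $\epi\bigl(\falpha[\alpha]\bigr)$, and Remark~\ref{remark:epi} then yields convexity of $\falpha[\alpha]$. The only cosmetic difference is that the paper explicitly invokes Proposition~\ref{proposition:falpha_x+mydelta} to pass from convexity on $[-a,\comp(-a)]$ to convexity on all of $\R$, a point your appeal to Remark~\ref{remark:epi} covers implicitly, while you in turn spell out the compactness and convexity of $\epi(f)$ which the paper leaves implicit.
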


\begin{proof}
By Lemma~\ref{lemma:shaking_convex}, which can also be stated with $\Shp$, using Remark~\ref{remark:epi} and Proposition~\ref{proposition:epi(sh)}   we obtain that $\epi\bigl(\falpha\bigr)$ is convex thus $\falpha$ is convex on $[-a,\comp(-a)]$. By Proposition~\ref{proposition:falpha_x+mydelta}, we deduce that $\falpha$ is convex on $\R$.
\end{proof}

Recall from Definition~\ref{definition:Omegae} that $\Omegae = \Sh[1-2e^{-1}](\Omega)$. Example~\ref{example:Omega_convex} gives the following particular case.

\begin{corollary}
\label{corollary:Omegae_convex}
The shape $\Omegae$ is convex for all $e \geq 2$.
\end{corollary}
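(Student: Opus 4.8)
The plan is to specialize Corollary~\ref{corollary:falpha_convex}. That corollary already proves that $\falpha[\alpha]$ is convex for \emph{every} $a \in \R_{>0}$, \emph{every} $\alpha \in [0,1)$ and \emph{every} $f \in \myclass[a]$; so all the substance — Proposition~\ref{proposition:epi(sh)} identifying $\epi\bigl(\falpha[\alpha]\bigr)$ with the shaking $\Shp\bigl(\epi(f)\bigr)$, together with Lemma~\ref{lemma:shaking_convex} that shakings preserve convexity — is already in place. What remains is merely to check that the pair $(\Omega,\,1-2e^{-1})$ meets the hypotheses.

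First I would recall from Example~\ref{example:Omega_convex} (which rests on Lemma~\ref{lemma:derivative}) that $\Omega \in \myclass[2]$. Next I would verify that $\alpha \coloneqq 1-2e^{-1}$ lies in $[0,1)$ for every integer $e \geq 2$: one always has $1-2e^{-1} < 1$, while $1-2e^{-1} \geq 0$ precisely when $e \geq 2$ (the boundary value $\alpha = 0$ occurring at $e = 2$, which is the case made explicit in Corollary~\ref{corollary:limit_shape_alpha0}). Thus the data $(a,\alpha,f) = \bigl(2,\,1-2e^{-1},\,\Omega\bigr)$ satisfies the standing hypotheses of~\textsection\ref{subsection:convexity}.

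Finally, since $\Omegae = \functalpha{1-2e^{-1}}{\Omega}$ by Definition~\ref{definition:Omegae} — that is, $\Omegae$ is exactly the function $\falpha[\alpha]$ of Definition~\ref{definition:falpha} built from $f = \Omega$ and $\alpha = 1-2e^{-1}$ — Corollary~\ref{corollary:falpha_convex} applies directly and yields that $\Omegae$ is convex. I do not foresee any genuine obstacle: the argument is a one-line invocation, and the only point meriting explicit mention is the elementary check that $1-2e^{-1} \in [0,1)$ for all $e \geq 2$.
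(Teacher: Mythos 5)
Your proposal is correct and follows exactly the paper's route: the paper also obtains Corollary~\ref{corollary:Omegae_convex} by specialising Corollary~\ref{corollary:falpha_convex} to $f = \Omega \in \myclass[2]$ (Example~\ref{example:Omega_convex}) and $\alpha = 1-2e^{-1}$, via Definition~\ref{definition:Omegae}. Your explicit check that $1-2e^{-1} \in [0,1)$ for $e \geq 2$ is a harmless addition left implicit in the paper.
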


\section{Shaking partitions}
\label{section:shaking_partitions}

Let $\lambda$ be a partition and let $e \geq 2$. Recall from Definition~\ref{definition:sh} that $\sh = \Sh$ with $\alpha = \alpha_e\coloneqq 1-2e^{-1}$. The following observation is the starting point of this section.

\begin{lemma}
\label{lemma:sh_gr_reg}
We have $\sh\bigl(\gr(\omega_\lambda)\bigr) = \sh\bigl(\gr(\omega_{\reg(\lambda)})\bigr)$ and $\sh\bigl(\gr(\tomega_\lambda)\bigr) = \sh\bigl(\gr(\tomega_{\reg(\lambda)})\bigr)$.
\end{lemma}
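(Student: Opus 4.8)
The plan is to deduce the statement from Proposition~\ref{proposition:reg_direction} together with the elementary observation that $\sh(K)$ depends on a compact set $K$ only through its slices by lines of slope $\alpha_e$. I assume $\lambda\neq\emptyset$ throughout, the statement being trivial otherwise.

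First I would recall that, in the Russian convention, $\gr(\omega_\lambda)$ is exactly the union $\bigcup_{\beta}\beta$ of the boxes $\beta$ indexed by the nodes of $\Y(\lambda)$, these boxes having pairwise disjoint interiors. By Proposition~\ref{proposition:reg_direction}, the $e$-regularisation translates each box of $\Y(\lambda)$ by a vector parallel to the line $L_{\alpha_e}$ of equation $y=\alpha_e x$; since it is a bijection on nodes, it induces a bijection $\beta\mapsto\beta'$ from the boxes of $\gr(\omega_\lambda)$ onto those of $\gr(\omega_{\reg(\lambda)})$ with $\beta'=\beta+v_\beta$ for some $v_\beta$ parallel to $L_{\alpha_e}$.

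Next I would show that for every line $L$ of slope $\alpha_e$ one has $L\cap\gr(\omega_\lambda)=\emptyset$ if and only if $L\cap\gr(\omega_{\reg(\lambda)})=\emptyset$, and $\bigl|L\cap\gr(\omega_\lambda)\bigr|=\bigl|L\cap\gr(\omega_{\reg(\lambda)})\bigr|$. Indeed, since $v_\beta$ is parallel to $L$, translation by $v_\beta$ carries $L\cap\beta$ onto $L\cap\beta'$, hence preserves both its non-emptiness and its length; summing over the interior-disjoint boxes of each diagram gives $\bigl|L\cap\gr(\omega_\lambda)\bigr|=\sum_\beta|L\cap\beta|=\sum_\beta|L\cap\beta'|=\bigl|L\cap\gr(\omega_{\reg(\lambda)})\bigr|$, and likewise for the emptiness. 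By Definition~\ref{definition:shaking}, the set $\sh(K)$ is completely determined by the data, for each $x\in\biss$, of whether $K\cap(x+L_{\alpha_e})$ is empty and of its one-dimensional measure; hence $\sh\bigl(\gr(\omega_\lambda)\bigr)=\sh\bigl(\gr(\omega_{\reg(\lambda)})\bigr)$.

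For the rescaled version I would note that $\gr(\tomega_\lambda)$ is the image of $\gr(\omega_\lambda)$ under the homothety $\Phi$ of ratio $\frac{1}{\sqrt n}$ centred at the origin, and that $\Phi$ commutes with $\sh$: the origin lies on $\biss$, so $\Phi$ fixes $\biss$ setwise and sends every line of slope $\alpha_e$ to a line of slope $\alpha_e$, while scaling one-dimensional measures by $\frac{1}{\sqrt n}$, so that $\Phi\bigl(K_{\alpha_e}^x\bigr)=(\Phi K)_{\alpha_e}^{\Phi(x)}$ for every $x\in\biss$. Therefore $\sh\bigl(\gr(\tomega_\lambda)\bigr)=\Phi\bigl(\sh(\gr(\omega_\lambda))\bigr)=\Phi\bigl(\sh(\gr(\omega_{\reg(\lambda)}))\bigr)=\sh\bigl(\gr(\tomega_{\reg(\lambda)})\bigr)$; alternatively, the argument above applies verbatim to $\tomega_\lambda$ with the boxes replaced by squares of semi-diagonal $\frac{1}{\sqrt n}$. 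The one point requiring care is the middle step: one must translate the boxes \emph{individually} and check that this still preserves every slice by a line parallel to the translation directions, which works precisely because each $v_\beta$ is parallel to $L_{\alpha_e}$ and because the translated boxes remain interior-disjoint, $\reg(\lambda)$ being a genuine partition; the rest is bookkeeping with Definition~\ref{definition:shaking} and the geometry of homotheties.
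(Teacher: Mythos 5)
Your proposal is correct and follows essentially the same route as the paper: both reduce the claim to the observation (via Proposition~\ref{proposition:reg_direction}) that every line of slope $\dire$ meets $\gr(\omega_\lambda)$ and $\gr(\omega_{\reg(\lambda)})$ in slices of equal length (and simultaneous non-emptiness), which by Definition~\ref{definition:shaking} determines the shaking; your box-by-box translation argument and the homothety remark for the rescaled case just spell out details the paper leaves implicit.
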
 

\begin{proof}
Let $L$ be a line of slope $\alpha_e$. By Proposition~\ref{proposition:reg_direction}, we know that, in the Russian convention, the line $L$ crosses the same number of boxes in the Young diagram of $\lambda$ and in the Young diagram of $\reg(\lambda)$. This implies that $|L \cap \gr(\omega_\lambda)| = |L \cap \gr(\omega_{\reg(\lambda)})|$, whence the first equality. The proof for the second one is the same.
\end{proof}

Hence, to study the shaking operations on partitions it suffices to study the shaking operations on regular partitions. In fact, the shaking of $\gr(\tomega_{\mu})$ for $\mu$ an $e$-regular partition is a bit delicate to determine. Instead, we will bound the latter graph by two close graphs that are stable under the shaking operation.

\subsection{Corners}

Let $\lambda$ be a partition.
We say that $c \in \mathbb{Z}$ is an \emph{outer corner} (resp. \emph{inner corner}) of $\lambda$ if $\omega'_\lambda|_{(c-1,c)} = 1$ (resp. $=-1$) and $\omega'_\lambda|_{(c,c+1)} = -1$ (resp. $=1$).   Note that $\lambda$ has always at least one inner corner, and has exactly one inner corner if and only if $\lambda$ is empty.

\begin{figure}[h]
\begin{center}
\begin{tikzpicture}[scale=.6]
\draw[-stealth] (-8,0) -- (8,0) node[below]{$x$};
\draw[-stealth] (0,-1) -- (0,9) node[right]{$y$};
\foreach \i in {-7,-6,-5,-4,-3,-2,-1,1,2,3,4,5,6,7}
	{\draw (\i,-.1) --++ (0,.2);}
\foreach \j in {1,2,...,8}
	\draw (-.1,\j) --++ (.2,0);

\draw[very thick] (-7,7) -- (-4,4) -- (-2,6) -- (0,4) -- (1,5) -- (2,4) -- (3,5) -- (4,4) -- (7,7);
\draw[dotted, very thick] (-7.7,7.7) -- (-7,7);
\draw[dotted, very thick] (7,7) -- (7.7,7.7) node[below right]{$y=\omega_\lambda(x)$};

\draw[dashed] (-4,4) -- (0,0) -- (4,4);
\draw[dashed] (-3,5) -- (0,2) -- (2,4) --++ (1,-1);
\draw[dashed] (-3,3) --++ (2,2);
\draw[dashed] (-2,2) --++ (2,2) --++ (2,-2);
\draw[dashed] (-1,1) --++ (1,1) --++ (1,-1);

\begin{scope}[densely dotted, thick]

\color{red}
\draw (-2,6) -- (-2,0) node[below]{$c_1$};
\fill (-2,0) circle (\sizecorner);

\foreach \i in {1,3}
	{
	\draw (\i,5) -- (\i,0);
	\fill (\i,0) circle (\sizecorner);
	}
\draw (1,0) node[below]{$c_2$};
\draw (3,0) node[below]{$c_3$};

\color{blue}
\foreach \i in {-4,2,4}
	{
	\draw (\i,4) -- (\i,0);
	\fill (\i,0) circle (\sizecorner);
	}
	
\draw (-4,0) node[below]{$i_1$};
\draw (0,4) -- (0,0);
\fill (0,0) circle (\sizecorner);
\draw (-.5,0) node[below]{$i_2$};
\draw (2,0) node[below]{$i_3$};
\draw (4,0) node[below]{$i_4$};
\end{scope}
\end{tikzpicture}
\end{center}
\caption{Outer corners (in red) and inner corners (in blue) for $\lambda = (4,4,2,1)$}
\label{figure:corners}
\end{figure}
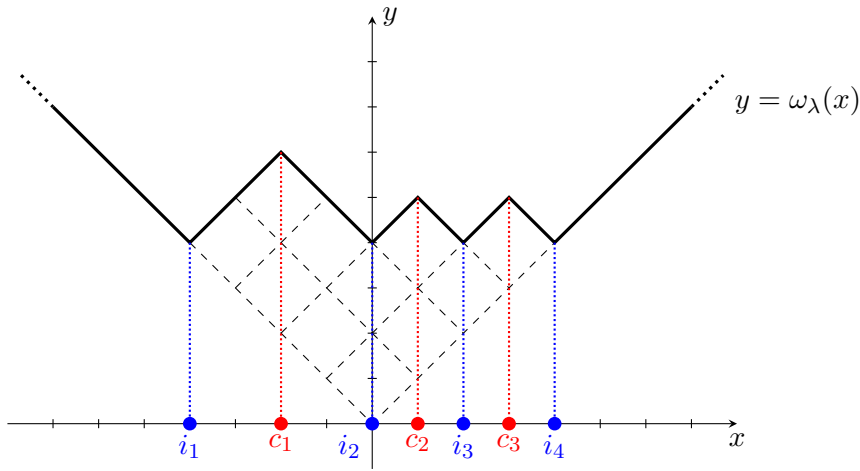

\begin{lemma}
\label{lemma:corners_intertwins}
Let $\lambda$ be a partition and let $\{c_1 < \dots < c_r\}$ (resp. $\{i_1 < \dots < i_s\}$) be its set of outer (resp. inner) corners. Then $s = r+1$ and:
\begin{enumerate}
\item for all $m \in \{1,\dots,r\}$ we have $i_m < c_m < i_{m+1}$ and $\omega_\lambda(c_m) = \omega_\lambda(i_m) + c_m - i_m$,
\item if $\lambda$ is $e$-regular then $i_m \geq c_m - e + 1$ for all $m \in \{1,\dots,r\}$.
\end{enumerate}
\end{lemma}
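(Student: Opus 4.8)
The plan is to read off everything from the sign pattern of the (piecewise constant) derivative $\omega'_\lambda$. By~\eqref{equation:omega'} the value $\omega'_\lambda|_{(k,k+1)}$ is $+1$ or $-1$ for every $k\in\mathbb{Z}$, and since $\omega_\lambda(x)=|x|$ for all $|x|$ large enough, we have $\omega'_\lambda|_{(k,k+1)}=-1$ for $k\ll 0$ and $\omega'_\lambda|_{(k,k+1)}=+1$ for $k\gg 0$. The integers at which this sign changes are, by definition, exactly the corners: a change from $+1$ to $-1$ is an outer corner, a change from $-1$ to $+1$ is an inner corner. Because successive sign changes alternate in type and the pattern equals $-1$ at the far left and $+1$ at the far right, the first and the last sign changes are both inner corners; hence, listed from left to right, the corners read $i_1<c_1<i_2<c_2<\dots<c_r<i_{r+1}$. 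This already gives $s=r+1$ and $i_m<c_m<i_{m+1}$ for every $m\in\{1,\dots,r\}$.

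For the identity in $(i)$, fix $m$. By the interleaving just obtained, no corner lies strictly between $i_m$ and $c_m$, so $\omega'_\lambda$ is constant on $(i_m,c_m)$, and since $i_m$ is an inner corner this constant equals $+1$. Integrating, $\omega_\lambda(c_m)-\omega_\lambda(i_m)=c_m-i_m$, as claimed.

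For $(ii)$ I would translate the maximal slope-$+1$ stretch $(i_m,c_m)$ into a statement about the parts of $\lambda$. In the Russian convention the unit segment of the upper rim lying over $(k,k+1)$ has slope $+1$ if and only if $k=a-\lambda_a-1$ for some row index $a\ge 1$ (it is then the edge above the box $(a,\lambda_a)$, i.e.\ the last box of row $a$). As $\lambda$ is non-increasing, $a\mapsto a-\lambda_a$ is strictly increasing with steps $\ge 1$, so two consecutive slope-$+1$ segments, over $(k-1,k)$ and over $(k,k+1)$, must come from rows $a$ and $a+1$ with $\lambda_a=\lambda_{a+1}$. Consequently a maximal slope-$+1$ run of $\omega_\lambda$ made of $\ell$ unit segments corresponds to $\ell$ consecutive rows of $\lambda$ of one and the same length. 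Applying this to the maximal run $(i_m,c_m)$, which consists of $c_m-i_m$ unit segments, produces $c_m-i_m$ consecutive equal parts of $\lambda$; if $\lambda$ is $e$-regular this is impossible as soon as $c_m-i_m\ge e$, whence $c_m-i_m\le e-1$, that is, $i_m\ge c_m-e+1$.

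The only genuinely error-prone point is the dictionary used in the last paragraph between the unit segments of $\omega_\lambda$ and the boxes of $\lambda$: one has to keep track of which box edges actually appear on the rim in the Russian convention and of the signs of their slopes. Once that dictionary is in place — it is just the standard way of recovering a partition from its rim — the rest is elementary.
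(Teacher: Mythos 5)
Your proof is correct and takes essentially the same route as the paper, which also reads item (i) off the sign pattern of $\omega'_\lambda$ (dismissing it as standard) and obtains (ii) by noting that the slope-$+1$ run on $(i_m,c_m)$ forces $c_m-i_m$ consecutive equal parts, hence $c_m-i_m\le e-1$ by $e$-regularity; you merely make the rim-to-partition dictionary explicit where the paper leaves it implicit. The only slight imprecision — your stated biconditional for slope-$+1$ segments fails over $(k,k+1)$ with $k\ge h$ (the $\lvert x\rvert$ tail), where there is no row of positive length — is harmless, since you only invoke it on the runs $(i_m,c_m)$, which lie inside the diagram.
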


\begin{proof}
The first item is standard and  follows from the definition of inner and outer corners and from the fact that $\omega_\lambda(x) = |x|$ for $x \gg 0$.  For the second one, since $i_m$ is the inner corner preceding $c_m$ we have $\omega'_\lambda|_{(i_m,c_m)} = 1$. By $e$-regularity, we thus have $c_m - i_m \leq e-1$, whence the result.
\end{proof}

We illustrate Lemma~\ref{lemma:corners_intertwins} in Figure~\ref{figure:corners}. Note that if $\lambda$ is a partition and $\{i_1 < \dots < i_{r+1}\}$ is its set of inner corners, then by definition of a Young diagram we have:
\begin{subequations}
\label{subequations:omega_abs}
\begin{align}
\omega(s) &= |s|, &&\text{for all } s \notin (i_1,i_{r+1}),
\\
\omega(s) &> |s|, &&\text{for all } s \in (i_1,i_{r+1}).
\end{align}
\end{subequations}
In the sequel, fix $e \geq 2$ and let $L_e$ be the line of $\R^2$ with slope $\dire = 1-2e^{-1}$ and containing the origin.

\subsection{Outer flattening}
\label{subsection:outer_regularisation}

Let $\lambda$ be an $e$-regular partition. Let $\{c_1 < \dots < c_r\}$ (resp. $\{i_1 < \dots < i_{r+1}\}$)  be the set of outer (resp. inner) corners of $\lambda$. We define $\applatitk{0}{\lambda} \coloneqq \omega_\lambda$ and, if $r \geq 1$, assuming by induction on $k \in \{1,\dots,r\}$ that we have constructed the piecewise linear function $\applatitk{k-1}{\lambda} : \R \to \R$ we construct the piecewise linear function $\applatitk{k}{\lambda} : \R \to \R$ as follows. Let $\myL{+}$ be the affine line $\bigl(c_k,\omega_\lambda(c_k)\bigr) + L_e$.

\begin{enumerate}
\item For $s \geq c_k$ then $\applatitk{k}{\lambda}(s) \coloneqq \applatitk{k-1}{\lambda}(s)$.
\item
\label{item:Lec_intersection}
At $s = c_k$, we follow the line $\myL{+}$ (in the negative direction) until we meet the curve of $\applatitk{k-1}{\lambda}$, at the point of abscissa $\hp$.
\item For $s \leq \hp$ then $\applatitk{k}{\lambda}(s) \coloneqq \applatitk{k-1}{\lambda}(s)$ again.
\end{enumerate}

An example of construction is given in Figure~\ref{figure:applatitk} (note that the last step will be illustrated later, in Figure~\ref{figure:applatit+}).

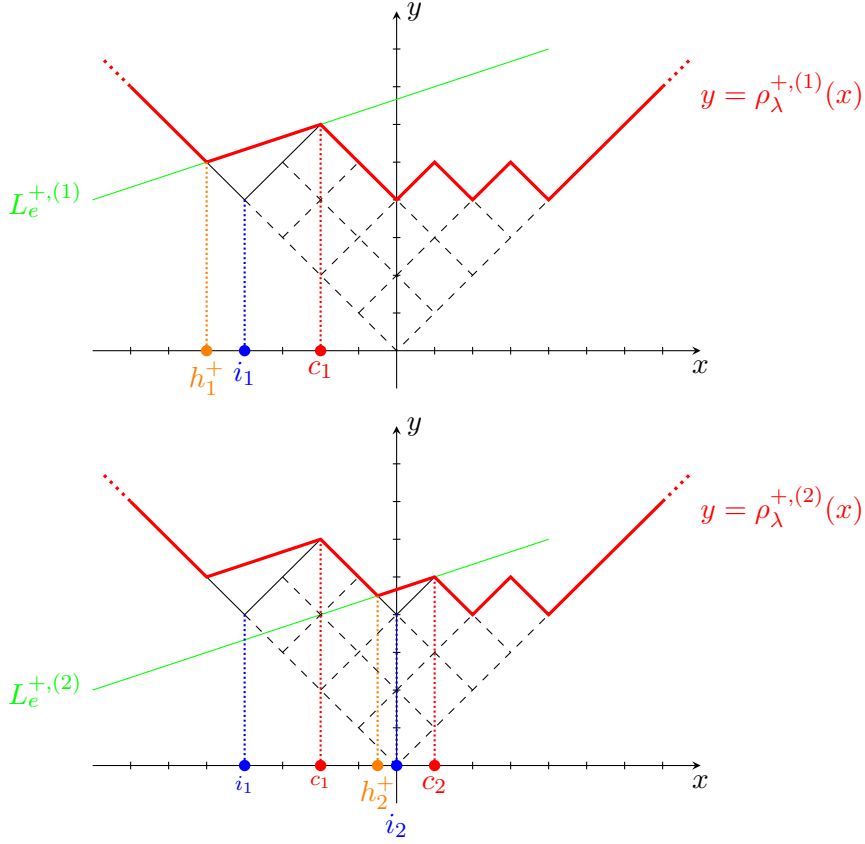
\begin{figure}[h]
\begin{center}
\begin{tikzpicture}[scale=.5]
\begin{scope}[shift={(0,11)}]
\draw[-stealth] (-8,0) -- (8,0) node[below]{$x$};
\draw[-stealth] (0,-1) -- (0,9) node[right]{$y$};
\foreach \i in {-7,-6,-5,-4,-3,-2,-1,1,2,3,4,5,6,7}
	{\draw (\i,-.1) --++ (0,.2);}
\foreach \j in {1,2,...,8}
	\draw (-.1,\j) --++ (.2,0);

\draw (-7,7) -- (-4,4) -- (-2,6) -- (0,4) -- (1,5) -- (2,4) -- (3,5) -- (4,4) -- (7,7);

\draw[green] (-8,4) node[left]{$\myLk{+}{1}$} -- (4,8);

\draw[very thick,red] (-7,7) -- (-5,5) -- (-2,6) -- (0,4) -- (1,5) -- (2,4) -- (3,5) -- (4,4) -- (7,7);
\draw[dotted, very thick, red] (-7.7,7.7) -- (-7,7);
\draw[dotted, very thick, red] (7,7) -- (7.7,7.7) node[below right]{$y = \applatitk{1}{\lambda}(x)$};

\draw[dashed] (-4,4) -- (0,0) -- (4,4);
\draw[dashed] (-3,5) -- (0,2) -- (2,4) --++ (1,-1);
\draw[dashed] (-3,3) --++ (2,2);
\draw[dashed] (-2,2) --++ (2,2) --++ (2,-2);
\draw[dashed] (-1,1) --++ (1,1) --++ (1,-1);

\begin{scope}[densely dotted, thick]

\color{red}
\draw (-2,6) -- (-2,0) node[below]{$ c_1$};
\fill (-2,0) circle (\sizecorner);


\color{blue}
\foreach \i in {-4}
	{
	\draw (\i,4) -- (\i,0);
	\fill (\i,0) circle (\sizecorner);
	}
	
\draw (-4,0) node[below]{$ i_1$};
\color{orange}
\draw (-5,5) -- (-5,0) node[below]{$\hp[1]$};
\fill (-5,0) circle (\sizecorner);


\end{scope}
\end{scope}

\draw[-stealth] (-8,0) -- (8,0) node[below]{$x$};
\draw[-stealth] (0,-1) -- (0,9) node[right]{$y$};
\foreach \i in {-7,-6,-5,-4,-3,-2,-1,1,2,3,4,5,6,7}
	{\draw (\i,-.1) --++ (0,.2);}
\foreach \j in {1,2,...,8}
	\draw (-.1,\j) --++ (.2,0);

\draw (-7,7) -- (-4,4) -- (-2,6) -- (0,4) -- (1,5) -- (2,4) -- (3,5) -- (4,4) -- (7,7);

\draw[green] (-8,2)node[left]{$\myLk{+}{2}$} -- (4,6) ;

\draw[very thick,red] (-7,7) -- (-5,5) -- (-2,6) -- (-.5,4.5) -- (1,5) -- (2,4) -- (3,5) -- (4,4) -- (7,7);
\draw[dotted, very thick, red] (-7.7,7.7) -- (-7,7);
\draw[dotted, very thick, red] (7,7) -- (7.7,7.7) node[below right]{$y = \applatitk{2}{\lambda}(x)$};

\draw[dashed] (-4,4) -- (0,0) -- (4,4);
\draw[dashed] (-3,5) -- (0,2) -- (2,4) --++ (1,-1);
\draw[dashed] (-3,3) --++ (2,2);
\draw[dashed] (-2,2) --++ (2,2) --++ (2,-2);
\draw[dashed] (-1,1) --++ (1,1) --++ (1,-1);

\begin{scope}[densely dotted, thick]

\color{red}
\draw (-2,6) -- (-2,0) node[below]{$\scriptstyle c_1$};
\fill (-2,0) circle (\sizecorner);

\foreach \i in {1}
	{
	\draw (\i,5) -- (\i,0);
	\fill (\i,0) circle (\sizecorner);
	}
\draw (1,0) node[below]{$ c_2$};

\color{blue}
\foreach \i in {-4,0}
	{
	\draw (\i,4) -- (\i,0);
	\fill (\i,0) circle (\sizecorner);
	}
	
\draw (-4,0) node[below]{$\scriptstyle i_1$};
\draw (0,-1) node[below]{$i_2$};
\color{orange}

\draw (-.5,4.5) -- (-.5,0) node[below]{$\hp[2]$};
\fill (-.5,0) circle (\sizecorner);

\end{scope}
\end{tikzpicture}
\end{center}
\caption{The maps $\applatitk{1}{\lambda}$ and $\applatitk{2}{\lambda}$ (in thick red) for the $3$-regular partition $\lambda = (4,4,2,1)$}
\label{figure:applatitk}
\end{figure}

\begin{proposition}
\label{proposition:applatit_corners}
\begin{itemize}
\item We have $\hp \in [i_k-1, i_k)$.
\item The functions $\applatitk{k-1}{\lambda}$ and $\applatitk{k}{\lambda}$ coincide  (at least) outside $(i_k-1,c_k)$, and for any $s \in (i_k-1,c_k)$ we have $0 \leq \applatitk{k}{\lambda}(s) -\applatitk{k-1}{\lambda}(s) \leq e$.
\end{itemize}
\end{proposition}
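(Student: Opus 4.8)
The plan is to fix $k \in \{1,\dots,r\}$ and analyse the single step $\applatitk{k-1}{\lambda} \rightsquigarrow \applatitk{k}{\lambda}$, reducing everything to the behaviour of $\omega_\lambda$ near its local maximum at the outer corner $c_k$. The first step is a \emph{localisation} observation: by construction each earlier step $j$ only alters $\applatitk{j-1}{\lambda}$ on the open interval $\bigl(\hp[j],c_j\bigr)$, and $\hp[j] < c_j \le c_{k-1}$; since consecutive corners are distinct integers with $c_{k-1} < i_k$ (Lemma~\ref{lemma:corners_intertwins}$(i)$), we get $c_j \le i_k-1$ for all $j \le k-1$, so $\applatitk{k-1}{\lambda}$ and $\omega_\lambda$ agree on $[i_k-1,+\infty)$.

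The second, and really the only substantial, step is an explicit comparison on $[i_k-1,c_k]$. Because $i_k$ and $c_k$ are consecutive corners, $\omega_\lambda$ is linear with slope $+1$ on $[i_k,c_k]$ and with slope $-1$ on $[i_k-1,i_k]$; setting $d \coloneqq c_k - i_k$, we have $\omega_\lambda(c_k) = \omega_\lambda(i_k)+d$, $\omega_\lambda(i_k-1) = \omega_\lambda(i_k)+1$, and $1 \le d \le e-1$ by Lemma~\ref{lemma:corners_intertwins}$(ii)$. The line $\myL{+}$ has slope $\alpha_e = 1-2e^{-1}$ and passes through $\bigl(c_k,\omega_\lambda(c_k)\bigr)$, so the difference $g(s) \coloneqq \myL{+}(s)-\omega_\lambda(s)$ computes to $g(s) = \tfrac2e(c_k-s)$ on $[i_k,c_k]$ and to an affine increasing function on $[i_k-1,i_k]$ with $g(i_k) = \tfrac{2d}{e}$ and $g(i_k-1) = \tfrac2e(d-e+1)$. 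The inequality $d \le e-1$ (that is, $e$-regularity) gives $g(i_k-1) \le 0$, while $d \ge 1$ gives $g(i_k) > 0$ and $g > 0$ on all of $[i_k,c_k)$. This is precisely the point where the two hypotheses ($e$-regularity and the specific slope $\alpha_e$) combine, and I expect the surrounding bookkeeping to be routine but this computation itself to be the heart of the matter.

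From here both bullets drop out. The function $g$ has a unique zero $\hp[k] \in [i_k-1,i_k)$ (equal to $i_k-1$ precisely when $d=e-1$), with $g > 0$ on $\bigl(\hp[k],c_k\bigr)$; since $\applatitk{k-1}{\lambda}=\omega_\lambda$ on that interval by the localisation step, following $\myL{+}$ leftwards from $c_k$ the curve of $\applatitk{k-1}{\lambda}$ is first met exactly at abscissa $\hp[k]$, which proves the first bullet (and that the construction is well defined). For the second, by construction $\applatitk{k}{\lambda}=\applatitk{k-1}{\lambda}$ on $\bigl(-\infty,\hp[k]\bigr]\cup[c_k,+\infty)$, which contains $(-\infty,i_k-1]\cup[c_k,+\infty)$ because $\hp[k]\ge i_k-1$, giving the coincidence outside $(i_k-1,c_k)$; and for $s\in(i_k-1,c_k)$ the difference $\applatitk{k}{\lambda}(s)-\applatitk{k-1}{\lambda}(s)$ equals $0$ for $s\le\hp[k]$ and $g(s)$ for $s\in[\hp[k],c_k]$, hence lies between $0$ and $\max_{[i_k-1,c_k]}g = g(i_k) = \tfrac{2d}{e} \le \tfrac{2(e-1)}{e} < 2 \le e$.
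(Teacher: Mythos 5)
Your proof is correct and follows essentially the same route as the paper's: a localised analysis of the single step on $[i_k-1,c_k]$, where your computation $g(i_k-1)=\tfrac{2}{e}(c_k-i_k-e+1)\leq 0$ is exactly the paper's comparison of the chord slope $1-\tfrac{2}{c_k-i_k+1}$ with $\dire$ via the $e$-regularity bound $c_k-i_k\leq e-1$, and your evaluation of the maximum of $g$ at $i_k$ matches the paper's monotonicity argument for the difference. The only (welcome) differences are that you make explicit the localisation $\applatitk{k-1}{\lambda}=\omega_\lambda$ on $[i_k-1,+\infty)$, which the paper uses implicitly, and that you obtain the slightly sharper bound $\tfrac{2}{e}(c_k-i_k)<2$ where the paper settles for $c_k-i_k\leq e-1$.
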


\begin{proof}
Following the curve of $\omega_\lambda$ to the left from the point $\bigl(c_k,\omega_\lambda(c_k)\bigr)$, since $\dire < 1$ we can cross $\myL{+}$ only after an inner corner, that is, since $i_k < c_k$ is the inner corner preceding $c_k$ (cf. Lemma~\ref{lemma:corners_intertwins}) then $\hp < i_k$. Moreover, by definition of an inner corner we have $\omega_\lambda(i_k-1) = \omega_\lambda(i_k)+1$, thus the line joining $\bigl(c_k,\omega_\lambda(c_k)\bigr)$ and $\bigl(i_k-1,\omega_\lambda(i_k-1)\bigr)$ has slope:
\[
\alpha \coloneqq \frac{\omega_\lambda(c_k)-\omega_\lambda(i_k)-1}{c_k-i_k+1}.
\]
Recalling from Lemma~\ref{lemma:corners_intertwins} that
$\omega_\lambda(i_k) = \omega_\lambda(c_k) - c_k+i_k$, we obtain:
\[
\alpha =  \frac{c_k-i_k-1}{c_k-i_k+1} = 1 - \frac{2}{c_k-i_k+1}.
\]
But now $\lambda$ is $e$-regular thus by Lemma~\ref{lemma:corners_intertwins} we have  $i_k \geq c_k-e+1$. We thus have $c_k-i_k+1 \leq e$ and thus, since $c_k-i_k+1 > 0$ (since $i_k \leq c_k$) we obtain:
\[
\alpha \leq 1-\frac{2}{e} = \dire.
\]
Thus, the line $\myL{+}$ meets the curve of $\omega_\lambda$ somewhere between the points of abscissa $i_k-1$ (included) and $i_k$ (excluded), thus $i_k-1 \leq \hp < i_k$ as announced. 

By construction, the maps $\applatitk{k}{\lambda}$ and $\applatitk{k-1}{\lambda}$ differ only on $(\hp,c_k)$. By the preceding inequality, we have $(i_k-1,c_k) \supseteq (\hp,c_k)$ thus these two functions coincide outside $(i_k-1,c_k)$ indeed.
Moreover, since $\dire < 1$ we have $\applatitk{k}{\lambda}(s) > \applatitk{k-1}{\lambda}(s)$ for any $s \in (\hp,c_k)$ thus we obtain the first member of the announced inequality. To prove the second one, since $\dire > -1$ (resp. $\dire < 1$) we have that $\applatitk{k}{\lambda} - \applatitk{k-1}{\lambda}$ is increasing (resp. decreasing) on $[\hp,i_k]$ (resp. $[i_k,c_k]$) thus  reaches its maximum at $i_k$. We now note the following two points.
\begin{itemize}
\item Since $\dire \geq 0$ and $\hp < i_k < c_k$, we have $\applatitk{k}{\lambda}(i_k) \leq \applatitk{k}{\lambda}(c_k)$ since $\applatitk{k}{\lambda}$ has slope $\dire$ on $(\hp,c_k)$.
\item The maps $\applatitk{k}{\lambda}$ and $\applatitk{k-1}{\lambda}$ coincide on $(c_k,+\infty)$ thus since $c_1 < \dots < c_k$ we obtain by induction that:
\begin{equation}
\label{equation:applatitk_leq_ck}
\applatitk{k}{\lambda}(s) = \omega_\lambda(s), \qquad \text{for all } s \geq c_k.
\end{equation}
Now if $k \geq 2$ we obtain $\applatitk{k-1}{\lambda}(i_k) = \omega_\lambda(i_k)$ since $i_k > c_{k-1}$ by Lemma~\ref{lemma:corners_intertwins}, and this equality remains valid if $k = 1$ since $\applatitk{0}{\lambda} = \omega_\lambda$ by definition.
\end{itemize}
Thus, by Lemma~\ref{lemma:corners_intertwins} we obtain:
\begin{align*}
\applatitk{k}{\lambda}(i_k) - \applatitk{k-1}{\lambda}(i_k)
&\leq 
 \applatitk{k}{\lambda}(c_k) - \omega_\lambda(i_k)
\\
&\leq
\omega_\lambda(c_k) - \omega_\lambda(i_k)
\\
&\leq 
c_k - i_k
\\
&\leq e-1
\end{align*}
whence the result.
\end{proof}

\begin{definition}
\label{definition:applatit+}
We define $\applatit{\lambda} \coloneqq \applatitk{r}{\lambda}$ and we define $\tapplatit{\lambda}$ by $\tapplatit{\lambda}(s) \coloneqq \frac{1}{\sqrt n}\applatit{\lambda}(s\sqrt n)$ for all $s \in \R$.
\end{definition}

Note that the rescaling for $\tapplatit{\lambda}$ is the same as for $\tomega_\lambda$ (see~\eqref{equation:rescaling}). Note also that by Proposition~\ref{proposition:applatit_corners}, the map $\applatit{\lambda}$ does not in fact depend on the order that we chose on  the outer corners.  An example of a map $\applatit{\lambda}$ is given in Figure~\ref{figure:applatit+}. Note that from Lemma~\ref{lemma:corners_intertwins} and Proposition~\ref{proposition:applatit_corners} for $k \geq 2$ we have $\hp \in (c_{k-1},c_k)$.

\begin{figure}[h]
\begin{center}
\begin{tikzpicture}[scale=.6]
\draw[-stealth] (-8,0) -- (8,0) node[below]{$x$};
\draw[-stealth] (0,-1) -- (0,9) node[right]{$y$};
\foreach \i in {-7,-6,-5,-4,-3,-2,-1,1,2,3,4,5,6,7}
	{\draw (\i,-.1) --++ (0,.2);}
\foreach \j in {1,2,...,8}
	\draw (-.1,\j) --++ (.2,0);

\draw (-7,7) -- (-4,4) -- (-2,6) -- (0,4) -- (1,5) -- (2,4) -- (3,5) -- (4,4) -- (7,7);

\draw[green] (-8,4) node[left]{$\myLk{+}{1}$} -- (4,8);
\draw[green] (-8,2)node[left]{$\myLk{+}{2}$} -- (4,6) ;
\draw[green] (-8,1.3333)node[below left]{$\myLk{+}{3}$} -- (4,5.3333) ;

\draw[very thick,red] (-7,7) -- (-5,5) -- (-2,6) -- (-.5,4.5) -- (1,5) -- (1.5,4.5) -- (3,5) -- (4,4) -- (7,7);
\draw[dotted, very thick, red] (-7.7,7.7) -- (-7,7);
\draw[dotted, very thick, red] (7,7) -- (7.7,7.7) node[below right]{$y = \applatit{\lambda}(x)$};

\draw[dashed] (-4,4) -- (0,0) -- (4,4);
\draw[dashed] (-3,5) -- (0,2) -- (2,4) --++ (1,-1);
\draw[dashed] (-3,3) --++ (2,2);
\draw[dashed] (-2,2) --++ (2,2) --++ (2,-2);
\draw[dashed] (-1,1) --++ (1,1) --++ (1,-1);

\begin{scope}[densely dotted, thick]

\color{red}
\draw (-2,6) -- (-2,0) node[below]{$\scriptstyle c_1$};
\fill (-2,0) circle (\sizecorner);

\foreach \i in {1,3}
	{
	\draw (\i,5) -- (\i,0);
	\fill (\i,0) circle (\sizecorner);
	}
\draw (1.2,0) node[below left]{$ \scriptstyle c_2$};
\draw (3,0) node[below]{$\scriptstyle c_3$};

\color{blue}
\foreach \i in {-4,0,2,4}
	{
	\draw (\i,4) -- (\i,0);
	\fill (\i,0) circle (\sizecorner);
	}
	
\draw (-4,0) node[below]{$\scriptstyle i_1$};
\draw (0,-1) node[below]{$\scriptstyle i_2$};
\draw (2.2,0) node[below] {$\scriptstyle i_3$};
\draw (4,0) node[below]{$\scriptstyle i_4$};
\color{orange}
\draw (-5,5) -- (-5,0) node[below]{$\hp[1]$};
\fill (-5,0) circle (\sizecorner);

\draw (-.5,4.5) -- (-.5,0) node[below]{$\hp[2]$};
\fill (-.5,0) circle (\sizecorner);

\draw (1.5,4.5) -- (1.5,0) node[below]{$\hp[3]$};
\fill (1.5,0) circle (\sizecorner);
\end{scope}
\end{tikzpicture}
\end{center}
\caption{The map $\applatit{\lambda}$ (in thick red) for the $3$-regular partition $\lambda = (4,4,2,1)$. In this case we have $\applatit{\lambda} = \applatitk{3}{\lambda}$.}
\label{figure:applatit+}
\end{figure}
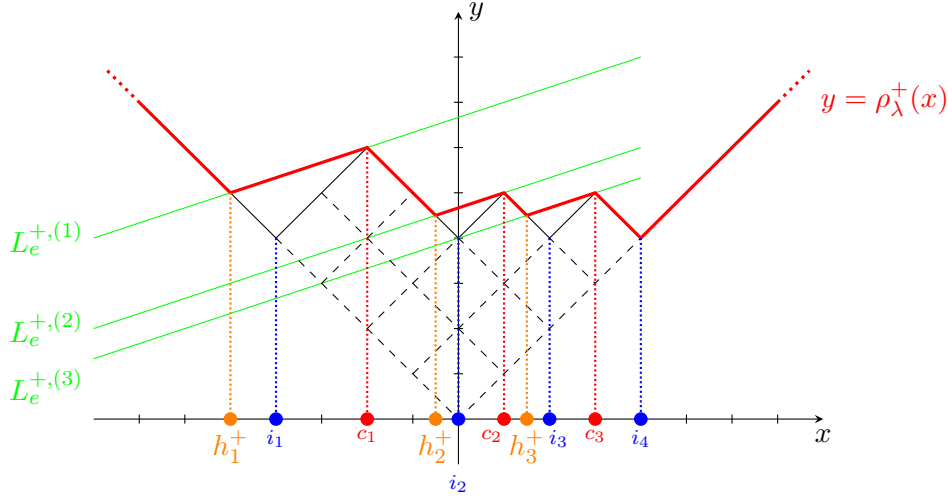

\begin{proposition}
\label{proposition:applatit+}
Recall that $\lambda$ is $e$-regular.
\begin{enumerate}
\item For all $s \in \R$ we have $0 \leq \applatit{\lambda}(s) - \omega_\lambda(s) \leq e$.
\item We have $\applatit{\lambda}(s) = |s|$ outside $(i_1 - 1,i_{r+1})$.
\item  The graph $G_\lambda^+ \coloneqq \gr\bigl(\applatit{\lambda}\bigr)$ is stable under the shaking operation with slope $\dire$, that is, we have $\sh(G_\lambda^+) = G_\lambda^+$.
\end{enumerate}
\end{proposition}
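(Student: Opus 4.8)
The plan is to obtain items $(i)$ and $(ii)$ as immediate consequences of Proposition~\ref{proposition:applatit_corners} and to concentrate the effort on item $(iii)$. We may assume $r\geq 1$, since for $r=0$ (that is, $\lambda=\emptyset$) we have $\applatit{\lambda}=\omega_\lambda=|\cdot|$ and all three items are trivial. The crucial remark for $(i)$ and $(ii)$ is that the successive modifications take place on \emph{pairwise disjoint} intervals: by Proposition~\ref{proposition:applatit_corners} the passage from $\applatitk{k-1}{\lambda}$ to $\applatitk{k}{\lambda}$ only changes the values on $(\hp,c_k)$, and by the remark following Definition~\ref{definition:applatit+} one has $\hp\in[i_k-1,i_k)\subseteq(c_{k-1},c_k)$ for $k\geq 2$; since $c_1<\dots<c_r$, the intervals $(\hp,c_k)$, $k=1,\dots,r$, are disjoint and $\applatit{\lambda}=\omega_\lambda$ off their union. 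Hence at each $s$ at most one step changes the value, so $\applatit{\lambda}(s)-\omega_\lambda(s)$ is either $0$ or equal to $\applatitk{k}{\lambda}(s)-\applatitk{k-1}{\lambda}(s)$ for the relevant $k$, and $(i)$ follows from the inequality $0\leq\applatitk{k}{\lambda}(s)-\applatitk{k-1}{\lambda}(s)\leq e$ of Proposition~\ref{proposition:applatit_corners}. For $(ii)$: by~\eqref{equation:applatitk_leq_ck} we have $\applatit{\lambda}=\omega_\lambda$ on $[c_r,+\infty)$, whence $\applatit{\lambda}(s)=\omega_\lambda(s)=|s|$ for $s\geq i_{r+1}$, while on $(-\infty,\hp[1]]$ only step $1$ could act but leaves the values unchanged, so $\hp[1]\geq i_1-1$ together with $\omega_\lambda(s)=|s|$ for $s\leq i_1$ gives $\applatit{\lambda}(s)=|s|$ for $s\leq i_1-1$.

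For $(iii)$ I would first pin down the shape of $\applatit{\lambda}$. Being built from $\omega_\lambda$ by flattening, $\applatit{\lambda}$ is piecewise linear with slopes in $\{-1,\dire,+1\}$. The slope of $\omega_\lambda$ equals $+1$ exactly on the intervals $(i_k,c_k)$ for $k=1,\dots,r$ and on $(i_{r+1},+\infty)$; each $(i_k,c_k)$ is contained in the flattened interval $(\hp,c_k)$, so the only interval of slope $+1$ surviving in $\applatit{\lambda}$ is $(i_{r+1},+\infty)$, on which $\applatit{\lambda}=\omega_\lambda=\mathrm{id}_\R$. By $(ii)$ we have $\sd[\applatit{\lambda}]\leq i_{r+1}$, hence on $I\coloneqq[\su[\applatit{\lambda}],\sd[\applatit{\lambda}]]$ the function $\applatit{\lambda}$ is piecewise linear with slopes in $\{-1,\dire\}$ only. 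Since $\dire\in[0,1)$, the auxiliary function $g\coloneqq\applatit{\lambda}-\dire\,\mathrm{id}_\R$ then has $g'\in\{-1-\dire,0\}\subseteq\R_{\leq 0}$ on $I$, so $g$ is non-increasing on $I$. This monotonicity is the substitute for the hypothesis $g'<\alpha$ of Lemma~\ref{lemma:graphe_stable_Sh}, which cannot be applied as a black box since $\applatit{\lambda}$ is not of class $\mathcal C^1$ and has pieces of slope exactly $\dire$.

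It then remains to rerun, in this slightly weaker setting, the line-by-line analysis underlying Lemmas~\ref{lemma:derivative_bounded_alpha} and~\ref{lemma:graphe_stable_Sh}. By Lemma~\ref{lemma:shaking_distance} and the definition of $\sh$, proving $\sh(G_\lambda^+)=G_\lambda^+$ amounts to showing that every line $L$ of slope $\dire$ meets $G_\lambda^+=\{(x,y):\su[\applatit{\lambda}]\leq x\leq\sd[\applatit{\lambda}],\ |x|\leq y\leq\applatit{\lambda}(x)\}$ either in the empty set or in a segment whose left extremity lies on $\biss$. Writing $L:y=\dire x+c$, the constraint $|x|\leq\dire x+c$ confines $x$ to $[x_0,x_1]$, where $x_0,x_1$ are the abscissae of $L\cap\biss$ and $L\cap\bissp$ (with $x_0\leq 0\leq x_1$ as soon as $L$ meets $\{y\geq|x|\}$), while $\dire x+c\leq\applatit{\lambda}(x)$ reads $g(x)\geq c$, which by the monotonicity of $g$ on $I$ confines $x$ to a closed subinterval $[\su[\applatit{\lambda}],\xi]$ of $I$. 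Hence $L\cap G_\lambda^+$ is the image under $x\mapsto(x,\dire x+c)$ of the interval $[\max(x_0,\su[\applatit{\lambda}]),\min(x_1,\xi)]$, so it is a segment; and if it is non-empty then $x_0\geq\su[\applatit{\lambda}]$, for otherwise $\su[\applatit{\lambda}]$ lies strictly between $x_0$ and $x_1$, whence $L(\su[\applatit{\lambda}])>|\su[\applatit{\lambda}]|=\applatit{\lambda}(\su[\applatit{\lambda}])$, i.e.\ $c>g(\su[\applatit{\lambda}])$, which by monotonicity forces $g<c$ throughout $I$ and thus $L\cap G_\lambda^+=\emptyset$. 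Therefore the left extremity of $L\cap G_\lambda^+$ is $(x_0,-x_0)\in\biss$, as required.

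The main obstacle is item $(iii)$: Lemma~\ref{lemma:graphe_stable_Sh} does not apply directly, and the real content is to isolate the correct monotonicity property (that $\applatit{\lambda}-\dire\,\mathrm{id}_\R$ is non-increasing on $[\su[\applatit{\lambda}],\sd[\applatit{\lambda}]]$), which hinges on the structural fact that flattening absorbs every interval of slope $+1$, and then to carry out the direct intersection argument above. Items $(i)$ and $(ii)$, by contrast, are bookkeeping on top of Proposition~\ref{proposition:applatit_corners}.
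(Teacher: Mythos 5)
Your proof is correct and follows essentially the same route as the paper: items $(i)$ and $(ii)$ are the same bookkeeping on top of Proposition~\ref{proposition:applatit_corners}, and item $(iii)$ rests on the paper's own key observation that on $[\su[\applatit{\lambda}],\sd[\applatit{\lambda}]]$ the slopes of $\applatit{\lambda}$ lie in $\{-1,\dire\}$, so a line of slope $\dire$ meets $G_\lambda^+$ in a segment anchored on $\biss$. Your version merely makes explicit (via the monotonicity of $\applatit{\lambda}-\dire\,\mathrm{id}_\R$) the details that the paper leaves as a "simple observation", including why Lemma~\ref{lemma:graphe_stable_Sh} cannot be invoked verbatim.
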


\begin{proof}
The first point follows directly from Proposition~\ref{proposition:applatit_corners}, recalling that $\applatitk{0}{\lambda} = \omega_\lambda$. The second point follows from the inequalities $i_1 - 1 \leq h_1$ and $h_r \leq c_r$ of Proposition~\ref{proposition:applatit_corners},  recalling that $c_r \leq i_{r+1}$ by Lemma~\ref{lemma:corners_intertwins}, together with~\eqref{subequations:omega_abs}.

To prove the last point, note that it follows directly from Definition~\ref{definition:applatit+}  and from the convexity of (the epigraph of) the absolute value that $\applatit{\lambda} \in \partclass$, thus $\gr(\applatit{\lambda})$ is well-defined. The proof of the second point follows from the simple observation that the slopes of $\applatit{\lambda}$ are either $-1$ or $\dire$ (except the part $\applatit{\lambda}(s) = s$ for $s \gg 0$). 
As a consequence,  if a line $L'$ of slope $\dire$ intersects the curve of $\applatit{\lambda}$ at a point of abscissa $s \in [\su[\applatit{\lambda}],\sd[\applatit{\lambda}]]$ then $L'$ remains below the curve $\applatit{\lambda}$ on $(\su[\applatit{\lambda}],s)$. Observing that $L'$ intersects the curve of $|\cdot|$ on $(-\infty,0)$ since $\dire < 1$ gives the result.
\end{proof}

\subsection{Inner flattening}
\label{subsection:inner_regularisation}
We now define a similar construction as $\applatit{\lambda}$ but for inner corners. We give the statements without proofs since they are entirely similar.

Let $\lambda$ be an $e$-regular partition. Let $\{i_1 < \dots < i_{r+1}\}$  be the set of inner corners where $r \geq 0$ and define $\applatitk[-]{r+1}{\lambda} \coloneqq \omega_\lambda$.  If $r \geq 1$, assuming by decreasing induction on $k \in \{1,\dots,r\}$ that we have constructed the piecewise linear function $\applatitk[-]{k+1}{\lambda} : \R \to \R$,  we construct the piecewise linear function $\applatitk[-]{k}{\lambda} : \R \to \R$ as follows. Let $\myL{-}$ be the affine line $\bigl(i_k,\omega_\lambda(i_k)\bigr) + L_e$.
\begin{enumerate}
\item For $s \leq i_k$ then $\applatitk[-]{k}{\lambda}(s) \coloneqq \applatitk[-]{k+1}{\lambda}(s)$.
\item
At $s = i_k$, we follow the line $\myL{-}$ (in the \emph{positive} direction) until we meet the curve of $\applatitk[-]{k+1}{\lambda}$, at the point of abscissa $\hm$.
\item For $s \geq \hm$ then $\applatitk[-]{k}{\lambda}(s) \coloneqq \applatitk[-]{k+1}{\lambda}(s)$ again.
\end{enumerate}

\begin{proposition}
\label{proposition:applatit_inner_corners}
\begin{itemize}
\item For any $k \in \{1,\dots,r\}$ we have $\hm \in (i_k,i_{k+1}]$.
\item The functions $\applatitk[-]{k+1}{\lambda}$ and $\applatitk[-]{k}{\lambda}$ coincide (at least) outside $(i_k,i_{k+1})$, and for any $s \in (i_k,i_{k+1})$ we have $-e \leq \applatitk[-]{k}{\lambda}(s) -\applatitk[-]{k+1}{\lambda}(s) \leq 0$.
\end{itemize}
\end{proposition}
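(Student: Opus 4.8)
The plan is to mimic the proof of Proposition~\ref{proposition:applatit_corners}, swapping the roles of outer and inner corners and reversing the direction in which the line $L_e$ is traced; I only indicate the changes. First I would record the bookkeeping underlying the decreasing induction on $k$: the steps already performed (at $i_{k+1},\dots,i_r$) modify the curve only within the intervals $(i_j,i_{j+1})$ for $j\geq k+1$, all contained in $(i_{k+1},+\infty)$, so $\applatitk[-]{k+1}{\lambda}$ agrees with $\omega_\lambda$ on $(-\infty,i_{k+1}]$. In particular, on $[i_k,i_{k+1}]$ the curve of $\applatitk[-]{k+1}{\lambda}$ coincides with $\omega_\lambda$, which by Lemma~\ref{lemma:corners_intertwins} is ``tent-shaped'' there: it rises with slope $+1$ from $\bigl(i_k,\omega_\lambda(i_k)\bigr)$ to the unique outer corner $c_k\in(i_k,i_{k+1})$, then falls with slope $-1$ to $\bigl(i_{k+1},\omega_\lambda(i_{k+1})\bigr)$.

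For the first bullet, since $\dire\geq 0>-1$ and the tent rises with slope $1>\dire$, the line $\myL{-}$ leaves $\bigl(i_k,\omega_\lambda(i_k)\bigr)$ strictly below the curve on $(i_k,c_k]$; beyond $c_k$ the curve falls with slope $-1$ while $\myL{-}$ is nondecreasing, so there they meet at most once. To place this meeting at or before $i_{k+1}$ it suffices to check that $\myL{-}$ has reached height at least $\omega_\lambda(i_{k+1})$ by abscissa $i_{k+1}$, i.e.\ that the chord from $\bigl(i_k,\omega_\lambda(i_k)\bigr)$ to $\bigl(i_{k+1},\omega_\lambda(i_{k+1})\bigr)$ has slope at most $\dire$. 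By Lemma~\ref{lemma:corners_intertwins} that slope equals
\[
\frac{(c_k-i_k)-(i_{k+1}-c_k)}{i_{k+1}-i_k},
\]
and, writing $p\coloneqq c_k-i_k$ and $q\coloneqq i_{k+1}-c_k$, the inequality $\frac{p-q}{p+q}\leq\dire=1-\frac{2}{e}$ rearranges to $p\leq(e-1)q$, which holds because $p\leq e-1$ by $e$-regularity (Lemma~\ref{lemma:corners_intertwins}) and $q\geq 1$. Hence, by the intermediate value theorem, $\hm$ exists and $\hm\in(c_k,i_{k+1}]\subseteq(i_k,i_{k+1}]$.

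For the second bullet, by construction $\applatitk[-]{k}{\lambda}$ and $\applatitk[-]{k+1}{\lambda}$ differ only on $(i_k,\hm)\subseteq(i_k,i_{k+1})$, which yields the coincidence claim; on this interval $\applatitk[-]{k}{\lambda}$ follows $\myL{-}$, which stays weakly below the tent, so $\applatitk[-]{k}{\lambda}-\applatitk[-]{k+1}{\lambda}\leq 0$. For the lower bound I would observe that $g\coloneqq\applatitk[-]{k}{\lambda}-\applatitk[-]{k+1}{\lambda}$ is piecewise linear with slope $\dire-1<0$ on $(i_k,c_k)$ and slope $\dire+1>0$ on $(c_k,\hm)$, so its minimum is attained at $c_k$, where by Lemma~\ref{lemma:corners_intertwins} it equals $(\dire-1)(c_k-i_k)=-\frac{2}{e}(c_k-i_k)\geq-\frac{2}{e}(e-1)>-e$. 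The one place that needs care, exactly as in the proof of Proposition~\ref{proposition:applatit_corners}, is combining the induction bookkeeping (which makes the ``tent'' picture legitimate) with orienting the inequality in the first bullet correctly: it is the \emph{line} that must overtake the \emph{curve} by abscissa $i_{k+1}$, and this is precisely where $e$-regularity enters through $c_k-i_k\leq e-1$; no genuinely new difficulty arises.
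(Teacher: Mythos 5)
Your proposal is correct and is exactly the adaptation the paper has in mind: it omits the proof of this proposition precisely because it is ``entirely similar'' to that of Proposition~\ref{proposition:applatit_corners}, and your argument mirrors that proof, with $e$-regularity entering through $c_k-i_k\leq e-1$ to give the chord-slope comparison $\frac{(c_k-i_k)-(i_{k+1}-c_k)}{i_{k+1}-i_k}\leq\dire$, which correctly yields $\hm\in(c_k,i_{k+1}]\subseteq(i_k,i_{k+1}]$. The bookkeeping that $\applatitk[-]{k+1}{\lambda}=\omega_\lambda$ on $(-\infty,i_{k+1}]$ and the bound $\applatitk[-]{k}{\lambda}-\applatitk[-]{k+1}{\lambda}\geq(\dire-1)(c_k-i_k)>-e$ at the minimum $c_k$ are also sound (indeed slightly sharper than required).
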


\begin{definition}
\label{definition:applatit-}
We define $\applatit[-]{\lambda} \coloneqq \applatitk[-]{1}{\lambda}$ and we define $\tapplatit[-]{\lambda}$ by $\tapplatit[-]{\lambda}(s) \coloneqq \frac{1}{\sqrt n}\applatit[-]{\lambda}(s\sqrt n)$ for all $s \in \R$.
\end{definition}

An example of a map $\applatit[-]{\lambda}$ is given in Figure~\ref{figure:applatit-}.

\begin{figure}[h]
\begin{center}
\begin{tikzpicture}[scale=.6]
\draw[-stealth] (-8,0) -- (8,0) node[below]{$x$};
\draw[-stealth] (0,-1) -- (0,9) node[right]{$y$};
\foreach \i in {-7,-6,-5,-4,-3,-2,-1,1,2,3,4,5,6,7}
	{\draw (\i,-.1) --++ (0,.2);}
\foreach \j in {1,2,...,8}
	\draw (-.1,\j) --++ (.2,0);

\draw (-7,7) -- (-4,4) -- (-2,6) -- (0,4) -- (1,5) -- (2,4) -- (3,5) -- (4,4) -- (7,7);

\draw[green] (-7,3) node[left]{$\myLk{-}{1}$} -- (7,7.6666);
\draw[green] (-7,1.6666)node[left]{$\myLk{-}{2}$} -- (7,6.3333);
\draw[green] (-7,1) node[below left]{$\myLk{-}{3}$} -- (7,5.6666);

\draw[very thick,blue] (-7,7) -- (-4,4) -- (-1,5) -- (0,4) -- (1.5,4.5) -- (2,4) -- (3.5,4.5) -- (4,4) -- (7,7);
\draw[dotted, very thick, blue] (-7.7,7.7) -- (-7,7);
\draw[dotted, very thick, blue] (7,7) -- (7.7,7.7) node[below right]{$y = \applatit[-]{\lambda}(x)$};

\draw[dashed] (-4,4) -- (0,0) -- (4,4);
\draw[dashed] (-3,5) -- (0,2) -- (2,4) --++ (1,-1);
\draw[dashed] (-3,3) --++ (2,2);
\draw[dashed] (-2,2) --++ (2,2) --++ (2,-2);
\draw[dashed] (-1,1) --++ (1,1) --++ (1,-1);

\begin{scope}[densely dotted, thick]

\color{blue}
\foreach \i in {-4,0,2,4}
	{
	\draw (\i,4) -- (\i,0);
	\fill (\i,0) circle (\sizecorner);
	}
\draw (-4,0) node[below]{$\scriptstyle i_1$};
\draw (-.3,0) node[below]{$\scriptstyle i_2$};
\foreach \i in {3,4}
	\draw ({2*(\i -2)},0) node[below]{$\scriptstyle i_{\i}$};
	
\color{orange}
\draw (-1,5) -- (-1,0);
\draw (-.8,0) node[below left]{$\hm[1]$};
\fill (-1,0) circle (\sizecorner);

\draw (1.5,4.5) -- (1.5,0);
\draw (1.7,0) node[below left]{$\hm[2]$};
\fill (1.5,0) circle (\sizecorner);

\draw (3.5,4.5) -- (3.5,0);
\draw (3.7,0) node[below left]{$\hm[3]$};
\fill (3.5,0) circle (\sizecorner);
\end{scope}
\end{tikzpicture}
\end{center}
\caption{The map $\applatit[-]{\lambda}$ (in thick blue) for the $3$-regular partition $\lambda = (4,4,2,1)$}
\label{figure:applatit-}
\end{figure}
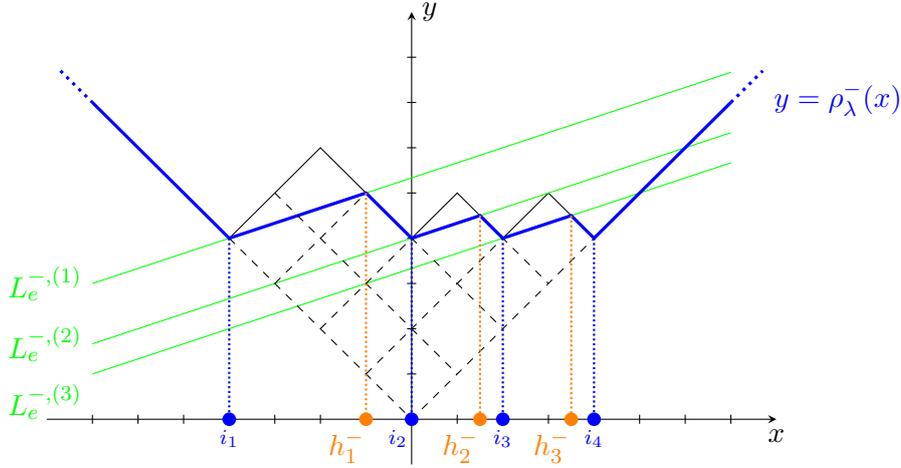

\begin{proposition}
\label{proposition:applatit-}
Recall that $\lambda$ is $e$-regular.
\begin{enumerate}
\item For all $s \in \R$ we have $-e \leq \applatit[-]{\lambda}(s) - \omega_\lambda(s) \leq 0$.
\item We have $\applatit[-]{\lambda}(s) \geq |s|$ for all $s \in \R$, with equality if and only if $s \notin (i_1,i_{r+1})$.
\item The graph $G_\lambda^- \coloneqq \gr\bigl(\applatit[-]{\lambda}\bigr)$ is stable under the shaking operation with slope $\dire$, that is, we have $\sh(G_\lambda^-) = G_\lambda^-$.
\end{enumerate}
\end{proposition}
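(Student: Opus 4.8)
The plan is to reproduce, \emph{mutatis mutandis}, the proof of Proposition~\ref{proposition:applatit+}, replacing every use of Proposition~\ref{proposition:applatit_corners} by its inner‑corner analogue Proposition~\ref{proposition:applatit_inner_corners}; the three items then come out exactly as their ``$+$''‑counterparts.

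For the first item, recall from Proposition~\ref{proposition:applatit_inner_corners} that $\applatitk[-]{k}{\lambda}$ and $\applatitk[-]{k+1}{\lambda}$ coincide outside $(i_k,i_{k+1})$, and that the intervals $(i_1,i_2),\dots,(i_r,i_{r+1})$ are pairwise disjoint. Hence a fixed $s$ is touched by at most one step of the decreasing induction, which changes its value by at most $e$ in absolute value and never increases it; telescoping from $\applatitk[-]{r+1}{\lambda}=\omega_\lambda$ gives $-e\le\applatit[-]{\lambda}(s)-\omega_\lambda(s)\le0$. For the second item I would prove, by decreasing induction on $k$, that $\applatitk[-]{k}{\lambda}\ge|\cdot|$ everywhere with equality exactly outside $(i_1,i_{r+1})$: the inductive step modifies $\applatitk[-]{k+1}{\lambda}$ only on $(i_k,\hm)$, where $\applatitk[-]{k}{\lambda}$ runs along the segment joining $\bigl(i_k,\omega_\lambda(i_k)\bigr)$ and $\bigl(\hm,\applatitk[-]{k+1}{\lambda}(\hm)\bigr)$; both endpoints lie weakly above the graph of the convex function $|\cdot|$ by the inductive hypothesis, so the whole segment does too, and strictness on the open interval $(i_k,\hm)$ follows from the convexity of $|\cdot|$ together with the vertex of $|\cdot|$ at the origin and the fact that the inserted segments have slope $\dire\ge0$ (for $k\ge2$ already the left endpoint is strictly above, since $i_k\in(i_1,i_{r+1})$). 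Outside $(i_1,i_{r+1})$ no step modifies anything, so $\applatit[-]{\lambda}=\omega_\lambda=|\cdot|$ there by~\eqref{subequations:omega_abs}; in particular $\applatit[-]{\lambda}\in\partclass$, so $\gr\bigl(\applatit[-]{\lambda}\bigr)$ is well defined and has support interval $[i_1,i_{r+1}]$.

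For the third item, the key observation — as in Proposition~\ref{proposition:applatit+} — is that on $[i_1,i_{r+1}]$ the piecewise‑linear function $\applatit[-]{\lambda}$ has all its slopes in $\{-1,\dire\}$: between two consecutive inner corners $\omega_\lambda$ rises with slope $1$ up to the unique intervening outer corner $c_k$ (Lemma~\ref{lemma:corners_intertwins}) and then falls with slope $-1$, so the flattening segment issued from $i_k$, having slope $\dire<1$, stays strictly below the rising part and first meets the curve in $(c_k,i_{k+1}]$, erasing precisely the slope‑$1$ stretch; moreover $\hm\le i_{k+1}$ (Proposition~\ref{proposition:applatit_inner_corners}) ensures that in the decreasing induction this segment never reaches into a region already modified at an earlier step, so the slopes really are confined to $\{-1,\dire\}$. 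Consequently $\applatit[-]{\lambda}-\dire\,\mathrm{id}_\R$ is non‑increasing on $(-\infty,i_{r+1}]$; hence if $L'$ is any line of slope $\dire$ meeting $\gr\bigl(\applatit[-]{\lambda}\bigr)$, then $L'$ stays weakly below the curve of $\applatit[-]{\lambda}$ to the left of their meeting point, while, since $\dire<1$, it meets the graph of $|\cdot|$ in $(-\infty,0)$; therefore $L'\cap\gr\bigl(\applatit[-]{\lambda}\bigr)$ is a segment whose lower‑left endpoint lies on $\biss$, which by Lemma~\ref{lemma:shaking_distance} is exactly the stability $\sh(G_\lambda^-)=G_\lambda^-$.

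I do not expect any genuine difficulty — the author describes the argument as ``entirely similar'', and it is. The one point deserving care is the slope bookkeeping in the third item: at each step of the decreasing induction one must check that the flattening segment from $i_k$ meets the curve built so far at exactly the place where it would meet $\omega_\lambda$ itself (the earlier modifications all lie to the right of $i_{k+1}$, and $\hm\le i_{k+1}$), so that $\applatit[-]{\lambda}$ ends up with slopes only in $\{-1,\dire\}$; everything else is a routine transcription of the ``$+$'' arguments.
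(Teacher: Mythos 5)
Your proposal is correct and follows essentially the same route as the paper: items $(i)$ and $(iii)$ are the direct transcriptions of Proposition~\ref{proposition:applatit+} via Proposition~\ref{proposition:applatit_inner_corners} and the observation that the slopes of $\applatit[-]{\lambda}$ lie in $\{-1,\dire\}$, which is exactly what the paper means by ``entirely similar''. Your inductive convexity/endpoint argument for the strict inequality in $(ii)$, with the separate treatment of $k=1$, is the same idea the paper carries out using the identity $\applatitk[-]{k}{\lambda}=\omega_\lambda$ on $(-\infty,i_k]$ and the case split at $\hm$.
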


\begin{proof}
We only prove that $\applatit[-]{\lambda}(s) = |s|$ if and only if $s \notin (i_1,i_{r+1})$, the remaining assertions being similar to the ones in Proposition~\ref{proposition:applatit+}. The statement is clear if $r = 0$ (that is, if $\lambda$ is empty), thus assume $r \geq 1$. By Proposition~\ref{proposition:applatit_inner_corners} it suffices to prove that $\applatit[-]{\lambda}(s) > |s|$ if $s \in (i_1,i_{r+1})$.

First note that, similarly to~\eqref{equation:applatitk_leq_ck}, by induction on $k \in \{1,\dots,r+1\}$ we have:
\begin{equation}
\label{equation:applatit-_omega}
\applatitk[-]{k}{\lambda}(s) = \omega_\lambda(s), \qquad \text{for all } s \leq i_k.
\end{equation}
Now let $s \in (i_1,i_{r+1})$ and let $k \in \{1,\dots,r\}$ be the unique integer such that $i_k \leq s < i_{k+1}$, so that by Proposition~\ref{proposition:applatit_inner_corners}   we have:
\begin{equation}
\label{equation:applatit_applatitk}
\applatit[-]{\lambda}(s) = \applatitk[-]{k}{\lambda}(s).
\end{equation} 
 If $s \in [\hm,i_{k+1})$ then:
\begin{align*}
\applatit[-]{\lambda}(s) &= \applatitk[-]{k}{\lambda}(s)
\\
&= \applatitk[-]{k+1}{\lambda}(s)
&&\text{since } s \geq \hm
\\
&=  \omega_\lambda(s)
&&\text{by~\eqref{equation:applatit-_omega}}
\\
&> |s|
&&\text{by~\eqref{subequations:omega_abs}},
\end{align*}
as desired.  We now prove this inequality on $[i_k,\hm]$. We have just proved that $\applatit[-]{\lambda}(\hm) > |\hm|$, moreover by~\eqref{equation:applatit-_omega} and~\eqref{equation:applatit_applatitk}  we have:
\[\applatit[-]{\lambda}(i_k) = \applatitk[-]{k}{\lambda}(i_k) = \omega_\lambda(i_k).
\]
Hence, if $k \geq 2$ we have $\applatit[-]{\lambda}(t) > |t|$ for all $t \in \{i_k,\hm\}$. By~\eqref{equation:applatit_applatitk}, we know that  the curve of $\applatit[-]{\lambda}(s)$ on $[i_k,\hm]$ is a line, thus by convexity of (the interior of the epigraph of) the absolute value we deduce that $\applatit[-]{\lambda}(t) > |t|$ for all $t \in [i_k,\hm]$, as desired. If $k = 1$ the same proof works on $(i_1,\hm[1]]$ and this concludes the proof.
\end{proof}

\section{Limit shape for regularisation of large partitions}
\label{section:limit_shape_regularisation}

We are now ready to gather all the previous results, to study the limit shape for the $e$-regularisation of large partitions taken under the Plancherel measure. 
 Recall from Definition~\ref{definition:Omegae} the definition of $\Omegae$ for $e \geq 2$.
 
\begin{theorem}
\label{theorem:limit_shape_regularisation}
Let $e \geq 2$. Under the Plancherel measure $\Pl$, the function $\tomega_{\reg(\lambda)}$ converges uniformly in probability to $\Omegae$ as $n \to +\infty$. In other words, for any $\varepsilon > 0$ we have
\[
\Pl\left(\sup_{\R} \,\bigl\lvert\tomega_{\reg(\lambda)}-\Omegae\bigr\rvert > \varepsilon\right) \xrightarrow{n\to+\infty} 0.
\]
\end{theorem}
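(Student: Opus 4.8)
The plan is to sandwich the rescaled upper rim $\tomega_{\reg(\lambda)}$ between the two flattenings of $\reg(\lambda)$ and between two scalings of $\Omegae$, exploiting that $\sh$ preserves inclusions. Write $\mu \coloneqq \reg(\lambda)$, which is $e$-regular, and recall from Lemma~\ref{lemma:sh_gr_reg} that $\sh\bigl(\gr(\tomega_\lambda)\bigr) = \sh\bigl(\gr(\tomega_\mu)\bigr)$. Dividing both coordinates by $\sqrt n$ in Propositions~\ref{proposition:applatit+} and~\ref{proposition:applatit-} gives $\tapplatit[-]{\mu} \le \tomega_\mu \le \tapplatit{\mu}$ with $\|\tomega_\mu - \tapplatit{\mu}\|_{\infty} \le e/\sqrt n$ and $\|\tomega_\mu - \tapplatit[-]{\mu}\|_{\infty} \le e/\sqrt n$ (so $\tapplatit{\mu} \le \tapplatit[-]{\mu} + 2e/\sqrt n$), and the graphs $\gr\bigl(\tapplatit[\pm]{\mu}\bigr)$ are stable under $\sh$ --- here one notes that rescaling both axes by $1/\sqrt n$ commutes with the shaking, since the direction $\alpha_e$ and the line $y=-x$ are scale invariant and lengths scale uniformly. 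Combining Lemma~\ref{lemma:inclusion_gr}, Proposition~\ref{proposition:shaking_subset} and this stability yields
\[
\gr\bigl(\tapplatit[-]{\mu}\bigr) = \sh\bigl(\gr(\tapplatit[-]{\mu})\bigr) \subseteq \sh\bigl(\gr(\tomega_\mu)\bigr) = \sh\bigl(\gr(\tomega_\lambda)\bigr) \subseteq \sh\bigl(\gr(\tapplatit{\mu})\bigr) = \gr\bigl(\tapplatit{\mu}\bigr).
\]

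Next I would bound $\sh\bigl(\gr(\tomega_\lambda)\bigr)$ using Theorem~\ref{theorem:LLN_partitions}. Fix $\myepsilon \in (0,1)$ and let $\Omegaeps{\pm}$ denote the maps of Definition~\ref{definition:feps} applied to $f = \Omega$. A routine but slightly delicate argument shows $\Pl\bigl(\Omegaeps{-} \le \tomega_\lambda \le \Omegaeps{+}\bigr) \to 1$: on $\{|s| \le 2-\myepsilon\}$ (resp. $\{|s| \le 2+\tfrac{\myepsilon}{2}\}$) the gap $\Omega - \Omegaeps{-}$ (resp. $\Omegaeps{+} - \Omega$) is bounded below by a positive constant depending only on $\myepsilon$, by the monotonicity in Lemma~\ref{lemma:feps}, so the uniform convergence $\tomega_\lambda \to \Omega$ forces the corresponding inequality there; on $\{|s| \ge 2-\myepsilon\}$ the inequality $\Omegaeps{-}(s) = |s| \le \tomega_\lambda(s)$ is automatic, and on $\{|s| \ge 2+\tfrac{\myepsilon}{2}\}$, on the event (of probability tending to $1$, by the convergence of supports) that $\tomega_\lambda - |\cdot|$ has support contained in $(-2-\tfrac{\myepsilon}{2}, 2+\tfrac{\myepsilon}{2})$, one has $\tomega_\lambda(s) = |s| \le \Omegaeps{+}(s)$. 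Let $E_n^{\myepsilon}$ be the event that $\Omegaeps{-} \le \tomega_\lambda \le \Omegaeps{+}$, so that $\Pl(E_n^{\myepsilon}) \to 1$. On $E_n^{\myepsilon}$, Lemma~\ref{lemma:inclusion_gr}, Proposition~\ref{proposition:shaking_subset}, Theorem~\ref{theorem:sh_gr_f} (applicable since $\Omegaeps{\pm} \in \myclass[2\pm\myepsilon]$ by Lemma~\ref{lemma:feps}) and Proposition~\ref{proposition:falphaeps} --- which, using $\functalpha{\alpha_e}{\Omega} = \Omegae$ (Definition~\ref{definition:Omegae}), gives $\sh\bigl(\gr(\Omegaeps{\pm})\bigr) = \gr\bigl(\functalpha{\alpha_e}{\Omegaeps{\pm}}\bigr) = \gr\bigl(\Omegae^{\pm\myepsilon}\bigr)$ --- yield
\[
\gr\bigl(\Omegae^{-\myepsilon}\bigr) \subseteq \sh\bigl(\gr(\tomega_\lambda)\bigr) \subseteq \gr\bigl(\Omegae^{+\myepsilon}\bigr).
\]

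Finally I would combine the two chains. On $E_n^{\myepsilon}$, transitivity and Lemma~\ref{lemma:inclusion_gr} give $\Omegae^{-\myepsilon} \le \tapplatit{\mu}$ and $\tapplatit[-]{\mu} \le \Omegae^{+\myepsilon}$, and together with $\tapplatit[-]{\mu} \le \tomega_\mu \le \tapplatit{\mu} \le \tapplatit[-]{\mu} + 2e/\sqrt n$ this yields $\Omegae^{-\myepsilon} - 2e/\sqrt n \le \tomega_{\reg(\lambda)} \le \Omegae^{+\myepsilon} + 2e/\sqrt n$, hence
\[
\sup_{\R}\bigl|\tomega_{\reg(\lambda)} - \Omegae\bigr| \le \max\bigl(\|\Omegae - \Omegae^{+\myepsilon}\|_{\infty},\, \|\Omegae - \Omegae^{-\myepsilon}\|_{\infty}\bigr) + \frac{2e}{\sqrt n} \qquad \text{on } E_n^{\myepsilon}.
\]
Given $\varepsilon > 0$, Proposition~\ref{proposition:diff_fepse_fe} allows me to fix $\myepsilon$ so that the maximum above is $< \varepsilon/2$, and then to pick $N$ with $2e/\sqrt N < \varepsilon/2$; for $n \ge N$ the event $E_n^{\myepsilon}$ is contained in $\{\sup_{\R}|\tomega_{\reg(\lambda)} - \Omegae| < \varepsilon\}$, so $\Pl\bigl(\sup_{\R}|\tomega_{\reg(\lambda)}-\Omegae| > \varepsilon\bigr) \le \Pl\bigl((E_n^{\myepsilon})^{c}\bigr) \to 0$. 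I expect the main obstacle to be precisely the pointwise sandwich $\Omegaeps{-} \le \tomega_\lambda \le \Omegaeps{+}$ with probability tending to $1$: this genuinely needs the convergence of the supports in Theorem~\ref{theorem:LLN_partitions}, not just the uniform convergence of $\tomega_\lambda$, because near $s = \pm 2$ the gap between $\Omega$ and $\Omegaeps{\pm}$ degenerates to zero.
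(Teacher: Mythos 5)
Your proposal is correct and follows essentially the same route as the paper's proof: the same probabilistic sandwich $\Omegaeps{-}\le\tomega_\lambda\le\Omegaeps{+}$ on an event of probability tending to $1$ (using both the uniform convergence and the convergence of supports in Theorem~\ref{theorem:LLN_partitions}, with the gap controlled via the monotonicity of Lemma~\ref{lemma:feps}), pushed through the shaking by Proposition~\ref{proposition:shaking_subset}, Theorem~\ref{theorem:sh_gr_f} and Proposition~\ref{proposition:falphaeps}, then compared with the flattenings $\tapplatit[\pm]{\reg(\lambda)}$ of Propositions~\ref{proposition:applatit+} and~\ref{proposition:applatit-} and concluded with Proposition~\ref{proposition:diff_fepse_fe}. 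The only deviations are cosmetic: your final triangle-inequality bookkeeping is organised slightly differently, and you make explicit the (correct) point that rescaling both axes by $1/\sqrt n$ commutes with $\sh$, which the paper leaves implicit.
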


\begin{proof}
Let $\varepsilon > 0$. By Proposition~\ref{proposition:diff_fepse_fe}, we can find $\eta \in (0,1)$ such that:
\begin{equation}
\label{equation:choice_eta}
\|\Omegaepse{\pm} - \Omegae\|_\infty \leq \frac{\varepsilon}{4},
\end{equation}
where:
\begin{equation}
\label{equation:Omegaepse}
\Omegaepse{\pm} \coloneqq \falphapsprim{(\Omegae)}{\pm} = \sh\bigl(\falphapsprim{\Omega}{\pm}\bigr)
\end{equation}
 (cf. Proposition~\ref{proposition:falphaeps}).
Define:
\begin{equation}
\label{equation:m^pm}
m^{\pm} \coloneqq \pm \left[\Omegaeps{\pm}\bigl(2\pm\tfrac{\myepsilon}{2}\bigr) - \Omega\bigl(2\pm\tfrac{\myepsilon}{2}\bigr)\right].
\end{equation}
By Lemma~\ref{lemma:feps}, we have $m^{\pm} > 0$. Now let $n \in \mathbb{Z}_{\geq 1}$ and let $M_n$ be the set of partitions $\lambda \in \partsn$ such that:
\begin{subequations}
\label{subequations:choice_Mn}
\begin{align}
\|\tomega_\lambda - \Omega\|_{\infty} &\leq \min\bigl(m^+,m^-\bigr),
\label{equation:norm_infini_tomega_Omega}
\\
\inf\{s \in \R : \tomega_\lambda(s) \neq |s|\} &\geq -2-\tfrac{\myepsilon}{2},
\label{equation:inf_tomega}
\\
\sup\{s \in \R : \tomega_\lambda(s) \neq |s|\} &\leq 2 + \tfrac{\myepsilon}{2}.
\label{equation:sup_tomega}
\end{align}
\end{subequations}
Note that by Theorem~\ref{theorem:LLN_partitions} we have:
\begin{equation}
\label{equation:cv_Pl_Mn}
\Pl(M_n) \to 1 \text{ as } n \to +\infty.
\end{equation}
Let $\lambda \in M_n$ and $s \in \R$. We will first prove that:
\begin{equation}
\label{equation:ineq_tomega}
\Omegaeps{-}(s) \leq \tomega_\lambda(s) \leq \Omegaeps{+}(s).
\end{equation} 
By~\eqref{equation:inf_tomega} and~\eqref{equation:sup_tomega}, we know that if $|s| \geq 2+\tfrac{\myepsilon}{2}$ then $\tomega_\lambda(s) = |s|$, thus~\eqref{equation:ineq_tomega} is satisfied by~\eqref{equation:f(s)=|s|} and Lemma~\ref{lemma:feps}. Now if $|s| \leq 2+\tfrac{\myepsilon}{2}$, by Lemma~\ref{lemma:feps}\ref{item:f+_-_f} and~\eqref{equation:norm_infini_tomega_Omega} we have:
\begin{align*}
\Omegaeps{+}(s) - \tomega_\lambda(s)
&=
\Omegaeps{+}(s) - \Omega(s) + \Omega(s) - \tomega_\lambda(s)
\\
&\geq
m^+ - \|\Omega - \tomega_\lambda\|_{\infty}
\\
&\geq
0,
\end{align*}
thus we have proved the second inequality of~\eqref{equation:ineq_tomega}. The proof of the first inequality is similar.

We now prove the uniform convergence in probability. Applying Lemma~\ref{lemma:inclusion_gr} to~\eqref{equation:ineq_tomega} gives:
\[
\gr\bigl(\Omegaeps{-}\bigr) \subseteq \gr\bigl(\tomega_\lambda\bigr) \subseteq \gr\bigl(\Omegaeps{+}\bigr).
\]
In the sequel, we write $\lambdareg \coloneqq \reg(\lambda)$  to simplify the expressions.
By Proposition~\ref{proposition:shaking_subset}, Theorem~\ref{theorem:sh_gr_f} and  Lemmas~\ref{lemma:salphaeta} and~\ref{lemma:sh_gr_reg}, we deduce that:
\begin{equation}
\label{equation:encastrement_Omegaeps}
\gr\bigl(\Omegaepse{-}\bigr) \subseteq \sh\bigl(\gr(\tomega_{\lambdareg})\bigr) \subseteq \gr\bigl(\Omegaepse{+}\bigr),
\end{equation}
recalling from~\eqref{equation:Omegaepse} that $\Omegaepse{\pm} = \falphapsprim{(\Omegae)}{\pm} = \sh\bigl(\falphapsprim{\Omega}{\pm}\bigr)$.
In particular, note that we can apply Theorem~\ref{theorem:sh_gr_f} for $\Omegaeps{\pm}$ indeed by Lemma~\ref{lemma:feps}.

Besides, by Propositions~\ref{proposition:applatit+} and~\ref{proposition:applatit-}, by~\eqref{equation:rescaling} and by Definitions~\ref{definition:applatit+} and~\ref{definition:applatit-}, for all $s \in \R$ we have the following inequalities:
\begin{subequations}
\label{subequations:tapplat_tomega}
\begin{gather}
\label{equation:tapplat-_leq_tomega}
\tapplat{-}(s) \leq \tomega_{\lambdareg}(s),
\\
\label{equation:tomega_leq_tapplat+}
\tomega_{\lambdareg}(s) \leq \tapplat{+}(s),
\end{gather}
\end{subequations}
together with:
\begin{subequations}
\label{subequations:tapplat_e_tomega}
\begin{gather}
\label{equation:tomega_leq_tapplat-}
\tomega_{\lambdareg}(s) \leq \tapplat{-}(s) + \frac{e}{\sqrt n},
\\
\label{equation:tapplat+_leq_tomega}
\tapplat{+}(s) \leq \tomega_{\lambdareg}(s) + \frac{e}{\sqrt n}.
\end{gather}
\end{subequations}
By~\eqref{subequations:tapplat_tomega} and Lemma~\ref{lemma:inclusion_gr}, we obtain:
\[
\gr\bigl(\tapplat{-}\bigr) \subseteq \gr(\tomega_{\lambdareg}) \subseteq \gr\bigl(\tapplat{+}\bigr).
\]
By Propositions~\ref{proposition:applatit+} and~\ref{proposition:applatit-} again and by Proposition~\ref{proposition:shaking_subset}, we deduce that:
\begin{equation}
\label{equation:encadrement_tapplatit}
\gr\bigl(\tapplat{-}\bigr) \subseteq \sh\bigl(\gr(\tomega_{\lambdareg})\bigr) \subseteq \gr\bigl(\tapplat{+}\bigr).
\end{equation}
 By~\eqref{equation:encastrement_Omegaeps} and~\eqref{equation:encadrement_tapplatit}, we deduce that:
\[
\gr\bigl(\Omegaepse{-}\bigr)  \subseteq \gr\bigl(\tapplat{+}\bigr), \qquad \text{and} \qquad
\gr\bigl(\tapplat{-}\bigr) \subseteq \gr\bigl(\Omegaeps{+}\bigr).
\]
By Lemma~\ref{lemma:inclusion_gr}, we obtain that for any $s \in \R$ we have:
\begin{equation}
\label{equation:Omegaespe_tapplat}
\Omegaepse{-}(s) \leq \tapplat{+}(s) \qquad \text{and} \qquad \tapplat{-}(s) \leq \Omegaepse{+}(s).
\end{equation}
We deduce that for any $s \in \R$ we have:
\begin{align*}
\tapplat{-}(s)
&\leq 
\Omegaepse{+}(s)
\\
&\leq \Omegaepse{-}(s) + \|\Omegaepse{-}-\Omegaepse{+}\|_{\infty}
\\
&\leq \tapplat{+}(s) +  \|\Omegaepse{-}-\Omegaepse{+}\|_{\infty}
\\
&\leq \tapplat{-}(s) + \frac{2e}{\sqrt n} + \|\Omegaepse{-}-\Omegaepse{+}\|_{\infty},
\end{align*}
where the last inequality follows from~\eqref{subequations:tapplat_e_tomega}. We deduce that for any $s \in \R$ we have:
\[
\bigl|\tapplat{-}(s) - \Omegaepse{+}(s)\bigr| \leq  \frac{2e}{\sqrt n} + \bigl\| \Omegaepse{-} - \Omegaepse{+}\bigr\|_\infty.
\]
Thus, by~\eqref{equation:tapplat-_leq_tomega} and~\eqref{equation:tomega_leq_tapplat-}, for any $s \in \R$ we have:
\begin{align*}
\bigl|\tomega_{\lambdareg}(s) - \Omegae(s)\bigr|
&
\begin{multlined}[t]
\leq
\bigl|\tomega_{\lambdareg}(s) - \tapplat{-}(s)\bigr|
+ \bigl|\tapplat{-}(s) - \Omegaepse{+}(s)\bigr|
\\
+ \bigl|\Omegaepse{+}(s) - \Omegae(s)\bigr|
\end{multlined}
\\
&\leq \frac{e}{\sqrt n} + \left(\frac{2e}{\sqrt n} +  \|\Omegaepse{-}-\Omegaepse{+}\|_{\infty}\right) + \|\Omegaepse{+}-\Omegae\|_\infty,
\end{align*}
whence:
\[
\|\tomega_{\lambdareg} - \Omegae\|_\infty \leq \frac{3e}{\sqrt n} + 2\|\Omegaepse{+} - \Omegae\|_\infty + \|\Omegaepse{-}-\Omegae\|_\infty.
\]
With $n \gg 0$ such that $\dfrac{3e}{\sqrt n} \leq \dfrac{\varepsilon}{4}$ and recalling~\eqref{equation:choice_eta} we thus obtain:
\[
\|\tomega_{\lambdareg} - \Omegae\|_\infty\leq \varepsilon.
\]
Finally, for $n \gg 0$ we have, recalling~\eqref{equation:cv_Pl_Mn},
\[
\Pl\bigl(\|\tomega_{\lambdareg} - \Omegae\|_\infty \leq \varepsilon\bigr) \geq  \Pl(M_n) \xrightarrow{n \to +\infty} 1,
\]
whence the desired convergence.
\end{proof}

The convergence of Theorem~\ref{theorem:limit_shape_regularisation} is illustrated in Figure~\ref{figure:limit_shape_2_diagram} for $e = 2$  and $n = 1000$, and in Figure~\ref{figure:limit_shape_3_diagram} for $e = 3$ and $n = 5000$.  We also give an example for $e = 4$ and $n = 30000$ in Figure~\ref{figure:limit_shape_4_rim}, where we focus on the positive abscissa part.

\begin{figure}
\begin{center}
\includegraphics[width=\mywidth]{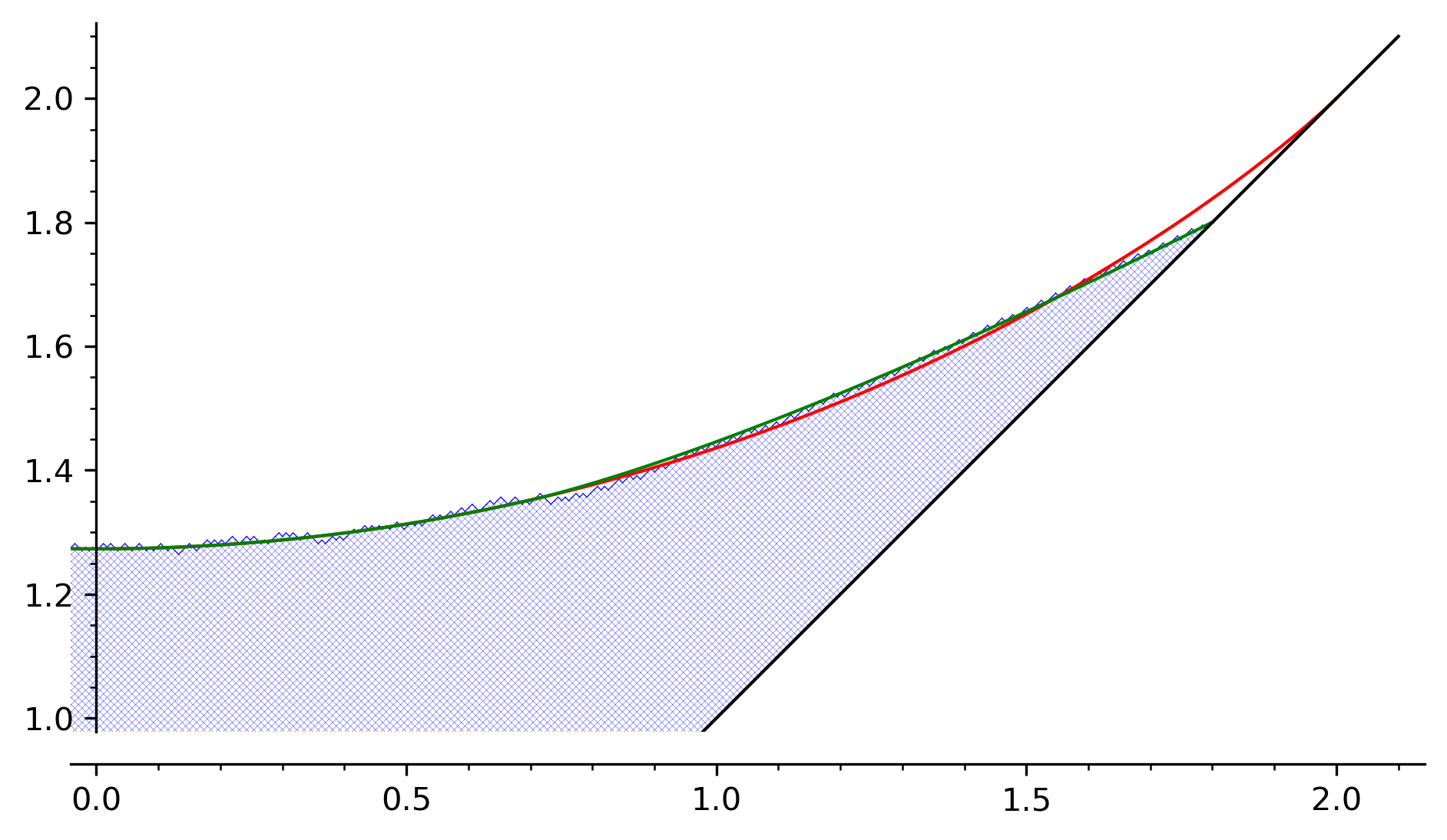}
\end{center}
\caption{Example of a (part of a) $4$-regularisation of a large partition taken under $\Pl[30000]$, with in green the limit shape $\Omegae[4]$ of Theorem~\ref{theorem:limit_shape_regularisation} and in red the limit shape $\Omega$ of Kerov-Vershik-Logan-Shepp}
\label{figure:limit_shape_4_rim}
\end{figure}

\begin{theorem}
\label{theorem:cv_supports}
The convergence of Theorem~\ref{theorem:limit_shape_regularisation} also holds for the supports, that is:
\begin{align*}
\inf\bigl\{s \in \R : \tomega_{\reg(\lambda)}(s) \neq |s|\bigr\} &\longrightarrow -2,
\\
\intertext{and:}
\sup\bigl\{s \in \mathbb R : \tomega_{\reg(\lambda)}(s) \neq |s|\bigr\} &\longrightarrow \suppfalpha[\dire](\Omega),
\end{align*}
in probability under $\Pl$ as $n \to +\infty$,  
\end{theorem}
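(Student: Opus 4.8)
The plan is to exploit the same sandwich structure used in the proof of Theorem~\ref{theorem:limit_shape_regularisation}, but now tracking where the curves leave the graph of the absolute value rather than their sup-distance. Fix $\varepsilon > 0$ and choose $\eta \in (0,1)$ small enough that $(1+\tfrac{\eta}{2})\suppfalpha[\dire](\Omega) \leq \suppfalpha[\dire](\Omega) + \varepsilon$ and $|{-2} - (-(2+\tfrac{\eta}{2}))| \leq \varepsilon$ and $|{-2} - (-2)| $ is trivially controlled — more precisely, using Lemma~\ref{lemma:salphaeta} we have $\suppfalpha[\dire](\Omegaeps{\pm}) = (1\pm\tfrac{\eta}{2})\suppfalpha[\dire](\Omega)$, so both are within $\varepsilon$ of $\suppfalpha[\dire](\Omega)$ for $\eta$ small. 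Work on the event $M_n$ of~\eqref{subequations:choice_Mn}, which has $\Pl(M_n) \to 1$ by Theorem~\ref{theorem:LLN_partitions}; on this event we established~\eqref{equation:encastrement_Omegaeps}, namely
\[
\gr\bigl(\Omegaepse{-}\bigr) \subseteq \sh\bigl(\gr(\tomega_{\lambdareg})\bigr) \subseteq \gr\bigl(\Omegaepse{+}\bigr),
\]
and, via~\eqref{equation:encadrement_tapplatit} together with~\eqref{subequations:tapplat_tomega}–\eqref{subequations:tapplat_e_tomega}, that $\sh(\gr(\tomega_{\lambdareg}))$ is squeezed between $\gr(\tapplat{\pm})$ which are themselves uniformly $O(e/\sqrt n)$-close to $\gr(\tomega_{\lambdareg})$.

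For the \textbf{right support}, I would argue as follows. By Lemma~\ref{lemma:inclusion_gr}, the inclusion $\gr(\Omegaepse{-}) \subseteq \sh(\gr(\tomega_{\lambdareg}))$ forces $\sd[{\sh(\gr(\tomega_{\lambdareg}))}] \geq \suppfalpha[\dire](\Omegaeps{-})$; similarly the inclusion into $\gr(\Omegaepse{+})$ gives $\sd[{\sh(\gr(\tomega_{\lambdareg}))}] \leq \suppfalpha[\dire](\Omegaeps{+})$. Thus on $M_n$ the right endpoint of $\sh(\gr(\tomega_{\lambdareg}))$ is within $\varepsilon$ of $\suppfalpha[\dire](\Omega)$. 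It then remains to relate $\sd$ of $\sh(\gr(\tomega_{\lambdareg}))$ to $\sd$ of $\gr(\tomega_{\lambdareg})$ itself, i.e. to the actual right support $\sup\{s : \tomega_{\reg(\lambda)}(s)\neq|s|\}$. For this I would use that $\gr(\tomega_{\lambdareg})$ is squeezed between $\gr(\tapplat{\pm})$ (Lemma~\ref{lemma:inclusion_gr} applied to~\eqref{subequations:tapplat_tomega}), that these flattened curves have graphs stable under $\sh$ (Propositions~\ref{proposition:applatit+},~\ref{proposition:applatit-}), and that their supports differ from that of $\tomega_{\lambdareg}$ by at most the $O(e/\sqrt n)$ errors of~\eqref{subequations:tapplat_e_tomega}; combined with the monotonicity of $\sd$ under inclusion this pins $\sup\{s : \tomega_{\reg(\lambda)}(s)\neq|s|\}$ to within $\varepsilon + O(e/\sqrt n)$ of $\suppfalpha[\dire](\Omega)$, hence $\to \suppfalpha[\dire](\Omega)$ in probability.

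For the \textbf{left support}, the same scheme applies but the target is $-2$: by Remark~\ref{remark:Shf_|x|gg0} and Proposition~\ref{proposition:falpha_x+mydelta}, $\su[{\falpha[\dire]}] = -a$ where $a = 2$ for $\Omega$ (and $a = 2\pm\eta$ for $\Omegaeps{\pm}$ by Lemma~\ref{lemma:feps}), so $\su[{\gr(\Omegaepse{\pm})}] = -(2\pm\eta) \to -2$; the sandwich~\eqref{equation:encastrement_Omegaeps} and Lemma~\ref{lemma:inclusion_gr} then give $-(2+\eta) \leq \su[{\sh(\gr(\tomega_{\lambdareg}))}] \leq -(2-\eta)$ on $M_n$. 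Transferring from $\sh(\gr(\tomega_{\lambdareg}))$ to $\gr(\tomega_{\lambdareg})$ via the flattenings as above gives the claimed convergence to $-2$. \textbf{The main obstacle} I anticipate is the passage from the support of $\sh(\gr(\tomega_{\lambdareg}))$ to the support of $\gr(\tomega_{\lambdareg})$: the shaking operation does not in general preserve the left/right endpoints of a graph, so one genuinely needs the intermediate flattenings $\tapplat{\pm}$ — whose graphs \emph{are} shaking-stable and whose supports \emph{are} within $O(e/\sqrt n)$ of that of $\tomega_{\lambdareg}$ — as a bridge; making this two-sided estimate precise (in particular checking that $\su,\sd$ are monotone under $\gr$-inclusion, which follows from the proof of Lemma~\ref{lemma:inclusion_gr}) is where the care is needed, though no new idea beyond what is already in Section~\ref{section:shaking_partitions} is required.
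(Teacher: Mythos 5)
Your plan is correct and follows essentially the same route as the paper: the paper likewise pins down the inner corners $i_1,i_{r+1}$ of $\reg(\lambda)$ by comparing the flattenings $\tapplat{\pm}$ with $\Omegaepse{\pm}$ on the event $M_n$, and then transfers the resulting bounds to $\tomega_{\reg(\lambda)}$ through the sandwich $\tapplat{-}\leq\tomega_{\reg(\lambda)}\leq\tapplat{+}$. One justification in your bridging step must be replaced, however: the closeness of the supports of $\tapplat{\pm}$ to that of $\tomega_{\reg(\lambda)}$ does \emph{not} follow from the sup-norm bounds~\eqref{subequations:tapplat_e_tomega} (uniform closeness of two functions says nothing about where they leave the graph of $\lvert\cdot\rvert$); it follows from the second items of Propositions~\ref{proposition:applatit+} and~\ref{proposition:applatit-}, which locate the support of $\applatit{\mu}$ inside $(i_1-1,i_{r+1})$ and that of $\applatit[-]{\mu}$ exactly at $(i_1,i_{r+1})$, hence within $1/\sqrt n$ (after rescaling) of the support $(i_1,i_{r+1})$ of $\omega_{\reg(\lambda)}$ itself. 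With that substitution your argument closes and coincides with the paper's; the remaining point you flag (monotonicity of $\su$ and $\sd$ under inclusion) is indeed immediate from the proof of Lemma~\ref{lemma:inclusion_gr}, and for the intermediate compact set $\sh\bigl(\gr(\tomega_{\reg(\lambda)})\bigr)$, which is not a priori of the form $\gr(h)$, one simply uses monotonicity of extremal abscissae under inclusion of sets rather than Lemma~\ref{lemma:inclusion_gr}.
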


\begin{proof}
Write $\suppfalpha$ instead of $\suppfalpha[\dire](\Omega)$. Note that by~\eqref{equation:suppfalpha_ineq} we have $\suppfalpha > 0$. Let $\varepsilon > 0$ and let $\myepsilon \in (0,1)$ such that:
\begin{equation}
\label{equation:eta_support}
\myepsilon \leq \min\left(\frac{\varepsilon}{2},\frac{2\varepsilon}{\suppfalpha}\right).
\end{equation}
Let $n \in \mathbb{Z}_{\geq 1}$ and let $M_n$ as in the proof of Theorem~\ref{theorem:limit_shape_regularisation}, that is, satisfying~\eqref{subequations:choice_Mn} where $m^{\pm}$ is defined in~\eqref{equation:m^pm}. Let $\lambda \in M_n$ and define $\mu \coloneqq \reg(\lambda)$. Recall from~\eqref{equation:Omegaespe_tapplat} that for all $s \in \R$ we have:
\[
\tapplatit[-]{\mu}(s) \leq \Omegaepse{+}(s).
\]
Thus, if $\{i_1 < \dots < i_{r+1}\}$ are the inner corners of $\mu$, we deduce from Proposition~\ref{proposition:applatit-} together with Lemma~\ref{lemma:feps} and Corollary~\ref{corollary:support_fe} that:
\[
-2-\myepsilon \leq \frac{i_1}{\sqrt n} \qquad \text{and} \qquad \frac{i_{r+1}}{\sqrt n} \leq \suppfalpha[\alpha_e]\bigl(\Omegaeps{+}\bigr).
\]
By Lemma~\ref{lemma:salphaeta} we deduce that:
\[
-2-\myepsilon - \frac{1}{\sqrt n} \leq \frac{i_1-1}{\sqrt n} \qquad \text{and} \qquad \frac{i_{r+1}}{\sqrt n} \leq \bigl(1+\tfrac{\myepsilon}{2}\bigr)\suppfalpha.
\]
By Proposition~\ref{proposition:applatit+}, we thus have:
\[
\tapplat{+}(s) = |s|, \qquad \text{if } s \leq -2 - \myepsilon-\frac{1}{\sqrt n}
\text{ or } s \geq \bigl(1+\tfrac{\myepsilon}{2}\bigr)\suppfalpha.
\]
Recalling~\eqref{equation:tomega_leq_tapplat+}, for all $s \in \R$ we have $|s| \leq \tomega_\mu(s) \leq \tapplat{+}(s)$ thus we deduce that:
\[
\tomega_\mu(s) = |s|, \qquad \text{if } s \leq -2 - \myepsilon-\frac{1}{\sqrt n}
\text{ or } s \geq \bigl(1+\tfrac{\myepsilon}{2}\bigr)\suppfalpha.
\]
Finally, we have proved that:
\begin{subequations}
\label{subequations:inf_geq}
\begin{align}
\inf\bigl\{s \in \R : \tomega_\mu(s) \neq |s|\bigr\} &\geq -2 - \myepsilon-\frac{1}{\sqrt n},
\\
\intertext{and:}
\sup\bigl\{s \in \R : \tomega_\mu(s) \neq |s|\bigr\} &\leq  \bigl(1+\tfrac{\myepsilon}{2}\bigr)\suppfalpha.
\end{align}
\end{subequations}

Similarly, from $\Omegaepse{-}(s) \leq \tapplat{-}(s)$ for all $s \in \R$ we deduce that:
\[
\frac{i_1-1}{\sqrt n} \leq -2+\myepsilon \qquad \text{and} \qquad \bigl(1-\tfrac{\myepsilon}{2}\bigr) \suppfalpha \leq \frac{i_{r+1}}{\sqrt n},
\]
hence recalling from~\eqref{equation:tapplat-_leq_tomega} the inequality $\tapplat{-}(s) \leq \tomega_\mu(s)$ for all $s \in \R$ and Proposition~\ref{proposition:applatit-} we know that if $s \in \bigl(-2+\myepsilon + \frac{1}{\sqrt n}, \bigl(1-\tfrac{\myepsilon}{2}\bigr) \suppfalpha \bigr)$ we have $|s| < \tapplat{-}(s)$ thus $|s| < \tomega_\mu(s)$ as well. We deduce that:
\begin{subequations}
\label{subequations:inf_leq}
\begin{align}
\inf\{s \in \R : \tomega_\mu(s) \neq |s|\} &\leq -2 + \myepsilon+\frac{1}{\sqrt n},
\\
\intertext{and:}
\sup\{s \in \R : \tomega_\mu(s) \neq |s|\} &\geq  \bigl(1-\tfrac{\myepsilon}{2}\bigr)\suppfalpha.
\end{align}
\end{subequations}
Hence, from~\eqref{subequations:inf_geq} and~\eqref{subequations:inf_leq} we obtain:
\begin{align*}
\bigl|\inf\{s \in \R : \tomega_\mu(s) \neq |s|\} + 2\bigr| &\leq \myepsilon + \frac{1}{\sqrt n},
\\
\intertext{and:}
\bigl|\sup\{s \in \R : \tomega_\mu(s) \neq |s|\} - \suppfalpha\bigr| &\leq \tfrac{\myepsilon}{2}\suppfalpha.
\end{align*}
With $n \gg 0$ such that $\frac{1}{\sqrt n} \leq \frac{\varepsilon}{2}$ and recalling~\eqref{equation:eta_support} we have $\myepsilon + \frac{1}{\sqrt n} \leq \varepsilon$ and $\frac{\myepsilon}{2}\suppfalpha \leq \varepsilon$, thus we obtain the desired convergences since $\Pl(M_n) \to 1$ as $n \to \infty$.
\end{proof}

Finally, we deduce the asymptotic behaviour of the first part and the first column.

\begin{corollary}
\label{corollary:length_first_column}
Let $e \geq 2$. Under the Plancherel measure $\Pl$:
\begin{enumerate}
\item  the rescaled size $\frac{1}{\sqrt n}\reg(\lambda)_1$ of the first row of $\reg(\lambda)$ converges as $n \to +\infty$ in probability to $2$;
\item the rescaled size $\frac{1}{\sqrt n}\reg(\lambda)'_1$ of the first column of $\reg(\lambda)$ converges as $n \to +\infty$ in probability to $\frac{2e}{\pi}\sin\frac{\pi}{e}$.
\end{enumerate}
\end{corollary}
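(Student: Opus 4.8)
The plan is to derive both limits straight from Theorem~\ref{theorem:cv_supports}, after identifying $\reg(\lambda)_1$ and $\reg(\lambda)'_1$ with the two endpoints of the support of $\tomega_{\reg(\lambda)}-|\cdot|$, and then to evaluate the right endpoint $\suppfalpha[\dire](\Omega)$ by an elementary computation. First I would record the dictionary: for any nonempty partition $\nu$ with exactly $h$ nonzero parts, the Russian-convention profile satisfies $\omega_\nu(x)=-x$ for $x\le-\nu_1$, $\omega_\nu(x)=x$ for $x\ge h=\nu'_1$, and $\omega_\nu(x)>|x|$ on $(-\nu_1,\nu'_1)$ (this is~\eqref{subequations:omega_abs}, the leftmost and rightmost inner corners being $-\nu_1$ and $\nu'_1$); hence, after the rescaling~\eqref{equation:rescaling},
\[
\inf\bigl\{s\in\R:\tomega_\nu(s)\neq|s|\bigr\}=-\frac{\nu_1}{\sqrt n},
\qquad
\sup\bigl\{s\in\R:\tomega_\nu(s)\neq|s|\bigr\}=\frac{\nu'_1}{\sqrt n}.
\]
Applying this with $\nu=\reg(\lambda)$ and invoking Theorem~\ref{theorem:cv_supports} gives at once that $\frac{1}{\sqrt n}\reg(\lambda)_1$ converges in probability to $2$, which is part $(i)$, and that $\frac{1}{\sqrt n}\reg(\lambda)'_1$ converges in probability to $\suppfalpha[\dire](\Omega)$, where $\dire=1-2e^{-1}$.

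It then remains to check that $\suppfalpha[\dire](\Omega)=\frac{2e}{\pi}\sin\frac{\pi}{e}$. Since $\Omega\in\myclass[2]$ by Example~\ref{example:Omega_convex}, Corollary~\ref{corollary:support_fe} gives
\[
\suppfalpha[\dire](\Omega)=(1-\dire)^{-1}\,\Falpha[\Omega]{\dire}(x_0)=\tfrac{e}{2}\,\Falpha[\Omega]{\dire}(x_0),
\]
where $\Falpha[\Omega]{\dire}(x)=\Omega(x)-\dire x$ and $x_0$ is the point $\xalphap$ associated with $f=\Omega$, i.e. $x_0=\Omega'^{-1}(\dire)$ by Definition~\ref{definition:xalpha+}. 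By Lemma~\ref{lemma:derivative}, $\Omega'(s)=\frac{2}{\pi}\arcsin\frac{s}{2}$ on $[-2,2]$, so $x_0$ solves $\frac{2}{\pi}\arcsin\frac{x_0}{2}=1-\frac{2}{e}$, that is $\arcsin\frac{x_0}{2}=\frac{\pi}{2}-\frac{\pi}{e}$, whence $x_0=2\cos\frac{\pi}{e}\in[0,2)$. Plugging this into the closed form of $\Omega$ and using $\sqrt{4-x_0^2}=2\sin\frac{\pi}{e}$ (valid since $0<\frac{\pi}{e}\le\frac{\pi}{2}$ for $e\ge2$) yields
\[
\Omega(x_0)=\frac{2}{\pi}\Bigl(2\cos\tfrac{\pi}{e}\bigl(\tfrac{\pi}{2}-\tfrac{\pi}{e}\bigr)+2\sin\tfrac{\pi}{e}\Bigr)=\Bigl(2-\tfrac{4}{e}\Bigr)\cos\tfrac{\pi}{e}+\frac{4}{\pi}\sin\tfrac{\pi}{e},
\]
and hence $\Falpha[\Omega]{\dire}(x_0)=\Omega(x_0)-\bigl(1-\tfrac{2}{e}\bigr)\cdot 2\cos\tfrac{\pi}{e}=\frac{4}{\pi}\sin\frac{\pi}{e}$. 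Therefore $\suppfalpha[\dire](\Omega)=\frac{e}{2}\cdot\frac{4}{\pi}\sin\frac{\pi}{e}=\frac{2e}{\pi}\sin\frac{\pi}{e}$, which is part $(ii)$.

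There is no genuine obstacle in this corollary: statement $(i)$ is literally the left-support convergence of Theorem~\ref{theorem:cv_supports}, and statement $(ii)$ is its right-support convergence followed by the trigonometric evaluation above; the only point deserving a line or two of care is the translation of $\reg(\lambda)_1$ and $\reg(\lambda)'_1$ into the endpoints of the support of the rescaled profile $\tomega_{\reg(\lambda)}-|\cdot|$, which is immediate from the description of the leftmost and rightmost inner corners of a partition.
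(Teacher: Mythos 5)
Your proposal is correct and follows essentially the same route as the paper: part $(i)$ and the identification of $\frac{1}{\sqrt n}\reg(\lambda)'_1$ with the right support endpoint come directly from Theorem~\ref{theorem:cv_supports}, and the evaluation $\suppfalpha[\dire](\Omega)=\frac{2e}{\pi}\sin\frac{\pi}{e}$ via Corollary~\ref{corollary:support_fe}, $\Omega'^{-1}(\dire)=2\cos\frac{\pi}{e}$ and the closed form of $\Omega$ is exactly the paper's computation. The only difference is that you spell out the dictionary between $\reg(\lambda)_1$, $\reg(\lambda)'_1$ and the extreme inner corners, which the paper treats as immediate.
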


\begin{proof}
The first point is clear by Theorem~\ref{theorem:cv_supports}. For the second one, by the same theorem it suffices to prove that the announced value is equal to $\suppfalpha = \suppfalpha(\Omega)$, with $\alpha = \dire = 1-2e^{-1}$. By Corollary~\ref{corollary:support_fe} applied for $f = \Omega$ we have:
\[
\suppfalpha = (1-\alpha)^{-1}\Bigl[\Omega\bigl(\Omega'^{-1}(\alpha)\bigr)-\alpha \Omega'^{-1}(\alpha)\Bigr].
\]
Recall from Lemma~\ref{lemma:derivative} that $\Omega'(s) = \frac{2}{\pi}\arcsin(\frac{s}{2})$ for $s \in (-2,2)$. Hence, for $s \in (-2,2)$ and $t \in (-1,1)$ we have $\Omega'(s) = t \iff \arcsin(\frac{s}{2}) = \frac{\pi t}{2} \iff s = 2\sin \frac{\pi t}{2}$ thus $\Omega'^{-1}(t) = 2\sin\frac{\pi t}{2}$. We thus have:
\begin{align*}
\Omega'^{-1}(\alpha) &= 2\sin\frac{\pi \alpha}{2}
\\
&=
2\sin\frac{\pi (1-2e^{-1})}{2}
\\
&=
2\sin\left(\frac{\pi}{2} - \frac{\pi}{e}\right)
\\
&=
2 \cos\frac{\pi}{e}.
\end{align*}
We obtain:
\begin{align*}
\suppfalpha
&=
\frac{e}{2}\Bigl[\Omega\bigl(\Omega'^{-1}(\alpha)\bigr) - \alpha \Omega'^{-1}(\alpha)\Bigr]
\\
&=
\frac{e}{2}\Bigl[\Omega\bigl(2\cos\tfrac{\pi}{e}\bigr) - 2\alpha \cos\tfrac{\pi}{e}\Bigr].
\end{align*}
Now we have, recalling the identity $\arcsin(\cos x) = \frac{\pi}{2} - x$ for $x \in [0,\pi]$:
\begin{align*}
\Omega\left(2\cos\frac{\pi}{e}\right)
&=
\frac{2}{\pi}\left[\arcsin\left(\cos\frac{\pi}{e}\right)2\cos\frac{\pi}{e} + \sqrt{4 - 4\cos^2\frac{\pi}{e}}\right]
\\
&=
\frac{2}{\pi}\left[\left(\frac{\pi}{2} - \frac{\pi}{e}\right)2\cos\frac{\pi}{e} + 2\sin\frac{\pi}{e}\right]
\\
&=
2\left(1 - \frac{2}{e}\right) \cos\frac{\pi}{e} + \frac{4}{\pi}\sin\frac{\pi}{e}
\\
&=
2\alpha\cos\frac{\pi}{e}+\frac{4}{\pi}\sin\frac{\pi}{e},
\end{align*}
thus we finally obtain:
\begin{equation}
\label{equation:suppfalpha_Omega}
\suppfalpha = \frac{2e}{\pi}\sin\frac{\pi}{e}.
\end{equation}
\end{proof}

\begin{example}
Here are approximations of the first values of the limit for $\frac{1}{\sqrt n}\reg(\lambda)'_1$:
\[
\begin{array}{c|c}
e & \tfrac{2e}{\pi}\sin\tfrac{\pi}{e}
\\[.5ex]
\hline
2 & 1.27
\\
3 & 1.65
\\
4 & 1.80
\end{array}
\]
The reader can check that they match the corresponding values of Figures~\ref{figure:limit_shape_2_diagram}, \ref{figure:limit_shape_3_diagram} and~\ref{figure:limit_shape_4_rim}.
\end{example}

\end{document}